\documentclass[10pt]{article}

\usepackage[utf8]{inputenc}
\DeclareUnicodeCharacter{00A0}{~} 
\usepackage[ruled,vlined,linesnumbered]{algorithm2e}
\usepackage{mathtools}
\usepackage{amssymb}
\usepackage{amsthm}
\usepackage{thmtools}
\usepackage{thm-restate}
\usepackage{comment}
\usepackage{xcolor} 
\usepackage[unicode]{hyperref}
\usepackage[T1]{fontenc}
\usepackage{geometry}
\usepackage{multirow}
\usepackage[colorinlistoftodos]{todonotes}
\usepackage{lmodern}
\usepackage{anyfontsize}
\usepackage{stmaryrd}
\usepackage{natbib}
\usepackage{cleveref}
\usepackage{graphicx}
\usepackage{subcaption}
\usepackage{dsfont}
\usepackage{float}
\usepackage{amsbsy}
\usepackage{sidecap}
\usepackage{cancel}
\setcitestyle{numbers,open={[},close={]}}

\definecolor{red}{rgb}{0.7,0.15,0.15}
\definecolor{green}{rgb}{0,0.5,0}
\definecolor{blue}{rgb}{0,0,0.7}
\hypersetup{colorlinks, linkcolor={red},citecolor={green}, urlcolor={blue}}
			
\makeatletter \@addtoreset{equation}{section}

\newtheorem{theorem}{Theorem}[section]

\newtheorem{corollary}[theorem]{Corollary}

\newtheorem{lemma}[theorem]{Lemma}
\newtheorem{proposition}[theorem]{Proposition}

\newtheorem{definition}[theorem]{Definition}
\newtheorem{remark}[theorem]{Remark}

\def\no{\noindent}
\def\beq{\begin{eqnarray}}
\def\eeq{\end{eqnarray}}
\def\be*{\begin{eqnarray*}}
\def\ee*{\end{eqnarray*}}

\def \R{\mathbb{R}}

\def\0{\mathbf{0}}

\def\normeL2#1{\left\|{#1}\right\|_{L^2}}

\def \1{\mathds{1}}

\DeclareUnicodeCharacter{014D}{\=o}
 \title{Ancestral lineages under horizontal trait transfer: spinal decomposition and time reversal beyond stationarity}

\author{Mateo Deangeli Bravo\thanks{Ecole Polytechnique Paris, Centre de Math\'ematiques Appliqu\'ees,
        mateo.deangeli@polytechnique.edu.}
    }
             \date{\today}

\begin{document}

\maketitle
 
\begin{abstract}

We investigate a stochastic individual-based population model with trait-dependent reproduction, death, competition, variation, and deleterious transfer. Nonlinearities appear in the competition term and the transfer form, which moreover is asymmetric.
In the large-population limit, the process converges to a deterministic measure-valued equation. 

The latter helps us to uncover the past history of living individuals. We replace in the interactions the original stochastic process with the deterministic large population approximation obtaining a non-homogeneous measure-valued branching process, for which we use a spinal decomposition, generalizing a method that existed only for the stationary case. Reversing time, we identify the law of typical ancestral lineages. This approach encompasses a variety of settings and requires delicate functional analysis. In contrast to earlier approaches in the literature, which relied either on stochastic calculus or on favourable jump structures under stationarity assumptions, our method provides a more direct and more general point of view.
\end{abstract}

\vspace{3mm}
\no{\bf Keywords.} Stochastic individual-based model, large population, limit theorem, gene transfer, harmful contagion, many-to-one formula, non homogeneous Markov processes. 

\vspace{3mm}
\no{\bf MSC2010.60J25, 60J80, 92D25} 
\section{Introduction}

In this work, we focus on modeling systems that present harmful contagion dynamics. More specifically, we consider individuals, each characterized by a positive trait measuring the intensity of a burden or vulnerability. We assume that individual mortality increases with the trait, and that when pairs of individuals interact, the one with the lower trait adopts the trait of the other. In other words, we examine the interplay between asymmetric alignment toward deleterious traits and the absence of any reward for adopting such traits.

The phenomenon that originally motivates this work is Horizontal Gene Transfer (HGT)—that is, the exchange of genetic material between non-related individuals—which is recognized as a major driver of bacterial evolution \cite{keeling2008horizontal,moran2010lateral,ochman2000lateral,tettelin2008comparative}. Among the different HGT mechanisms, conjugation—the direct transfer of DNA through cell-to-cell contact—plays a central role. It is notably mediated by plasmids, which can carry and spread metabolic or resistance genes across lineages \cite{oliver2008population}. In a neutral environment devoid of selective pressures (such as antibiotics), carrying and expressing plasmid-encoded machinery represents a pure metabolic burden, directly increasing host cell mortality \cite{baltrus2013exploring,jaenike2012population,romanchuk2014bigger,stewart1977population}. During a conjugative event, physical DNA transfer from a donor cell to a recipient cell causes the recipient to acquire the genetic element, increasing its overall burden.
While plasmid transfer offers a discrete biological illustration of asymmetric horizontal alignment, it inspires a broader conceptual framework. In reality, bacterial communities present an immense diversity of plasmids with varying transfer rates, copy numbers, and metabolic costs. Treating the overall plasmid burden as a continuous variable serves as a convenient modeling approximation: it avoids the combinatorial complexity of tracking distinct plasmid libraries within each cell, allowing us to focus specifically on the interplay between horizontal transfer dynamics and the cumulative deleterious cost. In this work, we thus generalize this principle to systems characterized by a continuous trait where local interactions systematically drive individuals toward higher levels of a purely costly burden.

A prime biological example of a continuous trait is the expression of efflux pumps in bacteria (see \cite{du2018multidrug,huang2022bacterial,zhao2020quorum}). Here, the trait represents the density or expression level of these membrane pumps (which normally serve to eject antibiotics and toxins from the cell). Through cell-to-cell signaling such as quorum sensing, bacteria with higher pump levels secrete more signals, triggering a positive feedback loop that aligns neighboring cells toward the higher expression threshold. In a neutral environment without antibiotics, this overexpression provides no selective benefit. Instead, it imposes a heavy metabolic cost—wasting cellular energy and weakening the cell membrane—which directly increases the cell's mortality rate.

Far from being restricted to microbiology, this asymmetric alignment also applies to human behaviour. Consider a population where individuals have a continuous trait—such as daily smoking intensity or exposure to risky behaviours—that increases mortality. In this context, "transfer" happens through social imitation or peer pressure, driving individuals to copy those with higher risk levels. Whether in extreme physical challenges or binge drinking during social events, interactions push people toward higher, more dangerous levels without offering any long-term benefit. This unilateral transfer thus serves as a simple baseline model for social contagion dynamics.

The questions that interest us are the following: what are the origins of the individuals alive at present? Do they predominantly descend from ancestors carrying low levels of the burden, or from lineages characterized by high exposure to the deleterious trait? Examining these ancestral lineages provides a retrospective view of the evolutionary process and helps uncover the structural determinants that allow high-cost traits to persist or be purged in structured populations.
\newline

We consider a population modelled by a measure-valued stochastic process inspired by \cite{A2,champagnat2021stochastic,fournier_meleard}. Each individual is characterized by a trait belonging to $\mathbb{R}_+$, quantifying the level of burden. The population is described by an individual-based process $(Z^K_t)_{t\in \R_+}$ corresponding to the trait distribution of the living individuals at time $t>0$,
\begin{equation}\label{def:ZK}
Z^K_t(dx)=\frac{1}{K}\sum_{i=1}^{N^K_t}\delta_{X^i_t}(dx),
\end{equation}
where $X^i_t$ is the trait of the $i$-th individual at time $t$, $N^K_t$ is the population size at time $t$ and $K$ is a scaling parameter. Indeed, the population is parameterized by an integer $K$ (which can be thought of as a carrying capacity or the order of the initial population size) that will be taken to infinity to obtain the large population limit. The population process is a \textit{càdlàg} process with values in $M_F(\mathbb{R}_+)$, the set of finite measures on $\mathbb{R}_+$, equipped with the weak convergence topology. We will denote by $\mathbb{D}([0,T],E)$ the set of \textit{càdlàg} functions from $[0,T]$ to $E$.

The population evolves through birth, death, competition, variation, and trait transfer. We work in a neutral environment: the reproduction-optimal trait is constant and, by convention, set to $0$. 

We assume transfer is unilateral: an individual with trait $y$ adopts the trait $x$ from another individual in a population $\nu\in M_F(\mathbb{R}_+)$ with the intensity $h(x,y,\nu)$ as in \cite{A2}:

\begin{equation}\label{eq:definition transfert}
h(x,y,\nu)=\frac{\tau}{\beta+\mu\langle\nu,1\rangle}\1_{{x>y}},
\end{equation}
where $\langle\nu,1\rangle=\int\nu(dx)$ is the number of individuals in the $\nu$ population. The quantity $\tau$ corresponds to the average frequency of transfer events, $\beta$ to the average time of encounters between two individuals, and $\mu$ to the average time taken for the transfer to be ended. Depending on the values of these three parameters, the transfer regime changes. It is described as Frequency-Dependent when $\beta=0$ and as Density-Dependent when $\mu=0$. When $\mu$ and $\beta$ are both greater than zero, this is referred to as a mixed regime, called the Beddington-DeAngelis regime in \cite{A1,A2} (see also \cite{geritz2012mechanistic}).

 An individual with trait $x \in \mathbb{R}_+$ gives birth to an offspring of the same trait at rate $b(x)$ and dies at rate $d(x) + C N^K_t/K$ where $C>0$. In this death rate, $d(x)$ represents the intrinsic mortality rate of the individual, while the additional term $C N^K_t/K$ models competition effect, assumed here to be of logistic type: it scales linearly with the total population size and does not explicitly depend on the distribution of traits. Additionally to transfer, an individual of trait $x$ may undergo trait changes through  variation mechanism: increase or decrease of the burden. Spontaneous variations occur at a rate $\gamma(x)$ according to a kernel $m(x,z)dz$, which describes the probability distribution of the new trait $z$.
\newline

The simulation of such dynamic (see Figure \ref{fig:IBM exemple}) exhibits interesting behaviours; for example cyclic phenomena already pointed out numerically in \cite{A2,A3} and partially explained in \cite{champagnat2021stochastic}. However, considering only the individuals alive at the present time whilst tracing the lines of their ancestors backward (see Figure \ref{fig:IBM exemple AL}) reveals a stark contrast. While the forward population exploration occupies a large trait space, the surviving lineages form a thin specific set of paths. 
Understanding how to mathematically isolate, scale, and reverse these specific "victorious" lineages constitutes the core motivation of this work. Our investigation begins by establishing the large-population scaling limit ($K \to \infty$) of the individual-based stochastic process \cite{fournier_meleard,champagnat2006unifying}. Using the limit to freeze nonlinearities, as already done in \cite{meleardB1, meleardB2}, provides a time-inhomogeneous branching process that allows to perform spinal decomposition. The ancestral lineages are obtained as the time reversal of the spine.

\begin{figure}[h!]
\centering\begin{minipage}[c]{0.48\textwidth}\centering\includegraphics[width=\textwidth]{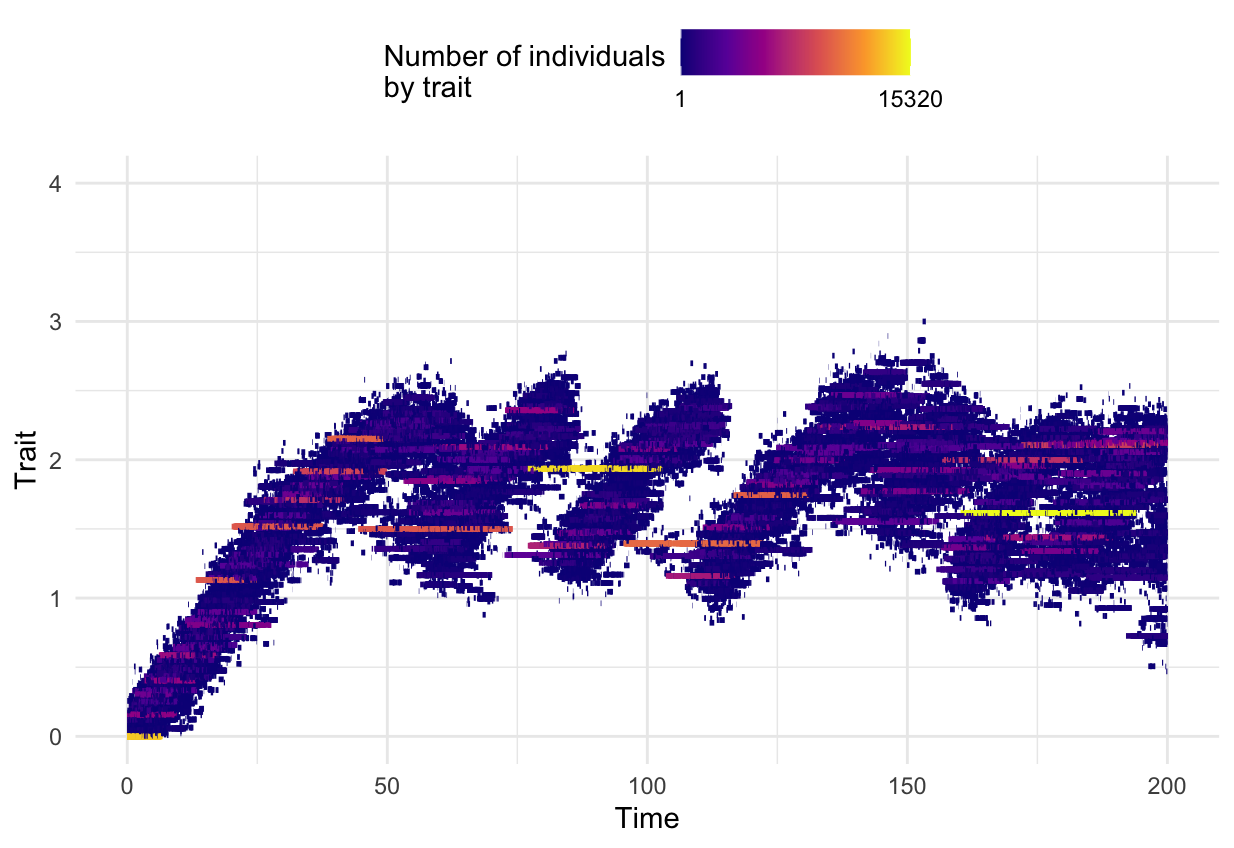}\end{minipage}\hfill\begin{minipage}[c]{0.45\textwidth}\caption{Simulation of the individual-based process - $K=200$, $b(x)=5$, $d(x)=5+(x^2-9)/2$, $\gamma(x)=0.15$, $C=1/2$, $\sigma=0.1$, $m(x,y)=\frac{1}{\sqrt{2\pi\sigma^2}}(e^{-\frac{(x-y)^2}{2\sigma^2}}+e^{-\frac{(x+y)^2}{2\sigma^2}})$, $\tau=\mu=1$,$\beta=0$. - Obtained with the IBMPopSim Package \cite{giorgiibmpopsim}.}\label{fig:IBM exemple}\end{minipage}\end{figure} 
\begin{figure}[h!]
    \centering
    \begin{minipage}[c]{0.48\textwidth}
        \centering
        \includegraphics[width=\textwidth]{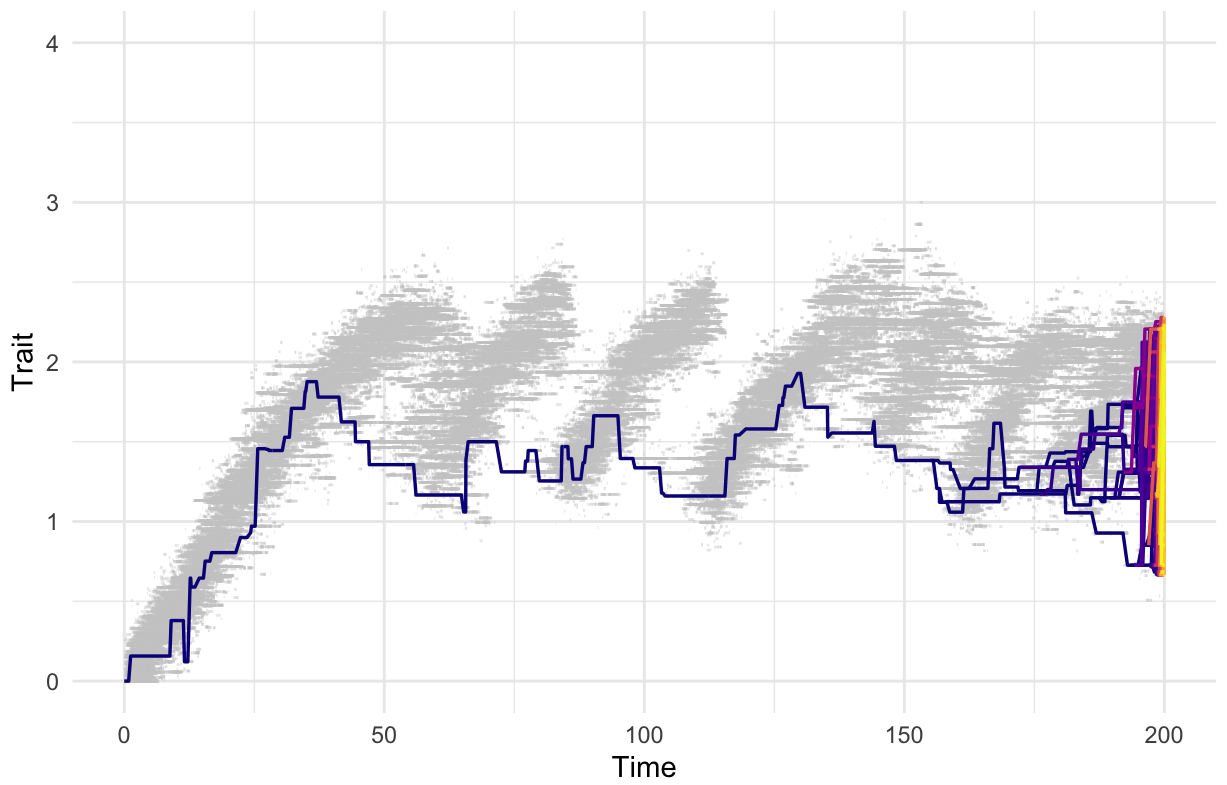}
    \end{minipage}
    \hfill
    \begin{minipage}[c]{0.45\textwidth}
        \caption{Ancestral lineages of the survivors (coloured lines), superimposed on the forward population dynamics displayed in Figure \ref{fig:IBM exemple}.}
        \label{fig:IBM exemple AL}
    \end{minipage}
\end{figure}
An initial class of models focused on population dynamics through a stochastic process approach. \cite{A1} proposed a model based on a birth-death process structured into two sub-populations. The study of its scaling limit leads to a system of Lotka-Volterra-type equations incorporating HGT, a model particularly suited to studying the invasion or persistence of a single plasmid type. These dynamics were subsequently extended by \cite{A2} via an individual-based model. The authors establish the large-population limit and formulate several conjectures concerning the trait substitution sequence (TSS). Continuing along this line, \cite{A3} explored these conjectures in greater detail numerically, highlighting in particular oscillatory behaviour in the population process and its Hamilton-Jacobi limit.
From an analytical perspective, the behaviour of these macroscopic dynamics has also been the subject of in-depth study. For example, \cite{mirrahimi} proved the existence of a stationary solution for an integro-differential partial differential equation modelling horizontal transfer, in a specific context where variations are diffusive and transfer is frequency-dependent, and establish the associated Hamilton-Jacobi equations. They explored these questions in more details in \cite{garriz2026hamilton}. It can be noted that another strand of the literature has focused on the spatio-temporal structure of diversity, drawing on population genetics. In this vein, \cite{baumdicker2014infinitely} has notably modeled the impact of horizontal gene transfer (HGT) from the perspective of coalescent theory and the infinite-allele model.

We extend the framework of \cite{A2} by relaxing several compactness and boundedness assumptions, specifically considering a non-compact trait space $\mathbb{R}_+$, a discontinuous transfer term, an unbounded death rate, and heavy-tailed variation kernels. Under these conditions, we establish the convergence of the microscopic population model to the corresponding integro-differential equation and prove the existence of a strong solution. Our proof introduces a novel argument to handle the technical challenges posed by the unbounded death rate and heavy-tailed variations. In this framework, the scaling limit is given by the density $\xi_t(x)$ satisfying the following non-linear integro-differential equation:
\begin{equation}\label{densité limite}
\begin{aligned}
\partial_t\xi_t(x) &=
    \Big(r(x) - C\!\int_0^\infty  \xi_t(y)\,dy\Big)\xi_t(x)+ \int_0^\infty[ \gamma(y)\xi_t(y)m(y,x)-\gamma(x)\xi_t(x)m(x,y)]dy \\
    & + \tau\xi_t(x)\left(\beta + \mu \int_0^\infty \xi_t(z)\,dz\right)^{-1}\int_0^\infty \text{sgn}(x-y)\xi_t(y)\,dy.
\end{aligned}
\end{equation}

We prove the existence of a strong solution and based on this macroscopic density, we then shift our focus to individual lines to capture the true ancestral trajectories of currently living individuals (previously visualized in Figure \ref{fig:IBM exemple AL}). Due to non-linear interactions induced by competition and horizontal transfer, the original stochastic population process lacks the branching property, preventing a direct genealogical analysis through spinal decomposition. To overcome this, inspired by \cite{meleardB1,meleardB2}, we introduce an auxiliary branching process where these non-linearities are systematically replaced by the deterministic density $\xi_t$, thereby restoring a tractable branching structure. For branching processes, there has been an abundant litterature (see \cite{andre2025moments,hardy2006new,hardy2009spine,marguet,marguet2019uniform}).

The detailed study of the dynamics of ancestral lineages — setting aside issues of transfer — has seen  theoretical developments, notably in \cite{meleardB1}. The authors examine the genealogy of a population whose individual variations follow a drifted diffusion dynamic. A central point of their analysis lies in establishing the existence of a stationary solution for the scaling-limit equation, aided by a favourable framework provided by Itô calculus and the Gaussian structure of the processes. In a methodological continuation, \cite{meleardB2} replaces diffusion with a non-local jump kernel to model variations, exploiting the symmetry of the generators via semi-group theory. We adapt these lineage dynamics tools to address the specific case of horizontal transfer. We introduce a novel approach based on a direct treatment of Feynman-Kac semigroups, which offers a conceptually simpler and more general framework than existing methods. This approach handles widespread time-inhomogeneity—notably due to the absence of a stationary density—and manages the additional terms arising from strong dynamical asymmetries, effectively encompassing previous works.

To reconstruct the genealogy retrospectively, the spinal process obtained by a Feynman-Kac formula must be reversed in time. We achieve this by exploiting Nagasawa’s Theorem \cite{nagasawa64, nagasawa} for Markov processes. 

This allows us to explicitly derive the infinitesimal generator $\widetilde{L}^\mathcal{R}$ of the time-reversed spine:
\begin{equation}\label{generator_backward}
\begin{aligned}
\widetilde{L}^\mathcal{R} \psi(s,x) =
-\partial_1 \psi(s,x)
&+ \int_0^\infty  (\psi(s,z) - \psi(s,x)) \frac{\xi(s,z)}{\xi(s,x)} \gamma(z) m(z,x)  dz \\
&+ \tau\int_0^\infty  (\psi(s,y) - \psi(s,x)) \frac{\xi(s,y)}{\beta + \mu \langle \xi_s, 1 \rangle} \mathds{1}_{{y < x}} dy.
\end{aligned}
\end{equation}

Beyond its theoretical value, this time-reversed characterization provides a powerful computational tool. Instead of executing heavy individual-based simulations across a full population, our framework leads to a simulation solution that solves the deterministic macro-equation \eqref{densité limite} only once, and then allows us to simulate ancestral lineages directly as simple, independent, real-valued backward processes governed by $\widetilde{L}^\mathcal{R}$.

As illustrated by our numerical simulations in Section \ref{ssec:simulations}, these sample trajectories highlight a crucial evolutionary phenomenon. Under a Gaussian variation kernel, once the spine reaches an adapted trait close to $0$, it remains there, implying that contemporary individuals likely originate from ancestors that evolved under regulated or fluctuating transfer regimes. But  under heavy-tailed Cauchy variations, ancestral lineages exhibit large jumps toward highly maladapted traits, illustrating how heavy-tailed variation acts as an evolutionary "rescue" mechanism for lineages that would otherwise face certain extinction under strict selection.
\newline

The article is organized as follows. In Section \ref{sec:pop_process}, we study the population process. We first establish in Section \ref{ssec:conv_deterministic} the convergence of the population process towards a deterministic measure, and then examine in Section \ref{ssec:integro_diff} the properties of the associated integro-differential equation. Section \ref{ssec:numerical} presents numerical resolutions and conjectures on the limiting measure $\xi$.
The Feynman-Kac approach is developed in Section \ref{sec:fk approach} : in Section \ref{ssec:coupling_branching}, we describe the coupling with an auxiliary branching population process, and Section \ref{ssec:historical_def_conv} defines the historical processes and establishes their convergence. Section \ref{ssec:fk_formula} presents the Feynman-Kac formula, while Section \ref{ssec:spine_law} studies the law of the spinal process. Section \ref{sec:dualities} focuses on the dualities of semigroups. Section \ref{ssec:dual_spaces} introduces duality on the appropriate spaces, and Section \ref{ssec:dual_extension} extends these dualities to larger functional spaces.
Next, Section \ref{sec:time_reversal} is dedicated to the time reversal and the return to the initial population process. Section \ref{ssec:time reversal of the spinal process} establish the law of the time-reversed of the spinal process. Section \ref{ssec:return_original} relates these results back to the original population process and Section \ref{ssec:simulations} presents some simulations and aspects of the ancestral lineage process, and develops the biological implications of our study.

\section{Convergence of the population process}\label{sec:pop_process}

We model a population using the measure-valued process \eqref{def:ZK}. Births, deaths, competition, variations, and horizontal transfer are all taken into account. 
We write $\tau(x,y)=\tau\mathds{1}_{\{x>y\}}$ for the numerator of \eqref{eq:definition transfert}. The time interval considered throughout this work is $[0,T]$ and if not specified, any $t$ will be an element of this interval. Because we consider càdlàg processes, we will make use of the classical Skorohod topology, namely the $J_1$ topology.

\subsection*{Assumptions (H)}\label{Assumptions H}

Unless otherwise stated, all functions considered are assumed to be measurable. 
\begin{itemize}
    \item [(a)] We assume that the rates $b,d$ and $\gamma$ are continuous and non-negative. The birth (respectively variation) rate is bounded by a constant $\overline{b}$ (respectively $\overline{\gamma}$) and $\lim_{x\rightarrow\infty}d(x)=\infty$.
    \item [(b)] We assume that the variation kernel satisfies
\begin{equation}\label{hyp:mutation kernel}
    \int_0^\infty  m(x,y) \, dy = 1,\qquad \sup_{y\geq0}\int_0^\infty  m(x,y)dx<\infty
\end{equation}
and that $m$ is continuous on $\mathbb{R}_+^2$. 
    \item [(c)] We assume the existence of a $\eta\in(0,1)$ and a constant $D\geq0$ such that 
    \begin{equation}\label{def:eta parameter}
        \int_0^\infty (d(z)^\eta-d(x)^\eta)m(x,z)dz\leq D+d(x)^\eta.
    \end{equation}
\end{itemize}
Assumption (b) is immediate if we have a symmetric kernel (i.e $m(x,y)=m(y,x)$) since $\int_0^\infty m(x,y)dy=\int_0^\infty m(x,y)dx=1$. An example of such a kernel is when 
\begin{equation}\label{eq:reflected kernel}
    m(x,y)=\hat m(x+y)+\hat m(x-y)
\end{equation}

where $\hat m$ is the density of a symmetric probability distribution on $\mathbb{R}$. In our simulations, $\hat{m}$ will be a centered Gaussian or Cauchy distribution. The assumption that $m(x,y)dy$ is a probability kernel simplifies the variation term in \eqref{densité limite}:
\begin{equation*}
    \int_0^\infty \gamma(x)\xi_t(x)m(x,y)dy=\gamma(x)\xi_t(x).
\end{equation*}
We shall make use of this simplification whenever appropriate.
Assumption (c) 
is a technical one that will allow us to handle the situation where the death rate is not integrable with respect to the variation kernel. When $\hat{m}$ is a Cauchy Distribution, any polynomial death rate is authorized provided that the parameter $\eta$ is sufficiently small.
Assumption (a) is consistent with related work \cite{meleardB1,cloezgabriel} where the death is also unbounded and reflects the idea within our modeling framework that moving away from the optimal trait becomes increasingly deleterious. Nonetheless, under the alternative of the death rate being bounded, Theorem \ref{theoreme cv} remains valid provided that there exists a positive, increasing function $V$ that tends to infinity at infinity and satisfies inequality \eqref{def:eta parameter}, and that the parameter $\mu$ is strictly positive. Indeed, in a density-dependent scenario (i.e., where $\mu = 0$), and assuming the mortality rate is bounded, the penalty for deviating from the optimum reaches a threshold. Furthermore, since the transfer rate is no longer uniformly bounded by the term $\tau/\mu$, it becomes impossible to guarantee that individuals will not disperse.

We describe the dynamics of $(Z^K_t)_{t\geq 0}$ \eqref{def:ZK}, corresponding to the above rates, by stochastic differential equations driven by Poisson point measures and detailed in Appendix \ref{appendix : EDS}.
We denote by $L^K$ the infinitesimal generator of the process, defined for $\nu\in\mathcal{M}_F(\mathbb{R}_+)$ and $\varphi:\mathcal{M}_F(\mathbb{R}_+)\mapsto\mathbb{R}$ measurable bounded by
\begin{equation*}
\begin{aligned}
L^K \varphi(\nu) &= K\int_0^\infty  b(x) \Big( \varphi(\nu + \tfrac{\delta_x}{K}) - \varphi(\nu) \Big) \, \nu(dx) +K \int_0^\infty  (d(x) + C\langle\nu,1\rangle) \Big( \varphi(\nu - \tfrac{\delta_x}{K}) - \varphi(\nu) \Big) \, \nu(dx)\\
&\quad + K\int_0^\infty  \gamma(x) \Big( \int_0^\infty  \big( \varphi(\nu + \tfrac{\delta_z}{K}-\tfrac{\delta_x}{K}) - \varphi(\nu) \big) m(x,z) dz \Big) \, \nu(dx) \\
&\quad + K\int_0^\infty  \int_0^\infty  h(x,y,\nu) \Big( \varphi(\nu + \tfrac{\delta_x}{K} - \tfrac{\delta_y}{K}) - \varphi(\nu) \Big) \, \nu(dy) \nu(dx).
\end{aligned}
\end{equation*}
 We state that under assumptions $\textbf{(H)}$, the population process does not explode in finite time and that there is a control of the death rate. The proof of the following Lemma \ref{lemma_moment_control} is postponed in Appendix \ref{proof:lemma_moment_control}.
\begin{lemma}\label{lemma_moment_control}
Assume (H) and that for some $p \geq 1$,
\begin{equation}\label{eq: estimate size time 0}
    \sup_K \mathbb{E}\big[\langle Z^K_0, 1 \rangle^p\big]< \infty.
\end{equation}

\textit{(i)} Then, for any finite $T>0$,
\begin{equation*}
    \sup_K \mathbb{E}\Big[\sup_{t \le T} \langle Z^K_t, 1 \rangle^p\Big] < \infty.
\end{equation*}
Suppose that $p\geq2$, we also have the estimates 
\begin{equation}\label{eq:estimate death}
    \sup_{K}\mathbb{E}\left[\int_0^T \langle Z^K_s,d\rangle ds\right]+ \sup_{K}\mathbb{E}\left[\left(\int_0^T \langle Z^K_s,d\rangle ds\right)^2\right]<\infty.
\end{equation}
\textit{(ii)} If, moreover, for $\eta$ defined in \eqref{def:eta parameter},
\begin{equation}\label{eq: estimate death time 0}
    \sup_K\mathbb{E}[\langle Z^K_0, d^\eta\rangle]< \infty
\end{equation}
then
\begin{equation}\label{eq : estimate lyapunov}
    \sup_K\mathbb{E}[\sup_{t\leq T}\langle Z^K_t,d^\eta\rangle]+\sup_K\mathbb{E}\left[\int_0^T\langle Z^K_s,d^{1+\eta}\rangle ds\right]<\infty.
\end{equation}
\end{lemma}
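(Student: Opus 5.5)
We follow the standard localization scheme of Fournier--Méléard, using the Poisson-driven SDE representation of Appendix \ref{appendix : EDS}. Introduce the stopping times $\tau_n=\inf\{t\geq0:\langle Z^K_t,1\rangle\geq n\}$ and $\sigma_n=\inf\{t:\langle Z^K_t,d^\eta\rangle\geq n\}$. On these stopped intervals the functionals of $Z^K$ are bounded, so we can apply It\^o's formula for jump processes to $\varphi(\nu)=\langle\nu,1\rangle^p$ and to $\langle\nu,d^\eta\rangle$, and take expectations.

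\textbf{Part (i).} We only need an upper bound on the total mass $\langle Z^K_t,1\rangle$. This mass increases by $1/K$ only at birth events, whose rate is bounded by $K\overline{b}\langle Z^K,1\rangle$. Variation and transfer events leave the mass unchanged. The death terms, including the competition term, contribute nonpositively and can be dropped. Using $(a+1/K)^p-a^p\le C_p(a^{p-1}+K^{-(p-1)})/K$ and $a^{p-1}\le 1+a^p$, we get
\[
\sup_{s\le t\wedge\tau_n}\langle Z^K_s,1\rangle^p\le\langle Z^K_0,1\rangle^p+C_p\overline{b}\int_0^{t\wedge\tau_n}(1+\langle Z^K_s,1\rangle^p)ds .
\]
Gronwall then gives a bound uniform in $K$ and $n$. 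In particular $\tau_n\to\infty$ almost surely, which proves nonexplosion. Fatou's lemma yields (i).

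\textbf{Death estimate \eqref{eq:estimate death}.} Apply the semimartingale decomposition to $\langle Z^K_t,1\rangle$ itself:
\[
\langle Z^K_t,1\rangle=\langle Z^K_0,1\rangle+M^K_t+\int_0^t\langle Z^K_s,b-d-C\langle Z^K_s,1\rangle\rangle ds .
\]
Rearranging,
\[
\int_0^T\langle Z^K_s,d\rangle ds\le\langle Z^K_0,1\rangle+\overline bT\sup_{s\le T}\langle Z^K_s,1\rangle+|M^K_T|.
\]
The bracket of $M^K$ is $\frac1K\int_0^T\langle Z^K_s,b+d+C\langle Z^K_s,1\rangle\rangle ds$. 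Taking expectations with localization gives the first moment. For the second moment, Burkholder--Davis--Gundy gives $\mathbb E[|M^K_T|^2]\le \frac1K\mathbb E[\int\langle Z^K,d\rangle]+C$. This bounds the square by a constant plus a first moment, which is already controlled, together with $\sup\langle Z^K,1\rangle^2$. That last term requires $p\ge2$.

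\textbf{Part (ii), the main obstacle.} We need a Lyapunov inequality for $\langle Z^K,d^\eta\rangle$. The drift of this functional splits into four contributions.
\begin{itemize}
\item Birth and death give $\langle Z,(b-d-C\langle Z,1\rangle)d^\eta\rangle\le\overline b\langle Z,d^\eta\rangle-\langle Z,d^{1+\eta}\rangle$.
\item Variation gives $\langle Z,\gamma\int(d^\eta(z)-d^\eta(x))m(x,z)dz\rangle\le\overline\gamma(D\langle Z,1\rangle+\langle Z,d^\eta\rangle)$ by (c). This is exactly where (c) replaces integrability of $d$ against the heavy-tailed kernel.
\item Transfer gives $\iint h(x,y,Z)(d^\eta(x)-d^\eta(y))Z(dy)Z(dx)$.
\end{itemize}
The transfer term is the delicate one, because $d$ need not be monotone and the indicator is in the trait, not in $d$. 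We bound it crudely by $\frac{\tau\langle Z,1\rangle}{\beta+\mu\langle Z,1\rangle}\langle Z,d^\eta\rangle$. If $\mu>0$ this is at most $(\tau/\mu)\langle Z,d^\eta\rangle$. If $\mu=0$ it equals $(\tau/\beta)\langle Z,1\rangle\langle Z,d^\eta\rangle$, which is quadratic.

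To handle the quadratic case, use Young's inequality $d^\eta\langle Z,1\rangle\le\epsilon d^{1+\eta}+C_\epsilon\langle Z,1\rangle^{(1+\eta)}$, possibly with an extra factor of $d^\eta$. This absorbs the term into $-\langle Z,d^{1+\eta}\rangle$, at the cost of a power of the mass controlled by (i) with $p$ large enough. One could alternatively keep the negative $-C\langle Z,1\rangle\langle Z,d^\eta\rangle$ competition term and hope it suffices, but it need not.

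Collecting terms, and controlling the martingale part by BDG through its bracket $\frac1K\int\langle Z,(b+d+\dots)d^{2\eta}\rangle$, we obtain
\[
\mathbb E\Big[\sup_{t\le T}\langle Z^K_t,d^\eta\rangle\Big]+\tfrac12\mathbb E\Big[\int_0^T\langle Z^K_s,d^{1+\eta}\rangle ds\Big]\le C+C\int_0^T\mathbb E\Big[\sup_{u\le s}\langle Z^K_u,d^\eta\rangle\Big]ds .
\]
Since $2\eta<1+\eta$, the bracket is dominated by the mass plus the $d^{1+\eta}$ term, so it can be absorbed. Gronwall and $n\to\infty$ then give \eqref{eq : estimate lyapunov}.
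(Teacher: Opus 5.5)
Your part (i) and your proof of \eqref{eq:estimate death} follow the paper's argument; note that the Gronwall display in (i) holds in expectation, not pathwise, since its birth term is a compensator. Part (ii), however, has two genuine gaps.

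\textbf{The supremum via the martingale.} Write $Y^K=\langle Z^K,d^\eta\rangle$. The predictable bracket of its martingale part contains the death contribution $\frac1K\int_0^T\langle Z^K_s,(d+C\langle Z^K_s,1\rangle)d^{2\eta}\rangle ds$, which is a $d^{1+2\eta}$ term. Your comparison $2\eta<1+\eta$ forgets the factor $d$ coming from the death rate, and $d^{1+2\eta}$ is not dominated by $d^{1+\eta}$ plus the mass. The variation contribution is also a problem: it involves $\int_0^\infty d(z)^{2\eta}m(x,z)dz$, which (H.c) does not control. For the reflected Cauchy kernel with $d(x)=x^2$ and $\eta=0.4$, (H.c) holds but this integral is infinite, so the martingale need not even be square integrable.

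The paper avoids the martingale for the supremum altogether. Write $Y^K=Y^K_0+P^K-D^K$, with $P^K$ and $D^K$ the nondecreasing sums of positive and negative jumps. Then $\sup_{s\le t}Y^K_s\le Y^K_0+P^K_t$ and $\mathbb{E}[P^K_{t\wedge T^K_n}]=\mathbb{E}[\widetilde P^K_{t\wedge T^K_n}]$. The compensator $\widetilde P^K$ only involves births, transfers and upward variations. The upward variations are bounded using $\int_0^\infty(d^\eta(z)-d^\eta(x))^+m(x,z)dz\le\int_0^\infty d^\eta(z)m(x,z)dz\le D+2d^\eta(x)$. No death term and no squared jumps appear. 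The bound on $\mathbb{E}\int_0^T\langle Z^K_s,d^{1+\eta}\rangle ds$ is then obtained separately from the drift identity, where only the zero mean of the stopped martingale is used.

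\textbf{The case $\mu=0$.} Your Young step $\langle Z,1\rangle\langle Z,d^\eta\rangle\le\epsilon\langle Z,d^{1+\eta}\rangle+C_\epsilon\langle Z,1\rangle^{2+\eta}$ requires $\sup_K\mathbb{E}[\int_0^T\langle Z^K_s,1\rangle^{2+\eta}ds]<\infty$, i.e.\ $p\ge 2+\eta$. The lemma only provides $p\ge2$, and Theorem \ref{theoreme cv} only $p>2$. Testing on $Z=a\delta_x$ shows that no pointwise bound of the form $\epsilon\langle Z,d^{1+\eta}\rangle+F(\langle Z,1\rangle)$ can have $F$ growing slower than $a^{2+\eta}$, so this route cannot be repaired by tuning exponents.

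The missing idea is to use the second-moment estimate \eqref{eq:estimate death} that you have already proved; this is exactly why it is stated. Since $\eta<1$, there is a constant $D'$ with $d^\eta\le d+D'$. Hence $\int_0^TY^K_sds\le\int_0^T\langle Z^K_s,d\rangle ds+D'T\sup_{s\le T}\langle Z^K_s,1\rangle$, and Cauchy--Schwarz gives
\[
\mathbb{E}\Big[\int_0^{T}\langle Z^K_s,1\rangle Y^K_s\,ds\Big]\le\Big(\mathbb{E}\big[\sup_{s\le T}\langle Z^K_s,1\rangle^2\big]\Big)^{1/2}\Big(\mathbb{E}\Big[\Big(\int_0^TY^K_s\,ds\Big)^2\Big]\Big)^{1/2},
\]
which is bounded uniformly in $K$ using only $p=2$. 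With this bound for the transfer term when $\mu=0$ (and the bound $\tau/\mu$ when $\mu>0$), Gronwall closes both estimates in \eqref{eq : estimate lyapunov}.
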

The estimates \eqref{eq:estimate death} and \eqref{eq : estimate lyapunov} are based on Assumption \textbf{(H.c)} and, to the best of our knowledge, this is the first time they are used to prove the convergence to the scaling limit.

Under the assumption \textbf{(H)}, \eqref{eq: estimate size time 0} and \eqref{eq: estimate death time 0}, for any measurable bounded function $\varphi$, it is well known \cite{bansaye2015stochastic} that the process $(Z^K)$ can be represented as a semimartingale:
\begin{equation}\label{eq:semimartingale}
\begin{aligned}
\langle Z^K_t, \varphi \rangle &= \langle Z^K_0, \varphi \rangle + M^{K,\varphi}_t + \int_0^t \int_0^\infty  \Big\{ b(x) - d(x) - C\langle Z^K_s,1\rangle \Big\}\varphi(x)  Z^K_s(dx) \, ds \\
&\quad + \int_0^t \int_0^\infty  \Big\{ \gamma(x) \int_0^\infty  (\varphi(z)-\varphi(x)) m(x,z) dz \Big\} Z^K_s(dx) \, ds \\
&\quad + \int_0^t \int_0^\infty  \Big\{ \int_0^\infty  h(x,y,Z^K_s) (\varphi(x) - \varphi(y)) Z^K_s(dy) \Big\} Z^K_s(dx) \, ds,
\end{aligned}
\end{equation}
where $M^{K,\varphi}$ is a square-integrable martingale with predictable quadratic variation
\begin{equation*}\label{predictable_quadratic_variation}
\begin{aligned}
\langle M^{K,\varphi} \rangle_t &= \frac{1}{K} \int_0^t \int_0^\infty  \varphi^2(x) \Big( b(x) + d(x) + C\langle Z^K_s,1\rangle \Big) Z^K_s(dx) \, ds \\
&\quad + \frac{1}{K} \int_0^t \int_0^\infty  \Big\{ \gamma(x) \int_0^\infty  (\varphi(z)-\varphi(x))^2 m(x,z) dz \Big\} Z^K_s(dx) \, ds \\
&\quad + \frac{1}{K} \int_0^t \int_0^\infty  \Big\{ \int_0^\infty  h(x,y,Z^K_s) (\varphi(x)-\varphi(y))^2 Z^K_s(dy) \Big\} Z^K_s(dx) \, ds.
\end{aligned}
\end{equation*}
Lemma \ref{lemma_moment_control} guarantees the existence of a constant $B>0$ such that 
\begin{equation}\label{eq:convergence variation quadratique}
    \mathbb{E}[\langle M^{K,\varphi}\rangle_t]\leq \frac{B}{K},\, \forall t\leq T.
\end{equation}

\subsection{Convergence towards a deterministic measure}\label{ssec:conv_deterministic}
Let $T>0$. In this section, we investigate the asymptotic behaviour of the processes $(Z^K_t)_{t\in [0,T]}$ as $K \to +\infty$, assuming that the initial conditions $Z^K_0$ converge to a non-trivial deterministic measure $\xi_0$.
 We can state the convergence theorem:
\begin{theorem}\label{theoreme cv}[Large population limit]
Assume that the hypotheses $\mathbf{H}$ hold. In addition, suppose that it exists $p>2$ such that
\begin{equation}\label{eq:hypothèses du modèle sur les moments}
    \sup_K \mathbb{E}[\langle Z^K_0, 1 \rangle^p] +\sup_K\mathbb{E}[\langle Z^K_0,d^\eta\rangle] < \infty,
\end{equation}
and that the initial conditions $Z^K_0$ converge in probability in $\mathbb{D}([0,T], M_F(\mathbb{R}_+))$ to a deterministic finite measure $\xi_0$. Then, as $K \to \infty$, the processes $(Z^K)$ converge in probability in $\mathbb{D}([0,T], M_F(\mathbb{R}_+))$ to the unique deterministic continuous function $\xi \in \mathcal{C}([0,T], M_F(\mathbb{R}_+))$ satisfying $\sup_{t\geq0}\langle\xi_t,1\rangle<\infty$ and for all bounded $\varphi : \mathbb{R}_+ \to \mathbb{R}$,
\begin{equation}\label{limit_measure}
\begin{aligned}
\langle \xi_t, \varphi \rangle &= \langle \xi_0, \varphi \rangle
+ \int_0^t \int_0^\infty  \varphi(x) \big[b(x) - d(x) - C \langle \xi_s, 1 \rangle\big] \xi_s(dx) \, ds \\
&\quad + \int_0^t \int_0^\infty  \gamma(x) \left( \int_0^\infty  (\varphi(z)-\varphi(x)) m(x,z) dz \right) \xi_s(dx) \, ds \\
&\quad + \int_0^t \int_0^\infty  \int_0^\infty  (\varphi(x) - \varphi(y)) \frac{\tau(x,y)}{\beta + \mu \langle \xi_s, 1 \rangle} \, \xi_s(dy) \, \xi_s(dx) \, ds.
\end{aligned}
\end{equation}
Moreover, the convergence in $L^2$ of $Z^K_0$ implies the convergence in $L^2$ of the processes $(Z^K_t)_{t\in[0,T]}$. 
\end{theorem}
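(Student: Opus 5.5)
The plan follows the classical compactness–uniqueness scheme of \cite{fournier_meleard,bansaye2015stochastic}, adapted to the non-compact trait space $\mathbb{R}_+$, the unbounded death rate and the discontinuous transfer kernel.

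\textbf{Tightness.} We first prove tightness of the laws of $(Z^K)$ in $\mathbb{D}([0,T],M_F(\mathbb{R}_+))$, working in the vague topology (i.e.\ against $C_0(\mathbb{R}_+)$) and upgrading afterwards.
\begin{itemize}
\item For each fixed $\varphi\in C_b^1$, tightness of $\langle Z^K,\varphi\rangle$ in $\mathbb{D}([0,T],\R)$ will follow from Aldous–Rebolledo, applied to the semimartingale decomposition \eqref{eq:semimartingale}.
\item The martingale part vanishes by \eqref{eq:convergence variation quadratique}.
\item The finite-variation part is handled as follows: its increments over $[S,S+\delta]$ are bounded by $\|\varphi\|_\infty\int_S^{S+\delta}\big(\overline b+\langle Z^K_s,d\rangle+C\langle Z^K_s,1\rangle^2+2\overline\gamma\langle Z^K_s,1\rangle+\tfrac{2\tau}{\beta+\mu\langle Z^K_s,1\rangle}\langle Z^K_s,1\rangle^2\big)ds$.
\item The death contribution is the delicate term, since it is not uniformly Lipschitz in time. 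Here I would use \eqref{eq : estimate lyapunov}: $\langle Z^K_s,d\rangle\le M\langle Z^K_s,1\rangle+M^{-\eta}\langle Z^K_s,d^{1+\eta}\rangle$. Hence its integral over an interval of length $\delta$ is at most $M\delta\sup\langle Z^K,1\rangle$ plus a term whose expectation is $O(M^{-\eta})$ uniformly in $K$, which suffices for Aldous' criterion.
\end{itemize}
To pass from vague to weak convergence in $M_F(\mathbb{R}_+)$ (Méléard–Roelly criterion), we need to rule out loss of mass at infinity. The bound $\sup_K\mathbb{E}[\sup_t\langle Z^K_t,d^\eta\rangle]<\infty$, combined with $d\to\infty$, gives uniform control of $\sup_t Z^K_t([A,\infty))\le \sup_t\langle Z^K_t,d^\eta\rangle/\inf_{x\ge A}d(x)^\eta$. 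Together with the convergence of $\langle Z^K,1\rangle$ (taking $\varphi\equiv1$), this yields tightness in the weak topology. Jumps are of size $1/K$, so every limit point is continuous.

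\textbf{Identification of the limit.} Let $\bar\xi$ be a limit point (Skorohod representation) and define $\Psi_t(\nu)$ as the difference between the two sides of \eqref{limit_measure}. We show $\mathbb{E}|\Psi_t(Z^K)|\to0$ by \eqref{eq:semimartingale}, and $\Psi_t(Z^K)\to\Psi_t(\bar\xi)$ in law. Three difficulties appear here.
\begin{itemize}
\item \emph{Death term.} The map $\nu\mapsto\langle\nu,d\varphi\rangle$ is unbounded. We truncate $d$ by $d\wedge M$, with an error controlled uniformly via $d^{1+\eta}$ as above, and use uniform integrability from the second moments in \eqref{eq:estimate death}.
\item \emph{Transfer term.} The map $\nu\mapsto\iint\1_{x>y}(\varphi(x)-\varphi(y))\nu(dy)\nu(dx)$ is discontinuous on the diagonal. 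It is continuous at $\nu\otimes\nu$ whenever the limit $\bar\xi_s$ has no atoms. Since $\xi_0$ is arbitrary, I would instead symmetrize: the integrand vanishes on $\{x=y\}$ because $\varphi(x)-\varphi(y)=0$ there. Hence $\iint \1_{x>y}(\varphi(x)-\varphi(y))\,\nu\otimes\nu$ has an integrand whose discontinuity set $\{x=y\}$ carries zero weight in the sense that the integrand tends to $0$ at the diagonal, for $\varphi$ continuous. The integrand is therefore continuous on $\mathbb{R}_+^2$, and weak convergence of $Z^K_s\otimes Z^K_s$ suffices.
\item \emph{Variation term.} For $\varphi\in C_b$, $x\mapsto\gamma(x)\int\varphi(z)m(x,z)dz$ is continuous by continuity of $m$ and Scheffé's lemma.
\end{itemize}
Extension from $C_b$ to bounded measurable $\varphi$ then follows by a monotone class argument once uniqueness is known.

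\textbf{Uniqueness.} This is the step I expect to be the main obstacle, because $d$ is unbounded and the transfer kernel is discontinuous. I would work in total variation. Writing the mild (Duhamel) form with the semigroup of $e^{-\int(d+\gamma)}$, the killing by $d$ is absorbed into a contraction, so only bounded perturbations remain. For two solutions $\xi,\xi'$ with bounded mass:
\begin{itemize}
\item the birth and competition terms are Lipschitz in $\|\cdot\|_{TV}$;
\item the variation operator is bounded by the second condition of \eqref{hyp:mutation kernel};
\item the transfer term satisfies $\big|\iint \1_{x>y}(\varphi(x)-\varphi(y))(\xi\otimes\xi-\xi'\otimes\xi')\big|\le 2\|\varphi\|_\infty(\langle\xi,1\rangle+\langle\xi',1\rangle)\|\xi-\xi'\|_{TV}$, and the prefactor $(\beta+\mu\langle\cdot,1\rangle)^{-1}$ is Lipschitz since $\beta>0$ or $\mu>0$. (In the case $\beta=0$, we use instead that the masses are bounded below on $[0,T]$, because the mass satisfies a differential inequality.)
\end{itemize}
Gronwall then gives $\xi=\xi'$. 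The a priori bound $\sup_t\langle\xi_t,1\rangle<\infty$ comes from the logistic term.

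\textbf{Conclusion.} Uniqueness together with a deterministic limit gives convergence in law, hence in probability. The $L^2$ statement follows from uniform integrability of $\sup_t\langle Z^K_t,1\rangle^2$, which holds since $p>2$ in Lemma \ref{lemma_moment_control}.
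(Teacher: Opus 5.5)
Your proposal is correct. It has the same compactness--identification--uniqueness architecture as the paper and uses the same key ingredients:
\begin{itemize}
\item the Lyapunov bounds on $\langle Z^K,d^\eta\rangle$ and $\int_0^T\langle Z^K_s,d^{1+\eta}\rangle ds$, to stop mass escaping to infinity and to control the truncation $d\wedge n$;
\item the observation that $\mathds{1}_{\{x>y\}}(\varphi(x)-\varphi(y))$ is continuous because it vanishes on the diagonal.
\end{itemize}

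There are two genuine differences.
\begin{itemize}
\item \emph{Uniqueness.} You absorb death and variation loss into the explicit factor $e^{-(d(x)+\gamma(x))(t-s)}$ and treat the variation gain $\gamma(x)\int_0^\infty f(s,z)m(x,z)\,dz$ as a bounded perturbation. The paper instead uses $f^t(s,x)=\mathbb{E}_x[\varphi(Y_{t-s})e^{-\int_0^{t-s}d(Y_u)du}]$, built on the variation jump process $Y$. Your route is more elementary: no auxiliary process is needed and the test function is explicit. The paper's route keeps the variation generator inside the semigroup, so only birth, competition and transfer remain as perturbations; it is also the device reused for the historical processes in the appendix. Both routes need the time-dependent weak formulation, which is justified since $\int_0^t\langle\xi_s,d\rangle ds<\infty$ for any solution.
\item \emph{Tightness.} You run Aldous--Rebolledo directly on $C^1_b$ functions, including $\varphi\equiv 1$, and control the death term via $d\le M+M^{-\eta}d^{1+\eta}$. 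The paper uses compactly supported test functions, on which $d$ is bounded, and invokes the Lyapunov bound only in the tail criterion \eqref{eq:tension weak}. Both work.
\end{itemize}

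A few points to tidy.
\begin{itemize}
\item In the total-variation estimate, the bound on the variation operator uses $\int_0^\infty m(x,z)\,dz=1$, not the second condition in \eqref{hyp:mutation kernel}.
\item For $\beta=0$, the positive lower bound on the mass does not follow from the naive inequality $\lambda_t'\ge-\langle\xi_t,d\rangle-C\lambda_t^2$, because $\langle\xi_t,d\rangle$ is not controlled by $\lambda_t$. Make it precise with your own mild form, using the test function $e^{-(d+\gamma+A)(x)(t-s)}$ with $A\ge C\sup_s\lambda_s+\tau/\mu$. Since $b\ge0$ and the transfer term is bounded below by $-\tfrac{\tau}{\mu}\langle\xi_s,f(s)\rangle$, this gives $\lambda_t\ge\langle\xi_0,e^{-(d+\gamma+A)t}\rangle>0$.
\item In the identification step with $\beta=0$, note, as the paper does, that the transfer functional extends continuously at $\nu=0$, since it is bounded by $2\tau\|\varphi\|_\infty\langle\nu,1\rangle/\mu$.
\item The monotone-class extension to bounded measurable $\varphi$ does not require uniqueness. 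It only uses bounded mass and the integrability of $d$.
\end{itemize}
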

\begin{proof}

We follow a well-known argument developed in \cite{fournier_meleard} and detailed in \cite{bansaye2015stochastic}.
More precisely, the proof consists in a tightness and uniqueness argument. Nevertheless there are some notable differences. First, the state space is not compact, the death rate is unbounded and the variation kernel might be heavy-tailed. Secondly, the transfer rate is not continuous. We start by proving the uniqueness of the limit.
\newline

\textbf{Uniqueness of the limit:}

Because of the non-boundedness of the death rate, we need to establish an alternative formula to \eqref{limit_measure}. We start by rewriting the equation \eqref{limit_measure} for a test-function $f\in C^1_b(\mathbb{R}, L^\infty(\mathbb{R}_+,\mathbb{R}))$.
Fix $t>0$, we get rid of the death term by setting
\begin{equation*}
    f^t(s,x)=\mathbb{E}_x\left[\varphi(Y_{t-s})e^{-\int_0^{t-s}d(Y_u)du}\right], \, s\leq t,
\end{equation*}
where $\varphi$ is a measurable bounded test-function on $\mathbb{R}_+$ and $Y$ a real-valued pure jump Markov process of generator
\begin{equation*}
    L\varphi(x)=\gamma(x)\int_0^\infty (\varphi(z)-\varphi(x))m(x,z)dz.
\end{equation*}
This function $f^t$ satisfies $f^t(t,x)=\varphi(x)$ and $\partial_s f^t(s,x)+Lf^t(s,x)-d(x)f^t(s,x)=0$. 

We can prove the uniqueness using the total variation norm $\|\mu_1-\mu_2\|_{TV}=\sup_{\|\varphi\|_\infty\le 1} |\langle \mu_1-\mu_2,\varphi\rangle|$.
Let $(\xi_t)_t$ and $(\overline{\xi}_t)_t$ be two solutions of \eqref{limit_measure} with the same initial condition, and assume
\[
\sup_{0\leq t\le T}\langle \xi_t+\overline{\xi}_t,1\rangle<\infty.
\]
Knowing that $\|\varphi\|_\infty\leq 1$ implies $\|f^t(s,\cdot)\|_\infty\leq1$, we obtain by standard computations the existence of a constant $L>0$ such that $
    \|\xi_t-\overline{\xi}_t\|_{TV}\leq L\int_0^t \|\xi_s-\overline{\xi}_s\|_{TV}ds.$
We conclude by Gronwall's Lemma.
\newline

\textbf{Tightness for the vague topology:}

We first endow $M_F(\mathbb{R}_+)$ with the vague topology. To show tightness of the sequence of laws $\mathcal{L}(Z^K)$ for the vague topology, following \cite{roelly1986criterion}, we need that for any continuous compactly supported function $\varphi$ on $\mathbb{R}_+$, the sequence of laws of the processes $(\langle Z^K,\varphi\rangle)$ is tight in $\mathbb{D}([0,T],\mathbb{R})$. 

We use the Aldous criterion \cite{aldous1978stopping} and the Rebolledo criterion \cite{joffe1986weak}. 
The check is carried out immediately because of Markov inequality and the control over the population size (by Lemma \ref{lemma_moment_control}); the mortality rate and the test-functions are bounded thanks to the compact support, so variations (even when heavy-tailed) do not add any difficulty.
\newline

\textbf{Tightness for the weak topology :}

To obtain the tightness for the weak topology, following \cite{jourdain2012levy}, let us prove:
\begin{equation}\label{eq:tension weak}
    \lim_{n\rightarrow\infty}\limsup_{K\rightarrow\infty} \mathbb{E}[\sup_{t\leq T}\langle Z^K_t,f_n\rangle]=0
\end{equation}
where $f_n$ is the function equal to $0$ before $n$, to $1$ after $n+1$ and linearly interpolated on $[n,n+1]$. Define $V(x)=d(x)^\eta$ and $\underline{V}(n)=\inf_{x>n}V(x)$. 

Consider $n$ sufficiently large such that $\underline{V}(n)>0$ (it is a consequence of (H.a)). If $x<n$, then $f_n=0\leq V(x)/\underline{V}(n)$. If $x\geq n$, then $f_n\leq 1\leq V(x)/\underline{V}(n)$, so we have
\begin{equation*}
    \langle Z^K_t,f_n\rangle \leq \frac{1}{\underline{V}(n)}\langle Z^K_t,V\rangle
\end{equation*}
hence
\begin{equation*}
    \limsup_{K\rightarrow\infty}\mathbb{E}[\sup_{t\leq T}\langle Z^K_t,f_n\rangle]\leq\frac{1}{\underline{V}(n)}\sup_K\mathbb{E}[\sup_{t\leq T}\langle Z^K_t,V\rangle]
\end{equation*}
and we conclude the proof of \eqref{eq:tension weak} by \eqref{eq : estimate lyapunov} and by letting $n$ tend to infinity. \eqref{eq : estimate lyapunov} allows us apply the criterion of Méléard-Roelly \cite{meleard1993convergences} which ensures that the sequence of law $(Z^K)_K$ is tight endowed with weak topology. Prokhorov's Theorem says that this sequence of distributions is relatively compact. Let $Z$ be a limiting-value. There exists a subsequence of $(Z^K)$ (which we also denote by $(Z^K)$) that converges in distribution to $Z$.

By Skorohod's Representation Theorem (see \cite{billingsley2013convergence}), passing to a new probability space and keeping the same notations, we may assume that $(Z^K)$ converges almost surely to $Z$.

The fact that the limit process belongs to $C([0,T],M_F(\mathbb{R}_+))$ and that the weak convergence holds is a consequence of \cite{meleard1993convergences}. The arguments are similar to the ones in Step 5 of the proof of Theorem 7.4 in \cite{bansaye2015stochastic}. The idea is that the jump size is $K^{-1}$ and tends to $0$.
\newline

\textbf{Identification of the limit :}

We need to check that 
\begin{equation}\label{eq: estimate limit}
    \mathbb{E}\left[\left(\int_0^T\langle Z_s,d\rangle ds\right)^2\right]<\infty.
\end{equation}
We know that $d$ is positive and continuous. For $n\geq0$, let $d_n(x)=n\wedge d(x)$. We define for all $\nu\in \mathbb{D}([0,T],M_F(\mathbb{R}_+))$:
\begin{equation*}
    \Phi_n(\nu)=\left(\int_0^T\int_0^\infty  d_n(x)\nu_s(dx)ds\right)^2.
\end{equation*}
By classical arguments (see \cite{ethier2009markov} Chapter 3), $\Phi_n$ is continuous with respect to the weak topology. By Fatou’s Lemma and \eqref{eq:estimate death}, we have that
\begin{equation*}
        \mathbb{E}[\Phi_n(Z)]\leq \liminf_{K\rightarrow\infty}\mathbb{E}[\Phi_n(Z^K)]\leq \sup_K\mathbb{E}\left[\left(\int_0^T\int_0^\infty  d(x)Z^K_s(dx)ds\right)^2\right]<\infty.
\end{equation*}
By the monotone convergence Theorem we have the a.s. convergence
\begin{equation*}
    \Phi_n(Z)\underset{n\rightarrow\infty}{\rightarrow}\Phi(Z):=\left(\int_0^T\int_0^\infty  d(x)Z_s(dx)ds\right)^2.
\end{equation*}
To finally obtain \eqref{eq: estimate limit}, we write by Fatou's Lemma :
\begin{equation*}
    \mathbb{E}[\Phi(Z)]=\mathbb{E}[\lim_{n\rightarrow\infty}\Phi_n(Z)]\leq \liminf_{n\rightarrow\infty}\mathbb{E}[\Phi_n(Z)]
\end{equation*}
and the bound does not depend on $n$.

Now, we identify the limit. For any integer $n$, for any $t\in[0,T]$, for any $\nu\in\mathbb{D}([0,T],M_F(\mathbb{R}_+))$, and for any $\varphi\in C_b(\mathbb{R}_+)$, we set:
\begin{equation*}
    \begin{aligned}
        F^{(n)}_t(\nu)&=\langle \nu_t,\varphi\rangle -\langle\nu_0,\varphi\rangle -\int_0^t\int_0^\infty  \varphi(x)(b(x)-d_n(x)-C\langle\nu_s,1\rangle)\nu_s(dx)ds\\
        &-\int_0^t\int_0^\infty \int_0^\infty  \gamma(x)(\varphi(z)-\varphi(x))m(x,z)dz\nu_s(dx)ds\\
        &-\int_0^t\int_0^\infty \int_0^\infty  h(y,x,\nu_s)(\varphi(y)-\varphi(x))\nu_s(dy)\nu_s(dx)ds.
    \end{aligned}
\end{equation*}
We denote by $F_t$ the same functional with $d$ replacing $d_n$.
The death term $d_n$ is bounded by $n$ and the horizontal transfer term is continuous. Indeed, the function $(x,y)\mapsto \mathds{1}_{\{y>x\}}(\varphi(y)-\varphi(x))$ is continuous and bounded. Moreover, when $\beta=0$, division by zero is avoided since
\begin{equation*}
\left|\frac{\tau}{\mu\langle\nu,1\rangle}\int_0^\infty \int_0^\infty  \mathds{1}_{\{y>x\}}(\varphi(y)-\varphi(x))\nu(dy)\nu(dx)\right|\leq 2\frac{\tau\|\varphi\|_\infty}{\mu}|\langle\nu,1\rangle|.
\end{equation*}
The functional $F_t^{(n)}$ is continuous on $M_F(\mathbb{R}_+)$. We note that

\begin{equation}\label{eq:functional on ZK}
    F^{(n)}_t(Z^K)=M^{K,\varphi}_t+\int_0^t\int_0^\infty \varphi(x)(d_n(x)-d(x)) Z^K_s(dx)ds.
\end{equation}
We can deduce the uniform integrability for $(F^{(n)}_t(Z^K))_K$ by
using \eqref{eq:estimate death} and \eqref{eq:convergence variation quadratique}.

By the continuity of the functional $F^{(n)}$ and the continuity of the limit $Z$, we have that $F^{(n)}_t(Z^K)$ converges almost surely to $F^{(n)}_t(Z)$, so by uniform integrability, 
\begin{equation*}
    \lim_{K\rightarrow\infty}\mathbb{E}[|F^{(n)}_t(Z^K)|]=\mathbb{E}[|F^{(n)}_t(Z)|].
\end{equation*}
Taking the limit as $K$ goes to infinity, we obtain from \eqref{eq:functional on ZK}:
\begin{equation*}
    \mathbb{E}[|F^{(n)}_t(Z)|]\leq \lim_{K\rightarrow\infty}\sqrt{\frac{B}{K}}+ \lim_{K\rightarrow\infty}\mathbb{E}\left[\left|\int_0^t\int_0^\infty  \varphi(x)(d_n(x)-d(x))Z^K_s(dx)ds\right|\right].
\end{equation*}
Let us now prove that 
\begin{equation}\label{eq : convergence en K via l'estimée d(x)}
    \lim_{K\rightarrow\infty}\mathbb{E}\left[\left|\int_0^t\int_0^\infty  \varphi(x)(d_n(x)-d(x))Z^K_s(dx)ds\right|\right]=\mathbb{E}\left[\left|\int_0^t\int_0^\infty  \varphi(x)(d_n(x)-d(x))Z_s(dx)ds\right|\right].
\end{equation}
It is done by observing that the sequence of deterministic measures $(\nu^K)_K$ defined by
\begin{equation*}
    \langle \nu^K_t,g\rangle =\mathbb{E}\left[\int_0^t\langle Z^K_s,g\rangle ds\right]
\end{equation*}
uniformly integrates any function whose growth is at most linear in $d(x)$, according to the estimates \eqref{eq : estimate lyapunov}. 
We can take the limit and obtain \eqref{eq : convergence en K via l'estimée d(x)}.

To conclude, we have 
\begin{equation*}
    |\varphi(x)(d_n(x)-d(x))|\leq\|\varphi\|_\infty d(x)\mathds{1}_{\{d(x)\geq n\}}.
\end{equation*}
Now, for any $\omega$, $Z_s(\omega,dx)ds$ is a finite measure on $[0,t]\times\mathbb{R}_+$ and $d$ is integrable with respect to this measure. By dominated convergence, we obtain that the quantity
\begin{equation*}
    X_n:=\int_0^t\int_0^\infty  \varphi(x)(d_n(x)-d(x))Z_s(dx)ds
\end{equation*}
tends to $0$ $a.s.$ as $n$ goes to infinity.
Since $|X_n|\leq \|\varphi\|_\infty \int_0^t\langle Z_s,d\rangle ds$, once again by dominated convergence we obtain that 
\begin{equation*}
    \lim_{n\rightarrow\infty}\mathbb{E}\left[\left|\int_0^t\int_0^\infty  \varphi(x)(d_n(x)-d(x))Z_s(dx)ds\right|\right]=0.
\end{equation*}
By Fatou’s lemma, we conclude that $\mathbb{E}[|F_t(Z)|]=0$ and thus we have identified the limit.
The $L^2$ convergence is a consequence of the uniform integrability and the estimates of Lemma \ref{lemma_moment_control}.

The boundedness of $\langle\xi_t,1\rangle$ is proven below in Section \ref{ssec:integro_diff}. Therefore, we can extend \eqref{limit_measure} from continuously bounded to measurable bounded test-functions.
\end{proof}
\subsection{Properties of the integro-differential equation}\label{ssec:integro_diff}

A first practical result is

\begin{proposition}\label{bornitude de la taille de population}
Let $\lambda_t := \langle \xi_t, 1 \rangle$ denote the total mass of the solution of \eqref{limit_measure} at time $t$. Then, under the assumptions of Theorem~\ref{theoreme cv}, for any $T>0$:
\begin{equation}\label{eq: bornitude pop macro}
\inf_{t \in [0,T]} \lambda_t > 0,\qquad \sup_{t \ge 0} \lambda_t \le \max\left(\frac{\overline{b}}{C}, \lambda_0\right) .
\end{equation}
\end{proposition}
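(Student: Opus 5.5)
Take $\varphi\equiv 1$ in \eqref{limit_measure}. The variation term vanishes because $m(x,\cdot)$ is a probability kernel. The transfer term vanishes because $\varphi(x)-\varphi(y)=0$. This leaves
\begin{equation*}
\lambda_t=\lambda_0+\int_0^t\Big(\langle \xi_s,b-d\rangle - C\lambda_s^2\Big)\,ds .
\end{equation*}
The identity is only justified once $\int_0^T\langle\xi_s,d\rangle ds<\infty$. That bound holds for the limit by \eqref{eq: estimate limit}, here deterministic, so the integrand is in $L^1([0,T])$. Hence $\lambda$ is absolutely continuous with $\dot\lambda_t=\langle\xi_t,b-d\rangle-C\lambda_t^2$ for a.e. $t$. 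The circularity with the statement "$\sup_t\lambda_t<\infty$" in Theorem \ref{theoreme cv} is harmless: we argue directly on the limit $Z=\xi$ obtained by tightness, whose mass is finite on $[0,T]$ by Lemma \ref{lemma_moment_control} and Fatou.

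\textbf{Upper bound.} Since $d\ge 0$ and $b\le \overline b$, we get $\dot\lambda_t\le \lambda_t(\overline b - C\lambda_t)$ a.e. Set $M=\max(\overline b/C,\lambda_0)$. Suppose $\lambda_{t_1}>M$ for some $t_1$, and let $t_0=\sup\{s<t_1:\lambda_s\le M\}$, so $\lambda_{t_0}=M$ by continuity. On $(t_0,t_1]$ we have $\lambda>M\ge\overline b/C$, so $\dot\lambda<0$ a.e. there. This contradicts $\lambda_{t_1}>\lambda_{t_0}$. This holds for every $T$, so $\sup_{t\ge0}\lambda_t\le M$.

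\textbf{Lower bound.} This is the delicate step, because $d$ is unbounded and we cannot write $\dot\lambda\ge -c\lambda$ directly. I would use the Feynman--Kac representation from the uniqueness proof. Set $f^t(s,x)=\mathbb{E}_x[e^{-\int_0^{t-s}d(Y_u)du}]$, taking $\varphi\equiv1$, where $Y$ is the pure-jump variation process. Then $\partial_sf^t+Lf^t-df^t=0$. Testing the equation against $f^t$ (as in the uniqueness argument) removes both the death and variation terms. For the remaining terms:
\begin{itemize}
\item The birth term contributes something nonnegative.
\item The competition term is bounded below by $-CM\langle\xi_s,f^t(s,\cdot)\rangle$.
\item The transfer term, $\frac{\tau}{\beta+\mu\lambda_s}\iint \mathds 1_{x>y}(f^t(s,x)-f^t(s,y))\xi_s(dy)\xi_s(dx)$, is bounded below by $-\frac{\tau}{\beta+\mu\lambda_s}\lambda_s\langle\xi_s,f^t(s,\cdot)\rangle$ since $0\le f^t\le1$. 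This coefficient is at most $\tau/\mu$ if $\mu>0$, and at most $\tau M/\beta$ if $\mu=0$.
\end{itemize}
So $g(s)=\langle\xi_s,f^t(s,\cdot)\rangle$ satisfies $g'\ge -c\,g$ with $c$ independent of $t$. This gives $\lambda_t=g(t)\ge e^{-cT}g(0)=e^{-cT}\langle\xi_0,f^t(0,\cdot)\rangle$.

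It remains to show $\inf_{t\le T}\langle \xi_0, f^t(0,\cdot)\rangle>0$. Since $f^t(0,x)\ge f^T(0,x)$, it suffices to show $f^T(0,x)>0$ on a set of positive $\xi_0$-mass. Choose $R$ with $\xi_0([0,R])>0$; this is possible because $\xi_0$ is nontrivial. With probability at least $e^{-\overline\gamma T}$ the process $Y$ does not jump before time $T$. On that event $f^T(0,x)\ge e^{-\overline\gamma T}e^{-T\sup_{[0,R]}d}$ for $x\le R$, and $\sup_{[0,R]}d<\infty$ by continuity of $d$. This yields the positive lower bound. The main technical care lies in justifying the time-dependent test function $f^t$ in \eqref{limit_measure}, which I would take from the uniqueness step of Theorem \ref{theoreme cv}.
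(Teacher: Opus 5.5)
Your proof is correct. The upper bound is the same logistic super-solution comparison the paper has in mind; its proof only says that $\lambda$ is sandwiched between explicit super- and sub-solutions. A minor point: in the mass equation the variation term vanishes simply because $\varphi(z)-\varphi(x)=0$ for $\varphi\equiv 1$, not because $m$ is normalized.

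For the lower bound you take a genuinely different route. You rightly observe that the mass equation alone yields no sub-solution, since $\langle\xi_t,d\rangle$ is not controlled by $\lambda_t$. You then remove death and variation with the Feynman--Kac weight $f^t$ and run Gronwall on $\langle\xi_s,f^t(s,\cdot)\rangle$. This global argument reuses the uniqueness machinery, but it also inherits its technical cost. Indeed, $f^t\notin C^1_b(\mathbb{R},L^\infty(\mathbb{R}_+,\mathbb{R}))$, because $\partial_s f^t(t,\cdot)=d$ is unbounded. So the time-dependent test function must be justified by truncating $d$ to $d\wedge n$ and passing to the limit using $\int_0^T\langle\xi_s,d\rangle\,ds<\infty$.

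A lighter route, and the most natural reading of an ``explicit sub-solution'' here, is to localize. Test \eqref{limit_measure} with the bounded measurable function $\mathds{1}_{[0,R]}$, to which the identity extends. Death on $[0,R]$ costs at most $\sup_{[0,R]}d$. Variation out of $[0,R]$ costs at most $\overline{\gamma}$. Transfer contributes exactly $-\frac{\tau}{\beta+\mu\lambda_t}\,\xi_t((R,\infty))\,\xi_t([0,R])$, and competition at least $-CM\,\xi_t([0,R])$. Hence $\frac{d}{dt}\xi_t([0,R])\ge -c_R\,\xi_t([0,R])$ almost everywhere, so $\lambda_t\ge \xi_0([0,R])\,e^{-c_R t}$. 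Here $c_R=\sup_{[0,R]}d+CM+\overline{\gamma}+\tau/\mu$, with $\tau/\mu$ replaced by $\tau M/\beta$ when $\mu=0$. This is exactly the constant your no-jump estimate produces, but it needs only a time-independent bounded test function and no regularity of $f^t$.
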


\noindent
Therefore, in the deterministic large-population limit, the population size remains strictly positive and uniformly bounded over time.

\begin{proof}
The proof consists in bounding $(\lambda_t)$ between explicit super- and sub-solutions.
\end{proof}

\begin{proposition}\label{prop:densité limite}
Under the Assumption \textbf{(H)}, if $\xi_0$ is absolutely continuous with respect to the Lebesgue measure, then so is $\xi_t$ for all $t>0$, with density $\xi_t(x)$ solving
\begin{equation}\label{densité limite}
\begin{aligned}
    \partial_t\xi_t(x) &=
    \Big(r(x) - C\!\int_0^\infty  \xi_t(y)\,dy\Big)\xi_t(x)+ \int_0^\infty[ \gamma(y)\xi_t(y)m(y,x)-\gamma(x)\xi_t(x)m(x,y)]dy \\
    & + \tau\xi_t(x)\left(\beta + \mu \int_0^\infty \xi_t(z)\,dz\right)^{-1}\int_0^\infty \text{sgn}(x-y)\xi_t(y)\,dy.
\end{aligned}
\end{equation}
$\xi$ is time-differentiable and measurable in space and for all $t$ in $[0,T]$ we have $\partial_t\xi_t\in L^1(\mathbb{R}_+)$. Moreover, key regularity properties of the initial condition are preserved over time: the strict positivity and continuity of $\xi_0$ are propagated to $\xi_t$ for all $t\in[0,T]$, and if $\xi_0$ is bounded, then $\xi_t$ remains bounded on $[0,T]\times\mathbb{R}_+$.
\end{proposition}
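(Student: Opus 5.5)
The strategy is to treat \eqref{densité limite} as a linear equation in which the total mass is frozen, and then to read off all the listed properties from an explicit fixed-point representation. By Theorem~\ref{theoreme cv} and Proposition~\ref{bornitude de la taille de population}, the solution $\xi$ of \eqref{limit_measure} is unique and continuous in time, and $\lambda_s=\langle\xi_s,1\rangle$ is bounded above and away from $0$ on $[0,T]$. I therefore regard the nonlinear coefficients as known functions of time. Set $r(x)=b(x)-d(x)$, let $G_s(x)=\xi_s((x,\infty))-\xi_s([0,x))$, and note $|G_s|\le\lambda_s$. Define
\[
a(s,x)=r(x)-\gamma(x)-C\lambda_s+\frac{\tau\,G_s(x)}{\beta+\mu\lambda_s},
\]
so that $a\le \overline b+\tau\lambda/(\beta+\mu\lambda)$ is bounded above, even though it is unbounded below through $-d$. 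The candidate density solves the mild (Duhamel) equation
\[
u_t(x)=u_0(x)\,e^{\int_0^t a(s,x)ds}+\int_0^t e^{\int_s^t a(v,x)dv}\int_0^\infty \gamma(y)u_s(y)m(y,x)\,dy\,ds .
\]

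\textbf{Step 1: existence and uniqueness of the mild solution.} I solve the mild equation by Picard iteration in $C([0,T],L^1(\mathbb{R}_+))$. The map $u\mapsto\int_0^\infty\gamma(y)u(y)m(y,\cdot)\,dy$ is bounded on $L^1$ with norm at most $\overline\gamma\sup_y\int_0^\infty m(y,x)\,dx$, and this bound is finite by (H.b). The multiplicative factor $e^{\int a}$ is bounded by $e^{cT}$ because $a$ is bounded above, so the scheme is a contraction for a suitable weighted sup-norm in time.

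\textbf{Step 2: identification with $\xi$.} I check that $u_t(x)\,dx$ satisfies \eqref{limit_measure} with the coefficients frozen at $\xi$. This is done by testing against bounded $\varphi$ and using Fubini, together with $\int_0^\infty m(x,z)\,dz=1$. The measure $\xi$ satisfies the same linear equation. Uniqueness for this linear measure equation follows from the same TV-Gronwall argument as in the uniqueness part of Theorem~\ref{theoreme cv}, which uses the test functions $f^t$ to remove the unbounded death term. Hence $\xi_t=u_t\,dx$.

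A subtlety appears here: $G_s(x)$ computed from $\xi$ must coincide with $\int_0^\infty \mathrm{sgn}(x-y)\xi_s(y)\,dy$. This holds once absolute continuity is known, since atoms only affect the points where the sign function jumps. There is no circularity, because absolute continuity is established by the identification itself.

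\textbf{Step 3: time differentiability.} Differentiating the mild formula gives \eqref{densité limite} pointwise in $x$ for every $t$. To get $\partial_t\xi_t\in L^1$, I need $d\,\xi_t\in L^1$. This is the main obstacle, since $d$ is unbounded. I would obtain it by passing the Lyapunov bounds \eqref{eq : estimate lyapunov} to the limit, as in the identification step of Theorem~\ref{theoreme cv}. That yields $\int_0^T\langle\xi_s,d^{1+\eta}\rangle\,ds<\infty$ and $\sup_t\langle\xi_t,d^\eta\rangle<\infty$.

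This integrated bound only controls $d\,\xi_t$ for almost every $t$. To reach every $t$, I would use the mild formula directly. The bound $\sup_{\theta\ge0}\theta e^{-\theta}\le 1$, applied with $\theta=\int_s^t d$, controls $d(x)e^{-\int_s^t d(x)\,dv}$ by $1/(t-s)$. Making this integrable near $s=t$ requires some extra integrability of $\int\gamma u_s m$ against $d$. If needed, I would fall back on differentiability in the weak (distributional) sense in $L^1$.

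\textbf{Step 4: propagation of properties.} Each property follows from the mild formula.
\begin{itemize}
\item \emph{Positivity:} both terms are nonnegative, and the first is at least $u_0(x)e^{-\int_0^t|a|}>0$ whenever $u_0>0$. The factor $\int_0^t|a|$ is finite pointwise because $d$ is continuous.
\item \emph{Continuity in $x$:} $a(s,\cdot)$ is continuous, since $G_s$ is continuous once $\xi_s$ has no atoms. The integral term is continuous by dominated convergence, using the continuity of $m$ and $\gamma$ together with local bounds on $m$.
\item \emph{Boundedness:} if $u_0\le M$, then a Gronwall argument on $\|u_t\|_\infty$ applies. This requires $\sup_x\int_0^\infty\gamma(y)u(y)m(y,x)\,dy\le\overline\gamma\,\|u\|_\infty\sup_x\int_0^\infty m(y,x)\,dy$. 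That quantity is finite by (H.b), read with the roles of the variables as in the stated uniform bound.
\end{itemize}
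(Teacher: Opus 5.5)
Your Steps 1, 2 and 4 are sound. Step 3, however, does not deliver the claim $\partial_t\xi_t\in L^1(\mathbb{R}_+)$. You stop at the pointwise bound $d(x)e^{-d(x)(t-s)}\le 1/(t-s)$, conclude that some integrability of $\int\gamma u_s m$ against $d$ is needed, and fall back on a weak derivative, which proves less than what is stated.

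The missing idea is to integrate in $s$ before bounding. With $c=\overline b+\sup_{v\le T}\tau\lambda_v/(\beta+\mu\lambda_v)$ you have $e^{\int_s^t a(v,x)dv}\le e^{c(t-s)}e^{-d(x)(t-s)}$. The mild formula itself gives a time-uniform envelope. Write $B_v(x)=\int_0^\infty\gamma(y)u_v(y)m(y,x)\,dy$. Then $u_s\le U:=e^{cT}\big(u_0+\int_0^T B_v\,dv\big)\in L^1$ for all $s\le T$. Hence $B_s\le M:=\overline\gamma\int_0^\infty U(y)m(y,\cdot)\,dy$, with $\|M\|_1=\overline\gamma\|U\|_1$ since $\int_0^\infty m(y,x)\,dx=1$. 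Use $\theta e^{-\theta}\le e^{-1}$ on the homogeneous term and $\int_0^t d(x)e^{-d(x)(t-s)}ds\le 1$ on the Duhamel term. This gives
\[
\int_0^\infty d(x)u_t(x)\,dx\;\le\; e^{ct}\Big(\frac{\|u_0\|_1}{e\,t}+\|M\|_1\Big)<\infty,\qquad 0<t\le T .
\]
No moment of $m$ against $d$ is required, which is exactly what makes this work for Cauchy kernels.

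The same computation gives $\int_0^T\langle u_s,d\rangle\, ds\le e^{cT}(\|u_0\|_1+T\|M\|_1)$. You also need this in Step 2: it lets $u\,dx$ satisfy the weak equation with the unbounded death term, and it is required for the $f^t$ uniqueness argument.

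At $t=0$, however, $\partial_t\xi_0\in L^1$ is equivalent to $\langle\xi_0,d\rangle<\infty$. The hypotheses only give $\langle\xi_0,d^\eta\rangle<\infty$ with $\eta<1$, so that endpoint needs an extra assumption. The paper's proof settles this point in one sentence (``follows directly from the spatial integrability of $\xi$ together with the properties of the variation kernel''), silently ignoring the $d\,\xi_t$ term. Your diagnosis of the obstacle is therefore more accurate than the paper's; what is missing is the resolution.

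There is also a sign slip in your coefficient. Mass moves upward, since an individual at $y$ adopts $x>y$, so $\int_0^\infty\mathrm{sgn}(x-y)\,\xi_s(dy)=\xi_s([0,x))-\xi_s((x,\infty))=-G_s(x)$. Hence $a$ must contain $-\tau G_s(x)/(\beta+\mu\lambda_s)$. Only $|G_s|\le\lambda_s$ enters your estimates, so nothing else changes. But with the sign as written, the mild solution solves a different linear equation and the identification in Step 2 fails.

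Your route differs from the paper's. The paper proceeds in four steps:
\begin{itemize}
\item it first proves absolute continuity by Gronwall on $\langle\xi_t,\mathds{1}_A\rangle$ for Lebesgue-null $A$;
\item for each fixed $x$ it solves a linear ODE whose source $B_x(t)=\langle\xi_t,\gamma(\cdot)m(\cdot,x)\rangle$ is computed from the already known measure $\xi$, so no fixed point is needed;
\item it obtains integrability from an explicit supersolution;
\item it identifies the result with the density by uniqueness.
\end{itemize}
You instead close the source term on the unknown and solve by Picard iteration in $C([0,T],L^1)$. You then obtain absolute continuity and identification in one stroke from uniqueness of the frozen linear measure equation. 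That equation is exact even in the presence of atoms, because $\int\!\int(\varphi(x)-\varphi(y))\mathds{1}_{\{x>y\}}\xi(dy)\xi(dx)=\int\varphi(x)\big[\xi([0,x))-\xi((x,\infty))\big]\xi(dx)$. Your approach costs a contraction argument but makes the identification step more transparent than the paper's brief ``by uniqueness''. Your Step 4 is essentially the paper's sub/supersolution and Gronwall reasoning.
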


\begin{proof}
(Existence of a density) - It is obtained with a similar computation as in \cite{fournier_meleard}. Let $A \subset \mathbb{R}_+$ be a Borel set of Lebesgue measure zero. From equation \eqref{limit_measure}, the continuity of the kernel $m$, the bounds \eqref{eq: bornitude pop macro} and using that $\xi_0$ is absolutely continuous, there exists a constant $L>0$ such that
\[
\langle \xi_t, \mathds{1}_A \rangle \le L\int_0^t  \langle \xi_s, \mathds{1}_A \rangle ds.
\]
Gronwall's lemma then implies $\langle \xi_t, \mathds{1}_A \rangle = 0$ for all $t \le T$, hence $\xi_t$ is absolutely continuous.

(Weak formulation of the density equation) - For any bounded continuous $\varphi$ on $\mathbb{R}_+$, applying Fubini's theorem yields
\begin{equation*}
\begin{aligned}
\int_0^\infty  \varphi(x) \xi_t(x) dx &= \int_0^\infty  \varphi(x) \xi_0(x) dx 
+ \int_0^t \int_0^\infty  \varphi(x) \Big[ r(x)- C\langle \xi_s,1 \rangle \Big] \xi_s(x) dx\, ds \\
&\quad + \int_0^t \int_0^\infty  \int_0^\infty  \varphi(x) \, [\gamma(z)\xi_s(z) m(z,x) -\gamma(x)\xi_s(x)m(x,z)] dz \, dx \, ds \\
&\quad + \int_0^t \int_0^\infty  \int_0^\infty  \varphi(x) \frac{\tau(x,y) - \tau(y,x)}{\beta + \mu \langle \xi_s,1 \rangle} \xi_s(x) \xi_s(y) dy \, dx \, ds.
\end{aligned}
\end{equation*}

Since this holds for all bounded continuous $\varphi$, it follows that $\xi_t(x)$ is a weak solution of \eqref{densité limite}, defined almost everywhere for all $t \in [0,T]$.

(Existence of a strong solution) - The existence of a strong solution to \eqref{densité limite} follows from the Cauchy-Lipschitz theorem. Given that $\xi_t$ is finite almost everywhere, $\langle\xi_t, \varphi\rangle$ is well-defined for any measurable bounded function $\varphi$. For a mapping $f(x, y)$ that is continuous in $x$ and measurable bounded in $y$, we observe that the function $x \mapsto \langle\xi_t, f(x, \cdot)\rangle$ inherits the continuity in $x$. Since $t \mapsto \langle\xi_t, f(x, \cdot)\rangle$ is also continuous, the equation for each fixed $x \in \mathbb{R}_+$ reduces to a linear ODE with bounded, continuous coefficients. 
Fix $x\in\mathbb{R}_+$ and consider
\begin{equation}\label{ODE_indexed_by_x}
\begin{cases}
\partial_t u_x(t) = A_x(t) u_x(t) + B_x(t) \\
u_x(0) = \xi(0,x)
\end{cases}
\end{equation}
for $t \in [0,T]$, assuming $0 < \xi(0,x) < \infty$. The coefficients are
\begin{align*}
A_x(t) &= r(x) - C \lambda_t - \gamma(x) + \frac{\langle\xi_t,\alpha_x\rangle}{\beta + \mu \lambda_t}, \\
\alpha_x(y) &= \alpha(x,y) = \tau \text{sgn}(x-y),\\
B_x(t) &= \int_0^\infty  \gamma(y) m(y,x) \xi_t(y) dy = \langle \xi_t,\gamma(\cdot) m(\cdot,x) \rangle.
\end{align*}

As we discussed, $A_x$ and $B_x$ are continuous in time on $[0,T]$. By the Cauchy-Lipschitz Theorem and the fact that $A_x$ and $B_x$ are continuous and bounded, for each $x \in \mathbb{R}_+$, if $\xi(0,x)$ is strictly positive and finite and Assumption $\textbf{(H)}$ holds, the ODE \eqref{ODE_indexed_by_x} admits a unique continuously differentiable solution $u_x$ on $[0,T]$. The positivity of $\xi(0,x)$ and $B_x$ implies the positivity of $u_x$ on $[0,T]$. 

We shall show that the collection $(u_x)_{x\geq0}$ allows us to construct a solution to the equation \eqref{densité limite}. We define
\begin{equation*}
    \widehat\xi(t,x) = u_x(t), \quad \forall t \in [0,T], \, x \in \mathbb{R}_+.
\end{equation*}

To do this, we must verify that $\widehat{\xi}$ is integrable with respect to $x$. We use a super-solution argument.

Let $\overline{A}(t) := \overline{b} + \frac{\tau \lambda_t}{\beta + \mu \lambda_t} \ge A_x(t)$, and consider $\overline{u}$ as the explicit solution to
\begin{equation}\label{eq:sur-solution}
\partial_t \overline{u}(t,x) = \overline{A}(t)\overline{u}(t,x) + B_x(t), \quad \overline{u}(0,x) = \xi(0,x).
\end{equation}
Under assumption \eqref{hyp:mutation kernel}, $\overline{u}$ is integrable in $x$:
\begin{equation*}
\int_0^\infty \overline{u}(t,x) dx \le \lambda_0 e^{\int_0^t \overline{A}(v) dv} + \frac{\overline{\gamma}\overline{b}}{C} \int_0^t e^{\int_v^t \overline{A}(s)ds} dv < \infty.
\end{equation*}
Because for any $t,x\geq0,\, \widehat{\xi}(t,x)\leq \overline{u}(t,x)$, $\widehat{\xi}$ is integrable in $x$, we deduce that $\widehat\xi$ is a strong solution of \eqref{densité limite}. By uniqueness, any solution coincides with $\widehat\xi$ on $[0,T]\times\mathbb{R}_+$ up to a set of Lebesgue measure zero. Hence, we work with $\widehat\xi$, which we will now denote by $\xi$.

\textit{(Other properties)} - The fact that the time derivative of $\xi$ is spatially integrable follows directly from the spatial integrability of $\xi$ together with the properties of the variation kernel.  

The strict positivity of $\xi_0$ implies the strict positivity of $\xi_t$ by a sub-solution argument, obtained by providing a uniform lower bound on $A_x$ and $B_x$. Similarly, uniform boundedness follows by controlling $B_x$ from above in \eqref{eq:sur-solution}.  

Continuity in $x$ is obtained using Gronwall’s lemma.

\end{proof}
The next proposition is needed in subsection \ref{ssec:dual_extension} to ensure the necessary regularity of the involved semigroups.
\begin{proposition}\label{prop:produit_par_x_fini}
The estimate
\begin{equation*}
    \sup_{x \ge 0} x\xi_t(x) < \infty \quad \text{for all } t \in [0, T]
\end{equation*}
holds under either of the following conditions:

\begin{enumerate}
    \item The map $x \mapsto \int_0^\infty z^p m(x,z) \,\mathrm{d}z$ is bounded or integrable, and $\int_0^\infty x^p \xi_0(x) \,\mathrm{d}x < \infty$ for some $p \ge 1$.
    \item In the case of the reflected kernel defined in \eqref{eq:reflected kernel}, if the following assumptions are satisfied:
    \begin{itemize}
        \item $\sup_{x \ge 0} x\xi_0(x) < \infty$,
        \item $m(x,y) = \hat{m}(x+y) + \hat{m}(x-y)$, where $\hat{m}$ is an even probability density on $\mathbb{R}$,
        \item $\sup_{x \ge 0} x\hat{m}(x) < \infty$.
    \end{itemize}
\end{enumerate}
\end{proposition}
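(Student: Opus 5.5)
The plan is to work pointwise in $x$ with the linear ODE \eqref{ODE_indexed_by_x} built in the proof of Proposition~\ref{prop:densité limite}, and to dominate $x\,\xi_t(x)$ by a super-solution. Fix $x$ and set $w_x(t)=x\,u_x(t)=x\,\xi_t(x)$. Multiplying \eqref{ODE_indexed_by_x} by $x$ gives $\partial_t w_x = A_x(t)w_x + xB_x(t)$ with $w_x(0)=x\xi_0(x)$. As in \eqref{eq:sur-solution}, $A_x(t)\le \overline{A}(t):=\overline{b}+\tau\lambda_t/(\beta+\mu\lambda_t)$, and by Proposition~\ref{bornitude de la taille de population} this is bounded on $[0,T]$ by a constant $\overline{A}_T$ independent of $x$. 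Since $w_x\ge 0$, Duhamel's formula (comparison for linear ODEs) yields
\begin{equation*}
x\xi_t(x)\le e^{\overline{A}_T t}\,x\xi_0(x)+\int_0^t e^{\overline{A}_T(t-v)}\,x B_x(v)\,dv .
\end{equation*}
Everything therefore reduces to a bound on $\sup_{x}\sup_{v\le T} xB_x(v)=\sup_x\sup_v \int_0^\infty \gamma(y)\xi_v(y)\,x\,m(y,x)\,dy$, together with a bound on the initial term.

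\emph{Case 2 (reflected kernel).} The initial term is bounded by hypothesis. For the source term, write $x\,m(y,x)=x\hat m(x+y)+x\hat m(x-y)$. In the first summand, $x\le x+y$, so $x\hat m(x+y)\le \sup_z z\hat m(z)=:M$. In the second, split $x=(x-y)+y$: $|x-y|\hat m(x-y)\le M$ by evenness, and $y\,\hat m(x-y)\le \|\hat m\|_\infty\, y$. I will need $\hat m$ bounded, which follows from its continuity (implicit in (H.b)) together with $z\hat m(z)\to$ bounded. This gives
\begin{equation*}
xB_x(v)\le \overline{\gamma}\big(2M\lambda_v+\|\hat m\|_\infty\langle\xi_v,y\rangle\big),
\end{equation*}
so I also need $\sup_{v\le T}\langle \xi_v, y\rangle<\infty$. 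To get it, I test \eqref{limit_measure} with truncations of $\varphi(y)=y$ and use Gronwall. Births and transfer are controlled by $\overline{b}$ and $\tau/\mu$ (or $\tau\lambda/\beta$). Death and competition have a good sign. Variation contributes $\gamma(y)\int (z-y)m(y,z)dz\le \overline{\gamma}\int |z|\hat m$ when $\hat m$ has a first moment. When $\hat m$ is Cauchy this fails, and that is where I expect the real obstacle. My first fallback is to avoid the moment altogether: bound $y\hat m(x-y)$ on $\{y\le 2x\}$ by $2x\hat m(x-y)$, and handle $x-y$ near zero through the boundedness of $\xi$, which holds because $\sup_x x\xi_0(x)<\infty$ and $\xi_0$ is bounded away from $0$ (Proposition~\ref{prop:densité limite}). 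On $\{y>2x\}$ I have $|x-y|\ge y/2$, hence $y\hat m(x-y)\le 2M$. This bounds $xB_x$ by $C(\lambda_v+\|\xi_v\|_\infty\sup_x x\!\int\!\hat m)$-type terms. I consider this route more robust, and would use it in Case 2.

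\emph{Case 1.} Here I instead aim for a moment bound $\sup_{v\le T}\langle\xi_v,y^p\rangle<\infty$, obtained from \eqref{limit_measure} tested against truncations of $y^p$. The variation term is $\le \overline{\gamma}\int (\int z^pm(y,z)dz)\xi_v(y)dy$, which is finite whether that map is bounded (use $\lambda_v$) or integrable (use $\|\xi_v\|_\infty$, if $\xi_0$ bounded). The transfer term $\int\!\int(x^p-y^p)\mathds 1_{x>y}\,\xi\xi\le \lambda\langle\xi,x^p\rangle$ is linear, so Gronwall applies. The main obstacle is then converting the moment into a pointwise bound on $xB_x$: I would write $x\le \max(x,y)$ and use $x\,m(y,x)\le$ bounds implied by the hypothesis on $\int z^pm$. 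That hypothesis does not directly give pointwise control of $xm(y,x)$, so this conversion is the step I am least sure of. It may require, as in Case 2, using the continuity of $m$ and the bound on $\xi_v$ near the diagonal.
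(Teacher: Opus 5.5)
The Duhamel framework you set up is sound, but the step you rely on in Case 2 cannot work, and Case 2 is the one that covers the Cauchy kernel. You try to bound $\sup_x\sup_{v\le T} xB_x(v)$ by quantities that do not involve the unknown. This is impossible in general. For a concentrated $\hat m$, the near-diagonal part $\int_{|y-x|<x/2}\gamma(y)\,x\,\hat m(x-y)\,\xi_v(y)\,dy$ behaves like $x\,\xi_v$ averaged over $y$ near $x$, which is exactly what you are trying to bound.

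Your first route bounds this piece by $\|\hat m\|_\infty\langle \xi_v,y\rangle$. That fails for Cauchy $\hat m$, and it already fails at $v=0$: $\sup_x x\xi_0(x)<\infty$ does not give a finite first moment (take $x\xi_0(x)\sim 1/\log^2 x$). Your fallback bounds the same piece by $\|\xi_v\|_\infty\, x\int_{|z|<x/2}\hat m(z)\,dz$, which grows linearly in $x$. Indeed, the quantity $\sup_x x\int\hat m$ you end with is infinite.

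The missing idea, which is the paper's, is to let the estimate refer back to the unknown and close it with Gronwall. Keep your splitting $x\le |x-y|+y$. Since $\int_0^\infty \hat m(x-y)\,dy\le 1$, bound the leftover piece by the unknown:
\[
\int_0^\infty \gamma(y)\,\hat m(x-y)\,y\,\xi_v(y)\,dy\le \overline{\gamma}\,W(v),\qquad W(v):=\sup_{z\ge 0} z\,\xi_v(z).
\]
This gives $xB_x(v)\le \overline{\gamma}\big(2\hat C\lambda_v+W(v)\big)$ with $\hat C=\sup_z z\hat m(z)$. Insert this into your Duhamel inequality and take the supremum in $x$. You get $W(t)\le c_1+c_2\int_0^t W(v)\,dv$, and Gronwall concludes. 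No moments of $\xi$ or $\hat m$ are needed, and neither is boundedness of $\hat m$.

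Gronwall requires $W$ to be finite a priori. The paper therefore works with $W_n(t)=\sup_x\min(x,n)\xi_t(x)\le n\sup_{t,x}\xi_t(x)<\infty$ and uses $\min(x,n)\le |x-y|+\min(y,n)$. It gets a bound independent of $n$ and then lets $n\to\infty$. The paper also uses $r(x)\to-\infty$ for large $x$; with your crude bound $A_x\le\overline{A}_T$ that step is unnecessary.

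In Case 1, the conversion step you flag is genuinely missing, and the stated hypotheses cannot supply it. At $t=0$ the conclusion reads $\sup_x x\xi_0(x)<\infty$. This does not follow from $\int x^p\xi_0<\infty$: take tents of height one at the integers $n$ with widths $n^{-p-2}$. Moreover, a moment condition on $z\mapsto m(x,z)$ gives no pointwise control of $x\mapsto x\,m(y,x)$. The paper handles this case in one sentence (``straightforward by Gronwall's Lemma''), which addresses neither point. To make Case 1 work you would need to add $\sup_x x\xi_0(x)<\infty$ and a pointwise kernel bound that allows $x\,m(y,x)$ to be split as above. You could then run the same Gronwall argument on $W$.
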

The proof is postponed in Appendix  \ref{proof:reflected_supremum}.

\begin{remark}
For what follows, we will need to work on the full time interval $\mathbb{R}$, despite the time reversal being considered only on $[0,T]$.
It is therefore necessary to extend the definition of $\xi$ to $\mathbb{R}\times\mathbb{R}_+$. The simplest way to do this is by setting for any $T>0$
\begin{equation*}
    \begin{cases}
    \xi(t,x)=\xi(0,x), & \forall t<0, \\
    \xi(t,x)=\xi(T,x), & \forall t>T.
    \end{cases}
\end{equation*}
Consequently, we have $\sup_{(t,x)\in\mathbb{R}\times\mathbb{R}_+} \xi(t,x) < \infty.$
\end{remark}
Starting from here, we will suppose that $\xi_0$ admits a density.
\subsection{Numerical resolution and conjectures on $\xi$}\label{ssec:numerical}
In this section, we present several representations of the solution of \eqref{densité limite} in order to highlight its qualitative behaviour and to motivate its study.

The numerical scheme that is employed is standard: an explicit finite-difference scheme combined with numerical integration via the left Riemann sum (rectangle rule).
We fixed the parameters according to Table \ref{tab:experimental parameters}.
The optimal trait is $0$: the farther away from it, the larger the death rate $d(x)$.  
Parameters not specified in the table will be given directly in the figures.
These are the parameters of interest that we will let vary. We recall their meaning : $\tau$ is the intensity of transfer events, $\beta$ is the mean time of encounter between two individuals before the transfer and $\mu$ is the mean time that the transfer takes effectively once two individuals meet (see \cite{A1,A2}). The strict positivity of $\beta$ switches the model from a frequency-dependent to a Beddington-DeAngelis transfer regime
We mainly observe three distinct behaviours.

The simulated dynamics demonstrate how horizontal transfer shapes population structures and ancestral lineages. Under a density-dependent or Beddington-DeAngelis regime, temporal oscillations damp out as the population converges toward a stable and positive steady state (see Figures \ref{fig:densite_bda_3} and \ref{fig:tpop_bda_3}). Conversely, under a frequency-dependent regime, when $\tau$ is sufficiently large, oscillations take a very long time to damp out toward a positive steady state (see Figures \ref{fig:densite_fd_3} and \ref{fig:tpop_fd_3}), while an excessively large $\tau$ drives rapidly the population to extinction. In the absence of transfer, the cost of the burden leads to its clearance through selection. However, increasing the transfer rate forces the distribution toward high values, while simultaneously reducing the total population size due to the cumulative metabolic burden. These results suggest that present-day individuals are unlikely to descend from lineages that were continuously saturated with deleterious traits, as such heavy loads increase the risk of population extinction. Instead, contemporary individuals likely originate from ancestral lineages that evolved under regulated or fluctuating transfer regimes, allowing them to maintain a light burden without compromising long-term survival.

As mentioned in the introduction, the work presented in \cite{A3, garriz2026hamilton, mirrahimi} explores the behaviour of close deterministic models both analytically and numerically. The study of the long-term behaviour of the scaling limit and the existence of a non-zero stationary profile is the subject of \cite{deangeliBravo2026existence}.
\begin{table}[h!]
\centering
\begin{tabular}{|l|l|}
\hline
\textbf{Parameter} & \textbf{Value / Expression} \\
\hline
Domain & $[0,5]$ \\
\hline
Birth rate & $b(x)=5$ \\
\hline
Death rate & $d(x)=5+\tfrac{1}{2}(x-3)(x+3)$ \\
\hline
variation rate & $\gamma(x)=0.15$ \\
\hline
variation kernel & $m(x,y)=\hat{m}(x-y)+\hat{m}(x+y)
\text{ where }\hat{m}(x)=\tfrac{1}{\sqrt{2\pi\sigma^2}}e^{-\tfrac{x^2}{2\sigma^2}}$
\\
\hline
variation variance & $\sigma=0.1$ \\
\hline
Competition & $C=\tfrac{1}{2}$ \\
\hline
Transfer kernel & $\tau(x,y)=\tau\mathds{1}_{\{x>y\}}$ \\
\hline
Initial condition & $\xi(0,x)=\tfrac{1}{\sqrt{2\pi}}e^{-\tfrac{x^2}{2}}\mathds{1}_{x\geq0}$ \\
\hline
\end{tabular}
\caption{Model parameters used in the simulations.}\label{tab:experimental parameters}
\end{table}
\begin{figure}[h!]
    \centering
    \begin{subfigure}[b]{0.48\linewidth}
        \centering
        \includegraphics[width=\linewidth]{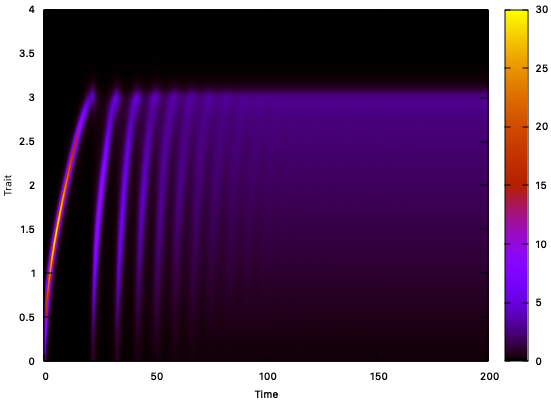}
        \caption{Plot of $\xi$, with time on the horizontal axis and trait value on the vertical axis - $\tau=3,\mu=\beta=1$ (BDA).}
        \label{fig:densite_bda_3}
    \end{subfigure}
    \hfill
    \begin{subfigure}[b]{0.48\linewidth}
        \centering
        \includegraphics[width=\linewidth]{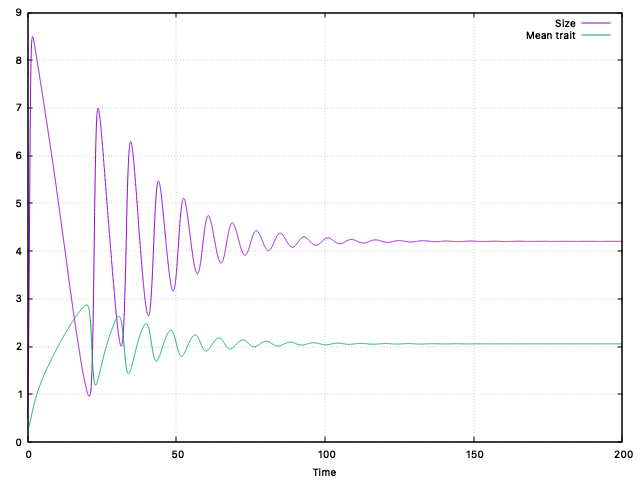}
        \caption{Population size and mean trait value over time. - $\tau=3,\mu=\beta=1$ (FD).}
        \label{fig:tpop_bda_3}
    \end{subfigure}
\end{figure}
\newpage

\begin{figure}[h!]
    \centering
    \begin{subfigure}[b]{0.48\linewidth}
        \centering
        \includegraphics[width=\linewidth]{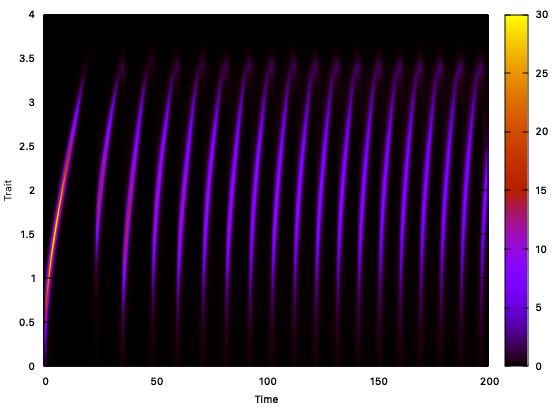}
        \caption{Plot of $\xi$, with time on the horizontal axis and trait value on the vertical axis - $\tau=3,\mu=1,\beta=0$.}
        \label{fig:densite_fd_3}
    \end{subfigure}
    \hfill
    \begin{subfigure}[b]{0.48\linewidth}
        \centering
        \includegraphics[width=\linewidth]{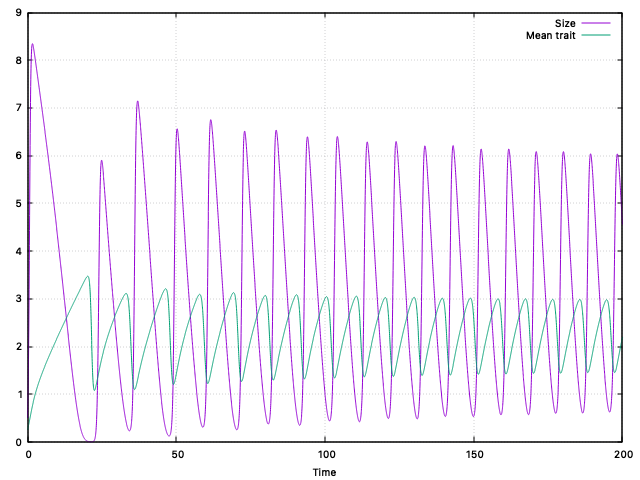}
        \caption{Population size and mean trait value over time - $\tau=3,\mu=1,\beta=0$.}
        \label{fig:tpop_fd_3}
    \end{subfigure}
\end{figure}

\section{Feynman-Kac approach}\label{sec:fk approach}
Up to this point, we have worked with $(Z^K)_K$ and $\xi$, respectively the individual-based processes and their large-population approximation. These objects describe the dynamics of the population, and in particular $\xi$ captures its macroscopic behaviour on finite time intervals. We now shift to the individual scale, while keeping track of the global dynamics, in order to characterize surviving lineages within the population.

We first introduce auxiliary processes that enjoy the branching property which is essential for spinal techniques. We then extend these techniques to entire trajectories. Ultimately, this framework will allow us to establish the existence and characterization of the surviving lineage, namely the spine.
An essential tool for the study of lineages is a Feynman--Kac type formula that allows us to move from the individual-based process to a real-valued process with an associated bias term. 
\subsection{Coupling with the auxiliary branching population process}\label{ssec:coupling_branching}
Following the approach of \cite{meleardB1}, we define an auxiliary process $(\widehat{Z}^K)$ on the same probability space as $(Z^K)_K$. It is defined by an equation similar to \eqref{eq:semimartingale} for $(Z^K)$ but where the competition and transfer terms have been replaced using $\xi$, the solution of \eqref{densité limite} (meaning that the term $C\langle Z^K_s,1\rangle$ is substituted by $C\langle\xi_s,1\rangle$ and that $h(x,y,Z^K_s)$ is substituted by $h(x,y,\xi_s)$). The full equation is given in Appendix \ref{app:historical}, see \eqref{processus auxiliaire}). 

The main interest of this auxiliary process is that it satisfies the branching property, unlike $(Z^K)$, which will be crucial in the analysis of the spinal decomposition. It should be noted, however, that while the branching property is gained, time-homogeneity is lost, since $(\xi_t)$ is not stationary.

We can prove the convergence result justifying the use of $\widehat{Z}^K$ :
\begin{theorem}
Under the same assumptions as in Theorem~\ref{theoreme cv}, the sequence $(\widehat{Z}^K)$ converges weakly to the unique continuous deterministic function solution of \eqref{limit_measure}.  
Moreover, for any bounded continuous function $\varphi$, we have
\[
\lim_{K\to\infty} \mathbb{E}\Big[\sup_{t \leq T} \big|\langle Z_t^K, \varphi \rangle - \langle \widehat{Z}_t^K, \varphi \rangle\big|^2\Big] = 0.
\]
\end{theorem}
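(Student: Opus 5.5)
The plan has two parts. First, I show that $\widehat{Z}^K$ converges to $\xi$ by rerunning the proof of Theorem~\ref{theoreme cv}. Second, I get the $L^2$ estimate from the triangle inequality $|\langle Z^K_t,\varphi\rangle-\langle \widehat Z^K_t,\varphi\rangle|\le |\langle Z^K_t,\varphi\rangle-\langle\xi_t,\varphi\rangle|+|\langle\xi_t,\varphi\rangle-\langle \widehat Z^K_t,\varphi\rangle|$, combined with the $L^2$ convergence of each process to the same deterministic limit. No pathwise coupling bound is needed.

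For the first part, I repeat each step of the proof of Theorem~\ref{theoreme cv} for $\widehat{Z}^K$. The nonlinear coefficients there become deterministic, bounded, time-dependent functions. By Proposition~\ref{bornitude de la taille de population}, $C\lambda_t\le C\max(\overline b/C,\lambda_0)$ and $\tau/(\beta+\mu\lambda_t)\le \tau/(\beta+\mu\inf_{[0,T]}\lambda_t)<\infty$. In the frequency-dependent case $\beta=0$, this is exactly where the lower bound $\inf_t\lambda_t>0$ is used.

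The moment estimates of Lemma~\ref{lemma_moment_control} then carry over, and are in fact simpler, because the transfer intensity is now a bounded rate times $\langle\widehat Z^K_s,1\rangle$. The only point to watch is that the transfer term $\int h(x,y,\xi_s)\,\widehat Z^K_s(dy)$ is bounded by $\tau\lambda_s/(\beta+\mu\lambda_s)$ only after taking expectations. The $d^\eta$ Lyapunov bound goes through unchanged, since transfer moves mass toward larger traits in the same way as before.

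Tightness is proved in the vague and then the weak topology via the $f_n$ argument, followed by identification of the limit using the truncation $d_n$. This shows that every limit point solves the \emph{linear} equation obtained from \eqref{limit_measure} by freezing the nonlinearities at $\xi$. That linear equation has $\xi$ as a solution, and uniqueness follows by the same total-variation Gronwall argument with $f^t$, which is easier here because the equation is linear. Since the limit is deterministic, convergence holds in probability. Uniform integrability, coming from the $p>2$ moments, upgrades it to $\mathbb{E}[\sup_{t\le T}|\langle \widehat Z^K_t,\varphi\rangle-\langle\xi_t,\varphi\rangle|^2]\to0$. The same reasoning applied to $Z^K$ gives the analogue from the $L^2$ part of Theorem~\ref{theoreme cv}, using convergence of the initial data in $L^2$ (I assume this is part of the hypotheses).

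One point needs care: uniform-in-time convergence for the weak topology gives $\sup_t|\langle Z^K_t-\xi_t,\varphi\rangle|\to0$ only because the limit $\xi$ is continuous. Skorokhod $J_1$ convergence to a continuous limit is uniform convergence, so this holds.

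I expect the main obstacle to be making the squared supremum uniformly integrable. I would bound $\sup_t|\langle Z^K_t,\varphi\rangle|^2\le\|\varphi\|_\infty^2\sup_t\langle Z^K_t,1\rangle^2$, and Lemma~\ref{lemma_moment_control}(i) with $p>2$ gives $\sup_K\mathbb{E}[\sup_t\langle Z^K_t,1\rangle^p]<\infty$. This makes the family $\{\sup_t\langle Z^K_t,1\rangle^2\}_K$ uniformly integrable, and likewise for $\widehat Z^K$. Convergence in probability plus uniform integrability then yields the $L^2$ statement. Adding the two estimates via $(a+b)^2\le 2a^2+2b^2$ concludes the proof.
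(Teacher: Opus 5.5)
Your proposal is correct and follows the paper's proof: you rerun the proof of Theorem~\ref{theoreme cv} for $\widehat Z^K$ and then pass through $\xi$ with the triangle inequality. Your identification of limit points via uniqueness of the \emph{linear} frozen equation and your uniform-integrability argument from the $p>2$ moments fill in details the paper leaves implicit, and that argument already gives the $L^2$ convergence of $Z^K$ from convergence in probability, so you do not need to assume $L^2$ convergence of $Z^K_0$. One small correction: the $d^\eta$ Lyapunov bound does not go through ``unchanged'', because in $\widehat Z^K$ the transferred trait is drawn from $\xi_s$, so the transfer term is bounded by $\frac{\tau}{\beta+\mu\lambda_s}\langle\xi_s,d^\eta\rangle\langle\widehat Z^K_s,1\rangle$ rather than by a term in $\langle\widehat Z^K_s,d^\eta\rangle$; Gronwall still closes once you note $\sup_{s\le T}\langle\xi_s,d^\eta\rangle<\infty$, which follows by Fatou from the estimate for $Z^K$.
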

\begin{proof}
The convergence of $(\widehat{Z}^K)_K$ to $\xi$ is identical to that of $(Z^K)_K$. The result then follows from the inequality
\begin{equation*}
    |\langle Z^K_t,\varphi\rangle-\langle\widehat{Z}^K_t,\varphi\rangle|\leq|\langle Z^K_t,\varphi\rangle-\langle \xi_t,\varphi\rangle|+|\langle\widehat{Z}^K_t,\varphi\rangle-\langle\xi_t,\varphi\rangle|.
\end{equation*}
\end{proof}
\subsection{Definition and convergence of the historical processes}\label{ssec:historical_def_conv}
In order to sample lineages from the population over a time interval $[0,T]$, we need to introduce the so-called historical process $H^K$ \cite{meleardB0}, which is a random measure taking values in the space of trajectories $\mathbb{D}(\mathbb{R},\mathbb{R}_+)$, defined by
\[
H^K_t \;=\; \frac{1}{K}\sum_{i\in V^K_t} \delta_{(X^i_{s\wedge t},\, s\in\mathbb{R}_+)},
\]

We can also define an auxiliary historical process $\widehat{H}^K$ (naturally associated with $\widehat{Z}^K$), in which competition and transfer are frozen through the density $\xi$. Like $(Z^K)$ and $(\widehat{Z}^K)$, the two historical processes $(H^K)$ and $(\widehat{H}^K)$ are built on the same probability space with the same initial condition and the same Poisson point measures.
Following the framework of \cite{meleardB1, meleardB2}, this section extends the convergence in distribution of the process $(Z^K)$. This extension is required to account for both the time non-homogeneity of $X$ and the strong non-linearity induced by the transfer function. Consequently, we are led to consider additional Markov processes taking values in $\mathbb{D}([0,T],\mathbb{R}_+)$; we refer the reader to the Appendix \ref{app:historical} for further details and for the proof of the following convergence result:
\begin{proposition}\label{cv historique}
Under the same assumptions as in Theorem~\ref{theoreme cv} and Proposition \ref{prop:densité limite},
for any bounded continuous function $\Phi$ of the form \eqref{eq:test functions on D} on $\mathbb{D}([0,T],\mathbb{R}_+)$, we have
\begin{equation*}
    \lim_{K\to\infty}\mathbb{E}\Big[\sup_{t\leq T}\big|\langle H^K_t,\Phi\rangle-\langle \widehat{H}^K_t,\Phi\rangle\big|^2\Big]=0.
\end{equation*}
\end{proposition}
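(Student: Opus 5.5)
The plan is to mimic the one-line argument used for the auxiliary process $\widehat{Z}^K$: insert a deterministic historical limit and use the triangle inequality. Concretely, I would first identify a deterministic limit $\mathcal{H}_t$ on $\mathbb{D}([0,T],\mathbb{R}_+)$, defined as the law of the historical paths of the time-inhomogeneous branching dynamics driven by $\xi$. Its mass at time $t$ is $\xi_t$. Then
\[
\big|\langle H^K_t,\Phi\rangle-\langle \widehat{H}^K_t,\Phi\rangle\big|\le \big|\langle H^K_t,\Phi\rangle-\langle \mathcal{H}_t,\Phi\rangle\big|+\big|\langle \widehat{H}^K_t,\Phi\rangle-\langle \mathcal{H}_t,\Phi\rangle\big|,
\]
so it suffices to prove $L^2$ convergence, uniformly in $t\le T$, of both $\langle H^K,\Phi\rangle$ and $\langle \widehat{H}^K,\Phi\rangle$ to $\langle\mathcal{H},\Phi\rangle$.

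For each historical process I would write the semimartingale decomposition analogous to \eqref{eq:semimartingale}, applied to test functions $\Phi$ of the cylindrical form \eqref{eq:test functions on D}. These are products $\prod_i \varphi_i(x_{t_i})$, or smooth functions of finitely many time-marginals, and for them the generator acting on a path stopped at $t$ only modifies its value after $t$. The drift contains four terms:
\begin{itemize}
\item birth, which duplicates the path;
\item death, with rate $d(x_t)+C\langle Z^K_t,1\rangle$, or $+C\lambda_t$ for the auxiliary process;
\item variation, which grafts a jump at time $t$;
\item transfer, where the recipient path is replaced by the donor's path, with its whole history.
\end{itemize}
The martingale part has predictable quadratic variation of order $1/K$ by Lemma \ref{lemma_moment_control}, exactly as in \eqref{eq:convergence variation quadratique}. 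Doob's inequality then shows that $\mathbb{E}[\sup_t|M^K_t|^2]\to0$.

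The remaining task is to control the drift differences. I would take a Gronwall approach in a total-variation-type quantity restricted to the class of cylindrical test functions, following the uniqueness step of Theorem \ref{theoreme cv}. To handle the unbounded death rate, I would first absorb $d$ through the Feynman–Kac function $f^t$ applied along the final coordinate. The nonlinear coefficients $C\langle Z^K_s,1\rangle$ and $h(\cdot,\cdot,Z^K_s)$ are compared with their $\xi$ counterparts using Theorem \ref{theoreme cv}, which gives $L^2$ convergence of $Z^K$, together with the lower bound $\inf\lambda_t>0$ from Proposition \ref{bornitude de la taille de population}. That lower bound keeps the denominator $\beta+\mu\lambda$ away from $0$ when $\beta=0$. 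For the auxiliary process these coefficients are already frozen, so only the martingale term and the Gronwall step remain.

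The hardest step will be the transfer term in the historical setting. There, the increment $\Phi(\text{path of donor}_{\cdot\wedge t})-\Phi(\text{path of recipient}_{\cdot\wedge t})$ is a quadratic functional of the historical measure, weighted by the discontinuous indicator $\mathds{1}_{x_t>y_t}$. Two issues arise. First, bounding the difference of quadratic terms requires splitting $H\otimes H-\mathcal{H}\otimes\mathcal{H}=(H-\mathcal{H})\otimes H+\mathcal{H}\otimes(H-\mathcal{H})$ and using the moment bounds of Lemma \ref{lemma_moment_control}, with $p>2$, for uniform integrability. Second, the indicator prevents a weak-topology continuity argument. My first plan is to avoid continuity entirely by working in total variation, so that bounded measurable kernels suffice. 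If that fails, I would fall back on the absolute continuity of $\xi_t$ from Proposition \ref{prop:densité limite}: it shows the diagonal $\{x=y\}$ is $\xi_t\otimes\xi_t$-null, so the indicator is a.e.\ continuous for the limit and the continuous mapping theorem still applies.
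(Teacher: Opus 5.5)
There is a genuine gap, and it starts with the dynamics you assign to $H^K$. In this model a transfer does not graft the donor's history onto the recipient. The recipient path $y$ becomes $(y|s|x)$: it keeps its own past, and only its trait after time $s$ becomes the donor trait $x$. That trait is drawn from $Z^K_s$, or from $\xi_s$ for $\widehat H^K$, which is what ``frozen through the density $\xi$'' means. This is also why the spine $X$ makes transfer jumps to a trait sampled from $\xi_u$, why the many-to-one formula holds, and it is explicit in Appendix \ref{app:historical}. The transfer drift is therefore
\[
\int_0^t\int_{\mathbb{D}(\mathbb{R},\mathbb{R})}\int_0^\infty h(x,y_s,Z^K_s)\big(\Phi_\varphi(y|s|x)-\Phi_\varphi(y)\big)\,Z^K_s(dx)\,H^K_s(dy)\,ds .
\]
This is linear in $H^K$, and the nonlinearity enters only through the one-dimensional marginal $Z^K$, whose convergence to $\xi$ is already known. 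Your $H\otimes H$ splitting and the path-space limit $\mathcal{H}$ are artifacts of the misreading.

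More seriously, your core step cannot close. $Z^K_t$ and $H^K_t$ are finite sums of Dirac masses, while $\xi_t$ and $\mathcal{H}_t$ are diffuse, so $\|Z^K_t-\xi_t\|_{TV}=\langle Z^K_t,1\rangle+\langle\xi_t,1\rangle\not\to 0$. The same holds on your cylindrical class, since it contains functions of the endpoint $y_t$, so a Gronwall inequality for that quantity can never give convergence to $0$. The total-variation Gronwall in the uniqueness step of Theorem \ref{theoreme cv} works only because both measures there are deterministic solutions of the same equation. Your continuous-mapping fallback gives convergence for a fixed test function. It does not close a Gronwall inequality, and it does not give uniformity in the recipient path $y$, which you need because $\mathds{1}_{\{x>y_s\}}$ is integrated against the random measure $H^K_s(dy)$.

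The missing idea is to absorb by Feynman--Kac not only $d$ but the whole $\xi$-frozen linear dynamics; then no Gronwall is needed. Fix $\Phi_\varphi$ and $t$. Let $Y$ be the path-valued process with variation and $\xi$-frozen transfer (generator $\mathcal{L}$), let $R$ be the frozen growth rate, and set
\[
\psi^t(s,y)=\mathbb{E}\Big[\Phi_\varphi(Y_t)\,e^{\int_s^t R(u,Y_u)\,du}\,\Big|\,Y_s=y\Big],\qquad \mathcal{L}\psi^t+R\psi^t=0 .
\]
Test both historical semimartingales against $\psi^t(s,\cdot)$.
\begin{itemize}
\item For $\widehat H^K$, every drift term vanishes, leaving $\langle H^K_0,\psi^t(0,\cdot)\rangle$ plus a martingale of order $K^{-1/2}$.
\item For $H^K$, the same initial term and martingale appear. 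The only other terms are $C\langle\xi_s-Z^K_s,1\rangle\psi^t(s,y)$ and the transfer term integrated against $h(x,y_s,Z^K_s)Z^K_s(dx)-h(x,y_s,\xi_s)\xi_s(dx)$.
\end{itemize}
The initial terms cancel exactly because the two processes share their initial condition, so no intermediate deterministic limit is needed. Three things remain:
\begin{itemize}
\item[(i)] the $J_1$-continuity of $\psi^t(s,\cdot)$, obtained from the series representation (Proposition \ref{prop:continuity criterion in J1}), so that $\Psi(s,x)=\psi^t(s,(y|s|x))$ is a continuous function of $x$ not depending on $y$;
\item[(ii)] the uniform estimate $\mathbb{E}[\sup_{s\le T,\,y}|\langle Z^K_s-\xi_s,\mathds{1}_{\{\cdot>y_s\}}\rangle|]\to0$ and its analogue with $\Psi$, proved from the bounded density of $\xi_s$ via a finite subdivision $(u_i)$ with $\xi_s([u_i,u_{i+1}])\le\varepsilon$ (Proposition \ref{prop:enhanced convergence});
\item[(iii)] localization by stopping times on the total masses, together with the moment bounds of Lemma \ref{lemma_moment_control}.
\end{itemize}
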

The processes $\widehat{Z}^K$ and $\widehat{H}^K$ satisfy the branching property. Hence, the spinal techniques developed in \cite{bansaye2011limit,marguet} can be used. It is also sufficient to restrict our attention to processes starting from a single individual.

\subsection{Feynman-Kac formula}\label{ssec:fk_formula}
We may now establish the Feynman-Kac formula, which allows us to characterize the behaviour of all lineages in the population through a single lineage biased by size, i.e. we perform the spinal decomposition \cite{bansaye2011limit,cloez2017limit,marguet2019uniform} by transferring the variations of a single lineage (variation and horizontal transfer) to a process $X$ taking values in $\mathbb{R}_+$, while the population growth component is incorporated into an exponential functional. We also introduce the various functional spaces that will be used subsequently and explain their respective roles.

\textbf{Important functional spaces:}
\begin{itemize}
    \item $C^1_b(\mathbb{R},L^\infty(\mathbb{R}_+,\mathbb{R}))$: bounded $C^1$ functions with bounded derivative in the first coordinate and bounded measurable in the second one. This is the natural space for the semigroups and generators under consideration.
    \item $C^1_c(\mathbb{R},L^\infty(\mathbb{R}_+,\mathbb{R}))$: $C^1$ functions with compact support in the first coordinate and bounded measurable in the second one, convenient for discarding boundary terms.
    \item $\mathcal{A} = \{ (s,x) \mapsto \mathds{1}_A(s) f(s,x) \mid A \subset \mathbb{R} \text{ finite interval}, f \in C^1(\mathbb{R},L^\infty(\mathbb{R}_+,\mathbb{R}))\}$: a small extension, notably needed to define the excessive measure for time reversing the spine.
\end{itemize}
We denote by $X$ the process defined by
\begin{equation*}
\begin{aligned}
X_t &= X_0 + \int_0^t \int_0^\infty  \int_0^\infty  (z - X_{u-}) \, \mathds{1}_{\{\theta \le \gamma(X_{u-}) m(X_{u-}, z)\}} \, N_1(du, dz, d\theta) \\
&\quad + \int_0^t \int_0^\infty  \int_0^\infty  (y - X_{u-}) \, \mathds{1}_{\{\theta \le h(y, X_{u-}, \xi_u) \, \xi_u(y)\}} \, N_2(du, dy, d\theta),
\end{aligned}
\end{equation*}
where $N_1$ and $N_2$ are independent Poisson point measures on $\mathbb{R}_+^3$ with intensity measure $dudzd\theta$. The process $X$ describes the trait changes occurring along a single lineage: the first integral corresponds to variations, and the second to horizontal trait transfers between individuals in the population. As with $(\widehat{Z}^K)_K$, for the transfer event, the new trait is sampled from the population described by the density~$\xi$.

Since $X$ is time-inhomogeneous due to the time dependence in $\xi_t$, we consider its extension to time-space $(t, X_t)_{[0,T]}$, which forms a two-dimensional homogeneous Markov process. Let $Q$ denote its infinitesimal generator, computed via Itô’s formula. It is given for any bounded measurable function $\varphi$ on $\mathbb{R} \times \mathbb{R}_+$ differentiable in the first coordinate by
\begin{equation*}
\begin{aligned}
Q\varphi(s,x) &= \partial_1 \varphi(s,x) + \gamma(x) \int_0^\infty  (\varphi(s,z) - \varphi(s,x)) m(x,z) dz \\
&\quad + \int_0^\infty  h(y,x,\xi_s) (\varphi(s,y) - \varphi(s,x)) \, \xi_s(y)dy.
\end{aligned}
\end{equation*}
The generator $Q$ encodes the trait variations given by variation and horizontal transfer. The semigroup of $(t, X_t)_{[0,T]}$ is denoted by $P_t$ and defined as
\[
P_t \varphi(s,x) = \mathbb{E}[\varphi_{s+t}(X_{s+t}) \mid X_s = x]
\]
where, to lighten the notation, we sometimes write $\varphi_{s}(x)$ to denote $\varphi(s,x)$.
The population-size variations are encoded in the growth function $r^\lambda$ defined by
\begin{equation*}
    r^\lambda(s,x)=b(x)-d(x)-C\lambda_s.
\end{equation*}

We denote  by $\widehat{Z}_t = K \widehat{Z}^K_t$, the auxiliary measure-valued branching process with jump length $1$. Like $X$, the process $\widehat{Z}$ is time-inhomogeneous. Analogously, we extend $\widehat{Z}_t$ to the time-space measure $\delta_t \otimes \widehat{Z}_t$:
\begin{equation*}
    \begin{aligned}
    \langle \delta_t\otimes\widehat{Z}_t,\varphi\rangle &=
    \langle \delta_0\otimes\widehat{Z}_0,\varphi\rangle+
    \widehat{\mathcal{M}}^{\varphi}_t
    +\int_0^t
    \int_0^\infty \int_\mathbb{R}\{\partial_1\varphi(u,x)\}\delta_s(du)\widehat{Z}_s(dx)ds \\
    &+\int_0^t \int_0^\infty \int_\mathbb{R} \{ r^\lambda(u,x)\varphi(u,x)\}\delta_s(du)\widehat{Z}_s(dx)ds\\
    &+\int_0^t \int_0^\infty \int_\mathbb{R}
    \left\{ \gamma(x)\int_0^\infty (\varphi(u,z)-\varphi(u,x))m(x,z)dz \right\}
    \delta_s(du)\widehat{Z}_s(dx)ds\\
    &+\int_0^t \int_0^\infty \int_\mathbb{R}\left\{ 
    \int_0^\infty h(y,x,\xi_u)(\varphi(u,y)-\varphi(u,x))\xi_u(dy)
    \right\}\delta_s(du)\widehat{Z}_s(dx)ds
    \end{aligned}
    \end{equation*}
where $\widehat{\mathcal{M}}^\varphi$ is the martingale part.
    \begin{remark}\label{rmk:uniqueness of time-space measure}
Let $(\nu_t)_{t}$ be a measure on $\mathbb{R}_+\times\mathbb{R}_+$. We state the following equation
\begin{equation}\label{Equation E}
    \forall\varphi\in C^1_b(\mathbb{R},L^\infty(\mathbb{R}_+,\mathbb{R})),\,\langle \nu_t,\varphi\rangle
    = \langle \nu_0,\varphi\rangle
    + \int_0^t \langle \nu_u,\, r^\lambda\varphi + Q\varphi \rangle \, du.
\end{equation}
Uniqueness of the solution, up to the initial condition, is guaranteed by Theorem~\ref{theoreme cv}. Indeed, since the solution to \eqref{densité limite} $\xi \in C([0, T], M_F(\mathbb{R}_+))$ is unique, its evaluation at any fixed time $t$, denoted by $\xi_t$, is a uniquely determined finite measure on $\mathbb{R}_+$. Consequently, the product measure $\delta_t \otimes \xi_t$ is also uniquely defined because the component $(\delta_t)_t$ is deterministic.

\end{remark}

Using the Feynman-Kac formula, the evaluation of the random measure $\widehat{Z}$ can be expressed through a single lineage biased by population size. More precisely, the Feynman-Kac semigroup is defined as follows:

\begin{proposition}\label{prop:def_semigroup_FK}
For any function $\varphi \in \mathcal{A}$, we define the Feynman-Kac semigroup associated to the growth function $r^\lambda$ and the infinitesimal generator $Q$ by
\begin{equation}\label{eq:semigroupe FK}
    \widehat{P}_t \varphi(s,x) := \mathbb{E}\Big[ e^{\int_0^t r^\lambda_{s+u}( X_{s+u}) \, du} \, \varphi_{s+t}( X_{s+t}) \,\big|\, X_s=x \Big].
\end{equation}
We have that 
\begin{equation}\label{eq:correspondance semigroupe et pvm}
     \widehat{P}_t \varphi(s,x)
= \mathbb{E}_{\delta_s \otimes \delta_x} \big[ \langle \delta_{s+t} \otimes \widehat{Z}_{s+t}, \varphi \rangle \big].
\end{equation}
\end{proposition}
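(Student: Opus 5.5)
The plan is to prove \eqref{eq:correspondance semigroupe et pvm} by showing that both sides solve the same linear backward equation and then invoking uniqueness. Fix $t$ and the target function $\varphi$. I would first argue for $\varphi\in C^1_b(\mathbb{R},L^\infty(\mathbb{R}_+,\mathbb{R}))$ and afterwards extend to $\mathcal{A}$.

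For $u\in[0,t]$, set
\[
F(u,y):=\widehat{P}_{t-u}\varphi(s+u,y).
\]
There are two things to check.
\begin{itemize}
\item $F$ is well defined and bounded. This holds because $r^\lambda\le \overline{b}$, so $e^{\int r^\lambda}\le e^{\overline{b}t}$. Note that $d\ge0$ only helps here: the unbounded death rate enters with a negative sign.
\item $F$ solves the backward Kolmogorov equation
\[
\partial_u F+QF+r^\lambda F=0,\qquad F(t,\cdot)=\varphi_{s+t},
\]
where $Q$ acts on the time-space variable. This follows from the Markov property of $(u,X_u)$, by conditioning on the first time increment. Equivalently, $M_u:=e^{\int_0^u r^\lambda_{s+v}(X_{s+v})dv}F(u,X_{s+u})$ is a martingale.
\end{itemize}

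Next, I would apply the semimartingale decomposition of $\langle\delta_u\otimes\widehat{Z}_u,\cdot\rangle$ to the time-dependent test function $(v,x)\mapsto F(v-s,x)$, starting from $\delta_s\otimes\delta_x$. The drift is exactly $\langle \widehat{Z}_u,\partial_1F+r^\lambda F+QF\rangle$, which vanishes by the previous step. Taking expectations then gives
\[
\mathbb{E}\big[\langle\delta_{s+t}\otimes\widehat{Z}_{s+t},\varphi\rangle\big]=F(0,x)=\widehat{P}_t\varphi(s,x),
\]
provided the martingale part $\widehat{\mathcal{M}}$ has zero mean. To justify that, I need integrability: $\mathbb{E}[\sup_{u\le t}\langle\widehat{Z}_u,1\rangle]\le e^{\overline{b}t}$, which comes from comparison with a Yule process of rate $\overline{b}$. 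I also need control of the death jumps, and here I would use the analogue of \eqref{eq:estimate death} for $\widehat{Z}$, which holds since the frozen coefficients are bounded by Proposition~\ref{bornitude de la taille de population}. A standard localization by the stopping times $\inf\{u:\langle\widehat{Z}_u,1\rangle\ge n\}$, together with dominated convergence, closes this step.

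An alternative route that avoids regularity of $F$ is the one suggested by Remark~\ref{rmk:uniqueness of time-space measure}. Define the measures $\nu_u(\varphi):=\mathbb{E}[\langle\delta_{s+u}\otimes\widehat{Z}_{s+u},\varphi\rangle]$ and $\mu_u(\varphi):=\widehat{P}_u\varphi(s,x)$. Both satisfy \eqref{Equation E} with initial condition $\delta_s\otimes\delta_x$: for $\nu$ this comes from taking expectations in the semimartingale decomposition, and for $\mu$ from Dynkin's formula for the killed or weighted process. Uniqueness for this linear equation then follows by a Gronwall argument in total variation, as in the uniqueness part of Theorem~\ref{theoreme cv}. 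The unbounded $d$ is removed there via the auxiliary semigroup with $d$ incorporated, exactly as with $f^t$ in that proof. I would present this second argument as the main one, since it reuses earlier machinery.

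\textbf{Main obstacle.} The hardest step is the unbounded death rate $d$ together with the fact that $\varphi\in\mathcal{A}$ is only measurable in $x$ and discontinuous in time through $\mathds{1}_A$.
\begin{itemize}
\item \emph{Death rate.} Since $r^\lambda\varphi$ need not be bounded, I would first prove the identity with $d$ replaced by $d_n=d\wedge n$, giving processes $\widehat{Z}^{(n)}$ and a truncated semigroup. For $\varphi\ge0$, both sides then decrease monotonically in $n$: on the Feynman--Kac side this is immediate, and on the branching side it follows from a coupling by thinning the death Poisson measure. Passing to the limit uses monotone convergence, and the general case follows by splitting $\varphi=\varphi^+-\varphi^-$.
\item \emph{Extension to $\mathcal{A}$.} For $\varphi=\mathds{1}_A\,f$, only the value at time $s+t$ matters, so $\varphi_{s+t}=\mathds{1}_A(s+t)f_{s+t}$. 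Hence the identity follows from the one for $f$ multiplied by a constant.
\end{itemize}
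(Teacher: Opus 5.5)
Your main argument is the paper's own proof: the measures $\nu_u(\varphi)=\mathbb{E}_{\delta_s\otimes\delta_x}[\langle\delta_{s+u}\otimes\widehat Z_{s+u},\varphi\rangle]$ and $\mu_u(\varphi)=\widehat P_u\varphi(s,x)$ both solve \eqref{Equation E} from the same initial condition, and uniqueness concludes. Your justification of that uniqueness, by redoing the total-variation Gronwall argument with the $f^t$ device for the \emph{linear} equation, is more precise than the paper's appeal to Theorem~\ref{theoreme cv}; the truncation $d\wedge n$ is then harmless but unnecessary. The one real difference is the extension to $\mathcal{A}$. The paper approximates $\mathds{1}_A$ pointwise by $C^1_b$ functions and uses dominated convergence together with Lemma~\ref{lemma : extension}. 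You instead observe that both sides depend only on $\varphi_{s+t}$, so $\mathds{1}_A f$ reduces to the time-constant function $\mathds{1}_A(s+t)f_{s+t}$, which reaches the same conclusion more directly.
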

\begin{proof}
Let us denote by \(\nu_t\) the measure defined for any \(\varphi\in C^1_b(\mathbb{R},L^\infty(\mathbb{R}_+,\mathbb{R}))\) by
\[
\langle \nu_t, \varphi \rangle = \mathbb{E}_{\delta_s \otimes \delta_x} \big[ \langle \delta_{s+t} \otimes \widehat{Z}_{s+t}, \varphi \rangle \big].
\]  
We have
\begin{align*}
\langle \delta_{s+t} \otimes \widehat{Z}_{s+t}, \varphi \rangle 
&= \langle \delta_s \otimes \widehat{Z}_s, \varphi \rangle 
   + \widehat{\mathcal{M}}_{s+t}^\varphi-\widehat{\mathcal{M}}_{s}^\varphi
   +\int_s^{s+t} \langle \delta_u \otimes \widehat{Z}_u, r^\lambda \varphi + Q \varphi \rangle \, du  \\
&= \langle \delta_s \otimes \widehat{Z}_s, \varphi \rangle 
   + \widehat{\mathcal{M}}_{s+t}^\varphi-\widehat{\mathcal{M}}_{s}^\varphi
   +\int_0^t \langle \delta_{s+u} \otimes \widehat{Z}_{s+u}, r^\lambda\varphi + Q \varphi \rangle \, du.
\end{align*}
Taking expectations, we obtain that $\nu_t$ satisfies \eqref{Equation E}

Next, we show that the left-hand side of the equality satisfies the same equation.
Define the measure \(\mu_t\) by
\[
\langle \mu_t, \varphi \rangle = \mathbb{E}\big[ e^{\int_0^t r^\lambda_{s+u}(X_{s+u})\,du} \varphi_{s+t}(X_{s+t}) \mid X_s = x \big].
\]
Combining Dynkin's formula for the process $(t,X_t)_{[0,T]}$, the integration by parts formula and taking conditional expectation, we obtain that $\mu_t$ also satisfies \eqref{Equation E}.

Both measures \(\nu_t\) and \(\mu_t\) are solutions of (\ref{Equation E}) with the same initial condition. By uniqueness (Theorem~\ref{theoreme cv} and Remark \ref{rmk:uniqueness of time-space measure}), we conclude that \(\nu_t = \mu_t\).

\medskip

We need to extend the equality \eqref{eq:correspondance semigroupe et pvm} to the elements of $$\mathcal{A}_b= \{ (s,x) \mapsto \mathds{1}_A(s) f(s,x) \mid A \subset \mathbb{R} \text{ finite interval}, f \in C^1_b(\mathbb{R},L^\infty(\mathbb{R}_+,\mathbb{R}))\}.$$ Let $A$ be a finite interval of $\mathbb{R}$ and $f$ in $C^1_b(\mathbb{R},L^\infty(\mathbb{R}_+,\mathbb{R}))$. By Lemma \ref{lemma : approximation de l'indicatrice} in Appendix \ref{sec:appendix approximation and extension results}, there exists a sequence $(c_n)$ in $C^1_b(\mathbb{R},\mathbb{R})$ that converges pointwise to $\mathds{1}_A$. We set $\varphi_n=c_n f$ and $\varphi=\mathds{1}_A f$. We conclude by dominated convergence.

Finally, we can define $\widehat{P}$ for $\varphi$ in $\mathcal{A}$ using Lemma \ref{lemma : extension} in Appendix.
\end{proof}
\begin{remark}
Proposition \ref{prop:def_semigroup_FK} also ensures that the identity 
\begin{equation*}
  \widehat P_t (\mathds{1}_{[0,T]}f)(s,x)=\mathds{1}_{[0,T]}(s+t)\widehat P_t f(s,x)  
\end{equation*}
holds, which will be required later on in order to prove the excessivity of the measure with respect to which the time reversal is performed.
\end{remark}
\subsection{Law of the spinal process}\label{ssec:spine_law}

From this point on, we denote by $m_{s,t}(x)$ the expected population size at time $s+t$ for the process $(\delta_t\otimes\widehat{Z}_t)_t$ started from $\delta_s\otimes\delta_x$, meaning at time $s$ starting from an individual of trait $x$, defined by  
\begin{equation*}
    m_{s,t}(x) = (\widehat{P}_t 1)(s,x)=\mathbb{E}\left[e^{\int_s^{s+t}r^\lambda_u(X_u)du}\Big|X_s=x\right].
\end{equation*}

We denote by $\widehat{V}_T$ the set of individuals alive at time $T$, starting from a single individual with trait $x$ at time $0$. We write $\widehat{X}^{i}_t$ for the historical lineage of individual $i \in \widehat{V}_T$ at time $t$. 

The proofs of the following results follow closely the arguments developed in \cite{meleardB1, meleardB2} while taking into account the time inhomogeneity.
The formula \eqref{eq:semigroupe FK} characterizes the law of $\widehat Z$. It can be extended to the whole trajectory (see \cite{meleardB1, meleardB2, marguet}).
\begin{lemma}
    For every $T>0$, every bounded continuous function $\Phi : \mathbb{D}([0,T],\mathbb{R}) \to \mathbb{R}$, and every $x \in \mathbb{R}_+$, we have
    \begin{equation*}
        \mathbb{E}_{\delta_x}[\langle \widehat{H}_T,\Phi\rangle]
        = \mathbb{E}_{\delta_x}\left[\sum_{i\in\widehat{V}_T}\Phi(\widehat{X}^i_s,\,s\leq T)\right]
        = \mathbb{E}_{x} \left[e^{\int_0^T r^\lambda_u(X_u)\,ds}\,\Phi(X_u,\,u\leq T) \right].
    \end{equation*}
\end{lemma}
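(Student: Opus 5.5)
The plan is to derive the trajectorial many-to-one formula from the one-time formula of Proposition~\ref{prop:def_semigroup_FK} by induction on the number of time marginals, and then to pass to general bounded continuous $\Phi$ by a monotone class argument. The first equality in the statement is just the definition of $\widehat{H}_T$ with jump length $1$, so all the work lies in the second equality.

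\textbf{Step 1: cylinder functionals.} Take $\Phi(y)=\prod_{k=1}^n f_k(y_{t_k})$ with $0\le t_1<\dots<t_n\le T$ and $f_k$ bounded measurable. We argue by induction on $n$. The case $n=1$ is exactly \eqref{eq:correspondance semigroupe et pvm} applied with $s=0$ and $\varphi(u,x)=f_1(x)$, and $f_1\in\mathcal{A}$ after multiplying by an indicator of a large interval.

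For the induction step, condition on $\mathcal{F}_{t_{n-1}}$ and use the branching property of $\widehat{Z}$, which holds because competition and transfer are frozen through $\xi$. Each individual $j$ alive at time $t_{n-1}$ starts an independent subpopulation, distributed as $\widehat{Z}$ started from $\delta_{t_{n-1}}\otimes\delta_{\widehat X^j_{t_{n-1}}}$. Every descendant $i$ of $j$ shares the past $(\widehat X^i_s)_{s\le t_{n-1}}=(\widehat X^j_s)_{s\le t_{n-1}}$. Write $g(t,y)=\widehat P_{T-t}\mathds{1}(t,y)=m_{t,T-t}(y)$ for the expected number of descendants alive at $T$. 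Then
\[
\mathbb{E}\Big[\sum_{i\in\widehat V_T}\prod_{k=1}^n f_k(\widehat X^i_{t_k})\Big]
=\mathbb{E}\Big[\sum_{j\in \widehat V_{t_{n-1}}}\prod_{k\le n-1}f_k(\widehat X^j_{t_k})\;
\widehat P_{t_n-t_{n-1}}\big(f_n\,g(t_n,\cdot)\big)(t_{n-1},\widehat X^j_{t_{n-1}})\Big].
\]
Here the time-$t_n$ quantity is first replaced by its expectation from time $t_n$ to $T$, using the semigroup property of $\widehat P$. The induction hypothesis applies with the last factor $f_{n-1}\cdot \widehat P_{t_n-t_{n-1}}(\cdots)$. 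We then unfold the Feynman--Kac expectations using the Markov property of the time-space process $(t,X_t)$ and the multiplicativity of $e^{\int r^\lambda}$ over consecutive intervals. This reproduces $\mathbb{E}_x[e^{\int_0^T r^\lambda_u(X_u)du}\prod_k f_k(X_{t_k})]$.

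\textbf{Step 2: semigroup property.} We still have to justify $\widehat P_{t+s'}=\widehat P_t\widehat P_{s'}$ on bounded functions of the type used above. This follows from the Markov property of $(t,X_t)$ and Fubini. Since $r^\lambda\le \overline b$, the exponential weight is at most $e^{\overline b T}$, so every quantity is finite and the interchanges are legitimate. The same bound ensures integrability on the population side, because $\mathbb{E}[\langle\widehat H_T,1\rangle]=m_{0,T}(x)\le e^{\overline b T}$.

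\textbf{Step 3: extension to general $\Phi$ (main obstacle).} Both sides of the formula define finite measures on $\mathbb{D}([0,T],\mathbb{R}_+)$, each of total mass $m_{0,T}(x)$. By Step 1 they agree on finite-dimensional cylinder functionals. Cylinder sets generate the Borel $\sigma$-field of the Skorokhod space, since the projections $y\mapsto y_t$ generate it. A $\pi$--$\lambda$ argument therefore shows that the two measures coincide, and the identity holds for every bounded measurable $\Phi$, in particular for bounded continuous ones.

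The delicate point is that $r^\lambda$ is unbounded below, because $d\to\infty$. This is harmless, since only the upper bound of $r^\lambda$ enters the integrability estimates. One must also check that the measurable (rather than $C^1$) time dependence of the test functions is covered. This is provided by the extension of Proposition~\ref{prop:def_semigroup_FK} to the class $\mathcal{A}$ together with Lemma~\ref{lemma : extension}, and the bounded-convergence arguments carry over.
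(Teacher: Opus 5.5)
Your argument is correct and follows the route the paper intends: it extends the one-time Feynman--Kac identity of Proposition~\ref{prop:def_semigroup_FK} to whole trajectories by induction over finite-dimensional marginals using the branching property, then concludes with a $\pi$--$\lambda$ argument; the paper gives no details and only refers to \cite{meleardB1, meleardB2, marguet}. The one slip is that for $t_1<T$ the case $n=1$ is not literally \eqref{eq:correspondance semigroupe et pvm}, but it follows from your own induction step with $n-1=0$, or by always appending $t_n=T$ with $f_n=1$ so that the horizon equals the last time.
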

Based on the previous representation, one can introduce a new family of probability measures $\mu_x^T$ on $\mathbb{D}([0,T],\mathbb{R})$, defined for every measurable subset $A$ of $\mathbb{D}([0,T],\mathbb{R})$ by
\begin{equation}\label{eq:mesure de la spine forward}
    \mu^T_x(A)
    =\frac{\mathbb{E}_{\delta_x}[\langle\widehat{H}_T,\mathds{1}_A\rangle]}{\mathbb{E}_{\delta_x}[\langle \widehat{H}_T,1\rangle]}
    =\frac{1}{m_{0,T}(x)}\,\mathbb{E}\!\left[\exp\!\left(\int_0^T r^\lambda_u(X_u)\,du\right)\mathds{1}_A(X_u,\,u\leq T)\,\bigg|\,X_0=x\right].
\end{equation}
 This family gives us the law of the stochastic process characterizing the surviving lineage (forward in time) in the population, the so-called \textit{spine}.
\begin{proposition}\label{semigroupe spine}
The law defined by $\mu^{T}_x$ is that of a time-inhomogeneous Markov process $(Y_t)_{[0,T]}$ called the spine. The law of its time-space extension, i.e. $(t,Y_t)_{[0,T]}$, starting from $(0,x)$, is given by the semigroup $\widetilde{P}_t$ defined for any bounded measurable function $\varphi$ by 
\begin{equation}\label{expression semigroupe spine}
    \widetilde{P}_t\varphi(s,x)
    =\frac{\widehat{P}_t(\varphi m_T)(s,x)}{m_T(s,x)}\,
    \mathds{1}_{\{s\geq 0,\,t\geq 0,\,0\leq s+t\leq T\}},
\end{equation}
where
\begin{equation*}
    m_T(s,x)=m_{s,T-s}(x)=\widehat P_{T-s}1(s,x).
\end{equation*}
\end{proposition}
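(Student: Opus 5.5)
The plan is to show that $\mu^T_x$ is an $h$-transform (Doob transform) of the Feynman--Kac semigroup $\widehat P$ with the space-time harmonic function $m_T$. First we establish the semigroup property of $\widehat P$ on the time-space process: by the Markov property of $(t,X_t)$ and the multiplicativity of the exponential weight,
\[
\widehat P_{t+u}\varphi(s,x)=\widehat P_t\big(\widehat P_u\varphi\big)(s,x),
\]
obtained by conditioning on $\mathcal F_{s+t}$ inside \eqref{eq:semigroupe FK}. Applying this with $\varphi=1$ gives the key identity
\[
\widehat P_t m_T(s,x)=\widehat P_t\widehat P_{T-s-t}1(s,x)=\widehat P_{T-s}1(s,x)=m_T(s,x)\qquad (0\le s\le s+t\le T),
\]
so $m_T$ is $\widehat P$-invariant on the relevant time window. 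We also check that $0<m_T<\infty$: finiteness follows from $r^\lambda\le \overline b$, and positivity from $r^\lambda\ge -d(x)-C\sup\lambda$ combined with the non-explosion of $X$ on $[0,T]$.

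Next we identify the finite-dimensional distributions under $\mu^T_x$. Take $0=t_0<t_1<\dots<t_n\le T$ and bounded measurable $\varphi_1,\dots,\varphi_n$, and apply \eqref{eq:mesure de la spine forward} to $\Phi=\prod_i\varphi_i(X_{t_i})$ (extending from bounded continuous $\Phi$ by monotone class). Successively conditioning on $\mathcal F_{t_{n-1}},\dots$ via the Markov property of $X$, and splitting the weight $e^{\int_0^T r^\lambda}$ at the times $t_i$, we obtain
\[
\mu^T_x\Big[\prod_{i=1}^n\varphi_i(Y_{t_i})\Big]=\frac{1}{m_T(0,x)}\,\widehat P_{t_1}\Big(\varphi_1\,\widehat P_{t_2-t_1}\big(\varphi_2\cdots\widehat P_{t_n-t_{n-1}}(\varphi_n m_T)\big)\Big)(0,x),
\]
where the last factor $m_T(t_n,\cdot)$ comes from integrating out the weight on $[t_n,T]$. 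Inserting $m_T/m_T$ between successive factors rewrites this as the iterated kernel $\widetilde P_{t_1}(\varphi_1\widetilde P_{t_2-t_1}(\cdots\widetilde P_{t_n-t_{n-1}}\varphi_n))(0,x)$, which is exactly the finite-dimensional law of a Markov process with transition operators \eqref{expression semigroupe spine}. The invariance $\widehat P_t m_T=m_T$ guarantees $\widetilde P_t1=1$ on $\{0\le s\le s+t\le T\}$, so these kernels are Markov (probability) kernels. The semigroup property of $\widetilde P$ then follows directly from that of $\widehat P$, consistency of the finite-dimensional laws follows, and since $\mu^T_x$ lives on $\mathbb D([0,T],\mathbb R)$ the càdlàg Markov process $(t,Y_t)$ has law $\mu^T_x$.

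The main obstacle is justifying the use of the Markov property with the unbounded-below weight and only measurable time-dependence. The functions involved ($m_T$, and $\varphi m_T$ with $\varphi$ merely bounded measurable) are not in $C^1_b$, so we cannot rely on generator identities. The route is to work purely at the level of conditional expectations for the time-inhomogeneous jump process $X$, whose rates are bounded: $\gamma\le\overline\gamma$, and the transfer rate is at most $\tau\lambda_s/(\beta+\mu\lambda_s)$, which is bounded by Proposition~\ref{bornitude de la taille de population}. Because $r^\lambda$ is bounded above, all integrands are dominated by $e^{\overline b T}\|\varphi\|_\infty$. If needed, we would truncate $d$ by $d\wedge n$, prove the identities, and pass to the limit $n\to\infty$ by monotone convergence.
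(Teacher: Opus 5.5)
Your proposal is correct and matches the paper's argument. The paper simply defers to the homogeneous-case proof it cites (Proposition 3.3 of the work it adapts), which is this same Doob $h$-transform of the Feynman--Kac semigroup $\widehat P$ by the space-time harmonic function $m_T$, identified through the Markov property on finite-dimensional distributions. The extra care the paper flags, namely the $s$-dependence of $m_T$ and the indicator restricting to $0\le s\le s+t\le T$, is handled correctly in your telescoping and in your check that $\widehat P_t m_T = m_T$ on that window.
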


\begin{remark}
Using time-space extensions requires us to treat the initial time $s$ as a standalone variable, just like the initial state $x$. For a time-homogeneous process, the indicator function in equation \eqref{expression semigroupe spine} would be unnecessary, as the law of the process would depend solely on its starting point $x$. Here, however, time inhomogeneity makes the parameter $s$ crucial: the law $\mu^T_x$ governs a process defined strictly on the interval $[0,T]$. Consequently, if the time $s+t$ falls outside this window, both the process and its extension are no longer well-defined. This indicator function is therefore essential, which in turn requires extending the functional spaces associated with the operators to accommodate it.
\end{remark}

\begin{remark}
Recall the expression of $m_T$:
\begin{equation*}
    m_T(s,x)=m_{s,T-s}(x)=\mathbb{E}\!\left[e^{\int_s^T r^\lambda_u(X_u)\,du}\,\big|\,X_s=x\right].
\end{equation*}
For every $(s,x)\in[0,T]\times\mathbb{R}_+$, $m_T(s,x)$ is strictly positive, and the semigroup $\widetilde{P}$ is therefore well-defined. We postpone the proof to Proposition \ref{prop : properties of mT}
\end{remark}
\begin{proof}[Proof of Proposition \ref{semigroupe spine}]
The proof follows the same lines as the homogeneous case (see Proposition 3.3 in \cite{meleardB2}). However, careful attention must be paid to the time-dependence of $m_T$ on $s$ and to the handling of the indicator functions, which are crucial to the derivation.
\end{proof}
We finish this section by establishing some results on the semigroup $\widehat{P}$ and the density of the spinal process.

\begin{lemma}\label{lemme de transition}
For every $\varphi\in \mathcal{A}$, we have for all $s,t\geq0$ such that $s+t\leq T$,
\begin{equation*}
    \langle \delta_s\otimes\xi_s,\widehat{P}_t\varphi\rangle
= \langle \xi_s,\widehat{P}_t\varphi(s,\cdot)\rangle
= \langle \delta_{s+t}\otimes\xi_{s+t},\varphi\rangle
= \langle \xi_{s+t},\varphi(s+t,\cdot)\rangle.
\end{equation*}
In particular, for $s,t\geq 0$, it follows that
\begin{equation*}
    \int_0^\infty  m_{s,t}(x)\,\xi(s,x)\,dx = \langle \xi_{s+t},1\rangle.
\end{equation*}
\end{lemma}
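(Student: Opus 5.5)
The plan is to show that the family of measures $\nu_t:=\delta_{s+t}\otimes\xi_{s+t}$, $t\in[0,T-s]$, and the family $\mu_t$ defined by $\langle\mu_t,\varphi\rangle:=\langle\xi_s,\widehat P_t\varphi(s,\cdot)\rangle$, solve the same linear equation \eqref{Equation E} with the same initial condition $\delta_s\otimes\xi_s$. We then conclude by the uniqueness statement of Remark \ref{rmk:uniqueness of time-space measure}. This parallels the proof of Proposition \ref{prop:def_semigroup_FK}, with the Dirac initial condition replaced by $\xi_s$. The first and last equalities in the statement are just the definition of the time-space pairing $\langle\delta_u\otimes\xi_u,\varphi\rangle=\langle\xi_u,\varphi(u,\cdot)\rangle$, so only the middle one needs proof.

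\textbf{Step 1 ($\nu_t$ solves \eqref{Equation E}).} Fix $\varphi\in C^1_b(\mathbb{R},L^\infty(\mathbb{R}_+,\mathbb{R}))$. Integrate the density equation \eqref{densité limite} against $\varphi(u,\cdot)$ and add the term coming from $\partial_u\varphi$; this uses $\partial_t\xi_t\in L^1$ from Proposition \ref{prop:densité limite}, together with Fubini. We get
\[
\frac{d}{du}\langle\xi_u,\varphi(u,\cdot)\rangle=\langle\xi_u,\partial_1\varphi(u,\cdot)+r^\lambda\varphi(u,\cdot)\rangle+V_u+T_u,
\]
where $V_u$ is the variation term and $T_u$ is the transfer term.

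The variation term is $V_u=\langle\xi_u,\gamma\int(\varphi(u,z)-\varphi(u,\cdot))m(\cdot,z)dz\rangle$. It is exactly the variation part of $\langle\xi_u,Q\varphi\rangle$.

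The transfer term needs an algebraic identity, and this is where the nonlinearity must be checked. In \eqref{limit_measure} the transfer term is $\iint(\varphi(x)-\varphi(y))h(x,y,\xi_u)\xi_u(dy)\xi_u(dx)$. After exchanging the names of the variables, this equals $\int\xi_u(dx)\int h(y,x,\xi_u)(\varphi(y)-\varphi(x))\xi_u(y)dy$, which is precisely the transfer part of $\langle\xi_u,Q\varphi(u,\cdot)\rangle$. So the quadratic term is rewritten as a linear operator $Q$ with frozen coefficients, evaluated along the solution $\xi$ itself, which is consistent because $Q$ is defined with $\xi$. Integrating from $s$ to $s+t$ gives \eqref{Equation E} for $\nu$.

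\textbf{Step 2 ($\mu_t$ solves \eqref{Equation E}).} By Proposition \ref{prop:def_semigroup_FK}, the kernel $\widehat P_t\varphi(s,x)$ satisfies \eqref{Equation E} for each fixed $x$, with initial condition $\delta_s\otimes\delta_x$:
\[
\widehat P_t\varphi(s,x)=\varphi(s,x)+\int_0^t\widehat P_u(r^\lambda\varphi+Q\varphi)(s,x)\,du .
\]
Integrating in $x$ against $\xi_s(x)dx$ and applying Fubini gives \eqref{Equation E} for $\mu$. Fubini is legitimate because of the following bounds:
\begin{itemize}
\item $|\widehat P_u g|\le e^{\overline b u}\|g\|_\infty$, since $r^\lambda\le\overline b$;
\item $\xi_s$ has finite mass, by Proposition \ref{bornitude de la taille de population};
\item $Q\varphi$ is bounded, because $h\xi_u$ integrates to at most $\tau\lambda_u/(\beta+\mu\lambda_u)$.
\end{itemize}

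\textbf{Main obstacle.} The term $r^\lambda\varphi$ contains the unbounded death rate $-d$, so $r^\lambda\varphi$ is not bounded. Remark \ref{rmk:uniqueness of time-space measure} and the integrability estimates must therefore be used in the integrated form of Theorem \ref{theoreme cv}.

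In Step 2 this only requires $\int_0^t\langle\mu_u,d\rangle du<\infty$. This holds because $\int d(x)\,e^{\int r^\lambda}\le$ (a bound using $e^{-\int d}\int d\le1$), so the weighted death integral is controlled. In Step 1 the analogous bound $\int_0^T\langle\xi_u,d\rangle du<\infty$ follows from \eqref{eq: estimate limit}.

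If the direct uniqueness is delicate with unbounded $d$, I would instead rerun the Gronwall argument from the uniqueness part of Theorem \ref{theoreme cv}, which removes $d$ through the killed semigroup $f^t$. Since \eqref{Equation E} is linear, that argument applies verbatim to the difference $\nu_t-\mu_t$.

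\textbf{Extension and particular case.} This gives the identity for $\varphi\in C^1_b$. We extend it to $\mathcal{A}_b$ and then to $\mathcal{A}$ as in the proof of Proposition \ref{prop:def_semigroup_FK}, using Lemma \ref{lemma : approximation de l'indicatrice}, dominated convergence and Lemma \ref{lemma : extension}. The particular case follows by taking $\varphi\equiv1$, which gives $\widehat P_t1(s,x)=m_{s,t}(x)$.
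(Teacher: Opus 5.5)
Your proposal is correct and is essentially the paper's argument. Both show that $t\mapsto\langle\xi_s,\widehat P_t\varphi(s,\cdot)\rangle$ and $t\mapsto\delta_{s+t}\otimes\xi_{s+t}$ solve \eqref{Equation E} from the common initial condition $\delta_s\otimes\xi_s$, then conclude by the uniqueness invoked in Remark~\ref{rmk:uniqueness of time-space measure}. The only differences are these: the paper obtains the first family through the branching-process moment $\mathbb{E}_{\delta_s\otimes\xi_s}[\langle\delta_{s+t}\otimes\widehat Z_{s+t},\varphi\rangle]$, whereas you integrate the kernel equation of Proposition~\ref{prop:def_semigroup_FK} against $\xi_s$ directly; and you also verify explicitly, by rewriting the quadratic transfer term as the frozen operator $Q$, that $\xi$ solves \eqref{Equation E}, which the paper takes for granted.
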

\begin{proof}
It suffices to observe that
\begin{align*}
    \langle \delta_s\otimes\xi_s,\widehat{P}_t\varphi\rangle
    &= \langle \xi_s,\widehat{P}_t\varphi(s,\cdot)\rangle \\
    &= \int_0^\infty  \xi(s,x)\, \mathbb{E}_{\delta_s\otimes\delta_x}[\langle \delta_{s+t}\otimes\widehat{Z}_{s+t},\varphi\rangle]\,dx \\
    &= \mathbb{E}_{\delta_s\otimes\xi_s}[\langle \delta_{s+t}\otimes\widehat{Z}_{s+t},\varphi\rangle].
\end{align*}

Now, the measure given by the last equality satisfies Equation (\ref{Equation E}) with $\delta_s\otimes\xi_s$ as the initial condition, just like $(\delta_{s+t}\otimes \xi_{s+t})_t$. By uniqueness, we have
\[
\langle \delta_s\otimes\xi_s,\widehat{P}_t\varphi\rangle = \langle \delta_{s+t}\otimes\xi_{s+t},\varphi\rangle.
\]

Taking $\varphi=1$, we can write
\begin{align*}
    \int_0^\infty  m_{s,t}(x)\,\xi(s,x)\,dx
    &= \int_0^\infty  \int_0^\infty  \xi(u,x)\, \widehat{P}_t(\varphi)(u,x)\, \delta_s(du)\, dx \\
    &= \langle \xi_s, \widehat{P}_t(\varphi)(s,\cdot) \rangle \\
    &= \langle \xi_{s+t}, 1 \rangle.
\end{align*}
\end{proof}
We thus deduce that $\langle \delta_s\otimes\xi_s, m_T \rangle = \langle \xi_T, 1 \rangle$ and obtain the following proposition.
\begin{proposition}
For every bounded measurable function $f$ on $\mathbb{R}_+$ and $t\leq T$, we have
\begin{equation*}
    \mathbb{E}_{m_{0,T}\xi_0}[f(Y_t)]
    = \int_0^\infty  f(x)\, m_T(t,x)\,\xi(t,x)\,dx.
\end{equation*}
That is, when initialized with $m_{0,T}\xi_0 / \langle \xi_T,1\rangle$, the law of $Y_t$ admits the density
\[
m_T(t,x)\,\xi(t,x)\,\langle \xi_T, 1 \rangle^{-1}\,dx.
\]
\end{proposition}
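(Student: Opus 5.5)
The plan is to read $\mathbb{E}_{m_{0,T}\xi_0}[f(Y_t)]$ as $\int_0^\infty m_{0,T}(x)\,\xi_0(x)\,\mathbb{E}_{(0,x)}[f(Y_t)]\,dx$. The spine semigroup of Proposition \ref{semigroupe spine} then turns this into an expression involving $\widehat P$. Lemma \ref{lemme de transition} transports it from time $0$ to time $t$.

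First I apply Proposition \ref{semigroupe spine} with $s=0$ and the function $\varphi(u,y)=f(y)$, for $0\le t\le T$. This gives
\[
\mathbb{E}_{(0,x)}[f(Y_t)]=\widetilde P_t\varphi(0,x)=\frac{\widehat P_t(\varphi\, m_T)(0,x)}{m_T(0,x)}.
\]
Since $m_T(0,x)=m_{0,T}(x)$, multiplying by $m_{0,T}(x)\xi_0(x)$ cancels the denominator. This step uses the strict positivity of $m_T$ from Proposition \ref{prop : properties of mT}. It leaves
\[
\mathbb{E}_{m_{0,T}\xi_0}[f(Y_t)]=\int_0^\infty \xi(0,x)\,\widehat P_t(f m_T)(0,x)\,dx=\langle \delta_0\otimes\xi_0,\widehat P_t(f m_T)\rangle .
\]

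Next I apply Lemma \ref{lemme de transition} with $s=0$ to the function $\psi(u,y)=f(y)\,m_T(u,y)$. This yields $\langle \xi_t,\psi(t,\cdot)\rangle=\int_0^\infty f(x)\,m_T(t,x)\,\xi(t,x)\,dx$, which is the claim.

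The normalisation statement follows by taking $f=1$. The identity $\langle\delta_s\otimes\xi_s,m_T\rangle=\langle\xi_T,1\rangle$ noted just above gives total mass $\langle\xi_T,1\rangle$. Dividing by it shows that $Y_t$ has density $m_T(t,x)\xi(t,x)\langle\xi_T,1\rangle^{-1}$ when started from the normalised initial law.

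The main obstacle is checking that $\psi$ is admissible in Lemma \ref{lemme de transition}, i.e.\ that it belongs to $\mathcal A$. The factor $m_T$ is bounded, because $r^\lambda\le \overline b$ gives $m_T\le e^{\overline b T}$. However, $C^1$ regularity of $s\mapsto m_T(s,x)$ must be justified. I would handle this by one of two routes.

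\begin{itemize}
\item \textbf{Bypassing regularity.} Use the Markov/semigroup property of $\widehat P$ to write $\widehat P_t(f m_T)(0,\cdot)=\widehat P_t\big(f\,\widehat P_{T-t}1\big)(0,\cdot)$. Note that $\widehat P_t\varphi$ only depends on $\varphi$ at time $t$. Then apply the identity \eqref{eq:correspondance semigroupe et pvm} and the uniqueness argument of Lemma \ref{lemme de transition} directly to a function of the form $g(t,\cdot)$ with $g$ bounded measurable. This uses the extension to $\mathcal A$ from Proposition \ref{prop:def_semigroup_FK} and Lemma \ref{lemma : extension}.
\item \textbf{Approximation.} Alternatively, approximate $m_T$ by smooth-in-time functions, for instance $\mathds 1_{[0,T]}(u)\widehat P_{T-u}1$ mollified in $u$. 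Then pass to the limit by dominated convergence, using the uniform bound $e^{\overline b T}$ and $\sup_t\langle\xi_t,1\rangle<\infty$ from Proposition \ref{bornitude de la taille de population}.
\end{itemize}
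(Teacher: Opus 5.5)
Your proof is correct and follows the paper's route. You express $\mathbb{E}_{(0,x)}[f(Y_t)]$ through the spine semigroup $\widetilde P_t$ applied to the time-independent function $f$, cancel the factor $m_T(0,x)=m_{0,T}(x)$, and transport $\langle \delta_0\otimes\xi_0,\widehat P_t(fm_T)\rangle$ to time $t$ with Lemma \ref{lemme de transition}. Your first bullet is a clean way to settle the admissibility of $fm_T$ in $\mathcal A$, which the paper only asserts: since $\widehat P_t\varphi(0,\cdot)$ depends only on $\varphi(t,\cdot)$, you can replace $fm_T$ by the time-frozen function $y\mapsto f(y)m_T(t,y)$. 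This makes the approximation-based second bullet unnecessary.
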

\begin{proof}
Let $\psi(u,x)=x$, so that $f(Y_t) = f \circ \psi(t,Y_t)$. We have $f\circ\psi\in\mathcal{A}$. Recall also that for $0\leq s\leq T$,  $m_T(s,x)=m_{s,T-s}(x)$. We conclude by Lemma \ref{lemme de transition}.
\end{proof}
\section{Dualities of semigroups}\label{sec:dualities}
To perform the time reversal of the spinal process, we first need to establish duality results for the Feynman–Kac semigroup $\widehat{P}$. Here, the initial time parameter $s$ is allowed to range over $\mathbb{R}$. This does not conflict with the fact that the final time reversal is restricted to $[0,T]$. Indeed, $\widehat{P}$ acts as an auxiliary operator used to construct $\widetilde{P}$ (the spinal semigroup), and it is $\widetilde{P}$ itself that is strictly confined to $[0,T] \times \mathbb{R}_+$ due to the indicator function in \eqref{expression semigroupe spine}.

The ``natural'' function spaces for the semigroups and generators considered throughout this work are $C^1_b(\mathbb{R}, L^\infty(\mathbb{R}_+,\mathbb{R}))$ and $C^1_b(\mathbb{R}, L^1(\mathbb{R}_+,\mathbb{R}))$.
However, in order to avoid boundary terms, we shall initially restrict ourselves to the smaller space $C^1_c(\mathbb{R}, L^\infty(\mathbb{R}_+,\mathbb{R}))$ and $C^1_c(\mathbb{R}, L^1(\mathbb{R}_+,\mathbb{R}))$,
and subsequently extend the results to larger function spaces.

The semigroup associated with the time-reversed spinal process involves the dual of $\widehat{P}$ (with respect to Lebesgue measure). It is therefore necessary to determine this dual object. We begin by characterizing the dual of the (non-Markovian) generator associated with $\widehat{P}$, then we establish the corresponding semigroup duality, and finally extend these results to sufficiently broad classes of test functions.

\subsection{Duality of the Generators on the appropriate spaces}\label{ssec:dual_spaces}
We start by establishing the dual operator of $Q$.
\begin{lemma}\label{dual generator}
For $\varphi\in C^1_c(\mathbb{R},L^\infty(\mathbb{R}_+,\mathbb{R}))\, \psi\in C^1_c(\mathbb{R},L^1(\mathbb{R}_+,\mathbb{R}))$, let $Q^*$ be defined as
\begin{equation*}
    \begin{aligned}
        (Q^*\psi)(s,x) &= -\partial_1\psi(s,x) 
        + \int_0^\infty  \big( \gamma(z)\psi(s,z)m(z,x) -
        \gamma(x)\psi(s,x)m(x,z)\big)\,dz \\
        &\quad + \int_0^\infty  \big( \xi_s(x)h(x,y,\xi_s)\psi(s,y) 
        - \xi_s(y)h(y,x,\xi_s)\psi(s,x) \big)\,dy .
    \end{aligned}
\end{equation*}
We have
\begin{equation*}
    \int_\mathbb{R}\int_0^\infty  Q\varphi(s,x)\psi(s,x)dsdx= \int_\mathbb{R}\int_0^\infty  \varphi(s,x)Q^*\psi(s,x)dsdx.
\end{equation*}
\end{lemma}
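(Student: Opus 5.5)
The plan is to split $Q$ into its three pieces, the time-derivative term, the variation term and the transfer term, and to check the duality separately for each against $\psi$ over $\mathbb{R}\times\mathbb{R}_+$ with respect to $ds\,dx$. Each identity is either an integration by parts in $s$ or an exchange of the variables $x$ and $y$ (or $z$) after Fubini.

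\textbf{Time derivative.} For fixed $x$, integrate by parts in $s$:
\[
\int_\mathbb{R}\partial_1\varphi(s,x)\,\psi(s,x)\,ds=-\int_\mathbb{R}\varphi(s,x)\,\partial_1\psi(s,x)\,ds .
\]
There are no boundary terms, because $\varphi$ (and $\psi$) have compact support in $s$. This is exactly why the statement uses the $C^1_c$ spaces. To integrate then in $x$ we need $\int_\mathbb{R}\int_0^\infty|\partial_1\psi||\varphi|\,dx\,ds<\infty$. This holds because $\|\varphi\|_\infty<\infty$ and $s\mapsto\|\partial_1\psi(s,\cdot)\|_{L^1}$ is continuous with compact support.

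\textbf{Variation term.} Write
\[
\gamma(x)\int_0^\infty(\varphi(s,z)-\varphi(s,x))\,m(x,z)\,dz\,\psi(s,x).
\]
In the gain part $\int\!\!\int\gamma(x)m(x,z)\varphi(s,z)\psi(s,x)\,dz\,dx$, rename $x\leftrightarrow z$ to obtain $\int\varphi(s,x)\int\gamma(z)\psi(s,z)m(z,x)\,dz\,dx$. The loss part already has the form $\varphi(s,x)\,\gamma(x)\psi(s,x)\int m(x,z)\,dz$, so it is kept as it stands. Fubini is justified because
\[
\int\!\!\int\overline{\gamma}\,\|\varphi\|_\infty\,|\psi(s,x)|\,m(x,z)\,dz\,dx=\overline{\gamma}\,\|\varphi\|_\infty\,\|\psi_s\|_{L^1},
\]
using (H.b), $\int m(x,z)\,dz=1$. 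This bound is uniform over the compact $s$-support.

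\textbf{Transfer term.} Write $\int_0^\infty h(y,x,\xi_s)(\varphi(s,y)-\varphi(s,x))\,\xi_s(y)\,dy\,\psi(s,x)$. For the gain part $\int\!\!\int h(y,x,\xi_s)\xi_s(y)\varphi(s,y)\psi(s,x)\,dy\,dx$, swap $x\leftrightarrow y$ to get $\int\varphi(s,x)\int\xi_s(x)h(x,y,\xi_s)\psi(s,y)\,dy\,dx$. The loss part is $\varphi(s,x)\psi(s,x)\int\xi_s(y)h(y,x,\xi_s)\,dy$, matching the last term of $Q^*$. For absolute integrability, use $h\le\tau/(\beta+\mu\lambda_s)$. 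This is bounded by $\tau/(\mu\lambda_s)$ when $\beta=0$, and $\lambda_s$ is bounded below on compact time intervals by Proposition~\ref{bornitude de la taille de population} (extended constantly outside $[0,T]$ by the Remark). Combined with $\sup\xi<\infty$ and $\int\xi_s=\lambda_s$ bounded, this gives the bound $C\,\|\varphi\|_\infty\,\|\psi_s\|_{L^1}$.

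\textbf{Conclusion and main obstacle.} Summing the three identities and integrating in $s$ gives the claimed duality. The only delicate step is the justification of Fubini/Tonelli, in particular the uniform integrability bounds on the kernels. The essential inputs there are $\int m(x,z)\,dz=1$, the boundedness of $\gamma$, the bounds on $\xi$ and $\lambda$, and the compact support in time, which controls both the boundary terms and the integrals in $s$.
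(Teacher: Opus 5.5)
Your proposal is correct and follows the paper's own argument: integration by parts in $s$ with no boundary terms thanks to the compact time support, plus Fubini and the exchange of variables in the variation and transfer terms. You simply make explicit the absolute-integrability bounds that the paper leaves implicit.

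Two small refinements. First, instead of integrating by parts for each fixed $x$ (which presupposes pointwise differentiability of $s\mapsto\varphi(s,x)$, not immediate for $L^\infty$- or $L^1$-valued $C^1$ maps), differentiate $s\mapsto\int_0^\infty\varphi(s,x)\psi(s,x)\,dx$ using the product rule for the continuous $L^\infty\times L^1$ pairing, and use that this function has compact support. Second, the transfer bound needs neither $\sup\xi$ nor a lower bound on $\lambda_s$, since $\int_0^\infty\xi_s(y)h(y,x,\xi_s)\,dy\le\tau\lambda_s/(\beta+\mu\lambda_s)$ is already bounded.
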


\begin{proof}
This follows directly from an integration by parts combined with Fubini’s theorem. 
No boundary term appears since we work with compactly supported functions.
\end{proof}

\begin{remark}
The following identity will be useful later :
\begin{equation}\label{xi as solution}
    r^\lambda(s,x)\xi(s,x) + (Q^*\xi)(s,x) = 0.
\end{equation}
It is a reformulation of \eqref{densité limite}.
\end{remark}
It is crucial to observe that $Q^*$ is not a Markov infinitesimal  generator. Indeed, one can immediately see that $Q^*1(s,x)\neq0$.

We thus decompose $Q^*$ as
\[
(Q^*\psi)(s,x) = Q'\psi(s,x) + \kappa(s,x)\psi(s,x),
\]
where
\begin{equation*}
    \begin{aligned}
        Q'\psi(s,x) 
        &= -\partial_1\psi(s,x) 
        + \int_0^\infty  \big(\psi(s,z)-\psi(s,x)\big)\gamma(z)m(z,x)\,dz \\
        &\quad + \xi_s(x)\int_0^\infty  h(x,y,\xi_s)\big(\psi(s,y)-\psi(s,x)\big)\,dy,
    \end{aligned}
\end{equation*}
and
\begin{equation*}
    \kappa(s,x) = \int_0^\infty \big(\gamma(z)m(z,x)-\gamma(x)m(x,z)\big)\,dz
    + \int_0^\infty \big(\xi_s(x)h(x,y,\xi_s)-\xi_s(y)h(y,x,\xi_s)\big)\,dy.
\end{equation*}

The operator $Q'$ is indeed a Markov infinitesimal generator. The minus before the time derivative indicates that the time runs backward.
The term $\kappa$ corresponds to a creation and killing of mass arising from the potential asymmetry in the trait variation terms. 

We can immediately observe that 
\begin{equation*}
    \overline{\kappa}:=\sup_{(s,x)\in\mathbb{R}\times\mathbb{R}_+}|\kappa(s,x)|<\infty
\end{equation*}
The process associated with the operator $Q$ is not the spinal process itself, 
but rather an intermediate process describing trait variation along a single lineage external to the population.  
In fact, the duality properties of this intermediate process are not central to our analysis.  
Hence, we aim to obtain the dual of $\widehat P$ directly via its generator, which is non-Markovian since the semigroup itself is not.

The dynamics of the semigroup $\widehat P$ are constructed from a process with generator $Q$, 
with the size-dependent term $r^\lambda$ (accounting for birth, death, and competition) incorporated into an exponential functional.  
Following a computation analogous to that in the proof of Proposition~\ref{prop:def_semigroup_FK}, we can then write 
\[
\widehat{P}_t\varphi(s,x)=\varphi(s,x)+\int_0^t \widehat{P}_u\widehat{L}\varphi(s,x)du
\]
which shows that the generator of $\widehat P$ is $\widehat L$, given explicitly by
\begin{equation*}
    \widehat L\varphi(s,x) = Q\varphi(s,x)+r^\lambda(s,x)\varphi(s,x).
\end{equation*}
The computation is given later on in Lemma \ref{formules de kolmogorov}. In summary, the generator of the Feynman--Kac semigroup can be decomposed as the sum of a Markovian generator and a zero-order term, corresponding respectively to trait variation and population size dynamics.  
The dual of $\widehat L$ can then be computed as follows:
\begin{lemma}\label{lemma: dualité des générateurs}
Define, for $\psi\in C^1_c(\mathbb{R}, L^1(\mathbb{R}_+,\mathbb{R}))$,
\[
\widehat{L}'\psi(s,x) = Q'\psi(s,x) + (r^\lambda(s,x) + \kappa(s,x))\psi(s,x).
\]
Then $\widehat{L}$ and $\widehat{L}'$ are in duality with respect to the Lebesgue measure on $\mathbb{R}\times\mathbb{R}_+$ for elements of $C^1_c(\mathbb{R},L^\infty(\mathbb{R}_+,\mathbb{R}))$ and $C^1_c(\mathbb{R},L^1(\mathbb{R}_+,\mathbb{R}))$.
\end{lemma}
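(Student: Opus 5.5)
The identity to prove is
\[
\int_\mathbb{R}\int_0^\infty \widehat L\varphi\,\psi\,dx\,ds=\int_\mathbb{R}\int_0^\infty \varphi\,\widehat L'\psi\,dx\,ds
\]
for $\varphi\in C^1_c(\mathbb{R},L^\infty(\mathbb{R}_+,\mathbb{R}))$ and $\psi\in C^1_c(\mathbb{R},L^1(\mathbb{R}_+,\mathbb{R}))$. The plan is to reduce it to Lemma \ref{dual generator}, using the linear structure of $\widehat L=Q+r^\lambda$.

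\emph{Splitting.} We write
\[
\int_\mathbb{R}\int_0^\infty \widehat L\varphi\,\psi
=\int_\mathbb{R}\int_0^\infty Q\varphi\,\psi+\int_\mathbb{R}\int_0^\infty r^\lambda\varphi\,\psi .
\]
Lemma \ref{dual generator} turns the first term into $\int\int \varphi\,Q^*\psi$. The zero-order term is symmetric: $\int\int r^\lambda\varphi\psi=\int\int \varphi\,(r^\lambda\psi)$.

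\emph{Recombination.} By the decomposition $Q^*\psi=Q'\psi+\kappa\psi$, we get
\[
Q^*\psi+r^\lambda\psi=Q'\psi+(r^\lambda+\kappa)\psi=\widehat L'\psi .
\]
The only thing to verify here is that this algebraic decomposition holds pointwise. For the variation part this is immediate. For the transfer part one writes
\[
\xi_s(x)h(x,y,\xi_s)\psi(s,y)=\xi_s(x)h(x,y,\xi_s)\bigl(\psi(s,y)-\psi(s,x)\bigr)+\xi_s(x)h(x,y,\xi_s)\psi(s,x)
\]
and absorbs the second piece into $\kappa$.

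\emph{Integrability.} The main point is that every term is absolutely integrable, so that Fubini and the splitting are legitimate. In particular $r^\lambda$ involves the unbounded death rate $d$, so $r^\lambda\varphi\psi$ need not be integrable for general $\psi\in L^1$. I would handle this as follows.
\begin{itemize}
\item[(i)] Use that $b$ is bounded, $\lambda_s$ is bounded (Proposition \ref{bornitude de la taille de population}), and $\varphi$ is bounded with compact time support.
\item[(ii)] For the $d$ part, note that the claimed duality contains the same term $\int\int d\,\varphi\,\psi$ on both sides. So either interpret the statement for $\psi$ with $d\psi\in L^1$ (implicitly needed for $\widehat L'\psi$ to be defined), or truncate $d_n=d\wedge n$ and pass to the limit by monotone convergence.
\item[(iii)] The remaining terms are bounded by $\overline{\gamma}$, by $\sup_y\int m(x,y)\,dx<\infty$ from (H.b), by $\tau/(\beta+\mu\lambda_s)$ combined with $\xi$ bounded and integrable, and by $\overline{\kappa}$. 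Each is then dominated by a constant times $\|\varphi\|_\infty\|\psi(s,\cdot)\|_{L^1}$, integrated over a compact time set.
\end{itemize}
The time-derivative terms produce no boundary contributions because of the compact supports in $s$, as already used in Lemma \ref{dual generator}.
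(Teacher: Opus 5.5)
Your proposal is correct and follows the paper's own argument: the paper's proof is exactly this one-line reduction, namely the self-adjointness of multiplication by $r^\lambda$ together with the duality of $Q$ and $Q^*=Q'+\kappa$ from Lemma \ref{dual generator}. Your integrability discussion goes beyond the paper, which does not address the unbounded $d$. Of your two fixes, the sound one is to restrict to $\psi$ with $d\psi\in L^1$, or equivalently to note that the $d$-term is literally the same integral on both sides. Monotone convergence does not apply, because $d_n\varphi\psi$ changes sign, so truncation only helps when dominated convergence already would.
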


\begin{proof}
Duality follows from the self-adjointness of multiplication by $r^\lambda$ and the duality of $Q$ and $Q'+\kappa$.  
\end{proof}
$\widehat{L}'$ can be written in the same form as $\widehat{L}$, with the size-dependent term $r^\lambda$ replaced by $r^\lambda+\kappa$.  
We define the associated Feynman--Kac semigroup for $Q'$ and $r^\lambda+\kappa$ by
\begin{equation}\label{def:aux semigroup fk}
    \widehat{P}'_t\psi(s,x) := \mathbb{E}\Big[ e^{\int_0^t (r^\lambda+\kappa)_{s-u}(X'_{s-u})\,du} \, \psi_{s-t}(X'_{s-t}) |X'_s=x\Big]
\end{equation}
where $(X'_t)$ denotes the process associated with $Q'$, which can be written as
\begin{equation*}
\begin{aligned}
    X'_{T-t} &= X'_T + \int_0^t \int_0^\infty  \int_\mathbb{R} (z - X'_{(T-u)+}) \, \mathds{1}_{\{\theta \leq \gamma(z) m(z,X'_{(T-u)+})\}} \, N_1(du,dz,d\theta) \\
    &\quad + \int_0^t \int_0^\infty  \int_\mathbb{R} (y - X'_{(T-u)+}) \, \mathds{1}_{\{\theta \leq \xi(T-u,X'_{(T-u)+}) h(X'_{(T-u)+},y,\xi_{T-u})\}} \, N_2(du,dy,d\theta),
\end{aligned}
\end{equation*}
where $N_1$ and $N_2$ are independent Poisson measures of intensity $dudsd\theta$ on $\mathbb{R}_+\times\mathbb{R}_+\times\mathbb{R}_+$. In order to establish the duality between $\widehat{P}$ and $\widehat{P}'$, we first ensure that the Kolmogorov equations are satisfied.

\begin{lemma}\label{formules de kolmogorov}
The operators $\widehat{L}$ and $\widehat{L}'$ satisfy
\begin{equation*}
\begin{aligned}
    \forall \varphi \in C^1_c(\mathbb{R}, L^\infty(\mathbb{R}_+,\mathbb{R})), \quad 
    \frac{d}{dt} \widehat{P}_t \varphi(s,x) &= \widehat{P}_t \widehat{L} \varphi(s,x) = \widehat{L} \widehat{P}_t \varphi(s,x), \\
    \forall \psi \in C^1_c(\mathbb{R}, L^1(\mathbb{R}_+,\mathbb{R})), \quad
    \frac{d}{dt} \widehat{P}'_t \psi(s,x) &= \widehat{P}'_t \widehat{L}' \psi(s,x) = \widehat{L}' \widehat{P}'_t \psi(s,x).
\end{aligned}
\end{equation*}
\end{lemma}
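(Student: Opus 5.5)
The plan is to prove both Kolmogorov equations in the same way: the forward equation $\frac{d}{dt}\widehat P_t\varphi=\widehat P_t\widehat L\varphi$ comes from Dynkin's formula plus integration by parts, and the backward equation $\frac{d}{dt}\widehat P_t\varphi=\widehat L\widehat P_t\varphi$ comes from the semigroup (Markov) property. I describe the argument for $\widehat P$. For $\widehat P'$ one replaces $X$ by $X'$ (run backward in time), $r^\lambda$ by $r^\lambda+\kappa$, and the test function class $L^\infty$ by $L^1$. The only new ingredient there is that $\overline{\kappa}<\infty$, so the exponential weight stays bounded.

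\textbf{Forward equation.} Fix $\varphi\in C^1_c(\mathbb{R},L^\infty(\mathbb{R}_+,\mathbb{R}))$ and set $E_t=\exp\big(\int_0^t r^\lambda_{s+u}(X_{s+u})du\big)$.
\begin{enumerate}
\item The time-space process $(s+t,X_{s+t})$ is a pure-jump process with bounded jump rates. The variation rate is bounded by $\overline\gamma$, and the transfer rate is bounded by $\tau\lambda_t/(\beta+\mu\lambda_t)$, which is bounded by Proposition~\ref{bornitude de la taille de population}. Dynkin's formula therefore applies to $\varphi$: the process $\varphi(s+t,X_{s+t})-\int_0^t Q\varphi(s+u,X_{s+u})du$ is a martingale.
\item $E_t$ is a continuous finite-variation process with $dE_t=r^\lambda_{s+t}(X_{s+t})E_t\,dt$. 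The integration by parts formula then gives
\[
E_t\varphi_{s+t}(X_{s+t})=\varphi(s,x)+\int_0^t E_u\,(Q\varphi+r^\lambda\varphi)(s+u,X_{s+u})\,du+\text{martingale}.
\]
\item Taking expectations gives $\widehat P_t\varphi=\varphi+\int_0^t\widehat P_u\widehat L\varphi\,du$. Differentiating in $t$ yields the forward equation, provided $u\mapsto\widehat P_u\widehat L\varphi(s,x)$ is continuous.
\end{enumerate}

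The main obstacle is the unboundedness of $r^\lambda$ from below, since $d\to\infty$. On the side of $E_t$ this is harmless: $r^\lambda\le\overline b$ gives $0\le E_t\le e^{\overline b t}$. The difficulty is that $r^\lambda\varphi$ is not bounded. I would first localize with the stopping times $\sigma_n=\inf\{u:X_{s+u}>n\}$, use the fact that the martingale is a true martingale up to $\sigma_n$ (bounded jumps and integrands), and then let $n\to\infty$. The term $E_u|r^\lambda\varphi|$ is controlled by the identity $\frac{d}{du}E_u=r^\lambda E_u$. This gives
\[
\int_0^t E_u\,(d(X_{s+u})+C\lambda_{s+u})\,du\le e^{\overline b t}\Big(1+\overline b\,t\Big),
\]
so $E_u\,d(X_{s+u})$ is integrable in $du\,d\mathbb{P}$, and dominated convergence applies. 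If necessary, I would instead state the equality in integrated form, using the continuity of $\xi$ in time, and read the derivative as an a.e. derivative.

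\textbf{Backward equation.} The Markov property of $(t,X_t)$ together with the multiplicative property of $E$ gives the semigroup identity $\widehat P_{t+h}\varphi=\widehat P_h(\widehat P_t\varphi)$. Hence
\[
\frac{\widehat P_{t+h}\varphi-\widehat P_t\varphi}{h}=\frac{\widehat P_h g-g}{h},\qquad g=\widehat P_t\varphi.
\]
The function $g$ is not compactly supported in time, but $\varphi$ is, so $g$ vanishes for $s>\sup\operatorname{supp}$. For $\partial_1 g$, I would use the time-space structure: $\partial_1 g$ exists and is bounded because the coefficients $\xi_s$, $\lambda_s$ are continuous and $\varphi$ is $C^1$ in time. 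Applying the forward step to $g$ then gives $h^{-1}(\widehat P_h g-g)\to\widehat L g$ as $h\to0$, with the same localisation as before, which is the backward equation.

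For $\widehat P'$ the same steps apply. The jump rates of $X'$ are $\gamma(z)m(z,x)$, whose integral in $z$ is bounded by (H.b), and $\xi_s(x)\tau/(\beta+\mu\lambda_s)$, which is bounded since $\xi$ is bounded. Working in $L^1$ only changes the functional setting. Pointwise in $(s,x)$ the argument is identical.
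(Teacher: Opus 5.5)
Your route is the paper's: Dynkin's formula plus integration by parts against the weight $E_t$ gives $\widehat P_t\varphi=\varphi+\int_0^t\widehat P_u\widehat L\varphi\,du$, and the semigroup (Markov) property gives the other ordering. Your pathwise bound $\int_0^tE_u\,(d(X_{s+u})+C\lambda_{s+u})\,du\le 1+\overline b\,t\,e^{\overline b t}$, obtained from $dE_u=r^\lambda E_u\,du$, is a useful addition. The term $r^\lambda\varphi$ is unbounded, and the paper's short proof passes over this. However, three steps are not justified as written.

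\textbf{The jump rate of $X'$.} The transfer part of $Q'$ sends $x$ to a uniform point of $[0,x]$, with total intensity
\[
\xi_s(x)\int_0^\infty h(x,y,\xi_s)\,dy=\frac{\tau\,x\,\xi_s(x)}{\beta+\mu\lambda_s},
\]
not $\tau\xi_s(x)/(\beta+\mu\lambda_s)$. Boundedness of $\xi$ does not bound this rate. What you need is $\sup_x x\xi_s(x)<\infty$, which is Proposition~\ref{prop:produit_par_x_fini}. This is why the paper's rate $\Lambda'$ contains the term $\tau x\xi(s,x)/(\beta+\mu\lambda_s)$. Without it, your ``bounded rates, hence Dynkin'' step for $\widehat P'$ is unjustified.

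\textbf{Time regularity for the backward equation.} You need $g=\widehat P_t\varphi$ to be $C^1$ in $s$, and likewise $\widehat P'_t\psi$ in $C^1(\mathbb{R},L^1)$. This is required both for $\widehat L g$ to make sense and for applying your forward step to $g$. You only assert it from continuity of $\xi_s,\lambda_s$. This is the genuinely nontrivial point. The paper obtains it from the series representation of Section~\ref{ssec:dual_extension} (Lemma~\ref{lemma : analytical expression of functional}, Corollaries~\ref{corollary:series continuity Linfini} and~\ref{corollary:series continuity L1}). That argument uses the time-differentiability of $\lambda$ and $\xi$, and again Proposition~\ref{prop:produit_par_x_fini}. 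Incidentally, $g$ is compactly supported in $s$: it vanishes unless $s+t$ lies in the time support of $\varphi$.

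\textbf{Pointwise versus integrated identities.} Your integrated bound only shows that $u\mapsto\widehat P_u(r^\lambda\varphi)(s,x)$ is in $L^1(du)$. Two things need its continuity, at least at $u=0^+$ in the second case:
\begin{itemize}
\item the pointwise $t$-derivative;
\item the limit $h^{-1}(\widehat P_hg-g)\to\widehat Lg$, which needs the same for $r^\lambda g$.
\end{itemize}
You leave this open, so as written you get the Kolmogorov identities only in integrated, a.e.\ form.
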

\begin{proof}
The core arguments and tools are similar to those in Proposition \ref{prop:def_semigroup_FK}, namely Dynkin's and integration by parts formulas. Using the semigroup property (derived from the Markov property of the processes $(t, X_t)_{t \in [0,T]}$ and $(t, X'_t)_{t \in [0,T]}$), we then obtain the derivative formulas.
\end{proof}

\subsection{Regularity of the Feynman-Kac Semigroups}\label{ssec:dual_extension}

It is necessary to verify that the operator \(\widehat P\) (resp. \(\widehat P'\)) indeed maps $C^1_c(\mathbb{R},L^\infty(\mathbb{R}_+,\mathbb{R}))$ (resp. $C^1_c(\mathbb{R},L^1(\mathbb{R}_+,\mathbb{R}))$) into itself.
To this end, we establish a new alternative representation of our jump processes functionals, that will allow us to obtain all the desired regularity results.
\begin{lemma}[Series representation of jump process functional - $L^\infty$ case]\label{lemma : analytical expression of functional}
Let $(J_t)$ be a jump process on $\mathbb{R}\times\mathbb{R}_+$, with its generator given for $\varphi\in C^1_b(\mathbb{R}, L^\infty(\mathbb{R}_+,\mathbb{R}))$ by
\[
Q\varphi(s,x) = \partial_1 \varphi(s,x) 
+ \Lambda(s,x)\int_0^\infty  \big(\varphi(s,z)-\varphi(s,x)\big)\nu(s,x,z)\,dz,
\]

where $\Lambda$ is measurable, bounded by a constant $\overline{\Lambda}$, and $\nu(s,x,\cdot)$ is a probability density on $\mathbb{R}_+$.  
Fix $t\geq0$ and define
\begin{equation}\label{def:evaluation of functional jump process}
    f(t,s,x) := \mathbb{E}\Bigg[e^{\int_0^t A_{s+u}(J_{s+ u})du}\,\varphi_{s+t}(J_{s+ t})\ \Big|\ J_s=x\Bigg]
\end{equation}
with $A$ measurable and bounded from above by a constant $\overline{A}$. 
The function $f$ is bounded in $t$ and $x$.

Define the recurrence operator: for bounded measurable $\varphi$ on $\mathbb{R}_+$, 
\[
(\mathcal K_{s,u}\varphi)(x):=
e^{\int_0^u A_{s+v}(x)dv}\Bigg[
\frac{\Lambda(s+u,x)}{\overline{\Lambda}}\int_0^\infty \varphi(z_1)\nu(s+u,x,z_1)\,dz_1
+\Big(1-\frac{\Lambda(s+u,x)}{\overline{\Lambda}}\Big)\varphi(x)
\Bigg].
\]
Then we have the following expression
\begin{equation}\label{serie}
    f(t,s,x)=e^{-\overline{\Lambda}t}\sum_{k\geq0} \tfrac{(\overline{\Lambda}t)^k}{k!}\varphi_k(t,s,x)
\end{equation}
where $\varphi_0(t,s,x)=e^{\int_0^t A_{s+v}(x)dv}\varphi(s+t,x)$ and 
\begin{equation}\label{eq:kth term}
\varphi_k(t,s,x)
=\int_0^t \frac{k(t-u)^{k-1}}{t^k}\,
(\mathcal K_{s,u}\varphi_{k-1}(t-u,s+u,\cdot))(x)\,du.
\end{equation}

\end{lemma}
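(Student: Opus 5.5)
Our plan is to prove the series representation by uniformization (thinning) of the jump process $J$ at the constant rate $\overline{\Lambda}$, followed by a first-jump decomposition of the resulting Poisson clock. Boundedness of $f$ is immediate: since $A\le\overline{A}$ and $\varphi$ is bounded, $|f(t,s,x)|\le e^{\overline{A}t}\|\varphi\|_\infty$.

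\textbf{Step 1: uniformized construction.} We build $J$ from a Poisson process $(T_j)_{j\ge1}$ of rate $\overline{\Lambda}$ on $(s,\infty)$. At each epoch $T_j$, with probability $\Lambda(T_j,J_{T_j-})/\overline{\Lambda}$ the position jumps to a point drawn from $\nu(T_j,J_{T_j-},\cdot)$; otherwise it stays put. Between epochs the space coordinate is constant. The generator of $(t,J_t)$ built this way is exactly the operator $Q$ in the statement, because the effective jump intensity is $\overline{\Lambda}\cdot\Lambda/\overline{\Lambda}=\Lambda$. By uniqueness of the law of the pure-jump process with bounded rates, this construction has the law of $J$, so we may compute $f$ with it.

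\textbf{Step 2: conditioning on the number of epochs.} Let $N_t$ be the number of epochs in $(s,s+t]$, so $\mathbb{P}(N_t=k)=e^{-\overline{\Lambda}t}(\overline{\Lambda}t)^k/k!$. We set $\varphi_k(t,s,x):=\mathbb{E}[e^{\int_0^tA_{s+u}(J_{s+u})du}\varphi_{s+t}(J_{s+t})\mid J_s=x,\,N_t=k]$. Then \eqref{serie} follows from the law of total expectation. The sum converges absolutely because each $|\varphi_k|\le e^{\overline{A}t}\|\varphi\|_\infty$.
\begin{itemize}
\item For $k=0$ there is no jump, so $\varphi_0(t,s,x)=e^{\int_0^tA_{s+v}(x)dv}\varphi(s+t,x)$.
\item For $k\ge1$, conditionally on $N_t=k$ the epochs are distributed as the order statistics of $k$ uniform points on $[0,t]$. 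The first epoch $u=T_1-s$ then has density $k(t-u)^{k-1}/t^k$ on $[0,t]$.
\item Given $T_1=s+u$, the remaining $k-1$ epochs are uniform order statistics on $(s+u,s+t]$. By the strong Markov property of the time–space pair and of the Poisson clock, the future after $T_1$ contributes $\varphi_{k-1}(t-u,s+u,J_{s+u})$.
\item Up to time $T_1$ the position is frozen at $x$, which produces the factor $e^{\int_0^uA_{s+v}(x)dv}$. The thinning choice at $T_1$ produces the bracket defining $\mathcal K_{s,u}$.
\end{itemize}
Putting these together gives \eqref{eq:kth term}.

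\textbf{Main obstacle.} The delicate point is the rigorous conditioning in Step 2. We must check the conditional independence of the future clock from the past given $(T_1,J_{T_1})$ and $N_t=k$, and handle measurability of $\varphi_{k-1}$ in $(t,s,x)$ so that the integral is well defined. We would do this by proving the formula by induction on $k$, using the explicit joint density $k!/t^k$ of the ordered epochs on $\{0<u_1<\dots<u_k<t\}$. Fubini's theorem then separates the first epoch from the rest, and the identity $k!/t^k=\frac{k(t-u_1)^{k-1}}{t^k}\cdot\frac{(k-1)!}{(t-u_1)^{k-1}}$ gives exactly the recursion. Boundedness of $A$ from above and of $\varphi$ justify every exchange of sum, integral and expectation.
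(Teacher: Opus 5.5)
Your proposal is correct and follows the paper's proof essentially step for step. Both arguments uniformize $J$ with a rate-$\overline{\Lambda}$ Poisson clock and apply the law of total expectation over $\{N_t=k\}$. Both use the order-statistics density to get the law $k(t-u)^{k-1}/t^k$ of the first epoch, then split on acceptance or rejection of that first potential jump to obtain $\mathcal K_{s,u}$ applied to $\varphi_{k-1}(t-u,s+u,\cdot)$. Your factorization $k!/t^k=\frac{k(t-u_1)^{k-1}}{t^k}\cdot\frac{(k-1)!}{(t-u_1)^{k-1}}$ and the uniform bound $|\varphi_k|\le e^{\overline{A}t}\|\varphi\|_\infty$ make rigorous the conditioning step that the paper states without detail.
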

\begin{remark}
The case where the process runs backward in time has a similar series representation (as long as we do not forget the minus sign in front of the time derivative).
\end{remark}
\begin{proof}
We couple the process $(J)$ with a homogeneous Poisson process $(N)$ of intensity $\overline{\Lambda}$, where each jump time of $(N)$ serves as a potential jump for $(J)$. Using the law of total expectation, we partition the expectation based on the number of potential jumps in the interval $[0,t]$, resulting in a series of conditional expectations. The joint distribution of the ordered potential jump times is known and given by Theorem 2.15 of \cite{durrettprocesses}. By conditioning each term of the series on whether the first potential jump is accepted or rejected, we derive a recurrence relation: every term in the series, except for the first one, can be expressed in terms of the operator $\mathcal{K}_{s,u}$ and the preceding term. A more detailed version of this proof can be found in Appendix \ref{appendix:detailed_proof_series}.
\end{proof}
We now state various regularity results.
\begin{corollary}[Continuity criteria in $s$ - $L^\infty$ case]\label{corollary:series continuity Linfini}

With the same assumptions as in Lemma \ref{lemma : analytical expression of functional} and assuming
\begin{itemize}
    \item $s\mapsto\Lambda(s,x)$ is continuous and $x\mapsto\Lambda(s,x)$ measurable in $x$,
    \item $s\mapsto\nu(s,x,z)$ is continuous and $(x,z)\mapsto\nu(s,x,z)$ measurable,
    \item $A$ is continuous in $s$,
    \item $\varphi$ is continuous in $s$.
\end{itemize}
Then the function $f$ defined in \eqref{def:evaluation of functional jump process} is continuous in $s$.

Assume in addition:
\begin{itemize}
    \item $\Lambda$ is $C^1$ in $s$, and both $\Lambda$ and $\partial_s\Lambda$ are bounded,
    \item $\nu$ is continuous and there exists $h\in L^1(\mathbb{R}_+)$ such that 
    \[
    \forall (s,x,z)\in\mathbb{R}\times\mathbb{R}_+\times\mathbb{R}_+, \quad|\Lambda(s,x)\nu(s,x,z)|\le h(z),\quad |\partial_s(\Lambda(s,x)\nu(s,x,z))|\le h(z),   
    \]
 
    \item $A$ and $\varphi$ are $C^1$, with 
    \[
    \forall (s,x)\in\mathbb{R}\times\mathbb{R}_+,\quad
    |A(s,x)|+|\partial_sA(s,x)|+|\varphi(s,x)|+|\partial_s\varphi(s,x)|\le M
    \]

\end{itemize}

Then for all $t\ge0$ and $x\in\mathbb{R}_+$, the map $s\mapsto f(t,s,x)$ is $C^1$.  

If moreover $\varphi$ has compact support in the first coordinate $s$, then so does $f$ in $s$.
\end{corollary}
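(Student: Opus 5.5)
The plan is to work term by term on the series representation \eqref{serie} of Lemma \ref{lemma : analytical expression of functional}. We show that each $\varphi_k(t,\cdot,x)$ is continuous (respectively $C^1$) in $s$, with bounds growing at most geometrically in $k$. The Poisson weights $e^{-\overline{\Lambda}t}(\overline{\Lambda}t)^k/k!$ then give normal convergence of the series and of its termwise $s$-derivative. The standard theorems on uniformly convergent series of continuous (resp. $C^1$) functions conclude.

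\textbf{Continuity.} We proceed by induction on $k$, proving simultaneously that $\varphi_k$ is jointly continuous in $(t,s)$ for fixed $x$ and that $\|\varphi_k\|_\infty\le e^{\overline{A}t}\|\varphi\|_\infty$. The case $k=0$ is immediate: $s\mapsto \int_0^t A_{s+v}(x)dv$ is continuous by dominated convergence, since $A$ is continuous in $s$ and bounded above. For the inductive step, the integrand in \eqref{eq:kth term} is
\[
\frac{k(t-u)^{k-1}}{t^k}\,(\mathcal K_{s,u}\varphi_{k-1}(t-u,s+u,\cdot))(x).
\]
It is continuous in $s$: the factor $\Lambda(s+u,x)$ is continuous, and $\int_0^\infty \varphi_{k-1}(t-u,s+u,z)\nu(s+u,x,z)\,dz$ is continuous in $s$. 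For the latter we use dominated convergence with the bound $\|\varphi_{k-1}\|_\infty\nu$, together with Scheffé's lemma, which turns pointwise convergence of the probability densities $\nu(s+u,x,\cdot)$ into $L^1$ convergence. The integrand is also dominated by $k(t-u)^{k-1}t^{-k}e^{\overline{A}t}\|\varphi\|_\infty$, whose $u$-integral is $e^{\overline{A}t}\|\varphi\|_\infty$. So $\varphi_k$ is continuous in $s$ and keeps the same bound, since $\mathcal K$ is a sub-Markov operator times $e^{\int A}$. The series is then dominated by $\sum_k e^{-\overline{\Lambda}t}(\overline{\Lambda}t)^k e^{\overline{A}t}\|\varphi\|_\infty/k!<\infty$ uniformly in $s$, so $f$ is continuous in $s$.

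\textbf{$C^1$ regularity.} We differentiate \eqref{eq:kth term} under the integral sign. Writing
\[
\Lambda\int\varphi_{k-1}\nu\,dz=\int \varphi_{k-1}\,(\Lambda\nu)\,dz,
\]
the derivative in $s$ produces two kinds of terms. The first is $\int \partial_s\varphi_{k-1}\,\Lambda\nu\,dz+\int\varphi_{k-1}\,\partial_s(\Lambda\nu)\,dz$, which is justified by the domination $|\Lambda\nu|,|\partial_s(\Lambda\nu)|\le h\in L^1$. The second comes from $\partial_s$ of the exponential factor, $\int_0^u\partial_sA_{s+v}\,dv$, and from $\partial_s\Lambda$ in the diagonal term $(1-\Lambda/\overline{\Lambda})\varphi_{k-1}$. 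Setting $D_k:=\sup_{t'\le t,s,x}|\partial_s\varphi_k|$, these give a recursion of the form
\[
D_k\le c_1 D_{k-1}+c_2,
\]
where the constants depend on $M$, $\|h\|_{L^1}$, $\overline{\Lambda}$, $\|\partial_s\Lambda\|_\infty$ and $t$, and $c_1$ can be taken $\le e^{\overline{A}t}(1+\|h\|_1/\overline{\Lambda})$. Hence $D_k\le C\,c_1^k$ grows at most geometrically. Since $\sum_k (\overline{\Lambda}t)^k c_1^k/k!<\infty$, the differentiated series converges uniformly in $s$, and $f$ is $C^1$ in $s$. Continuity of each $\partial_s\varphi_k$ follows from the same dominated-convergence argument as in the first step, using continuity of $\nu$ and of $\partial_s(\Lambda\nu)$.

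\textbf{Main obstacle.} I expect the hardest step to be controlling this recursion: the constant $c_1$ must not depend on $k$, so that the growth is geometric and not factorial. The weight $k(t-u)^{k-1}/t^k$ is a probability density in $u$, which keeps the recursion from picking up extra factors of $k$.

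\textbf{Compact support.} If $\varphi(s',\cdot)=0$ for $s'\notin[a,b]$, then $\varphi_k(t,s,\cdot)$ only involves $\varphi$ at times in $[s,s+t]$. So $\varphi_k=0$ for $s>b$ and for $s+t<a$, and hence $f(t,\cdot,x)$ is supported in $[a-t,b]$.
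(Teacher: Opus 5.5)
Your proposal is correct and follows essentially the same route as the paper. Both proofs control the series \eqref{serie} term by term, and both use an induction showing that $\mathcal{K}_{s,u}$ preserves continuity in $s$ with the bound $\|\varphi_k\|_\infty\le e^{\overline{A}t}\|\varphi\|_\infty$. Both then derive a $k$-independent linear recursion on $\|\partial_s\varphi_k\|_\infty$, which gives uniform convergence of the differentiated series, and note that the recursion preserves compact support.

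Your extra details (Scheffé's lemma, the supremum over $t'\le t$ in $D_k$) make explicit what the paper leaves implicit. Two small remarks:
\begin{itemize}
\item Continuity of $\partial_s(\Lambda\nu)$ is not literally one of the stated hypotheses. You can avoid needing it by substituting $r=s+u$ and using continuity of $\varphi_{k-1}$ in its first argument.
\item Every $\varphi_k(t,s,\cdot)$ involves $\varphi$ only at time $s+t$, so the support of $f(t,\cdot,x)$ is in fact contained in $[a-t,b-t]$.
\end{itemize}
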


\begin{proof}
Using the series representation from Lemma \ref{lemma : analytical expression of functional}, it suffices to verify that $\varphi_0$ is continuous and that the recurrence operator $\mathcal{K}$ preserves continuity. For each $k$, we have $\|\varphi_k\|_\infty \le e^{\overline{A} t}\|\varphi\|_\infty$; thus, by uniform convergence of the defining series, $f(t,s,x)$ is continuous in $s$.

For the second part of the corollary, we verify that $\varphi_0$ is $C^1$ in $s$ and uniformly bounded. By our hypotheses, there exist constants $c_1(t)$ and $c_2(t)$, independent of $k$, such that
$$\|\partial_s\varphi_k\|_\infty \leq c_1(t)\|\varphi_{k-1}\|_\infty + c_2(t)\|\partial_s\varphi_{k-1}\|_\infty.$$
By induction, the series of derivatives converges uniformly; thus, $f(t,s,x)$ is $C^1$ in $s$, and its derivative corresponds to the sum of the series of derivatives. Furthermore, if $\varphi$ has compact support in its first coordinate $s$, this property is preserved by the recursive construction, as well as by its derivatives.
\end{proof}
Up to this point, the series representation introduced in Lemma \ref{lemma : analytical expression of functional} relied on uniform boundedness assumptions to ensure the well-posedness of the conditional expectations. We now extend these results to the space $L^1(\mathbb{R}_+)$.

\begin{lemma}\label{lemma : series representation L^1}
With the same notations and assumptions on $\Lambda$ and $A$ as in Lemma \ref{lemma : analytical expression of functional} and assuming 
\begin{itemize}
	\item $\varphi(s,\cdot)$ belongs to $L^1(\mathbb{R}_+)$
    \item $\sup_{s\in\mathbb{R},\,z\in\mathbb{R}_+}\int_0^\infty \Lambda(s,x)\nu(s,x,z)dx=C_\Lambda<\infty$.
\end{itemize}

Then the recurrence operator $\mathcal{K}_{s,u}$ is a bounded operator on $L^1(\mathbb{R}_+)$ and the series representation \eqref{serie} holds almost everywhere in $L^1(\mathbb{R}_+)$.
\end{lemma}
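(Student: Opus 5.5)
We first show that $\mathcal K_{s,u}$ is bounded on $L^1(\mathbb{R}_+)$, and then transfer the series representation \eqref{serie} from the bounded case to $L^1$ by a density argument.

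\textbf{Boundedness of $\mathcal K_{s,u}$.} Let $\varphi\in L^1(\mathbb{R}_+)$. Since $A\le\overline A$, the exponential prefactor is at most $e^{\overline A u}$, so
\[
\|\mathcal K_{s,u}\varphi\|_{L^1}\le e^{\overline A u}\Big(\frac{1}{\overline\Lambda}\int_0^\infty\!\!\int_0^\infty \Lambda(s+u,x)\nu(s+u,x,z)|\varphi(z)|\,dz\,dx+\int_0^\infty\Big(1-\frac{\Lambda(s+u,x)}{\overline\Lambda}\Big)|\varphi(x)|\,dx\Big).
\]
In the double integral we apply Tonelli and integrate in $x$ first. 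The hypothesis $\sup_{s,z}\int_0^\infty\Lambda(s,x)\nu(s,x,z)\,dx=C_\Lambda$ then bounds it by $C_\Lambda\|\varphi\|_{L^1}$. The second term is at most $\|\varphi\|_{L^1}$ because $0\le 1-\Lambda/\overline\Lambda\le1$. Hence
\[
\|\mathcal K_{s,u}\|_{L^1\to L^1}\le e^{\overline A u}\big(C_\Lambda/\overline\Lambda+1\big)=:c(u),
\]
which is uniform in $s$ and nondecreasing in $u$. The same Tonelli computation shows that $(x,z)\mapsto\Lambda\nu|\varphi(z)|$ is integrable, so $\mathcal K_{s,u}\varphi$ is well defined almost everywhere.

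\textbf{Control of the series in $L^1$.} Set $c:=c(t)$. We prove by induction on $k$ that $\|\varphi_k(t,s,\cdot)\|_{L^1}\le c^k e^{\overline A t}\|\varphi\|$, where $\|\varphi\|:=\sup_s\|\varphi(s,\cdot)\|_{L^1}$. The case $k=0$ is immediate. For the inductive step we apply Minkowski's integral inequality to \eqref{eq:kth term} and use $\int_0^t k(t-u)^{k-1}t^{-k}\,du=1$. Joint measurability of the integrand in $(u,x)$ follows from the measurability assumptions. Since $\sum_k (\overline\Lambda t)^k c^k/k!<\infty$, the series \eqref{serie} converges absolutely in $L^1(\mathbb{R}_+)$, and also almost everywhere after passing to a subsequence of partial sums (in fact monotonically if we split $\varphi=\varphi^+-\varphi^-$).

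\textbf{Identification with $f$.} Write $\varphi=\varphi^+-\varphi^-$ and approximate each part by truncations $\varphi^\pm\wedge n$. These truncations are bounded, so Lemma \ref{lemma : analytical expression of functional} gives the series representation for each of them. The operators $\mathcal K_{s,u}$ are positive, so by induction each $\varphi_k$ built from $\varphi^\pm\wedge n$ increases monotonically in $n$. The left-hand side $f$ also increases in $n$, because the Feynman--Kac weight is positive. Monotone convergence applied on both sides, in the expectation and termwise in the series, then yields \eqref{serie} for $\varphi^\pm$ as an identity in $[0,\infty]$. The $L^1$ bound above shows these quantities are finite almost everywhere. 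Subtracting the two identities gives the result almost everywhere.

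\textbf{Main obstacle.} The delicate point is that $f$ is not a priori defined pointwise, since $\varphi(s,\cdot)$ is only an $L^1$ class. One must check that the expectation in \eqref{def:evaluation of functional jump process} does not depend on the representative. This holds because, after the first accepted jump, the law of $J_{s+t}$ is absolutely continuous: $\nu$ is a density. The remaining atom comes from paths with no accepted jump, where $J_{s+t}=x$, and this contributes the term $\varphi(s+t,x)$ times a weight, which is itself an almost-everywhere-defined function of $x$. So $f$ is well defined as an $L^1$ class, and the monotone limits above are taken almost everywhere.
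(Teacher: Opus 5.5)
Your proposal is correct, and its first two steps coincide with the paper's proof. The paper obtains the same bound $\|\mathcal K_{s,u}\psi\|_1\le e^{\overline A u}(1+C_\Lambda/\overline\Lambda)\|\psi\|_1$ by the same Tonelli computation. It then propagates this bound by induction through \eqref{eq:kth term} to $\|\varphi_k(t,s,\cdot)\|_1\le M^k\|\varphi(s+t,\cdot)\|_1$, and concludes from normal convergence in $L^1(\mathbb{R}_+)$.

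The difference is your identification step. The paper simply asserts that normal convergence makes \eqref{serie} ``rigorously true''. It does not explain why the $L^1$-sum of the series equals the expectation $f$ once $\varphi$ is unbounded. Your argument fills this in: truncate to $\varphi^\pm\wedge n$, use positivity of $\mathcal K_{s,u}$ and of the Feynman--Kac weight, and apply monotone convergence on both sides. This proves the identity in $[0,\infty]$ for any fixed representative, and thereby also shows that $f$ is well defined almost everywhere.

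Two minor remarks. First, every term of the recursion only involves the single slice $\varphi(s+t,\cdot)$, because the time arguments always add up to $s+t$. So you do not need $\sup_s\|\varphi(s,\cdot)\|_1<\infty$, which the lemma does not assume. Second, normal convergence in $L^1$ already gives absolute convergence almost everywhere: apply monotone convergence to the weighted series of $|\varphi_k(t,s,\cdot)|$. No passage to a subsequence is needed.
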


\begin{proof}
Let $\psi\in L^1(\mathbb{R}_+)$.
Under our hypotheses and given the form of $\mathcal{K}$, we have the following estimate:
$$\|\mathcal{K}_{s,u}\psi\|_1 \leq e^{\overline{A}u}\left(1+\frac{C_\Lambda}{\overline{\Lambda}}\right)\|\psi\|_1.$$
By induction, taking into account the integration over $u$ in equation \eqref{eq:kth term}, there exists a constant $M > 0$ such that $\|\varphi_k(t,s,\cdot)\|_1 \leq M^k\|\varphi(s+t,\cdot)\|_1$. The series converges normally in the Banach space $L^1(\mathbb{R}_+)$, making the equality rigorously true in $L^1(\mathbb{R}_+)$.
\end{proof}
\begin{corollary}\label{corollary:series continuity L1}
With the same notations and assumptions as Lemma \ref{lemma : series representation L^1} and assuming
\begin{itemize}
\item $\varphi$ belongs to $C^1(\mathbb{R}, L^1(\mathbb{R}_+,\mathbb{R}))$,
\item $\Lambda$ and $\nu$ are differentiable in $s$ with $\sup_{s\in\mathbb{R},z\in\mathbb{R}_+} \int_0^\infty |\partial_s(\Lambda(s,x)\nu(s,x,z))|dx<\infty$,
\item $A$ is differentiable in $s$ and $\sup_{(s,x)\in\mathbb{R}\times\mathbb{R}_+} |\partial_s A(s,x)| < \infty$.
\end{itemize}
Then the function $(s,x)\mapsto f(t,s,x)$ belongs to $C^1(\mathbb{R}, L^1(\mathbb{R}_+))$. If $\varphi$ is compactly supported in $s$, so is $f$.
\end{corollary}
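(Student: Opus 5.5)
The plan mirrors the proof of Corollary \ref{corollary:series continuity Linfini}, now carried out in the Banach space $L^1(\mathbb{R}_+)$ instead of with sup-norms. We use the series representation of Lemma \ref{lemma : series representation L^1}: for fixed $t$,
\[
f(t,s,\cdot)=e^{-\overline{\Lambda}t}\sum_{k\geq0}\tfrac{(\overline{\Lambda}t)^k}{k!}\varphi_k(t,s,\cdot)
\]
in $L^1(\mathbb{R}_+)$. The goal is to show three things: each map $s\mapsto\varphi_k(t,s,\cdot)$ is $C^1$ from $\mathbb{R}$ into $L^1(\mathbb{R}_+)$; the derivatives obey a linear recursive bound; and both the series and the series of derivatives converge normally, uniformly in $s$. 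The standard theorem on term-by-term differentiation of series in a Banach space then gives $f\in C^1(\mathbb{R},L^1(\mathbb{R}_+))$.

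\textbf{Base term.} We have $\varphi_0(t,s,x)=e^{\int_0^tA_{s+v}(x)dv}\varphi(s+t,x)$. Its $s$-derivative is
\[
\big(A_{s+t}(x)-A_s(x)\big)\varphi_0(t,s,x)+e^{\int_0^tA_{s+v}(x)dv}\,\partial_1\varphi(s+t,x),
\]
since $\partial_s\int_0^tA_{s+v}dv=A_{s+t}-A_s$. This is where the assumption on $\partial_sA$ enters, as it makes the difference quotients of $A$ uniformly bounded. The exponential is bounded by $e^{\overline{A}t}$, and $A$ is bounded and continuous in $s$. Dominated convergence in $L^1$ therefore shows that the difference quotients converge in $L^1$, using $\varphi\in C^1(\mathbb{R},L^1)$. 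The derivative is continuous in $s$ with values in $L^1$, and $\|\partial_s\varphi_0\|_1\le c(t)\big(\|\varphi_{s+t}\|_1+\|\partial_1\varphi_{s+t}\|_1\big)$.

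\textbf{Recursive step.} The term $\mathcal K_{s,u}\psi_s$ splits into a multiplicative part and a kernel part. The multiplicative part is $e^{\int_0^uA}(1-\Lambda/\overline{\Lambda})\psi_s(x)$. The kernel part is $e^{\int_0^uA}\overline{\Lambda}^{-1}\int\psi_s(z)\Lambda(s+u,x)\nu(s+u,x,z)dz$. For the kernel part we differentiate under the integral. Fubini together with the hypothesis $\sup_z\int|\partial_s(\Lambda\nu)|dx<\infty$ gives
\[
\Big\|\int\psi(z)\,\partial_s(\Lambda\nu)(s+u,\cdot,z)\,dz\Big\|_1\le C'\|\psi\|_1,
\]
and $C_\Lambda$ bounds the undifferentiated kernel part in the same way. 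Differentiability of $\Lambda$ in $s$ is needed for the multiplicative part.

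Applying the product rule to $\mathcal K_{s,u}\varphi_{k-1}(t-u,s+u,\cdot)$ and integrating against the weight $k(t-u)^{k-1}/t^k$, which has total mass $1$, yields
\[
\|\partial_s\varphi_k\|_1\le c_1(t)\|\varphi_{k-1}\|_1+c_2(t)\|\partial_s\varphi_{k-1}\|_1 .
\]
Together with $\|\varphi_k\|_1\le M^k\sup_s\|\varphi_s\|_1$ from the proof of Lemma \ref{lemma : series representation L^1}, induction gives $\|\partial_s\varphi_k\|_1\le (k+1)\widetilde M^k$. Hence $\sum_k\frac{(\overline{\Lambda}t)^k}{k!}(k+1)\widetilde M^k<\infty$, and the series of derivatives converges normally, uniformly in $s$.

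\textbf{Main obstacle.} The delicate point is continuity in $L^1$, not pointwise, of $s\mapsto\mathcal K_{s,u}\psi_s$ and of its derivative. This requires an $L^1$ dominated-convergence argument for $x\mapsto\int\psi(z)\,\partial_s(\Lambda\nu)(s+u,x,z)dz$ as $s$ varies. As a first approach, I would approximate $\psi$ in $L^1$ by bounded compactly supported functions, use the uniform $L^1$ operator bounds above to control the approximation error, and use the continuity of $\partial_s(\Lambda\nu)$ in $s$ on the approximating functions. One should also check that the $k$-th terms are jointly measurable and continuous in $u$, so that the $du$-integral commutes with $\partial_s$; the uniform bounds make this a dominated-convergence argument.

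\textbf{Compact support.} If $\varphi(s,\cdot)=0$ for $s$ outside $[a,b]$, then $\varphi_0(t,s,\cdot)=0$ for $s+t\notin[a,b]$. Inductively, $\varphi_k(t,s,\cdot)$ involves only $\varphi(s+t,\cdot)$, because the time arguments $u+(s+u\to s+t)$ always end at $s+t$. Hence $f(t,s,\cdot)=0$ for $s\notin[a-t,b-t]$, which is compact.
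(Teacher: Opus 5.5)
Your proposal is correct and follows the paper's proof, which simply reruns the inductive argument of Corollary \ref{corollary:series continuity Linfini} in the Banach space $L^1(\mathbb{R}_+)$: the series representation, the recursive bound $\|\partial_s\varphi_k\|_1\le c_1(t)\|\varphi_{k-1}\|_1+c_2(t)\|\partial_s\varphi_{k-1}\|_1$, normal convergence of the differentiated series, and propagation of the compact support; you supply considerably more detail than the paper's one-line proof. The remaining caveats are shared with the paper and are not gaps in your argument relative to it: the multiplicative part of $\mathcal K_{s,u}$ tacitly needs $\sup|\partial_s\Lambda|<\infty$, and your continuity step needs $s\mapsto\partial_s(\Lambda\nu)$ to be continuous in an $L^1_x$ sense, neither of which is literally among the stated hypotheses (both are implicitly inherited from Corollary \ref{corollary:series continuity Linfini}); also $A$ need not be bounded below, but your base-term argument only uses $|A_{s+t}-A_s|\le t\sup|\partial_sA|$.
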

\begin{proof}
The result follows by applying the exact same inductive procedure as in the proof of Corollary \ref{corollary:series continuity Linfini}, but within the Banach space $L^1(\mathbb{R}_+)$. 

\end{proof}
We can deduce the next property of the operators $\widehat{P}$ and $\widehat{P}'$.
\begin{proposition}\label{prop:stabilité C1c des semigroupes}
Under Assumptions \textbf{(H)} and those of Propositions \ref{prop:densité limite} and \ref{prop:produit_par_x_fini}, we have
\[
\forall t\ge0,\ \forall \varphi \in C^1_c(\mathbb{R},L^\infty(\mathbb{R}_+,\mathbb{R})),\quad
\widehat{P}_t\varphi\, \in C^1_c(\mathbb{R},L^\infty(\mathbb{R}_+,\mathbb{R})),
\]
\[
\forall t\ge0,\ \forall \psi \in C^1_c(\mathbb{R},L^1(\mathbb{R}_+,\mathbb{R})),\quad
\widehat{P}'_t\psi\, \in C^1_c(\mathbb{R},L^1(\mathbb{R}_+,\mathbb{R})).
\]
\end{proposition}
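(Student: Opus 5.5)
The plan is to apply Corollary \ref{corollary:series continuity Linfini} to $\widehat{P}$ and Corollary \ref{corollary:series continuity L1} to $\widehat{P}'$. To do so, we first rewrite each generator in the normalized form $\partial_1\varphi+\Lambda(s,x)\int(\varphi(s,z)-\varphi(s,x))\nu(s,x,z)\,dz$ of Lemma \ref{lemma : analytical expression of functional}, and then check the hypotheses one by one.

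For $Q$ we take
\[
\Lambda(s,x)=\gamma(x)+\frac{\tau}{\beta+\mu\lambda_s}\int_x^\infty\xi_s(y)\,dy,
\qquad
\Lambda\nu(s,x,z)=\gamma(x)m(x,z)+\frac{\tau\,\mathds{1}_{\{z>x\}}\xi_s(z)}{\beta+\mu\lambda_s},
\]
together with $A=r^\lambda$. Here $\Lambda$ is bounded by $\overline{\gamma}+\tau\lambda_s/(\beta+\mu\lambda_s)$. This bound is finite by Proposition \ref{bornitude de la taille de population}: when $\beta=0$ it equals $\tau/\mu$, and otherwise it is at most $\tau\sup\lambda/\beta$. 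Moreover $A\le\overline{b}$, since $d\ge0$ and $C\lambda_s\ge0$. Regularity in $s$ comes from Proposition \ref{prop:densité limite}. Indeed, $\lambda_s$ is $C^1$ with $\dot\lambda_s=\int\partial_t\xi_s$, and $\inf\lambda>0$ keeps the denominator away from zero. Also $s\mapsto\xi_s(z)$ is $C^1$, and the extension by constants outside $[0,T]$ from the Remark only creates kinks at $0$ and $T$; I would handle these by mollifying $\xi$ in time or by working on each piece.

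For the dominating function $h\in L^1$ required by Corollary \ref{corollary:series continuity Linfini}, the transfer part is dominated by $c\,\xi_s(z)\le c\sup_s\xi_s(z)$. Its $s$-derivative is dominated by $c(|\partial_t\xi_s(z)|+\xi_s(z))$. I therefore need $\sup_s\xi_s(z)$ and $\sup_s|\partial_t\xi_s(z)|$ to be integrable in $z$. This is where Proposition \ref{prop:produit_par_x_fini} enters: together with the uniform bound on $\xi$, it gives $\sup_s \xi_s(z)\le c\min(1,1/z)$. That bound is not integrable on its own, so I would sharpen it using the super-solution $\overline{u}$ from Proposition \ref{prop:densité limite}. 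The resulting domination is $\xi_t(z)\le e^{ct}\xi_0(z)+c\int_0^t\langle\xi_v,\gamma m(\cdot,z)\rangle dv$, which is integrable in $z$ uniformly in $t$ by (H.b).

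The mutation part $\gamma(x)m(x,z)$ does not depend on $s$, so only its size in $z$ matters. For the $L^\infty$ corollary, the continuity/$C^1$ argument can in fact be run with $\sup_x$ of the integrand replaced by the bound $\int|\cdot|\,dz\le$ const, since only $\|\partial_s\varphi_k\|_\infty$ enters the induction. This is the step I expect to be the main obstacle, namely justifying differentiation under the integral with a non-$s$-uniform kernel. I would try dominated convergence with $|\varphi|\le M$ first.

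Finally, $A=r^\lambda=b-d-C\lambda_s$ has $\partial_sA=-C\dot\lambda_s$, which is bounded. However, $A$ itself is unbounded below because $d\to\infty$. In the series, only the upper bound $\overline{A}$ is used for boundedness. The $s$-derivative of $e^{\int A}$ equals $e^{\int A}\int\partial_sA$, which is still bounded, so the assumption $|A|\le M$ can be weakened to $A\le\overline{A}$ and $|\partial_sA|\le M$.

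For $\widehat{P}'$, I run the backward version of the series from the Remark after Lemma \ref{lemma : analytical expression of functional}. Here $A=r^\lambda+\kappa$, which is bounded above because $\overline{\kappa}<\infty$. The jump rate is
\[
\Lambda'\nu'(s,x,z)=\gamma(z)m(z,x)+\xi_s(x)h(x,z,\xi_s).
\]
Corollary \ref{corollary:series continuity L1} requires $\sup_{s,z}\int_0^\infty\Lambda'\nu'(s,x,z)\,dx<\infty$. The first term gives $\int\gamma(z)m(z,x)\,dx\le\overline{\gamma}$, using $\int m(z,x)\,dx=1$. The second gives $\frac{\tau}{\beta+\mu\lambda_s}\int_z^\infty\xi_s(x)\,dx\le\frac{\tau\lambda_s}{\beta+\mu\lambda_s}$. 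The rate itself must also be bounded: $\Lambda'(s,x)=\int\gamma(z)m(z,x)\,dz+\frac{\tau\xi_s(x)}{\beta+\mu\lambda_s}\cdot x$. The factor $x$ here comes from $\int_0^x dy$ in the transfer term, and this is exactly why $\sup_x x\xi_s(x)<\infty$ from Proposition \ref{prop:produit_par_x_fini} is needed. The first part of $\Lambda'$ is bounded by (H.b). For the $s$-derivative bounds, $\partial_s\kappa$ and $\partial_s(\xi_s(x)x)$ are controlled in the same way, using the bound on $x\,\partial_t\xi_s(x)$ obtained by differentiating the equation. Compact support in $s$ is preserved in both cases by the final clause of each corollary.
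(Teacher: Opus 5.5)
Your proposal follows the paper's proof. You identify exactly the same data $(\Lambda,\nu,A=r^\lambda)$ for $\widehat P$ and $(\Lambda',\nu',A'=r^\lambda+\kappa)$ for $\widehat P'$ and invoke Corollaries \ref{corollary:series continuity Linfini} and \ref{corollary:series continuity L1}, whereas the paper only asserts that their hypotheses follow from Propositions \ref{prop:densité limite}, \ref{prop:produit_par_x_fini} and \textbf{(H)}. Your extra checks go beyond the paper: that $A$ is only bounded above, that $\sup_x\gamma(x)m(x,\cdot)$ need not be integrable, the kinks of the constant extension of $\xi$ at $0$ and $T$, and the integrable majorant of $\sup_s\xi_s$ from the super-solution. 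One caveat, which the paper shares: the uniform bound on $x\,\partial_t\xi_s(x)$ that you need for $\partial_s\kappa$ and $\partial_s\Lambda'$ does not follow from Proposition \ref{prop:produit_par_x_fini} by differentiating the equation, because the equation produces the term $x\,d(x)\xi_s(x)$ with $d$ unbounded.
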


\begin{proof}
We recall that 
$\lambda_t=\langle \xi_t,1\rangle$ and we use the same notation as in Lemma \ref{lemma : analytical expression of functional}.  For the operator $\widehat{P}$ defined in \eqref{eq:semigroupe FK}, we have 
\begin{equation*}
\Lambda(s,x)=\gamma(x)+\frac{\tau}{\beta+\mu\lambda_s}\int_x^\infty \xi(s,z)\,dz,
\end{equation*}
\begin{equation*}
\nu(s,x,z)=\Lambda(s,x)^{-1}\Big(\gamma(x)m(x,z)+\frac{\tau}{\beta+\mu\lambda_s}\xi(s,z)\,\mathds{1}_{z>x}\Big).
\end{equation*}
\begin{equation*}
    A(s,x)=r^\lambda(s,x)
\end{equation*}
For the operator $\widehat{P}'$ defined in \eqref{def:aux semigroup fk}, 
\begin{equation*}
\Lambda'(s,x)=\int_0^\infty  \gamma(z)m(z,x)\,dz+\tau\frac{x\xi(s,x)}{\beta+\mu\lambda_s},
\end{equation*}
\begin{equation*}
\nu'(s,x,z)=\Lambda'(s,x)^{-1}\Big(\gamma(z)m(z,x)+\xi(s,x)\frac{\tau}{\beta+\mu\lambda_s}\,\mathds{1}_{x>z}\Big),
\end{equation*}
\begin{equation*}
    A'(s,x)=r^\lambda(s,x)+\kappa(s,x).
\end{equation*}

By the properties of $\xi$ given in Proposition \ref{prop:densité limite}, Proposition \ref{prop:produit_par_x_fini} and Assumption \textbf{(H)}, both semigroups satisfy the hypotheses of Corollaries \ref{corollary:series continuity Linfini} and \ref{corollary:series continuity L1}.

\end{proof}
\subsubsection{Duality of the Feynman-Kac Semigroups}
We recall the definition of the following set :
\begin{equation*}
    \mathcal{A} = \{ (s,x) \mapsto \mathds{1}_A(s) f(s,x) \mid A \subset \mathbb{R} \text{ finite interval}, f \in C^1(\mathbb{R},L^\infty(\mathbb{R}_+,\mathbb{R}))\}.
\end{equation*}

We define its counterpart :
\begin{equation*}
    \mathcal{A}' = \{ (s,x) \mapsto \mathds{1}_A(s) g(s,x) \mid A \subset \mathbb{R} \text{ finite interval}, g \in C^1(\mathbb{R},L^1(\mathbb{R}_+,\mathbb{R}))\}.
\end{equation*}
We can state the main duality result:
\begin{proposition}\label{prop: dualité principale}
The semigroups \(\widehat{P}\) and \(\widehat{P}'\) are in duality with respect to the Lebesgue measure on \(\mathbb{R}\times\mathbb{R}_+\):
\begin{equation*}
    \langle \widehat{P}_t \varphi,\psi\rangle 
    = \langle \varphi,\widehat{P}'_t \psi\rangle, 
    \quad \forall t\geq 0,\; \forall \varphi\in \mathcal{A},\,\forall\psi\in\mathcal{A}'.
\end{equation*}
\end{proposition}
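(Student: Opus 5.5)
The plan is the classical interpolation argument, first on the smooth compactly supported classes and then extended to $\mathcal{A}$ and $\mathcal{A}'$ by approximation. Fix $t\ge0$, $\varphi\in C^1_c(\mathbb{R},L^\infty(\mathbb{R}_+,\mathbb{R}))$ and $\psi\in C^1_c(\mathbb{R},L^1(\mathbb{R}_+,\mathbb{R}))$. For $u\in[0,t]$ define
\begin{equation*}
F(u):=\langle \widehat{P}_u\varphi,\ \widehat{P}'_{t-u}\psi\rangle=\int_\mathbb{R}\int_0^\infty \widehat{P}_u\varphi(s,x)\,\widehat{P}'_{t-u}\psi(s,x)\,dx\,ds .
\end{equation*}
By Proposition \ref{prop:stabilité C1c des semigroupes}, $\widehat{P}_u\varphi\in C^1_c(\mathbb{R},L^\infty)$ and $\widehat{P}'_{t-u}\psi\in C^1_c(\mathbb{R},L^1)$. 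Hence the pairing is finite, and the duality of generators (Lemma \ref{lemma: dualité des générateurs}) applies to this pair.

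The first step is to differentiate $F$. Lemma \ref{formules de kolmogorov} gives
\begin{equation*}
F'(u)=\langle \widehat{L}\widehat{P}_u\varphi,\widehat{P}'_{t-u}\psi\rangle-\langle \widehat{P}_u\varphi,\widehat{L}'\widehat{P}'_{t-u}\psi\rangle ,
\end{equation*}
and Lemma \ref{lemma: dualité des générateurs} shows that this vanishes. Then $F(0)=F(t)$, which is exactly $\langle\varphi,\widehat{P}'_t\psi\rangle=\langle\widehat{P}_t\varphi,\psi\rangle$.

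The step I expect to be the main obstacle is justifying that differentiation under the integral. We need the difference quotients of $u\mapsto\widehat{P}_u\varphi$ to be bounded uniformly in $(s,x)$, and those of $u\mapsto\widehat{P}'_{t-u}\psi$ to converge in $L^1(\mathbb{R}\times\mathbb{R}_+)$, locally uniformly in $u$. To get this I would write $\widehat{P}_{u+h}\varphi-\widehat{P}_u\varphi=\int_u^{u+h}\widehat{P}_v\widehat{L}\varphi\,dv$, and bound $\widehat{L}\varphi$ using that $\varphi$ has compact support in $s$ and is bounded. Two points need care here.
\begin{itemize}
\item The term $r^\lambda\varphi$ contains the unbounded death rate $d$. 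Since $r^\lambda\le\overline{b}$, the exponential weight is bounded above, but $d\varphi$ need not be bounded. I would handle this either by truncating $d$ (replace $d$ by $d\wedge n$, prove the duality, then let $n\to\infty$ by monotone convergence in the Feynman--Kac formulas \eqref{eq:semigroupe FK} and \eqref{def:aux semigroup fk}), or by placing $d$ on the $L^1$ side via the integrability of $d\,\xi$.
\item The compact time support has to be kept uniform in $u\in[0,t]$. It shifts by at most $t$: forward for $\widehat{P}$, backward for $\widehat{P}'$. So all integrands are supported in a fixed compact set of $s$, and dominated convergence applies, using the $L^1$ bound $\|\varphi_k\|_1\le M^k\|\varphi\|_1$ from Lemma \ref{lemma : series representation L^1} for the $\widehat{P}'$ factor.
\end{itemize}

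Finally, I extend from $C^1_c$ to $\mathcal{A}$ and $\mathcal{A}'$. Take $\varphi=\mathds{1}_A f$ and $\psi=\mathds{1}_B g$. Approximate the indicators by $C^1_c$ cutoffs $c_n\to\mathds{1}_A$ pointwise with $|c_n|\le 1$, which is possible since $A$ and $B$ are bounded intervals (Lemma \ref{lemma : approximation de l'indicatrice}). This gives $c_nf\in C^1_c(\mathbb{R},L^\infty)$ and $c_n' g\in C^1_c(\mathbb{R},L^1)$. Both semigroups are positive integral operators with exponential weights bounded by $e^{(\overline{b}+\overline{\kappa})t}$, so $\widehat{P}_t(c_nf)\to\widehat{P}_t(\mathds{1}_Af)$ pointwise with uniform bounds, and similarly in $L^1$ for $\widehat{P}'$. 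Dominated convergence on the bounded time window then passes the identity to the limit, in the same way the paper used Lemma \ref{lemma : extension} in Proposition \ref{prop:def_semigroup_FK}.
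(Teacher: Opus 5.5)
Your proposal is correct and follows the paper's proof. It uses the same interpolation $F(u)=\langle \widehat P_u\varphi,\widehat P'_{t-u}\psi\rangle$ on compactly supported test functions, differentiates via Lemma \ref{formules de kolmogorov} and cancels via the generator duality lemma, then extends to $\mathcal A,\mathcal A'$ by approximating the time indicators and applying dominated convergence. Your truncation $d\wedge n$ is a worthwhile refinement, because the paper's one-line domination by $|\widehat L\varphi\,\widehat L'\psi|$ glosses over the unbounded death rate in $r^\lambda$; keep that route, since ``placing $d$ on the $L^1$ side'' does not work for a general $\psi$. Also note that Lemma \ref{lemma : approximation de l'indicatrice} gives sigmoid cutoffs that are not compactly supported, so multiply them by a fixed bump equal to $1$ near $\overline{A}$ (or use $C^\infty_c$ approximations as the paper does).
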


\begin{proof}
We begin by establishing the duality for test functions \(\varphi\in C^1_c(\mathbb{R},L^\infty(\mathbb{R}_+,\mathbb{R}))\) and \(\psi\in C^1_c(\mathbb{R},L^1(\mathbb{R}_+,\mathbb{R}))\).  
We define the following quantity, for $u\in[0,t]$ :
\begin{equation*}
    F_t(u)=\int_\mathbb{R}\int_0^\infty \widehat{P}_u\varphi(s,x)\widehat{P}'_{t-u}\psi(s,x)dsdx.
\end{equation*}
It is well-defined by Proposition \ref{prop:stabilité C1c des semigroupes}. Let us observe that for all $u$, the function $(s,x)\mapsto\widehat{P}_u\varphi(s,x)\widehat{P}'_{t-u}\psi(s,x)$ is integrable. Also, for all $(s,x)$, the function $t\mapsto\widehat{P}_u\varphi(s,x)\widehat{P}'_{t-u}\psi(s,x)$ is differentiable (by Lemma \ref{formules de kolmogorov}) and we have the following domination :
\begin{equation*}
    \left|\partial_u\left(\widehat{P}_u\varphi(s,x)\widehat{P}'_{t-u}\psi(s,x)\right)\right|\leq e^{(2\overline{b}+\overline{\kappa})u}|\widehat{L}\varphi(s,x)\widehat{L}'\psi(s,x)|.
\end{equation*}
We can check that the function $\widehat{L}\varphi\widehat{L}'\psi$ belongs to $C_c(\mathbb{R},L^1(\mathbb{R}_+,\mathbb{R}))$.
By applying the classical differentiation under the integral sign, we have
\begin{equation*}
    \partial_u F_t(u)=\int_\mathbb{R}\int_0^\infty  \widehat{L}\widehat{P}_u\varphi(s,x) \widehat{P}'_{t-u}\psi(s,x)dsdx-\int_\mathbb{R}\int_0^\infty \widehat{P}_u\varphi(s,x) \widehat{L}'\widehat{P}'_{t-u}\psi(s,x)dsdx.
\end{equation*}
We know by Proposition \ref{prop:stabilité C1c des semigroupes} that the function $(s,x)\mapsto \widehat{P}_u\varphi(s,x)$ (respectively $(s,x)\mapsto \widehat{P}'_{t-u}\psi(s,x)$) belongs to $C^1_c(\mathbb{R},L^\infty(\mathbb{R}_+,\mathbb{R}))$ (resp. $C^1_c(\mathbb{R},L^1(\mathbb{R}_+,\mathbb{R}))$). Therefore we can apply the duality result given by Lemma \ref{lemma: dualité des générateurs} and write 
\begin{equation*}
    \int_\mathbb{R}\int_0^\infty \widehat{P}_u\varphi(s,x)\cdot \widehat{L}'\widehat{P}'_{t-u}\psi(s,x)dsdx=\int_\mathbb{R}\int_0^\infty  \widehat{L}\widehat{P}_u\varphi(s,x) \cdot\widehat{P}'_{t-u}\psi(s,x)dsdx.
\end{equation*}
It implies $\partial_uF_t(u)=0$ and in particular, taking $u=0$ and $u=t$, that
\begin{equation*}
    \langle \widehat{P}_t\varphi,\psi\rangle=\langle \varphi,\widehat{P}'_t\psi\rangle.
\end{equation*}

\medskip

We extend the duality to $\varphi\in\mathcal{A}$ and $\psi\in\mathcal{A}'$. By definition, there exist finite intervals $A,B$ and $f,g$ respectively elements of $C^1_b(\mathbb{R},L^\infty(\mathbb{R}_+,\mathbb{R}))$ and $C^1_b(\mathbb{R},L^1(\mathbb{R}_+,\mathbb{R}))$ such that $\varphi=\mathds{1}_Af$ and $\psi=\mathds{1}_B g$. We know by classical theory that $C^\infty_c(\mathbb{R})$ is dense in $L^1(\mathbb{R})$ (for the $\|.\|_1$ norm). Hence $\mathds{1}_A$ and $\mathds{1}_B$ can be approximated by sequences $(a_n)$ and $(b_n)$ of $C^\infty_c(\mathbb{R})$. We conclude by dominated convergence and Lemma \ref{lemma : extension} in Appendix \ref{sec:appendix approximation and extension results}.
\end{proof}

\section{Time reversal and return to the original population process}\label{sec:time_reversal}

Time reversal for the spinal process is achieved via Nagasawa's Theorem, which characterizes the backward semigroup through its duality with the forward semigroup.
In order to perform the duality between the semigroups, we ensure that the function \(m_T\) is sufficiently regular.  
Then, we conclude by expressing the generator of the time-reversed spine. Finally, we show that the backward trajectory of a uniformly sampled individual converges to the reversed spinal process in the large population limit.

In this work, the approach we adopt allows us to address the freezing of population interactions under a non-stationary density $\xi$. This framework, along with the resulting time inhomogeneity of the time-reversed spinal process, stands as a natural extension of the work in \cite{meleardB1, meleardB2}. The methodological advantage of our contribution is that we formulate the entire study directly using time-space extended processes. This formalism provides a clear and consistent framework, which allows us to precisely specify the required functional spaces, especially for handling the indicator functions that restrict the spinal processes to the interval $[0,T]$.
\subsection{Time reversal of the spinal process}\label{ssec:time reversal of the spinal process}
Let us denote by \(\widetilde{P}^\mathcal{R}\) the semigroup of the time-reversed spine.

We shall define the semigroup $\widetilde{P}^\mathcal{R}$ as the dual of $\widetilde{P}$ with respect to the measure $\eta$ given in Lemma \ref{mesure excessive} (in Appendix \ref{sec:appendix time reversal}) and define the notation $\langle\cdot,\cdot\rangle_\eta$ for the scalar product in $L^2(\eta)$. Recall the following expression:
\begin{equation*}\label{semigroup expression}
    \forall t\geq0,\,\widetilde{P}_t\varphi(s,x)
    =\frac{\widehat{P}_t(\varphi m_T \mathds{1}_{[t,T]})(s,x)}{m_T(s,x)}\,.
\end{equation*}
We have $m_T\in C^1([0,T], L^\infty(\mathbb{R}_+,\mathbb{R}))$. The semigroup $\widetilde{P}$ is well-defined if $\varphi m_T \mathds{1}_{[t,T]}\in \mathcal{A}$. This is the case if $\varphi\in C^1([0,T], L^\infty(\mathbb{R}_+,\mathbb{R}))$ by Lemma \ref{lemma : extension}.

Recall that $m_T$ is defined on $[0,T]\times\mathbb{R}_+$ by 
\[
m_T(s,x) = \widehat{P}_{T-s} 1(s,x).
\]
We prove that $m_T$ is sufficiently regular.
\begin{proposition}\label{prop : properties of mT}
The function $(s,x)\in[0,T]\times\mathbb{R}_+\mapsto m_T(s,x)$ is $C^1$ in time, measurable bounded in space and strictly positive.
\end{proposition}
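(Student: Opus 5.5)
Write $m_T(s,x)=\widehat P_{T-s}1(s,x)=\mathbb{E}\big[e^{\int_s^T r^\lambda_u(X_u)du}\,\big|\,X_s=x\big]$. The plan treats boundedness/measurability, positivity and time regularity separately, relying throughout on the series representation of Lemma \ref{lemma : analytical expression of functional}. In that lemma we take the parameters already computed in the proof of Proposition \ref{prop:stabilité C1c des semigroupes}:
\[
\Lambda(s,x)=\gamma(x)+\frac{\tau}{\beta+\mu\lambda_s}\int_x^\infty \xi(s,z)\,dz,
\]
the associated density $\nu$, and $A=r^\lambda$.

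\emph{Boundedness, measurability and positivity.} Since $b\le \overline b$ and $d,C\lambda_u\ge 0$, we have $r^\lambda\le \overline b$, hence $0\le m_T(s,x)\le e^{\overline b (T-s)}\le e^{\overline b T}$. Measurability in $x$ follows from the series \eqref{serie}: each $\varphi_k$ is built from measurable kernels, and the series converges uniformly.

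For strict positivity, the integrand $e^{\int_s^T r^\lambda_u(X_u)du}$ is strictly positive as soon as $\int_s^T d(X_u)\,du<\infty$ almost surely. This integral is finite because a pure jump process with rate bounded by $\overline\Lambda$ has almost surely finitely many jumps on $[s,T]$, and $d$ is continuous and hence finite on each visited point. The expectation of an a.s. positive variable is positive.

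One can also give a quantitative bound. On the event of no jump on $[s,T]$, whose probability is at least $e^{-\overline\Lambda T}$, we get $m_T(s,x)\ge e^{-\overline\Lambda T}e^{(-d(x)-C\sup\lambda)T}>0$. This bound is positive pointwise but not uniform in $x$, which is all the statement requires.

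\emph{$C^1$ regularity in time.} This is the main point, and it is delicate because $r^\lambda$ is unbounded below: $d\to\infty$. Corollary \ref{corollary:series continuity Linfini} requires $|A|+|\partial_s A|\le M$. Here $\partial_s A=-C\dot\lambda_s$ is bounded, since $\partial_t\xi_t\in L^1$ with bounds uniform on $[0,T]$ (Proposition \ref{prop:densité limite}). However, $A$ itself is not bounded below.

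The first workaround is to freeze the $x$-dependent unbounded part of $A$. In the series, $\varphi_0(t,s,x)=e^{\int_0^t A_{s+v}(x)dv}$, and the factor $e^{\int_0^u A_{s+v}(x)dv}$ inside $\mathcal K_{s,u}$ satisfy
\[
\partial_s e^{\int_0^u A_{s+v}(x)dv}=\Big(-C\int_0^u\dot\lambda_{s+v}\,dv\Big)e^{\int_0^u A_{s+v}(x)dv}.
\]
Only the bounded part $C\lambda$ depends on $s$, while $b(x)-d(x)$ is time-independent. So $s$-derivatives never produce factors of $d(x)$, and the inductive bound $\|\partial_s\varphi_k\|_\infty\le c_1\|\varphi_{k-1}\|_\infty+c_2\|\partial_s\varphi_{k-1}\|_\infty$ of that proof goes through using only $A\le\overline b$ and $|\partial_sA|\le M$.

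The hypotheses on $\Lambda$ and $\nu$ must also hold. Boundedness of $\Lambda$ uses $\gamma\le\overline\gamma$ and $\lambda_s\le\max(\overline b/C,\lambda_0)$. The required $C^1$ regularity in $s$ with an integrable dominating function $h$ comes from $\partial_s\xi$, whose time-continuity and integrability are given by Proposition \ref{prop:densité limite} together with the lower bound $\inf\lambda>0$ of Proposition \ref{bornitude de la taille de population}, which keeps the factor $\tau/(\beta+\mu\lambda_s)$ smooth and bounded.

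\emph{The two time variables.} In $m_T(s,x)=f(T-s,s,x)$ both arguments depend on $s$. Differentiability in the $t$-variable comes from the Kolmogorov equation of Lemma \ref{formules de kolmogorov}, applied with $\varphi=1$ cut off by a $C^1_c$ function equal to $1$ on $[0,T]$. Combining it with the $C^1$ regularity in $s$ by the chain rule gives
\[
\partial_s m_T=-\widehat L m_T+\partial_1\text{-part},
\]
i.e. the backward equation $\partial_s m_T+Qm_T-\partial_1 m_T+r^\lambda m_T=0$ in the appropriate sense. The main obstacle is justifying this chain rule uniformly in $x$ despite the unbounded $d$. The approach is to carry it out at the level of the series, term by term, where the $t$-derivative of $(\overline\Lambda t)^k e^{-\overline\Lambda t}/k!$ and of $\varphi_k$ is controlled exactly as above.
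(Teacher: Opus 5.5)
Your route is the paper's: the bound $m_T\le e^{\overline{b}T}$, positivity from the no-jump contribution (which is the $k=0$ term of the series), regularity in $t$ from the Kolmogorov equation, and regularity in $s$ from the series corollary. Your observation that the corollary's hypothesis $|A|\le M$ fails for $A=r^\lambda$ fixes a point the paper passes over. It is indeed not needed, because only $C\lambda_s$ depends on $s$ and $A\le\overline{b}$ suffices for the sup bounds. For the exponential factor you do not even need $\dot\lambda$ bounded, since $\partial_s\int_0^u A_{s+v}(x)\,dv=-C(\lambda_{s+u}-\lambda_s)$.

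The step that fails as written is the last one. You plan to justify the chain rule for $s\mapsto f(T-s,s,x)$ uniformly in $x$, by differentiating the series in $t$ term by term, ``controlled exactly as above''. The $t$-derivative does not behave like the $s$-derivative. Indeed $\partial_t\varphi_0(t,s,x)=A_{s+t}(x)\varphi_0(t,s,x)$ brings down $b(x)-d(x)$, and $\sup_x d(x)e^{-t d(x)}$ is of order $1/t$, which blows up as $t\to0$, i.e.\ as $s\to T$. In fact no uniform statement can hold. Since $m_T(T,\cdot)=1$, the backward equation gives $\frac{d}{ds}m_T(s,x)\big|_{s=T}=-r^\lambda(T,x)=d(x)-b(x)+C\lambda_T$, which is unbounded in $x$.

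The proposition only asks for regularity at fixed $x$, and there the chain rule needs no series in $t$. Write $\mathcal{J}_s$ for the jump part of $Q$. The Kolmogorov identity $\partial_t\widehat P_t1=\widehat L\widehat P_t1$ then reads $\partial_t f(t,s,x)=\partial_s f(t,s,x)+\mathcal{J}_s f(t,s,\cdot)(x)+r^\lambda(s,x)f(t,s,x)$. The right-hand side is jointly continuous in $(t,s)$ once $f$ and $\partial_s f$ are, and your $s$-argument provides this because its bounds are uniform in $t\in[0,T]$. Hence $\frac{d}{ds}m_T(s,x)=-\mathcal{J}_s m_T(s,\cdot)(x)-r^\lambda(s,x)m_T(s,x)$, which is continuous in $s$; here $d(x)$ is just a constant. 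You should drop the term-by-term $t$-differentiation and the uniformity claim.
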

\begin{proof}
First, observe that for all $x \in \mathbb{R}_+$ and $0 \leq s \leq T$, one has $\sup_{x \in \mathbb{R}_+,\,0 \leq s \leq T} m_T(s,x) \leq e^{\overline{b}T}$.

We now prove the continuity of $t \mapsto \widehat{P}_t 1(s,x)$.  
By Kolmogorov’s formula in Lemma \ref{formules de kolmogorov}, we have for all $s$, $\widehat{P}_t 1(s,x) = 1 + \int_0^t \widehat{P}_u r^\lambda(s,x) \, du$. Since the constant function $1$ belongs to the domain of $\widehat{L}$ and satisfies $\widehat{L}1(s,x)=r^\lambda(s,x)$ we get
\[
|\widehat{P}_t1(s,x) - 1| \leq \overline{b} \int_0^t e^{\overline{b}u}du 
\leq \overline{b} e^{\overline{b}t}t,
\]
which proves the continuity of $f$ at $0$ (with respect to the first coordinate). The semigroup property then implies continuity for all $t \geq 0$. Because $\partial_t\widehat{P}_t 1(s,x)= \widehat{P}_t r^\lambda(s,x)$, we have the $C^1$ regularity in $t$.

$C^1$ regularity of $\widehat{P}_t 1(s,x)$ with respect to $s$ follows from Lemma \ref{corollary:series continuity Linfini}. We can write $m_T(s,x)$ as a series of strictly positive terms and then the strict positivity of $m_T$ follows.
\end{proof}
 We need to recall the definition of the linear map $\mathcal{R}:\mathbb{D}([0,T],\mathbb{R})\to\mathbb{D}([0,T],\mathbb{R})$ given by
        \begin{equation*}\label{def:Rcal}
		\mathcal{R}(\varphi)(s)=\begin{cases}
            \lim\limits_{\varepsilon\to 0,\ \varepsilon>0}\varphi(T-s-\varepsilon) &\text{ if } s\neq T\\
            \varphi(0)&\text{ if } s=T
        \end{cases}
        \end{equation*}
		
As proven in \cite{meleardB2}, the linear map $\mathcal{R}$ is $1$-Lipschitz continuous with respect to the Skorokhod topology.

Now we can prove the main result:
\begin{proposition}\label{time-reversed semigroup}
Let $\mathcal{R}(Y)(t)$ denote the time reversal of the spinal process.
The process initiated with the law $\xi_T/\langle \xi_T,1\rangle$ (note that for all $x \in \mathbb{R}_+$, $m_T(T,x)=1$), has its semigroup (extended to time-space) given, for every $\psi$ in $C^1([0,T], L^1(\mathbb{R}_+,\mathbb{R}))\cup C^1([0,T], L^\infty(\mathbb{R}_+,\mathbb{R}))$, by
\begin{equation*}
    \widetilde{P}^\mathcal{R}_t\psi(s,x)=\frac{1}{\xi(s,x)}\widehat P'_t(\xi\psi)(s,x)\mathds{1}_{\{0\leq t\leq s\leq T\}}.
\end{equation*}
Since under our assumptions $\xi_t$ never vanishes for any $t$, the expression is well defined.

The generator of the time-reversed spine is given, for every $\psi\in D(\widetilde{L}^\mathcal{R})$, by
\begin{equation*}\label{backward generator}
    \begin{aligned}
        \widetilde{L}^\mathcal{R}\psi(s,x)=
        -\partial_1\psi(s,x)&+\int_0^\infty (\psi(s,z)-\psi(s,x))\frac{\xi(s,z)}{\xi(s,x)}\gamma(z)m(z,x)dz\\
        &+\tau\int_0^\infty (\psi(s,y)-\psi(s,x))\frac{\xi(s,y)}{\beta+\mu\langle \xi_s,1\rangle}\mathds{1}_{\{y<x\}}dy.
    \end{aligned}
\end{equation*}
\end{proposition}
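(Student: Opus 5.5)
We will apply Nagasawa's theorem to the time-space spinal process $(t,Y_t)_{[0,T]}$ with semigroup $\widetilde P$ of \eqref{expression semigroupe spine}. The reference measure is the excessive measure $\eta$ of Lemma \ref{mesure excessive}, which we take to be $\eta(ds,dx)=\mathds{1}_{[0,T]}(s)\,m_T(s,x)\,\xi(s,x)\,ds\,dx$. This is the natural candidate: by the proposition preceding Section \ref{sec:dualities}, $m_T(t,\cdot)\xi(t,\cdot)$ is, up to the constant $\langle\xi_T,1\rangle$, the marginal density of $Y_t$ when $Y$ is started from $m_{0,T}\xi_0$. The starting law of the reversed process is then the marginal at time $T$, namely $m_T(T,x)\xi(T,x)/\langle\xi_T,1\rangle=\xi_T/\langle\xi_T,1\rangle$, since $m_T(T,\cdot)=1$.

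\textbf{Step 1: dual of $\widetilde P$ with respect to $\eta$.} For $\varphi$ in $C^1([0,T],L^\infty)$ and $\psi$ in $C^1([0,T],L^1)$ or $C^1([0,T],L^\infty)$, we compute
\begin{equation*}
\langle \widetilde P_t\varphi,\psi\rangle_\eta=\int\!\!\int \widehat P_t(\varphi m_T\mathds{1}_{[t,T]})(s,x)\,\mathds{1}_{[0,T]}(s)\,\psi(s,x)\xi(s,x)\,ds\,dx .
\end{equation*}
The factor $m_T$ in the denominator of $\widetilde P_t$ cancels against the density of $\eta$. We then recognise both $\varphi m_T\mathds{1}_{[t,T]}\in\mathcal A$ and $\mathds{1}_{[0,T]}\xi\psi\in\mathcal A'$; the latter uses that $\xi$ is $C^1$ in time with $\partial_t\xi_t\in L^1$ and $\xi$ bounded (Proposition \ref{prop:densité limite}). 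Proposition \ref{prop: dualité principale} then moves $\widehat P_t$ across the pairing. Next we use the identity $\widehat P'_t(\mathds 1_{[0,T]}g)(s,\cdot)=\mathds 1_{[0,T]}(s-t)\widehat P'_t g(s,\cdot)$, which follows from the backward definition \eqref{def:aux semigroup fk} exactly as in the remark after Proposition \ref{prop:def_semigroup_FK}. With it, we rewrite the result as $\langle\varphi,\widetilde P^{\mathcal R}_t\psi\rangle_\eta$, where
\begin{equation*}
\widetilde P^{\mathcal R}_t\psi(s,x)=\frac{1}{\xi(s,x)}\widehat P'_t(\xi\psi)(s,x)\mathds{1}_{\{0\le t\le s\le T\}}.
\end{equation*}
Dividing by $\xi$ is legitimate by the strict positivity of $\xi$.

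\textbf{Step 2: hypotheses of Nagasawa.} We must check that $\eta$ is excessive (indeed invariant on the allowed window) for $\widetilde P$. Taking $\psi=1$ above gives $\langle \widetilde P_t\varphi,1\rangle_\eta=\int\!\!\int\varphi\,\widehat P'_t(\xi\mathds 1_{[0,T]})$. The identity \eqref{xi as solution}, $r^\lambda\xi+Q^*\xi=0$, says $\widehat L'\xi=0$. Together with the Kolmogorov equation of Lemma \ref{formules de kolmogorov}, this gives $\widehat P'_t\xi=\xi$ on the relevant window, so $\eta\widetilde P_t\le\eta$. The extension of Lemma \ref{formules de kolmogorov} to $\xi$, which is not compactly supported in time, goes through the time-extension of $\xi$ and the indicator extension via Lemma \ref{lemma : extension}. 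We also check that $\widetilde P^{\mathcal R}$ is sub-Markovian, which follows from $\widehat P'_t\xi=\xi$. We then invoke Nagasawa's theorem to conclude that $\mathcal R(Y)$, started from $\xi_T/\langle\xi_T,1\rangle$, is Markov with semigroup $\widetilde P^{\mathcal R}$.

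\textbf{Step 3: generator.} We differentiate at $t=0$ using Lemma \ref{formules de kolmogorov}, which gives $\widetilde L^{\mathcal R}\psi=\xi^{-1}\widehat L'(\xi\psi)$. Expanding with $\widehat L'=Q'+r^\lambda+\kappa$ and subtracting $\psi\,\xi^{-1}\widehat L'\xi=0$ removes the zero-order terms and the time-derivative of $\xi$, leaving $-\partial_1\psi$. The jump parts become the $h$-transform $\xi^{-1}[Q'(\xi\psi)-\psi Q'\xi]$. The variation part is $\int(\psi(s,z)-\psi(s,x))\frac{\xi(s,z)}{\xi(s,x)}\gamma(z)m(z,x)dz$. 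For the transfer part, note that $Q^*$ uses the kernel $\xi_s(y)h(y,x,\xi_s)$ acting on $\psi(s,y)$; after the $h$-transform this gives the term with $\xi(s,y)\mathds 1_{\{y<x\}}\tau/(\beta+\mu\lambda_s)$. To obtain this form cleanly, I would compute directly from $Q^*$ in Lemma \ref{dual generator} rather than from the $Q',\kappa$ split.

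\textbf{Main obstacle.} I expect the hardest step to be Step 2: justifying $\widehat P'_t\xi=\xi$ and the excessivity of $\eta$ with the indicator functions and non-compact time support. This requires the extension lemmas and the regularity of Proposition \ref{prop:stabilité C1c des semigroupes}.
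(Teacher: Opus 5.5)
Your proposal is correct and follows the paper's proof essentially step for step. The shared ingredients are Nagasawa's theorem with $\eta=\mathds{1}_{[0,T]}(s)\,\xi m_T\,ds\,dx$, the cancellation of $m_T$, the time shift $\widehat P'_t(g\phi)(s,x)=g(s-t)\widehat P'_t\phi(s,x)$ of the indicator, the semigroup duality between $\widehat P$ and $\widehat P'$, and $\widetilde L^{\mathcal R}\psi=\xi^{-1}\widehat L'(\xi\psi)$ with the residual removed by \eqref{xi as solution}.

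The only real variation is in how you obtain excessivity of $\eta$. You use the dual identity $\widehat P'_t\xi=\xi$ on $\{t\le s\le T\}$, which also gives $\widetilde P^{\mathcal R}_t1=\mathds{1}_{\{t\le s\le T\}}$. The paper instead derives it in the appendix from the forward identity of Lemma~\ref{lemme de transition}, which avoids extending the Kolmogorov equations to $\xi$.

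There are two harmless slips. Your Step 2 formula is missing the factor $m_T\mathds{1}_{[t,T]}$. In Step 3, the kernel multiplying $\psi(s,y)$ in $Q^*$ is $\xi_s(x)h(x,y,\xi_s)$, not $\xi_s(y)h(y,x,\xi_s)$; your final transfer term is nevertheless right.
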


\begin{proof}[Proof of Proposition \ref{time-reversed semigroup}]
We have introduced the reference measure $\eta$ in Lemma \ref{mesure excessive} in Appendix \ref{sec:appendix time reversal}:
\[
\eta(ds,dx)=\xi(s,x)m_T(s,x)\mathds{1}_{\{0\leq s\leq T\}}dsdx.
\]

Let $\varphi\in C^1([0,T],L^\infty(\mathbb{R}_+,\mathbb{R}))$, we have seen that $\varphi m_T \mathds{1}_{[t,T]}$ belongs to $\mathcal{A}$.
Let $\psi\in C^1([0,T], L^1(\mathbb{R}_+,\mathbb{R}))\cup C^1([0,T], L^\infty(\mathbb{R}_+,\mathbb{R}))$.

We compute
\begin{align*}
    \langle \widetilde{P}_t\varphi,\psi\rangle_\eta
    &=\int_0^\infty \int_\mathbb{R} \psi(s,x)
    (\widetilde{P}_t\varphi)(s,x)\xi(s,x)m_T(s,x){1}_{\{0\leq s\leq T\}}dsdx\\
    &=\int_0^\infty \int_\mathbb{R}\psi(s,x)\widehat P_t(\varphi m_T)(s,x)\xi(s,x)\mathds{1}_{\{s\geq 0,\,t\geq 0,\,0\leq s+t\leq T\}}\mathds{1}_{\{0\leq s\leq T\}}dsdx\\
    &=\mathds{1}_{\{0\leq t\leq T\}}\int_0^\infty \int_\mathbb{R}\widehat P_t (\varphi m_T)(s,x)\psi(s,x)\xi(s,x)\mathds{1}_{\{0\leq s+t\leq T\}}\mathds{1}_{\{0\leq s\leq T\}}dsdx.
\end{align*}

Define $g(s)=\mathds{1}_{\{0\leq s\leq T\}}\mathds{1}_{\{0\leq s+t\leq T\}}$.
For a test function $\phi\in C^1(\mathbb{R}, L^1(\mathbb{R}_+,\mathbb{R}))$, we have
\begin{align*}
    \widehat P'_t(\phi g)(s,x)&=\mathbb{E}_{(s,x)}\left[
    e^{\int_0^t r^\lambda_{s-u}(X_{s-u})du}\phi_{s-t}(X_{s-t})g(s-t)\right]\\
    &=g(s-t)\widehat P'_t\phi(s,x).
\end{align*}

We know that $\xi\in C^1([0,T], L^\infty(\mathbb{R}_+,\mathbb{R}))\cap C^1([0,T], L^1(\mathbb{R}_+,\mathbb{R}))$ since $\psi$ being an element of $C^1([0,T], L^1(\mathbb{R}_+,\mathbb{R}))\cup C^1([0,T], L^\infty(\mathbb{R}_+,\mathbb{R}))$, we have $\psi\xi\mathds{1}_{[0,T-t]}\in \mathcal{A}'$. Thus we can use the duality given by Proposition \ref{prop: dualité principale}:

\begin{align*}
    \langle \widetilde{P}_t\varphi,\psi\rangle_\eta
    &=\mathds{1}_{\{0\leq t\leq T\}}\int_0^\infty \int_\mathbb{R}\widehat P_t (\varphi m_T) (s,x)\psi(s,x)\xi(s,x)g(s)dsdx\\
    &=\mathds{1}_{\{0\leq t\leq T\}}
    \int_0^\infty \int_\mathbb{R}
    \varphi(s,x)m_T(s,x)g(s-t)\widehat P'_t(\psi\xi)(s,x)dsdx\\
    &=\int_0^\infty \int_\mathbb{R}
    \varphi(s,x)\cdot \frac{1}{\xi(s,x)}\widehat P'_t(\psi\xi)(s,x)
    \mathds{1}_{\{0\leq t\leq s\leq T\}}\cdot \eta(ds,dx)\\
    &=\langle\varphi,\widetilde{P}^\mathcal{R}_t\psi\rangle_\eta.
\end{align*}

The duality is thus established by applying Theorem \ref{Dellacherie}.

For the computation of the generator, we observe that (for $0\leq t\leq s\leq T$)
\[
\widetilde{P}^\mathcal{R}_t\psi(s,x)=\psi(s,x)+\frac{1}{\xi(s,x)}\int_0^t \widehat P'_u \widehat L' (\xi \psi)(s,x)du.
\]
By definition of the infinitesimal generator, we have
\[
\widetilde{L}^\mathcal{R}\psi(s,x)=\frac{1}{\xi(s,x)}\widehat L'(\xi \psi)(s,x),
\]
which rewrites as
\begin{align*}
    \widetilde{L}^\mathcal{R}\psi(s,x)=
        -\partial_1\psi(s,x)&+\int_0^\infty (\psi(s,z)-\psi(s,x))\frac{\xi(s,z)}{\xi(s,x)}\gamma(z)m(z,x)dz\\
        &+\int_0^\infty (\psi(s,y)-\psi(s,x))\xi(s,y)h(x,y,\xi_s)dy+H(s,x),
\end{align*}
where 
\begin{align*}
    H(s,x)=\frac{\psi(s,x)}{\xi(s,x)}\left( Q'\xi(s,x)+(r^\lambda+\kappa)(s,x)\xi(s,x) \right)=0
\end{align*}
since the term in parentheses vanishes by \eqref{xi as solution}.
\end{proof}

\subsection{Return to the original population process}\label{ssec:return_original}
Let us fix a time horizon $T>0$. We investigate the ancestral lineage of an individual sampled from the population at this time $T$. Let $V^K_T$ be the set of individuals alive at time $T$ and $U^K_T$ a uniform random variable on $V^K_T$. Given the historical process $H^K_T$, we define the empirical spinal process $Y^K$ by setting
\[
Y^{K}_{t}=X^{U^{K}_{T}}_{t},\quad \forall t\in[0,T],
\]
using the framework from \cite{meleardB2}. For any continuous and bounded test function $\Phi:\mathbb{D}([0,T],\mathbb{R})\to\mathbb{R}$, the law of $Y^K$ satisfies the relation
\begin{equation*}
	\label{eq:lawY}
	\mathbb{E}_{x}\left[\Phi\left(Y^{K}_{t},\ t\in[0,T]\right)\right]=
	\mathbb{E}_{\delta_{x}}\left[\cfrac{\langle H^{K}_{T},\Phi\rangle}{\langle H^{K}_{T},1\rangle}\right].
\end{equation*}

\begin{proposition}\label{prop:spine forward à la limite}
Assume that the initial sequence of measures $(Z^{K}_0)_{K}$ converges in probability and weakly to a deterministic finite measure $\xi_0(x)dx$. For a given $T>0$, we have
\begin{equation*}\label{eq:hist_tilde}
    \lim_{K\rightarrow +\infty} \mathbb{E}_{Z^K_0}\left[\Phi\big(Y^K_s,\ s\leq T\big)\right]= \int_0^\infty  \langle \mu^T_{x} , \Phi\rangle \ \frac{m_T(0,x)\xi_0(x)}{\langle\xi_T,1\rangle} \ dx.
\end{equation*}
Consequently, the typical lineage $Y^{K}$ behaves asymptotically as the spinal process $Y$ launched from the biased initial distribution $\frac{\xi_0(x)m_T(0,x)}{\langle\xi_T,1\rangle} dx$.
\end{proposition}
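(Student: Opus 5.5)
The plan follows the scheme of \cite{meleardB2}: replace the true historical process $H^K$ by the auxiliary branching one $\widehat H^K$, pass to the large population limit in numerator and denominator separately, and then recognise the limit through the many-to-one formula. We start from the representation
\[
\mathbb{E}_{Z^K_0}\big[\Phi(Y^K_s,s\le T)\big]=\mathbb{E}_{Z^K_0}\Big[\frac{\langle H^K_T,\Phi\rangle}{\langle H^K_T,1\rangle}\Big].
\]
By Proposition \ref{cv historique}, applied to $\Phi$ and to the constant function $1$, we get that $\langle H^K_T,\Phi\rangle-\langle \widehat H^K_T,\Phi\rangle$ and $\langle H^K_T,1\rangle-\langle\widehat H^K_T,1\rangle$ tend to $0$ in $L^2$. (If $\Phi$ is not of the form \eqref{eq:test functions on D}, we first approximate it by such functions, which are convergence-determining.) By Theorem \ref{theoreme cv} and Proposition \ref{bornitude de la taille de population}, $\langle H^K_T,1\rangle=\langle Z^K_T,1\rangle\to\lambda_T>0$ in probability. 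Hence the ratio can be replaced by $\langle\widehat H^K_T,\Phi\rangle/\lambda_T$ up to an error tending to $0$ in probability.

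The next step is to identify the limit of $\langle \widehat H^K_T,\Phi\rangle$. It suffices to prove a law of large numbers for the historical auxiliary process:
\[
\langle \widehat H^K_T,\Phi\rangle\longrightarrow \int_0^\infty \mathbb{E}_{\delta_x}\big[\langle\widehat H_T,\Phi\rangle\big]\,\xi_0(x)\,dx \quad\text{in } L^2 .
\]
The branching property makes this natural. Conditionally on $Z^K_0$, the process $K\widehat H^K_T$ is a sum of independent contributions $\widehat H^{(i)}_T$ of the initial individuals. The conditional mean is therefore $\langle Z^K_0, x\mapsto \mathbb{E}_{\delta_x}[\langle \widehat H_T,\Phi\rangle]\rangle$. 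The conditional variance is bounded by $K^{-1}\langle Z^K_0,\cdot\rangle$ applied to a second moment that is uniform in $x$, since $r^\lambda\le \overline b$ gives $\mathbb{E}_{\delta_x}[\langle\widehat H_T,1\rangle^2]\le C(T)$. Lemma \ref{lemma_moment_control} controls $\langle Z^K_0,1\rangle$.

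For the mean, the lemma preceding \eqref{eq:mesure de la spine forward} (the many-to-one formula) gives $\mathbb{E}_{\delta_x}[\langle \widehat H_T,\Phi\rangle]=m_{0,T}(x)\langle\mu^T_x,\Phi\rangle$. This is a bounded function of $x$. Passing to the limit in $\langle Z^K_0, m_{0,T}\langle\mu^T_\cdot,\Phi\rangle\rangle$ under weak convergence needs continuity in $x$. I would either prove this continuity through the series representation of Lemma \ref{lemma : analytical expression of functional}, using continuity of $\gamma$, $m$ and $\xi$, or, more simply, exploit that the limit $\xi_0$ has a density together with bounded-measurable convergence.

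The main obstacle is precisely this passage from weak convergence of $Z^K_0$ to convergence of integrals of the merely bounded measurable function $x\mapsto m_{0,T}(x)\langle\mu^T_x,\Phi\rangle$. I expect to handle it via continuity in $x$ obtained from the series representation, at least $\xi_0(x)dx$-almost everywhere, which suffices by the Portmanteau theorem.

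Finally, uniform integrability closes the argument. The ratio is bounded by $\|\Phi\|_\infty$, so convergence in probability yields convergence of expectations. The limit equals
\[
\frac{1}{\lambda_T}\int_0^\infty m_T(0,x)\langle\mu^T_x,\Phi\rangle\,\xi_0(x)\,dx .
\]
Lemma \ref{lemme de transition} with $s=0$, $t=T$ confirms that $\langle\xi_0,m_{0,T}\rangle=\lambda_T$, so that $\xi_0 m_T(0,\cdot)/\lambda_T$ is a probability density. This gives the stated identification of the limit with the spine started from the biased initial distribution.
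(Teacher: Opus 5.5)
Your proposal is correct and follows essentially the paper's route: pass from $H^K$ to $\widehat H^K$ with Proposition~\ref{cv historique}, use the branching property of $\widehat H^K$ to get a law of large numbers (the paper defers this step to the proof of Proposition~4.6 of \cite{meleardB1}), and identify the limit through the many-to-one formula, \eqref{eq:mesure de la spine forward} and Lemma~\ref{lemme de transition}. One correction: discard your ``more simply'' shortcut, because $Z^K_0$ is purely atomic and weak convergence to an absolutely continuous $\xi_0$ says nothing about $\langle Z^K_0,g\rangle$ for a merely bounded measurable $g$; instead get the needed continuity of $x\mapsto \mathbb{E}_x\big[e^{\int_0^T r^\lambda_u(X_u)du}\Phi(X)\big]$ from the path-space result of Proposition~\ref{prop:continuity criterion in J1}, evaluating $\psi^T(0,\cdot)$ at the constant path $(y|0|x)$.
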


\begin{proof}
We omit the full derivation here, as it closely follows the proof of Propositions 4.1 and 4.6 in \cite{meleardB1}. Briefly, we use Theorem \ref{cv historique} to move from $H^K$ to $\widehat{H}^K$ and follow the proof of Proposition 4.6 in \cite{meleardB1}. We conclude by using \eqref{eq:mesure de la spine forward} to identify the asymptotic distribution.
\end{proof}

We have all the necessary tools to state the main result. Indeed, as $K \to \infty$, we know the limit distribution of a forward lineage of individuals by virtue of Proposition \ref{prop:spine forward à la limite}, and we know the distribution of the time-reversed version of this distribution by virtue of Proposition \ref{time-reversed semigroup}.
\begin{theorem}\label{thm:conclusion}
Suppose that Assumptions (H) and \eqref{eq:hypothèses du modèle sur les moments} hold, and that $(Z^K_0)_{K}$ converges in probability and weakly to $\xi_0(x)dx$. Then, for any bounded measurable function $\Phi$, we obtain
\begin{equation*}\label{eq:th_concl}
    \lim\limits_{K\to \infty}\mathbb{E}_{ Z^K_0}\left[\frac{\langle H^K_T,\Phi\circ \mathcal{R}\rangle}{\langle H^K_T,1\rangle}\right]=\mathbb{E}_{\xi_T}\left[\Phi\left(\mathcal{R}(Y)(s), \,s\in[0,T]\right)\right],
\end{equation*}
where $\mathcal{R}(Y)$ is the Markov process associated with the semigroup
\begin{equation*}\label{def:PR}
    \widetilde{P}^\mathcal{R}_t\psi(s,x)=\frac{1}{\xi(s,x)}\widetilde{P}'_t(\xi \psi)(s,x)\mathds{1}_{\{0\leq t\leq s\leq T\}}.
\end{equation*}

\end{theorem}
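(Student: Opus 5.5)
The plan combines the two results just established: Proposition \ref{prop:spine forward à la limite} for the forward limit, and Proposition \ref{time-reversed semigroup} for the reversed semigroup. First I treat bounded continuous $\Phi$. Since $\mathcal{R}$ is $1$-Lipschitz for the Skorokhod topology, $\Phi\circ\mathcal{R}$ is again bounded and continuous on $\mathbb{D}([0,T],\mathbb{R})$. By definition of the empirical spine $Y^K$ (uniform sampling in $V^K_T$),
\begin{equation*}
\mathbb{E}_{Z^K_0}\left[\frac{\langle H^K_T,\Phi\circ\mathcal{R}\rangle}{\langle H^K_T,1\rangle}\right]=\mathbb{E}_{Z^K_0}\left[\Phi\circ\mathcal{R}(Y^K_s,\ s\le T)\right].
\end{equation*}
Proposition \ref{prop:spine forward à la limite} applied to $\Phi\circ\mathcal{R}$ then shows that this converges to $\int_0^\infty \langle\mu^T_x,\Phi\circ\mathcal{R}\rangle\, m_T(0,x)\xi_0(x)\langle\xi_T,1\rangle^{-1}dx$. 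That is, it converges to $\mathbb{E}[\Phi(\mathcal{R}(Y))]$, where $Y$ is the spine of Proposition \ref{semigroupe spine} started from the biased law $m_T(0,x)\xi_0(x)dx/\langle\xi_T,1\rangle$. Throughout, the hypotheses of Theorem \ref{theoreme cv} and Proposition \ref{prop:densité limite} are in force.

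Next I identify the law of $\mathcal{R}(Y)$. By the proposition preceding Section \ref{sec:dualities}, under this initial law the one-dimensional marginal of $Y_t$ has density $m_T(t,x)\xi(t,x)/\langle\xi_T,1\rangle$. This is exactly the normalized excessive measure $\eta$ of Lemma \ref{mesure excessive}, which is what Nagasawa's theorem requires. In particular, $m_T(T,\cdot)=1$ gives $\mathcal{R}(Y)_0\sim\xi_T/\langle\xi_T,1\rangle$. Proposition \ref{time-reversed semigroup} already provides the $\eta$-duality $\langle\widetilde P_t\varphi,\psi\rangle_\eta=\langle\varphi,\widetilde P^{\mathcal{R}}_t\psi\rangle_\eta$, and Theorem \ref{Dellacherie} (Nagasawa) then gives that the reversed time-space process $(T-s,Y_{(T-s)-})$ is Markov with semigroup $\widetilde P^{\mathcal{R}}$. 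The finite-dimensional distributions of $\mathcal{R}(Y)$ and of the Markov process generated by $\widetilde L^{\mathcal{R}}$ from $\xi_T/\langle\xi_T,1\rangle$ therefore agree, and both are càdlàg, so their laws on $\mathbb{D}$ coincide. This proves the identity for bounded continuous $\Phi$.

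Finally I extend to bounded measurable $\Phi$, and I expect this to be the main obstacle. Weak convergence only yields continuous test functionals, so the extension needs an argument. I would first reduce to cylinder functionals $\Phi(\omega)=\prod_i f_i(\omega(t_i))$ with $f_i$ bounded measurable. For fixed-time evaluations, both sides involve only marginals that are absolutely continuous: the limit has density with respect to $\xi_t\,dx$, and jumps at fixed times have zero probability. On the prelimit side, I would use Proposition \ref{cv historique} and the convergence $\langle H^K_T,\cdot\rangle\to$ the deterministic measure $\langle\xi_T,1\rangle\mathbb{E}[\cdot\,]$ in $L^2$, together with $\inf\lambda_T>0$ from Proposition \ref{bornitude de la taille de population}, to pass ratios to the limit. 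I would then approximate the $f_i$ by continuous functions in $L^1$ of the relevant marginal densities (Lusin), and close with a monotone class argument. If this route fails at the prelimit step, a safe fallback is to state the result for $\Phi$ bounded and continuous off a set that is null for the law of $\mathcal{R}(Y)$, via the portmanteau theorem.
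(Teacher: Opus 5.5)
Your first two steps are the paper's argument. There the theorem is obtained simply by combining the forward limit of the empirical spine with Proposition~\ref{time-reversed semigroup}, i.e.\ Nagasawa duality with respect to $\eta$. Your use of the continuity of $\mathcal{R}$, the identification $\mathcal{R}(Y)(0)\sim\xi_T/\langle\xi_T,1\rangle$ from $m_T(T,\cdot)=1$, and the identification through finite-dimensional distributions of c\`adl\`ag processes are all sound. Note, however, that this only covers bounded \emph{continuous} $\Phi$, because the forward-limit result for $Y^K$ is a weak-convergence statement for continuous test functionals.

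The genuine gap is your third step, and it cannot be repaired.

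\begin{itemize}
\item Passing from continuous to merely measurable $\Phi$ needs uniform-in-$K$ control of the prelimit laws, essentially convergence in total variation. A monotone class argument does not supply this, since $\lim_K$ does not commute with bounded pointwise limits in $\Phi$.
\item Your Lusin approximation controls $f_i-g_i$ only in $L^1$ of the \emph{limit} marginals. On the prelimit side the marginals need not be absolutely continuous: for instance $\mathcal{R}(Y^K)(T)=Y^K_0$ is carried by the atoms of $Z^K_0$.
\item Proposition~\ref{cv historique} only concerns continuous functionals of the form \eqref{eq:test functions on D}, so it does not help either.
\end{itemize}

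In fact the measurable claim is false under the stated hypotheses. Take deterministic $Z^K_0=\frac1K\sum_i\delta_{x^K_i}$ with all $x^K_i\in\mathbb{Q}_+$, converging weakly to $\xi_0(x)dx$ (this is allowed), and $\Phi(\omega)=\mathds{1}_{\mathbb{Q}}(\omega(T))$.

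\begin{itemize}
\item Left-hand side: $\mathcal{R}(y)(T)=y(0)$, and every sampled lineage starts from an initial atom. So the left-hand side equals $1$ on $\{\langle H^K_T,1\rangle>0\}$, whose probability tends to $1$.
\item Right-hand side: it equals $\mathbb{P}(Y_0\in\mathbb{Q})=0$, since $Y_0$ has density $m_T(0,\cdot)\xi_0/\langle\xi_T,1\rangle$.
\end{itemize}

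So drop the main route of your third step. What is provable, and all the paper's argument actually delivers, is the identity for bounded continuous $\Phi$. By the continuous mapping theorem this extends to bounded $\Phi$ whose discontinuity set is null for the law of $\mathcal{R}(Y)$. That is your fallback, and it is the correct form of the theorem.
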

Equivalently, Theorem \ref{thm:conclusion} guarantees that in the macroscopic limit, ancestral lineages of present-day individuals behave as the time-reversed spinal process.
\subsection{Simulations and implications of the model}\label{ssec:simulations}

Our work addresses the following evolutionary trade-offs: do present-day individuals predominantly descend from lineages that maintained a low cost by limiting their trait level, or from ancestors carrying an excessive burden? Examining these backward trajectories provides a retrospective view of the dynamics driving population evolution.

To address these questions, in this section, we illustrate our theoretical results by simulating the random ancestral lineage process, whose dynamics are governed by the backward generator \eqref{generator_backward}. We use the same baseline parameters as in Section \ref{ssec:numerical}, with an initial density $\xi_0$ taken as a truncated Gaussian centered at $0$, and a transfer rate $\tau=3$. To highlight the impact of the variation kernel on the genealogy, we investigate two distinct frameworks: Gaussian and Cauchy variations. For both cases, we explore the behaviour of the spine over a stationary background density $\xi$ (obtained via numerical resolution) as well as over the non-stationary solution $\xi_t$ of \eqref{densité limite}.
\newline

\textbf{Case 1 - small-amplitude variations: } We first consider the case of Gaussian (reflected) variations. In Figure \ref{fig:spine_comparison_gaussian}, we present a simulation of the backward spinal process. On the previously displayed representations of the density $\xi$, we superimpose sample trajectories (in white) of the random process. The top panels display the dynamics when $\xi$ is a stationary state, whereas the bottom panels illustrate the non-stationary regime.

A striking observation in both cases is that the backward spine, once it reaches a trait close to $0$, stays strictly in this region until the initial time. Crucially, it does not jump into the regions where the macroscopic density is concentrated (such as the main branches driven by transfer dynamics). This clearly illustrates our earlier population-level findings: although a high transfer rate pushes most of the population toward an excessive burden and high cost, this heavily loaded majority does not form the evolutionary backbone of surviving lineages. In other words, present-day individuals descend almost exclusively from a very small subgroup of ancestors carrying low, highly adapted traits.
\begin{figure}[h!]
        \centering
        
        \setlength{\tabcolsep}{2pt}
        \begin{tabular}{cc}
            \includegraphics[width=0.45\textwidth, height=0.35\textheight, keepaspectratio]{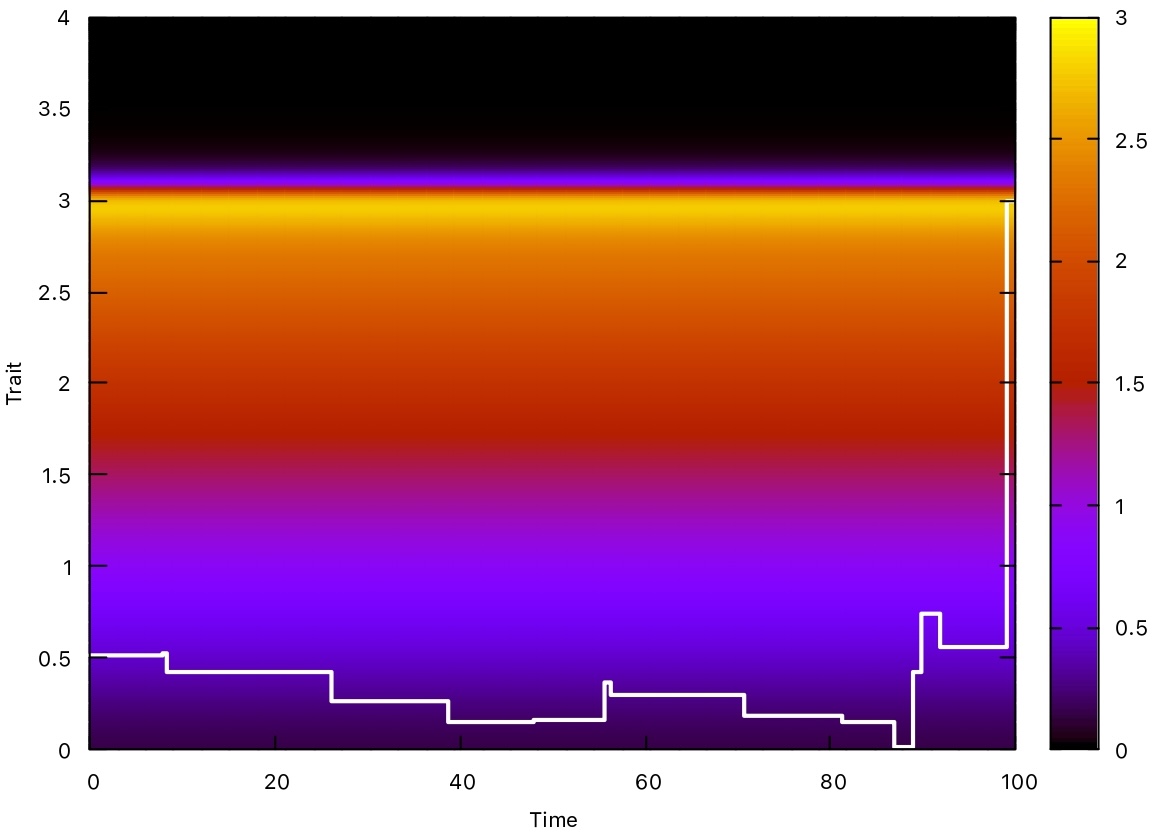} & 
            \includegraphics[width=0.45\textwidth, height=0.435\textheight, keepaspectratio]{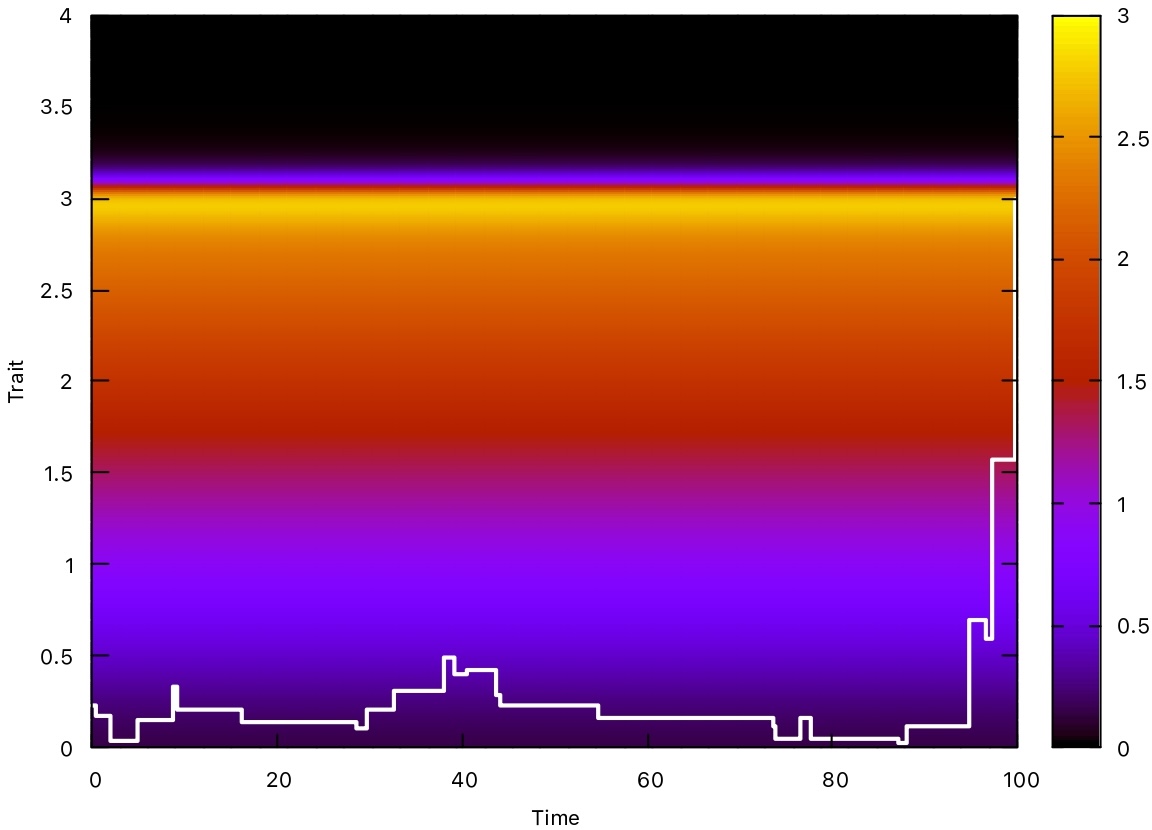} \\
            
            \includegraphics[width=0.45\textwidth, height=0.35\textheight, keepaspectratio]{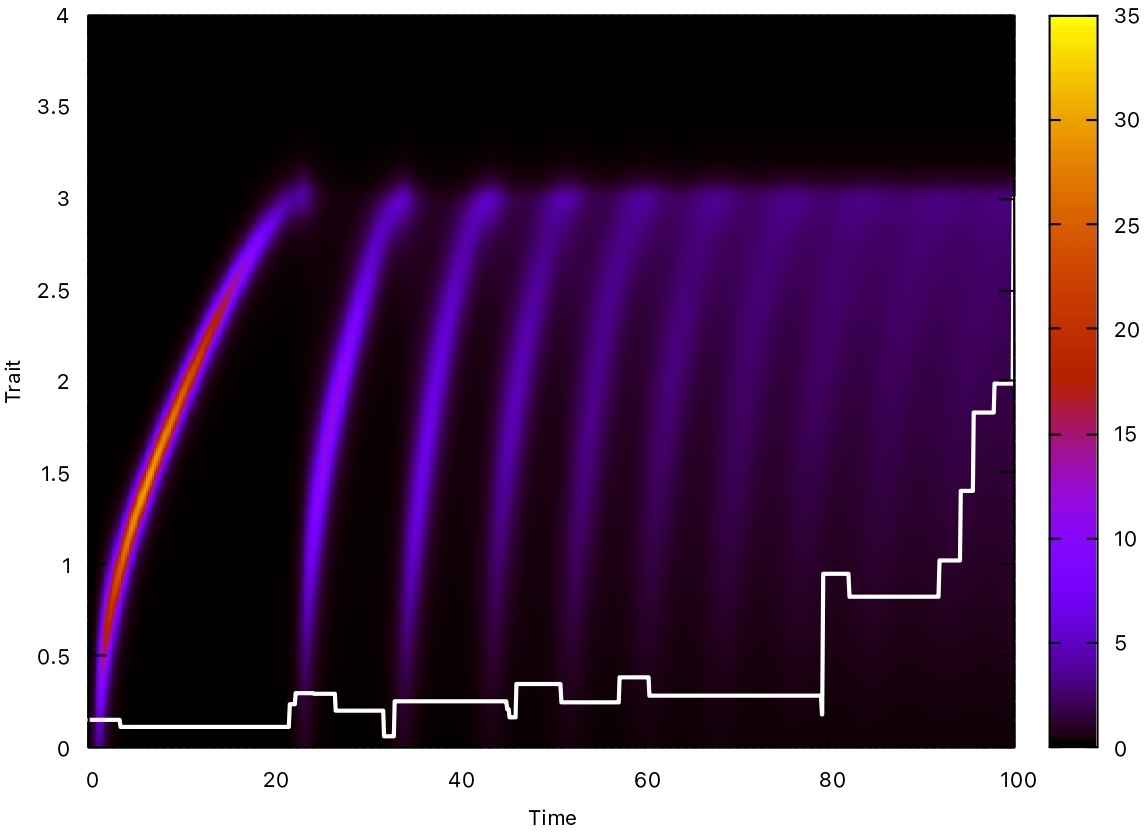} & 
            \includegraphics[width=0.45\textwidth, height=0.35\textheight, keepaspectratio]{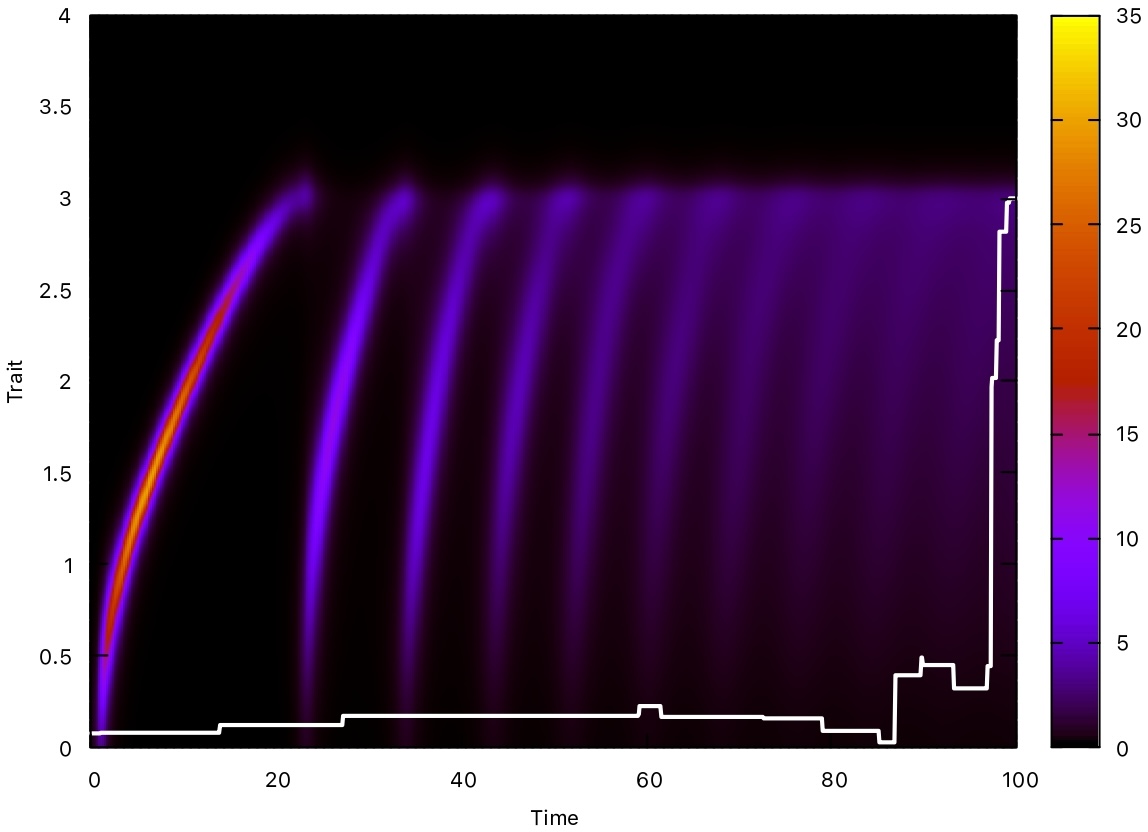} \\
        \end{tabular}

        \vspace{-0.2cm}
        \caption{\scriptsize Simulation of the ancestral lineage process in the stationary and non-stationary cases, with Gaussian variations - Parameters given in Table \ref{tab:experimental parameters}, $\tau=3,\,\beta=\mu=1.$}
  
    \label{fig:spine_comparison_gaussian}
    \end{figure}

Reasoning in forward time, this phenomenon highlights the evolutionary dead-end of carrying an excessive burden: if an individual's trait exceeds a certain threshold, the amplitude of Gaussian variations—which decay exponentially—is simply insufficient to allow a return to optimal traits before the lineage goes extinct due to selection. The severe demographic cost imposed by transfer dynamics acts as a strong filtering mechanism, effectively condemning these main branches to eventual extinction. Consequently, in the backward perspective, once the spine enters a region of adapted traits, large jumps toward unadapted traits are no longer observed. The victorious lineages are completely decoupled from the bulk of the highly burdened population. Under this short-range variation regime, contemporary individuals therefore originate strictly from those rare lineages that successfully evaded the transfer-induced accumulation of heavy burdens, maintaining a low-cost profile to ensure long-term viability.
\begin{figure}[h!]
        \centering
        \setlength{\tabcolsep}{2pt}
        \begin{tabular}{cc}
            \includegraphics[width=0.45\textwidth, height=0.35\textheight, keepaspectratio]{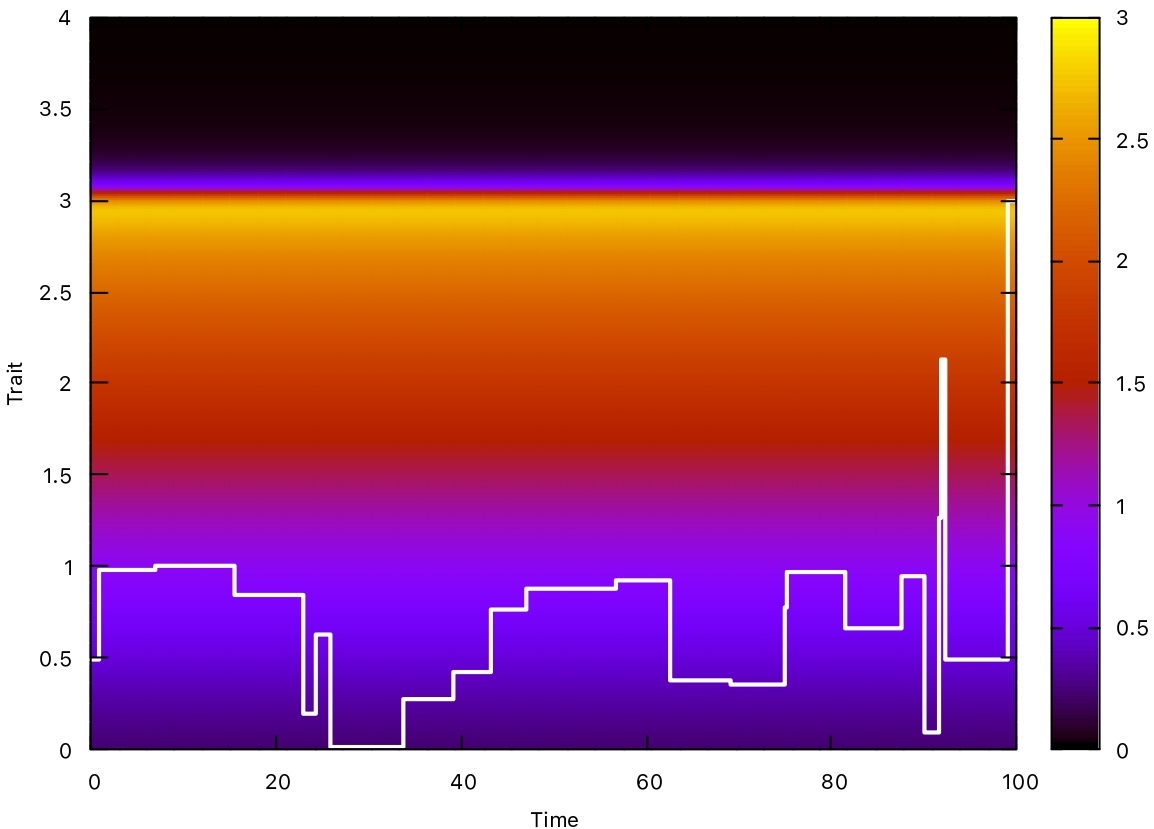} & 
            \includegraphics[width=0.45\textwidth, height=0.35\textheight, keepaspectratio]{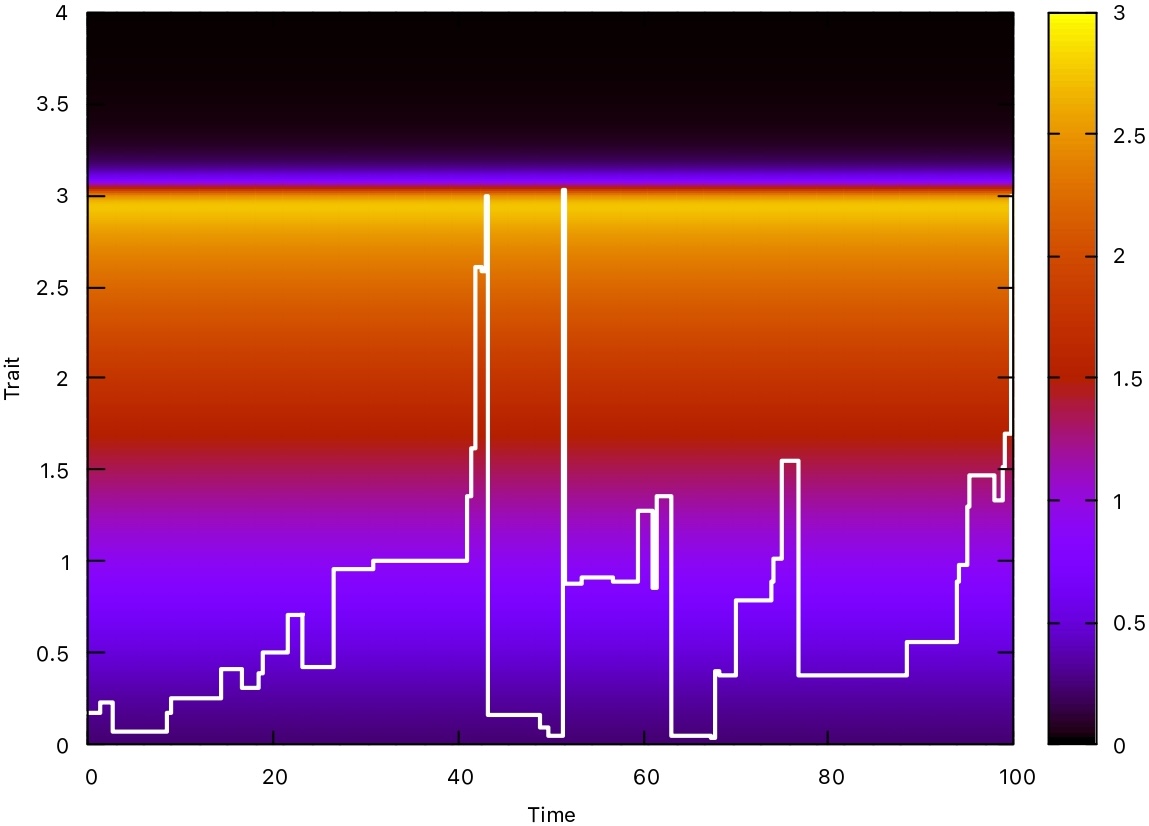} \\
            
            \includegraphics[width=0.45\textwidth, height=0.35\textheight, keepaspectratio]{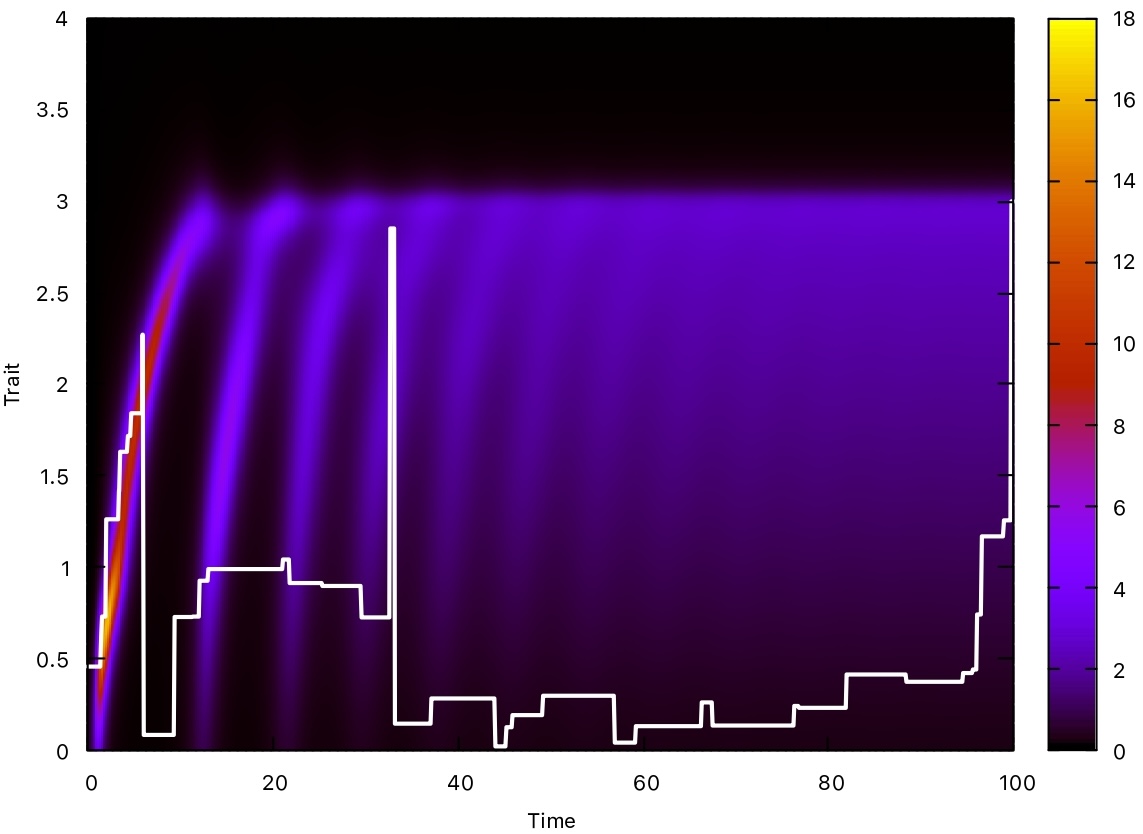} & 
            \includegraphics[width=0.45\textwidth, height=0.35\textheight, keepaspectratio]{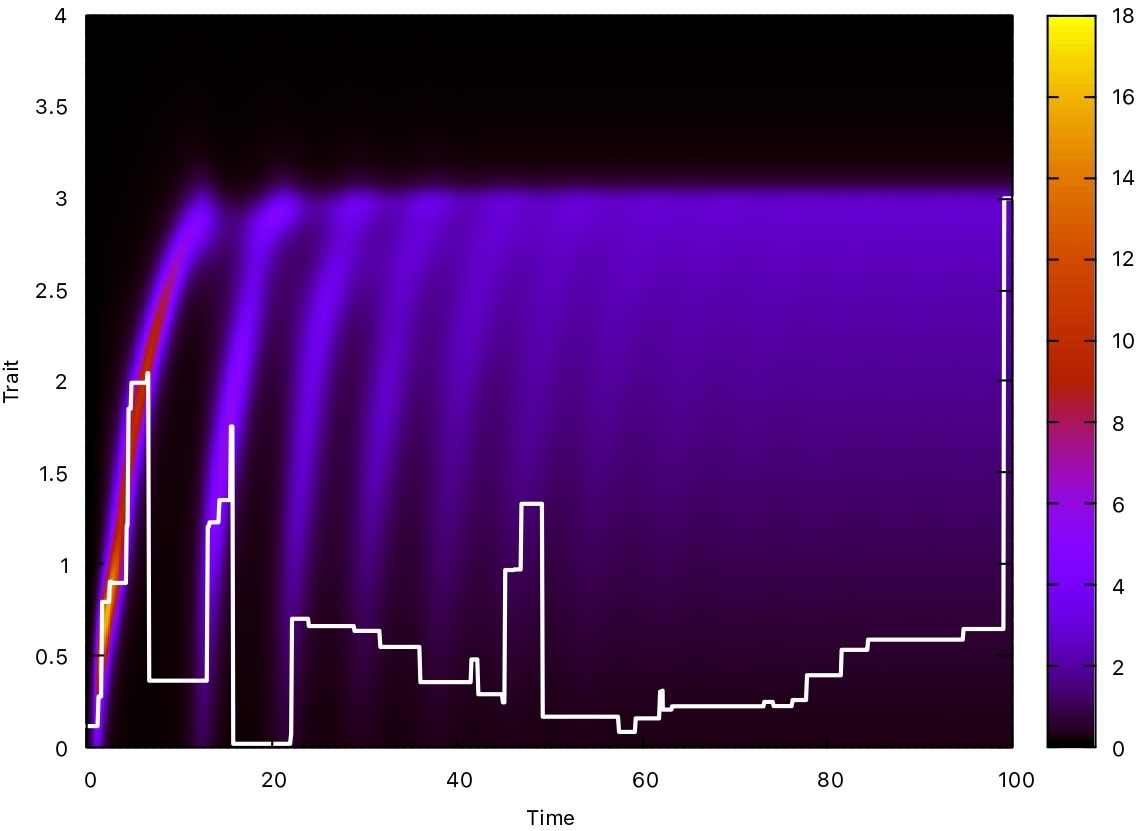} \\
        \end{tabular}
        
        \vspace{-0.2cm}
        \caption{\scriptsize Simulation of the ancestral lineage process in the stationary and non-stationary cases, with Cauchy variations - Parameters given in Table \ref{tab:experimental parameters}, $\widehat{m}(x)=\tfrac{\sigma}{\pi(\sigma^2+x^2)}$, $\sigma=0.1\,\tau=3,\,\beta=\mu=1$.}
  
    \label{fig:spine_comparison_Cauchy}
    \end{figure}
\newpage

\textbf{Case 2 - large-amplitude variations:} However, in the heavy-tailed regime, this genealogical picture changes drastically (when replacing the Gaussian variation kernel with a heavy-tailed Cauchy distribution). As shown in Figure \ref{fig:spine_comparison_Cauchy}, large jumps in the backward spinal process do indeed appear. The spine frequently escapes the optimal region near $0$ to visit regions of high trait values, and this behaviour persists whether the underlying macroscopic density is stationary or non-stationary.

This phenomenon arises directly from the heavy tail of the Cauchy distribution. In forward time, even when an individual carries a high and highly maladapted trait, there remains a small but non-negligible probability of being "rescued" by a large variation. This heavy-tailed rescue effect allows heavily burdened lineages to occasionally purge their excessive burden and give birth to highly adapted offspring, thereby contributing to the surviving population. In the backward perspective, this means that present-day adapted individuals can trace their ancestry back to highly unadapted ancestors, visually shown by sudden, large jumps of the spine into the main density clouds. Unlike in the Gaussian case, this regime reveals a highly fluctuating evolutionary path where long-term survival does not require avoiding high costs at all times. Instead, present-day lineages can indeed descend from heavily burdened ancestors, provided that catastrophic traits are sporadically purged by these rare, large-scale variational events.

\newpage
\section*{Acknowledgments}
This work has been funded by the Chair “Modélisation Mathématique
et Biodiversité” of Veolia Environnement-Ecole Polytechnique-Museum National d’Histoire
Naturelle-Fondation X and by the European Union(ERC, SINGER, 101054787). Views and opinions expressed are those of the author(s) only and do not necessarily reflect those of the European Union or the European Research Council. Neither the European Union nor the granting authority can be held responsible for them. The author would like to thank Sylvie Méléard and Viet-Chi Tran for their supervision, Sylvain Billiard, Meriem El Karoui and Alexandre Perrin for fruitful biological discussions, Théo Hérouard for his comprehensive answers on some analytical aspects and Daphné Giorgi for her kind help regarding the IBMPopSim Package \cite{giorgiibmpopsim}.
\nocite{*}
\bibliography{biblio}
\begin{appendix}
\section{Stochastic Differential Equations for the Processes}\label{appendix : EDS}
We write the stochastic differential equation associated with $(Z^K)$. Let $Z^K_0$ an $M_F(\mathbb{R}_+)$-valued random variable (the initial distribution). We consider three independent Poisson point measures: 
\begin{itemize}
    \item $N_1(ds,di,d\theta)$ on $\mathbb{R}_+\times\mathbb{N}^*\times\mathbb{R}_+$ with the intensity measure $ds\left( \sum_{k\geq1}\delta_k(di) \right)d\theta$
    \item $N_2(ds,di,d\theta,dz)$ on $\mathbb{R}_+\times\mathbb{N}^*\times\mathbb{R}_+\times\mathbb{R}_+$ with the intensity measure $ds\left( \sum_{k\geq1}\delta_k(di) \right)d\theta dz$
    \item $N_3(ds,di,dj,d\theta)$ on $\mathbb{R}_+\times\mathbb{N}^*\times\mathbb{N}^*\times\mathbb{R}_+$ with intensity measure $ds\left( \sum_{k\geq1}\delta_k(di) \right)\left( \sum_{k\geq1}\delta_k(dj) \right)d\theta$
\end{itemize}
The equation for $(Z^K)$ is:
\begin{equation}\label{eq:poisson representation}
    \begin{aligned}
    Z^K_t&=Z^K_0+\frac{1}{K}\int_0^t\int_{\mathbb{N}^*}\int_0^\infty \mathds{1}_{\{i\in V^K_{s-}\}} \delta_{X^i_{s-}}\left(\mathds{1}_{\{\theta\leq b(X^i_{s-})\}}-\mathds{1}_{\{0\leq \theta-b(X^i_{s-})\leq d(X^i_{s-})+C\langle Z^K_{s-},1\rangle\}}\right)N_1(ds,di,d\theta)\\
    &+\frac{1}{K}\int_0^t\int_{\mathbb{N}^*}\int_0^\infty \int_0^\infty \mathds{1}_{\{i\in V^K_{s-}\}}
    (\delta_z-\delta_{X^i_{s-}}) \mathds{1}_{\{\theta\leq \gamma(X^i_{s-})m(X^i_{s-},z)\}}N_2(ds,di,d\theta,dz)\\
    &+\frac{1}{K}\int_0^t\int_{\mathbb{N}^*}\int_{\mathbb{N}^*}\int_0^\infty \mathds{1}_{\{i,j\in V^K_{s-}\}}(\delta_{X^i_{s-}}-\delta_{X^j_{s-}})\mathds{1}_{\{\theta\leq h(X^i_{s-},X^j_{s-},Z^K_{s-})\}}N_3(ds,di,dj,d\theta)
    \end{aligned}
\end{equation}
For the auxiliary process $(\widehat{Z}^K)$, constructed on the same probability space as $(Z^K)$, with the same initial condition and Poisson point measures, we have for any measurable bounded function $\varphi$:
\begin{equation}\label{processus auxiliaire}
    \begin{aligned}
    \langle \widehat{Z}^K_t,\varphi\rangle &=\langle Z^K_0,\varphi\rangle+\widehat{M}^{K,\varphi}_t\\
    &+\int_0^t\int_0^\infty \left\{r(x)-C\langle\xi_s,1\rangle\right\}\varphi(x)\widehat{Z}^K_s(dx)ds\\
    &+\int_0^t\int_0^\infty \left\{\gamma(x)\int_0^\infty (\varphi(z)-\varphi(x))m(x,z)dz\right\}\widehat{Z}^K_s(dx)ds\\
    &+\int_0^t\int_0^\infty \left\{\int_0^\infty 
    h(x,y,\xi_s)(\varphi(x)-\varphi(y))\xi_s(dx)
    \right\}\widehat{Z}^K_s(dy)ds
\end{aligned}
\end{equation} where $\widehat{M}^{K,\varphi}$ is a square-integrable martingale with predictable quadratic variation  
\begin{equation*}
    \begin{aligned}
    \langle \widehat{M}^{K,\varphi}\rangle_t&=\frac{1}{K}\int_0^t\int_0^\infty \varphi^2(x)\{b(x)+d(x)+C\langle \xi_s,1\rangle\}\widehat{Z}^K_s(dx)ds\\
    &+\frac{1}{K}\int_0^t\int_0^\infty \left\{\gamma(x)\int_0^\infty  (\varphi(z)-\varphi(x))^2m(x,z)dz
  \right\}\widehat{Z}^K_s(dx)ds\\
  &+\frac{1}{K}\int_0^t\int_0^\infty \left\{ \int_0^\infty  h(x,y,\xi_s)(\varphi(x)-\varphi(y))^2 \xi_s(dy)\right\}\widehat{Z}^K_s(dx)ds.
    \end{aligned}
\end{equation*}
\section{Moments estimates and regularity results}\label{appendix : B}

\begin{proof}[Proof of Lemma \ref{lemma_moment_control}]\label{proof:lemma_moment_control}

The first estimate follows from a classical argument using Poisson representation \eqref{eq:poisson representation} and Gronwall's Lemma. We note that the transfer is not a problem because it leaves the mass unchanged. The second estimate is similar, we introduce the stopping-times $T_n^K=\inf\{t\geq0| \langle Z^K_t,1\rangle \geq n\text{ or }\int_0^t\langle Z^K_s,d\rangle ds \geq n\}$. By computing the quantity $\langle Z^K_t,1\rangle$, we get
\begin{equation*}
    \begin{aligned}
        \int_0^{T \wedge T^K_n} \langle Z^K_s, d\rangle ds &= \langle Z^K_0,1\rangle -\langle Z^K_{T \wedge T^K_n},1\rangle +\int_0^{T \wedge T^K_n} \langle Z^K_s, b(x)\rangle ds -C\int_0^{t \wedge T^K_n} \langle Z^K_s,1\rangle ^2ds+M^{K,1}_{T \wedge T^K_n}\\
        &\leq \langle Z^K_0,1\rangle +\overline{b}\int_0^{T \wedge T^K_n} \langle Z^K_s, 1\rangle ds+M^{K,1}_{T \wedge T^K_n}
    \end{aligned}
\end{equation*} and we can write 
\begin{equation*}
    \mathbb{E}\left[\int_0^{T \wedge T^K_n} \langle Z^K_s, d\rangle ds \right]\leq \sup_K \mathbb{E}[\langle Z^K_0,1\rangle]+ \overline{b}T\sup_K\mathbb{E}[\sup_{t\leq T}\langle Z^K_t,1\rangle].
\end{equation*}For any $K$, the sequence $(T^K_n)_n$ tends almost surely to infinity. If not, it would contradict the previous bound. We conclude by Fatou's Lemma.

The estimate of the time-integral squared is obtained the following way. We have 
\begin{equation*}
    \left(\int_0^T \langle Z^K_s, d\rangle ds\right)^2\leq 4\langle Z^K_0,1\rangle^2 +4\overline{b}^2\int_0^T \langle Z^K_s,1\rangle^2 ds+4(M^{K,1}_{T})^2
\end{equation*}
then
\begin{equation*}
    \mathbb{E}\left[\left(\int_0^T \langle Z^K_s, d\rangle ds\right)^2\right]\leq 4\sup_K\mathbb{E}[\langle Z^K_0,1\rangle^2] +4\overline{b}^2T \mathbb{E}[\sup_{t\leq T}\langle Z^K_t,1\rangle^2]+4\mathbb{E}[(M^{K,1}_{T})^2].
\end{equation*}
The first two terms are finite. The last term can be written:
\begin{equation*}
    \mathbb{E}[(M^{K,1}_T)^2]=\mathbb{E}[\langle M^{K,1}\rangle_T]\leq\frac{1}{K}\mathbb{E}[\langle Z^K_0,1\rangle^2]+\frac{C'}{K}\mathbb{E}[\sup_{t\leq T}\langle Z^K_t,1\rangle^2]+\frac{1}{K}\mathbb{E}\left[ \int_0^T \langle Z^K_s,d\rangle ds\right]
\end{equation*}
We conclude using our previous computations.

For the first estimate with $d(x)^\eta$,  let denote by $Y^K_t=\langle Z^K_t,d^\eta\rangle$ and introduce $T_n^K=\inf\{t\geq0|\langle Z^K_t,d^\eta\rangle\geq n\}$. Because $Z^K$ is pure jump Markov process we can write $Y^K_t=Y^K_0+P^K_t-D^K_t$ where
\begin{equation*}
P^K_t=\sum_{s\leq t}(\Delta Y^K_s)^+,\, D^K_t=\sum_{s\leq t}(\Delta Y^K_s)^-.
\end{equation*}
$P^K$ and $D^K$ are pure non decreasing jump processes. In particular, we have
\begin{equation*}
\sup_{s\leq t}Y^K_s\leq Y^K_0+P^K_t
\end{equation*} 
and 
\begin{equation*}
\mathbb{E}[\sup_{s\leq t\wedge T^K_n}Y^K_s]\leq \mathbb{E}[Y^K_0]+\mathbb{E}[P^K_{t\wedge T^K_n}].
\end{equation*}
The process $P^K$ counts only the jumps that increase $Y^K$. We write $\widetilde{P}^K$ its predictable compensator. $P^K-\widetilde{P}^K$ is a local martingale so
\begin{equation*}
\mathbb{E}[P^K_{t\wedge T^K_n}]=\mathbb{E}[{\widetilde{P}^K_{t\wedge T^K_n} }]
\end{equation*}
and 
\begin{equation*}
    \begin{aligned}
\widetilde{P}^K_t&=\int_0^t\int_0^\infty b(x)d(x)^\eta Z^K_s(dx)ds+\int_0^t\int_0^\infty \gamma(x)\left(\int_0^\infty (d(z)^\eta-d(x)^\eta)^+m(x,z)dz\right)Z^K_s(dx)ds\\
&+\int_0^t\int_0^\infty \int_0^\infty h(x,y,Z^K_s)(d(x)^\eta-d(y)^\eta)^+Z^K_s(dy)Z^K_s(dx)ds.
\end{aligned}
\end{equation*}

Immediately we have
\begin{equation*}
\int_0^t\int_0^\infty b(x)d(x)^\eta Z^K_s(dx)ds\leq \overline{b}\int_0^t Y^K_sds.
\end{equation*}
By the properties of $d^\eta$ and using the inequality $(A-B)^+\leq (A-B)+B$ for $A,B\geq0$, we get
\begin{equation*}
\int_0^t\int_0^\infty \gamma(x)\left(\int_0^\infty (d(z)^\eta-d(x)^\eta)^+m(x,z)dz\right)Z^K_s(dx)ds\leq 2\overline{\gamma}\int_0^t Y^K_s ds.
\end{equation*}
The last term can be roughly bounded by
\begin{equation*}
\int_0^t\int_0^\infty \int_0^\infty h(x,y,Z^K_s)(d(x)^\eta-d(y)^\eta)^+Z^K_s(dy)Z^K_s(dx)ds\leq \int_0^t \frac{2\tau\langle Z^K_s,1\rangle}{\beta+\mu\langle Z^K_s,1\rangle} Y^K_s ds.
\end{equation*}
If $\mu>0$, then
\begin{equation*}
\mathbb{E}[\widetilde{P}^K_{t\wedge T^K_n}]\leq (\overline{b}+2\overline{\gamma}+2\tfrac{\tau}{\mu})\int_0^t\mathbb{E}\left[Y^K_{s\wedge T^K_n}\right] ds
\end{equation*}
and we can directly conclude by Gronwall's Lemma. Now consider $\mu=0$. We write 
\begin{equation*}
\int_0^t 2\tfrac{\tau}{\beta}\langle Z^K_s,1\rangle Y^K_sds \leq 2\tfrac{\tau}{\beta}\sup_{s\leq t}\langle Z^K_s,1\rangle\int_0^t Y^K_sds
\end{equation*}
and by Cauchy-Schwarz inequality
\begin{equation}\label{astuce cauchy schwarz transfert}
\mathbb{E}\left[ \int_0^{t\wedge T^K_n}\langle Z^K_s,1\rangle Y^K_s ds \right]\leq \tfrac{2\tau}{\beta}\sqrt{\mathbb{E}[\sup_{s\leq t}\langle Z^K_s,1\rangle^2]}\sqrt{\mathbb{E}\left[\left(\int_0^t Y^K_{s\wedge T^K_n}ds\right)^2\right]}.
\end{equation}
Because $\eta<1$, there exists a constant $D'\geq0$ such that $d(x)^\eta\leq d(x)+D'$. Therefore $Y^K_t\leq \langle Z^K_t,d\rangle+D'\langle Z^K_t,1\rangle$ and by the estimate \eqref{eq:estimate death}, we can also conclude using Gronwall's Lemma in the case $\mu=0$.

Lastly, let denote $T^K_n=\inf\{t\geq0| \langle Z^K_t,1\rangle +\langle Z^K_t,d^\eta\rangle+\int_0^t\langle Z^K_s, d^{1+\eta}\rangle ds \geq n\}$. By evaluating the quantity $\langle Z^K_t,d^\eta\rangle$, we have
\begin{equation*}
    \begin{aligned}
        \int_0^t \langle Z^K_{s\wedge T^K_n}, d^{1+\eta}\rangle ds&\leq \langle Z^K_0,d^\eta\rangle +\overline{b}\int_0^t \langle Z^K_{s\wedge T^K_n}, d^\eta\rangle ds+M^{K,d^\eta}_{t\wedge T^K_n}\\
        &+\overline{\gamma}\int_0^t (\langle Z^K_{s\wedge T^K_n}, d^\eta\rangle+D\langle Z^K_{s\wedge T^K_n},1\rangle)ds\\
        &+\int_0^t\frac{\tau \langle Z^K_{s\wedge T^K_n},1\rangle}{\beta+\mu\langle Z^K_{s\wedge T^K_n},1\rangle}\langle Z^K_{s\wedge T^K_n},d^\eta\rangle ds
    \end{aligned}
\end{equation*}
If $\mu > 0$, it suffices to upper-bound the transfer term by $\tau/\mu$, take the expectation, and conclude using the previous estimates. If $\beta > 0$ and $\mu = 0$, we follow the same procedure as previously but also use the same upper bound as in \eqref{astuce cauchy schwarz transfert}.
\end{proof}

\begin{proof}[Proof of Proposition \ref{prop:produit_par_x_fini}]\label{proof:reflected_supremum}
The integrable case is straightforward by Gronwall's Lemma.

Define $\hat{C}=\sup_{x\in\mathbb{R}_+}x\hat{m}(x)$, $v_n(t,x)=\min(x,n)\xi_t(x)$ and $W_n(t)=\sup_{x\geq 0}v_n(t,x)$. If $\sup_{x\geq0}x\xi_0(x)<\infty$ then $\sup_{x\geq0}\xi_0(x)<\infty$ and $\sup_{t\leq T,\, x\geq0}\xi_t(x):=\overline{\xi}_T<\infty$. We also have $\sup_{t\geq0}\langle\xi_t,1\rangle\leq \overline{b}/C$.

We multiply equation \eqref{densité limite} by $\min(x,n)$:
\begin{equation*}
        \partial_t v_n(t,x)=\left(r(x)-C\lambda_t -\gamma(x) +\tfrac{\tau}{\beta+\mu\lambda_t}\int_0^\infty  sgn(x-y)\xi_t(y)dy\right)v_n(t,x)+ I(x)
\end{equation*}
where
\begin{equation*}
    I_n(x):=\min(x,n)\int_0^\infty  \gamma(y)(\hat{m}(x-y)+\hat{m}(x+y))\xi_t(y)dy.
\end{equation*}
We start by bounding the variation term. Using the subadditivity of $w_n$, we have $\min(x,n)\leq |x-y|+\min(y,n)$
\begin{equation*}
    \begin{aligned}
        \int_0^\infty  \gamma(y)\hat{m}(x-y)\min(y,n)\xi_t(y)dy&\leq\overline{\gamma}\int_0^\infty |x-y|\hat{m}(x-y)\xi_t(y)dy\\
        &+\overline{\gamma}\int_0^\infty  \hat{m}(x-y)\min(y,n)\xi_t(y)dy\\
        &\leq \hat{C}\tfrac{\overline{b}}{C}+W_n(t).
    \end{aligned}
\end{equation*}
and because $\min(x,n)\leq x \leq x+y$,
\begin{equation*}
    \begin{aligned}
        \int_0^\infty  \gamma(y)\hat{m}(x+y)\min(x,n)\xi_t(y)dy&\leq\overline{\gamma}\int_0^\infty  (x+y)\hat{m}(x+y)\xi_t(y)dy\\
        &\leq\hat{C}\tfrac{\overline{b}}{C}.
    \end{aligned}
\end{equation*}
In consequence, it exists $B_1, B_2>0$ such that  $I_n(x)\leq B_1 W_N(t)+B_2$.
Also, we can find a finite $B_3>0$ such that 
\begin{equation*}
    \partial_t v_n(t,x)\leq (r(x)+B_3)v_n(t,x)+B_1W_n(t)+B_2.
\end{equation*}
Because $\lim_{x\rightarrow\infty}r(x)=-\infty$, there exists $R>0$ such that for all $x\leq R$, $v_n(t,x)\leq R\overline{\xi}$ and for all $x>R$, $r(x)<-B_3$, so $v_n(t,x)\leq W_n(0)+B_2 T+ B_1\int_0^t W_n(s)ds$.
Taking the supremum over $x$, we obtain
\begin{equation*}
    W_n(t)\leq B_4+B_1\int_0^t W_n(s)ds
\end{equation*}
where $B_4=R\overline{\xi}+W(0)+B_2 T$. The boundedness of $W_n$ allows us to use the Gronwall's Lemma:
\begin{equation*}
    \forall t\leq T,\, W_n(t)\leq B_4 e^{B_1T}.
\end{equation*}
The last bound does not depend on $n$, we can make it tends to infinity and get
\begin{equation*}
    \sup_{x\geq0} x\xi_t(x)\leq B_4 e^{B_1 T}, \, \forall t\leq T.
\end{equation*}
\end{proof}
\begin{proof}[Detailed proof of Lemma \ref{lemma : analytical expression of functional}]\label{appendix:detailed_proof_series}
Let $N_t$ be a homogeneous Poisson process of intensity $\overline{\Lambda}$, representing the potential jumps, with ordered jump times $T_1<\dots<T_{N_t}$. By the law of total probability,
\begin{equation*}
    f(t,s,x) = \sum_{k\ge0} \mathbb{P}(N_t=k)\,\mathbb{E}[Y \mid J_s=x, N_t=k] 
= \sum_{k\ge0} e^{-\overline{\Lambda} t}\frac{(\overline{\Lambda} t)^k}{k!}\,\varphi_k(t,s,x)
\end{equation*}
where
\[
Y:=e^{\int_0^t A_{s+u}(J_{s+u})du}\,\varphi_{s+t}(J_{s+t}), 
\qquad \varphi_k(t,s,x):=\mathbb{E}[Y\mid J_s=x,N_t=k].
\]

- For $k=0$ (no jumps), the process stays at $x$ on $[s,s+t]$, hence
\[
\varphi_0(t,s,x)=e^{\int_0^t A_{s+v}(x)dv}\,\varphi(s+t,x).
\]

- For $k\ge1$, let $(T_1,...,T_k)$ denote the vector of jump times, by increasing order. By Theorem 2.15 of \cite{durrettprocesses}, the random vector has a density given by
\begin{equation*}
    f_{(T_1,...,T_k)}(t_1,...,t_k)=\frac{k!}{t^k}\mathds{1}_{\{0\leq t_1\leq...\leq t_k\leq t\}}.
\end{equation*}

We deduce that the conditional density of $T_1$ given $N_t=k$ is
\[
f_{T_1|N_t=k}(u)=\frac{k(t-u)^{k-1}}{t^k},\quad u\in[0,t].
\]

Then
\[
\varphi_k(t,s,x)=\int_0^t \mathbb{E}[Y\mid J_s=x,N_t=k,T_1=u]\,
f_{T_1|N_t=k}(u)\,du.
\]

Let $U_1$ be an uniform random variable on $[0,1]$, independent of $N$. We define the event $B_1 := \{U_1 \le \Lambda(s+u,x) / \overline{\Lambda}\}$, which corresponds to the acceptance and realization of the potential jump. Then
\[
\mathbb{E}[Y\mid J_s=x,N_t=k,T_1=u]
=\mathbb{E}[Y\mid B_1,\Gamma]\mathbb{P}(B_1|\Gamma)
+\mathbb{E}[Y\mid B_1^c,\Gamma]\mathbb{P}(B_1^c|\Gamma),
\]
with $\Gamma=\{J_s=x,N_t=k,T_1=u\}$.  

We have 
\[
\mathbb{E}[Y\mid B_1,\Gamma]\mathbb{P}(B_1|\Gamma)
=e^{\int_0^u A_{s+v}(x)dv}\frac{\Lambda(s+u,x)}{\overline{\Lambda}}
\int_0^\infty \varphi_{k-1}(t-u,s+u,z_1)\,\nu(s+u,x,z_1)\,dz_1
\]
and 
\[
\mathbb{E}[Y\mid B_1^c,\Gamma]\mathbb{P}(B_1^c|\Gamma)
=e^{\int_0^u A_{s+v}(x)dv}\Big(1-\frac{\Lambda(s+u,x)}{\overline{\Lambda}}\Big)\varphi_{k-1}(t-u,s+u,x).
\]

Define the recurrence operator: for bounded measurable function $\psi$ on $\mathbb{R}_+$,
\[
(\mathcal K_{s,u}\psi)(x):=
e^{\int_0^u A_{s+v}(x)dv}\Bigg[
\frac{\Lambda(s+u,x)}{\overline{\Lambda}}\int_0^\infty \psi(z_1)\nu(s+u,x,z_1)\,dz_1
+\Big(1-\frac{\Lambda(s+u,x)}{\overline{\Lambda}}\Big)\psi(x)
\Bigg].
\]

Then
\[
\varphi_k(t,s,x)
=\int_0^t \frac{k(t-u)^{k-1}}{t^k}\,
(\mathcal K_{s,u}\varphi_{k-1}(t-u,s+u,\cdot))(x)\,du.
\]
\end{proof}
\section{Historical processes}\label{app:historical}
Following \cite{meleardB2, meleardB0}, we consider the following class of functions:  
for elements of $\mathbb{D}(\mathbb{R},\mathbb{R})$, we take test functions of the form $\Phi_\varphi$, where $\Phi \in C_b(\mathbb{R},\mathbb{R})$ and $\varphi \in C(\mathbb{R}_+\times\mathbb{R},\mathbb{R})$, defined by  
\begin{equation}\label{eq:test functions on D}
    \Phi_\varphi(y) = \Phi\left(\int_0^T \varphi(t,y_t)\,dt\right), \qquad y\in \mathbb{D}(\mathbb{R},\mathbb{R}).
\end{equation}

For $y \in \mathbb{D}(\mathbb{R},\mathbb{R})$, $s \in [0,T]$, and $x \in \mathbb{R}$, we denote by $(y|s|x)$ the càdlàg function given by  
\[
(y|s|x)(t) = y(t)\,\mathds{1}_{\{t<s\}} + x\,\mathds{1}_{\{t\geq s\}}.
\]

The non-branching historical process $H$ satisfies the following for all $\Phi_\varphi$:
\begin{equation}\label{eq:historical process semimartingale}
    \begin{aligned}
        \langle H^K_t, \Phi_\varphi \rangle &=
        \langle H^K_0, \Phi_\varphi \rangle
        +\int_0^t\int_{\mathbb{D}(\mathbb{R},\mathbb{R})}\left\{ b(y_s)-d(y_s)-C\langle H^K_s,1\rangle\right\} \Phi_\varphi(y)H^K_s(dy)ds\\
        &+\int_0^t \int_{\mathbb{D}(\mathbb{R},\mathbb{R})}\left\{\gamma(y_s)\int_0^\infty (\Phi_\varphi(y|s|x)-\Phi_\varphi(y)) m(y_s,x) dx \right\} H^K_s(dy)ds\\
        &+\int_0^t\int_{\mathbb{D}(\mathbb{R},\mathbb{R})}\left\{\int_0^\infty  h(x,y_s,Z^K_s)(\Phi_\varphi(y|s|x)-\Phi_\varphi(y))Z^K_s(dx)\right\}H^K_s(dy)ds+M^{K,\Phi,\varphi}_t
    \end{aligned}
\end{equation}
where $M^{K,\Phi,\varphi}$ a square-integrable martingale with predictable quadratic variation 
\begin{equation*}\label{variation quadratique prévisible historique}
    \begin{aligned}
        \langle M^{K,\Phi,\varphi}\rangle_t&=\frac{1}{K}\int_0^t \int_{\mathbb{D}(\mathbb{R},\mathbb{R})}\{b(y_s)+d(y_s)+C\langle H^K_s,1\rangle\}\Phi^2_\varphi(y)H^K_s(dy)ds\\
        &+\frac{1}{K}\int_0^t\int_{\mathbb{D}(\mathbb{R},\mathbb{R})}\left\{\int_0^\infty  \gamma(y_s)(\Phi_\varphi(y|s|x)-\Phi_\varphi(y))^2 m(y_s,x)dx\right\} H^K_s(dy)ds\\
        &+\frac{1}{K}\int_0^t\int_{\mathbb{D}(\mathbb{R},\mathbb{R})}\left\{\int_0^\infty h(x,y_s,Z^K_s)(\Phi_\varphi(y|s|x)-\Phi_\varphi(y))^2 Z^K_s(dx)\right\}H^K_s(dy)ds
    \end{aligned}
\end{equation*}
The auxiliary branching historical process $\widehat{H}$ verifies for all $\Phi_\varphi$:
\begin{equation}\label{processus historique auxiliaire}
    \begin{aligned}
        \langle \widehat{H}^K_t, \Phi_\varphi \rangle &=
        \langle H^K_0, \Phi_\varphi \rangle
        +\int_0^t\int_{\mathbb{D}(\mathbb{R},\mathbb{R})}\left\{ b(y_s)-d(y_s)-C\langle \xi_s,1\rangle \right\} \Phi_\varphi(y)\widehat{H}^K_s(dy)ds\\
        &+\int_0^t \int_{\mathbb{D}(\mathbb{R},\mathbb{R})}\left\{\gamma(y_s)\int_0^\infty (\Phi_\varphi(y|s|x)-\Phi_\varphi(y)) m(y_s,x) dx \right\} \widehat{H}^K_s(dy)ds\\
        &+\int_0^t\int_{\mathbb{D}(\mathbb{R},\mathbb{R})}\left\{\int_0^\infty  h(x,y_s,\xi_s)(\Phi_\varphi(y|s|x)-\Phi_\varphi(y))\xi_s(dx)\right\}\widehat{H}^K_s(dy)ds+\widehat{M}^{K,\Phi,\phi}_t
    \end{aligned}
\end{equation}
with $\widehat{M}^{K,\Phi,\varphi}$ a square-integrable martingale with predictable quadratic variation 
\begin{equation*}\label{variation quadratique prévisible historique auxiliaire}
    \begin{aligned}
        \langle \widehat{M}^{K,\Phi,\varphi}\rangle_t&=\frac{1}{K}\int_0^t \int_{\mathbb{D}(\mathbb{R},\mathbb{R})}\{b(y_s)+d(y_s)+C\langle \xi_s,1\rangle \}\Phi^2_\varphi(y)\widehat{H}^K_s(dy)ds\\
        &+\frac{1}{K}\int_0^t\int_{\mathbb{D}(\mathbb{R},\mathbb{R})}\left\{\int_0^\infty  \gamma(y_s)(\Phi_\varphi(y|s|x)-\Phi_\varphi(y))^2 m(y_s,x)dx\right\} \widehat{H}^K_s(dy)ds\\
        &+\frac{1}{K}\int_0^t\int_{\mathbb{D}(\mathbb{R},\mathbb{R})}\left\{\int_0^\infty h(x,y_s,\xi_s)(\Phi_\varphi(y|s|x)-\Phi_\varphi(y))^2 \xi_s(dx)\right\}\widehat{H}^K_s(dy)ds
    \end{aligned}
\end{equation*}
The following lemma generalises the technique of the auxiliary function $\varphi^t$, thereby avoiding the need to deal with the death term $d(x)$ (and the historical processes evaluation difference) when the trajectories of the lineages take values in $\mathbb{D}([0,T],\mathbb{R}_+)$. Consider a test function of the form $\Phi_\varphi$ and a time-space Markov process $(t,Y_t)_{t\in[0,T]}$ such that for all $t\in[0,T]$, $Y_t\in\mathbb{D}([0,T],\mathbb{R}_+)$ and $Y_t(s)\in\mathbb{R}_+$. The dynamics are as follows: we start from a trajectory $y\in \mathbb{D}([0,T],\mathbb{R})$, time elapses and when a jump occurs, the trajectory changes at the moment of the jump. The generator of the process is given by
\begin{equation*}
\begin{aligned}
    \mathcal{L}\Phi_\varphi(s,y)&=\partial_s\Phi_\varphi(s,y)+\gamma(y_{s-})\int_0^\infty (\Phi_\varphi(s,(y|s|x))-\Phi_\varphi(s,y))m(y_{s-},x) dx\\
&+\int_0^\infty  h(x,y_{s-},\xi_s)(\Phi_\varphi(y|s|x)-\Phi_\varphi(y))\xi_s(x)dx.
\end{aligned}
\end{equation*}
\begin{lemma}\label{lemma : transformée FK}
Let $R(s,y)=b(y_{s-})-d(y_{s-})-C\langle\xi_s,1\rangle\leq \overline{b}$ and let, for all $s\leq t$,
\begin{equation*}
\psi^t(s,y)=\mathbb{E}\left[\Phi_\varphi(Y_t)e^{\int_s^t R(u,Y_u)du}|Y_s=y\right].
\end{equation*}
Then $\psi^t$ is uniformly bounded by $\|\Phi_\varphi\|_\infty e^{\overline{b}t}$ and satisfies 
\begin{equation}\label{eq:fk skorohod}
    \mathcal{L}\psi^t(s,y)+R(s,y)\psi^t(s,y)=0,\, \psi^t(t,y)=\Phi_\varphi(y).
\end{equation} 
We deduce that 
\begin{equation}\label{eq:historical without death}
    \begin{aligned}
        \langle H^K_t,\Phi_\varphi\rangle & = \langle H^K_0, \psi^t(0,\cdot)\rangle +C\int_0^t\int_{{\mathbb{D}(\mathbb{R},\mathbb{R})}}\Big\{\langle\xi_s- Z^K_s,1\rangle\psi^t(s,y)\\
        &+\int_0^\infty (\psi^t(s,(y|s|x))-\psi^t(s,y))
        [h(x,y_s,Z^K_s)Z^K_s(dx)-h(x,y_s,\xi_s)\xi_s(dx)]\Big\}H^K_s(dy)ds+R^K_{t,t}
    \end{aligned}
\end{equation}
where $t\mapsto R^K_{t,t}$ is not a martingale but $u\mapsto R^K_{u,t}$ is, with
\begin{equation*}
    \begin{aligned}
        \langle R^K_{u,t}\rangle &=\frac{1}{K}\int_0^u\Big\{ (b(y_s)+d(y_s)+C\langle H^K_s,1\rangle)\psi^t(s,y)^2
        +\gamma(y_s)\int_0^\infty (\psi^t(s,(y|s|x))-\psi^t(s,y))^2m(y_s,x)dx\\
        &+\int_0^\infty h(x,y_s,Z^K_s)(\psi^t(s,(y|s|x))-\psi^t(s,y))^2 Z^K_s(dx)\Big\}H^K_s(dy)ds.
    \end{aligned}
\end{equation*}
There exists a constant $C_T>0$ such that 
\begin{equation*}
    \mathbb{E}[\sup_{u\leq t} (R^K_{u,t})^2]\leq \frac{C_T}{K}.
\end{equation*}

For the auxiliary process, we get
\begin{equation}\label{eq:historical aux without death}
        \langle \widehat{H}^K_t,\Phi_\varphi\rangle = \langle H^K_0, \psi^t(0,\cdot)\rangle +\widehat{R}^K_{t,t}
\end{equation}
where $t\mapsto \widehat{R}^K_{t,t}$ is not a martingale but $u\mapsto \widehat{R}^K_{u,t}$ is, with
\begin{equation*}
    \begin{aligned}
        \langle \widehat{R}^K_{u,t}\rangle &=\frac{1}{K}\int_0^u\Big\{ (b(y_s)+d(y_s)+C\langle \xi_s,1\rangle)\psi^t(s,y)^2
        +\gamma(y_s)\int_0^\infty (\psi^t(s,(y|s|x))-\psi^t(s,y))^2m(y_s,x)dx\\
        &+\int_0^\infty h(x,y_s,\xi_s)(\psi^t(s,(y|s|x))-\psi^t(s,y))^2 \xi_s(dx)\Big\}\widehat{H}^K_s(dy)ds.
    \end{aligned}
\end{equation*}
There exists a constant $\widehat{C}_T>0$ such that 
\begin{equation*}
    \mathbb{E}[\sup_{u\leq t} (\widehat{R}^K_{u,t})^2]\leq \frac{\widehat{C}_T}{K}.
\end{equation*}
\end{lemma}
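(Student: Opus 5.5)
The plan is to prove the three claims of Lemma \ref{lemma : transformée FK} in order: the bound and backward equation \eqref{eq:fk skorohod} for $\psi^t$, then the representation \eqref{eq:historical without death} obtained by applying the semimartingale decomposition of $H^K$ to the time-dependent test function $\psi^t$, then the $L^2$ estimate on the remainder. The auxiliary case \eqref{eq:historical aux without death} follows by the same computation.

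\textbf{Step 1: bound and backward equation.} Since $R\le \overline{b}$ and $|\Phi_\varphi|\le\|\Phi_\varphi\|_\infty$, we get $|\psi^t|\le \|\Phi_\varphi\|_\infty e^{\overline{b}(t-s)}$ directly. For \eqref{eq:fk skorohod}, I will repeat the argument of Proposition \ref{prop:def_semigroup_FK} on path space. By the Markov property of $(s,Y_s)$, the process
\[
u\mapsto e^{\int_s^u R(v,Y_v)dv}\,\psi^t(u,Y_u)
\]
is a martingale on $[s,t]$. Expanding it with Dynkin's formula and integration by parts gives $\mathcal{L}\psi^t+R\psi^t=0$, with terminal value $\psi^t(t,\cdot)=\Phi_\varphi$.

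The jump part of $\mathcal{L}$ has total rate bounded by $\overline{\gamma}+\tau\lambda_s/(\beta+\mu\lambda_s)$, which is bounded by Proposition \ref{bornitude de la taille de population}. So Lemma \ref{lemma : analytical expression of functional}, transposed to path-valued states, should give the series representation. That representation yields differentiability in $s$, since $\Phi_\varphi$ depends smoothly on $s$ through $\int_0^T\varphi(u,y_u)du$ and $\xi$ is $C^1$ in time.

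\textbf{Step 2: representation formula.} Apply Itô's formula to $\langle H^K_u,\psi^t(u,\cdot)\rangle$ for $u\in[0,t]$, using the Poisson representation underlying \eqref{eq:historical process semimartingale}. The drift contains four terms: $\partial_u\psi^t$, the growth term with $b-d-C\langle H^K_u,1\rangle$, the variation jumps, and the transfer jumps against $Z^K_u$.

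Now subtract $0=\langle H^K_u,\mathcal{L}\psi^t+R\psi^t\rangle$. The unbounded death term $d$ and the variation terms cancel exactly. What survives is:
\begin{itemize}
\item the competition difference $C\langle\xi_u-Z^K_u,1\rangle\psi^t$, using $\langle H^K_u,1\rangle=\langle Z^K_u,1\rangle$;
\item the transfer difference $h(x,y_u,Z^K_u)Z^K_u(dx)-h(x,y_u,\xi_u)\xi_u(dx)$.
\end{itemize}
Evaluating at $u=t$ and using $\psi^t(t,\cdot)=\Phi_\varphi$ gives \eqref{eq:historical without death}. Here $R^K_{u,t}$ is the stochastic integral against the compensated Poisson measures of the integrand built from $\psi^t$, for fixed $t$. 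It is a martingale in $u$ with the stated bracket, while $t\mapsto R^K_{t,t}$ is not a martingale because the integrand itself depends on $t$. The same computation for $\widehat{H}^K$ makes both residual drifts vanish identically, since there competition and transfer are already frozen at $\xi$.

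\textbf{Step 3: the $L^2$ estimate, and the main obstacle.} By Doob's inequality,
\[
\mathbb{E}[\sup_{u\le t}(R^K_{u,t})^2]\le 4\,\mathbb{E}[\langle R^K_{\cdot,t}\rangle_t].
\]
Using $|\psi^t|\le\|\Phi_\varphi\|_\infty e^{\overline{b}T}$, the bracket is at most $C/K$ times
\[
\int_0^t\big(\langle Z^K_s,1+d\rangle+\langle Z^K_s,1\rangle^2\big)ds.
\]
Lemma \ref{lemma_moment_control}, in particular \eqref{eq:estimate death}, bounds this uniformly in $K$. The hat version is identical: replace $Z^K$ by $\widehat{Z}^K$, whose moments are controlled the same way, and note that $\langle\xi_s,1\rangle$ is bounded.

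I expect the main obstacle to be Step 1, namely the rigorous justification of the backward equation on the path space $\mathbb{D}$. Two points need care. First, $\psi^t$ must be regular enough in $s$ for the time-dependent Itô formula in Step 2 to apply, even though the transfer indicator is discontinuous. Second, the death rate is unbounded, so the integrability of $d\,\psi^t$ has to be justified before the cancellation of the $d$-terms is legitimate. For the first point I would handle regularity through the series representation, rather than through continuity in $y$.
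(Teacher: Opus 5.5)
Your proposal is correct and follows the paper's own proof, which is only a three-line sketch of the same steps. The paper uses Feynman--Kac for \eqref{eq:fk skorohod}, then substitutes $\psi^t$ into \eqref{eq:historical process semimartingale} and \eqref{processus historique auxiliaire} so that the death and variation terms cancel, and finally bounds the remainders with Lemma \ref{lemma_moment_control} together with $\langle H^K_s,1\rangle=\langle Z^K_s,1\rangle$ and $\langle H^K_s,d(y_s)\rangle=\langle Z^K_s,d\rangle$. Your version is more careful about the regularity of $\psi^t$ in $s$ and the integrability of $d\,\psi^t$. Your drift computation also correctly attaches $C$ only to the competition difference, whereas the displayed formula in the statement puts $C$ in front of the whole brace, including the transfer difference.
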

\begin{proof}
Equation \eqref{eq:fk skorohod} is an application of the Feynman–Kac formula. Equations \eqref{eq:historical without death} and \eqref{eq:historical aux without death} are obtained by applying \eqref{eq:fk skorohod} to \eqref{eq:historical process semimartingale} and \eqref{processus historique auxiliaire}. We find $C_T$ and $\widehat{C}_T$ using the estimates of Lemma  \ref{lemma_moment_control}
and the fact that $\langle H^K_s,1\rangle=\langle Z^K_s,1\rangle$ and $\langle H^K_s,d(y_s)\rangle=\langle Z^K_s,d\rangle$.
\end{proof}
We need to ensure the continuity in the $J_1$ topology of the auxiliary function $\psi^t(s,\cdot)$.
\begin{proposition}[Continuity criterion for $\psi^t(s,\cdot)$]\label{prop:continuity criterion in J1}
As in Lemma \ref{lemma : analytical expression of functional}, we can write a series representation for $\psi^t(s,y)$:
\begin{equation*}
        \psi^t(s,y)=e^{-\overline{\Lambda}(t-s)}\sum_{k\geq0}\tfrac{(\overline{\Lambda}(t-s))^k}{k!}\psi^t_k(s,y)
\end{equation*}
with $\psi^t_0(s,y)=\Phi_\varphi(y)e^{\int_s^t R(u,y)du}$,
\begin{equation*}
        \psi^t_k(s,y)=\int_s^t e^{\int_s^u R(v,y)dv}  \left[\tfrac{\Lambda(u,y)}{\overline{\Lambda}}\int_0^\infty \psi^t_{k-1}(u,(y|u|x))\nu(u,y,x)dx+\left(1-\tfrac{\Lambda(u,y)}{\overline{\Lambda}}\right)\psi^t_{k-1}(u,y)\right]\tfrac{k(t-u)^{k-1}}{(t-s)^k}du ,
\end{equation*}
\begin{equation*}
    \overline{\Lambda}\geq\Lambda(u,y)=\gamma(y_{u-})+\tfrac{\tau}{\beta+\mu\langle\xi_u,1\rangle}\int_0^\infty \mathds{1}_{\{x>y_{u-}\}}\xi_u(x)dx
\end{equation*}
and
\begin{equation*}
    \nu(u,y,x)=\Lambda(u,y)^{-1}\left(\gamma(y_{u-})m(y_{u-},x)+\tfrac{\tau}{\beta+\mu\langle\xi_u,1\rangle}\mathds{1}_{\{x>y_{u-}\}}\xi_u(x)\right)
\end{equation*}
We can state that the mapping $y \mapsto \psi^t(s,y)$ is continuous (with respect to the $J_1$ topology).
\end{proposition}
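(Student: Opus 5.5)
We prove $J_1$-continuity of $y\mapsto\psi^t(s,y)$ through the series. We show that each term $\psi^t_k(s,\cdot)$ is $J_1$-continuous, and then that the series converges uniformly in $y$.

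\textbf{Uniform convergence.} Since $R\le\overline{b}$ and $\mathcal K$ is an average of values of $\psi^t_{k-1}$ with weights $\frac{\Lambda}{\overline\Lambda}\nu$ and $1-\frac{\Lambda}{\overline\Lambda}$, induction gives
\[
\|\psi^t_k\|_\infty\le e^{\overline{b}(t-s)}\|\Phi_\varphi\|_\infty
\]
for every $k$. The Poisson weights $e^{-\overline\Lambda(t-s)}(\overline\Lambda(t-s))^k/k!$ are summable, so the series converges uniformly on $\mathbb{D}([0,T],\mathbb{R}_+)$. A uniform limit of continuous functions is continuous, so it remains to treat each term.

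\textbf{Continuity of each term.} Take $y^n\to y$ in $J_1$. Then $y^n_u\to y_u$ and $y^n_{u-}\to y_{u-}$ for every $u$ outside the countable set of jump times of $y$, so for Lebesgue-a.e. $u$. For $\psi^t_0$, note first that $\Phi_\varphi$ is $J_1$-continuous by dominated convergence in $\int_0^T\varphi(t,y_t)dt$. This uses that $\varphi$ is continuous and that $J_1$-convergent sequences are uniformly bounded, so $\varphi$ is bounded on the relevant compact set. Next, $\int_s^t R(u,y)du$ depends on $y$ only through $b(y_{u-}),d(y_{u-})$, which converge a.e. in $u$ and stay bounded, since $b,d$ are continuous and the $y^n$ are uniformly bounded. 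Dominated convergence therefore gives continuity of $\psi^t_0$.

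For $k\ge1$ we argue by induction and assume $\psi^t_{k-1}(u,\cdot)$ is $J_1$-continuous for each $u$. The first ingredient is that the map $y\mapsto(y|u|x)$ is $J_1$-continuous whenever $u$ is a continuity point of $y$, which again holds for a.e. $u$. The time changes $\lambda_n$ realizing $y^n\to y$ can be modified near $u$ to fix $u$, with vanishing cost, because $y$ is continuous at $u$. The second ingredient is that $\Lambda(u,y^n)\to\Lambda(u,y)$ and $\nu(u,y^n,x)\to\nu(u,y,x)$ for a.e. $u$ and every $x\neq y_{u-}$. This uses continuity of $\gamma$, of $m$, and of $a\mapsto\int_a^\infty\xi_u$, and the indicator $\mathds{1}_{\{x>y_{u-}\}}$ converges away from a Lebesgue-null set in $x$. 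The integrand in $x$ is dominated by
\[
\|\psi^t_{k-1}\|_\infty\Big(\overline\gamma\, m(y^n_{u-},x)+\tfrac{\tau}{\beta+\mu\lambda_u}\xi_u(x)\Big).
\]
We need this domination to be uniform in $n$. This is the main obstacle, since only $\int m(a,x)dx=1$ is assumed. We would handle it with Scheffé's lemma: the densities $m(y^n_{u-},\cdot)$ converge pointwise to $m(y_{u-},\cdot)$ by continuity of $m$, and all have mass one, so they converge in $L^1$. That $L^1$ convergence gives the generalized dominated convergence we need. We then apply dominated convergence once more in $u$, with bound $e^{\overline{b}t}\|\Phi_\varphi\|_\infty k(t-u)^{k-1}/(t-s)^k$, which shows that $\psi^t_k(s,y^n)\to\psi^t_k(s,y)$.

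Combining the per-term continuity with the uniform convergence yields $J_1$-continuity of $y\mapsto\psi^t(s,y)$.
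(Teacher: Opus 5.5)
Your proposal is correct and follows essentially the same route as the paper. Both use the series representation, prove term-by-term $J_1$-continuity by induction from the a.e.-in-$u$ convergence of $R$, $\Lambda$ and $\nu$ (via $y^n_{u-}\to y_{u-}$ off the jump times of $y$, and the absence of atoms of $\xi_u$), and conclude by uniform convergence of the series. Your Scheff\'e (total-variation) argument for $m(y^n_{u-},\cdot)$ and your explicit check that $y\mapsto(y|u|x)$ is $J_1$-continuous at $y$ whenever $u$ is a continuity point of $y$ fill in details the paper leaves implicit: its criterion only tests $\nu(u,y^n,\cdot)$ against a fixed $f\in C_b(\mathbb{R}_+)$, whereas the integrand $x\mapsto\psi^t_{k-1}(u,(y^n|u|x))$ depends on $n$.
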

\begin{proof}[Proof of Proposition \ref{prop:continuity criterion in J1}]

Similarly to Corollary \ref{corollary:series continuity Linfini}, we find that if the following conditions hold:
\begin{itemize}
    \item The function $\psi^t_0(s,\cdot)$ is continuous with respect to the $J_1$ topology and uniformly bounded.
    \item The function $R(u,y)$ is uniformly bounded from above by $\overline{b}$ and for any sequence $y^n \xrightarrow{J_1} y$, we have $R(u,y^n) \to R(u,y)$ for almost every $u \in [s,t]$.
    \item For every $y \in D$ such that there exists a sequence $y^{n} \xrightarrow{J_1} y$, we have
\begin{equation*}
\lim_{n\rightarrow\infty}\Lambda(u,y^{n})=\Lambda(u,y) \quad \text{for almost every } u \in [s,t].
\end{equation*}
    \item For every $y \in D$ such that there exists a sequence $y^{n} \xrightarrow{J_1} y$, and for every $f \in C_b(\mathbb{R}_+)$, we have
\begin{equation*}
\lim_{n\rightarrow\infty}\int_0^\infty f(x)\nu(u,y^{n},x)dx=\int_0^\infty f(x)\nu(u,y,x)dx \quad \text{for almost every } u \in [s,t].
\end{equation*}
\end{itemize}
Then, the mapping $y \mapsto \psi^t(s,y)$ is continuous with respect to the $J_1$ topology.

Indeed:
\begin{itemize}
    \item The functions $\gamma$ and $u\mapsto \tfrac{\tau}{\beta+\mu\langle\xi_u,1\rangle}$ are continuous, bounded and positive.
    \item $\xi$ is positive, continuous, integrable and uniformly bounded.
    \item The kernel $m$ is assumed to be continuous.
    \item By the properties of the $J_1$ topology, if $y^n \overset{J_1}{\rightarrow} y$, then $y^{n}_{u-} \overset{J_1}{\rightarrow} y_{u-}$ for almost all $u$.
    \item Since $\xi_u$ is a probability density, it has no atoms, meaning that $\lim_{n\to\infty} \mathds{1}_{\{x>y^n_{u-}\}} = \mathds{1}_{\{x>y_{u-}\}}$ holds $dx$-almost everywhere, which guarantees the convergence of $\nu(u,y^n,x)$ for almost every $u$.
\end{itemize}
So, in this particular case, we obtain continuity.
\end{proof}
In order to prove the convergence of the historical processes, we need to enhance the convergence of $Z^K$.
\begin{proposition}[Enhanced convergence in law for $(Z^K)_K$]\label{prop:enhanced convergence}
Let $f(x,y)=\mathds{1}_{\{x>y\}}\varphi(x)$, where $\varphi\in C_b(\mathbb{R}_+)$. Under the assumptions of Theorem \ref{theoreme cv} and if $\xi_0$ has a density, then we have 
\begin{equation}\label{eq:enhanced cv 1}
    (i)\qquad\lim_{K\rightarrow\infty}\mathbb{E}\left[\sup_{t\leq T}|\langle Z^K_t-\xi_t, f(\cdot,y)\rangle|\right]=0
\end{equation}
and 
\begin{equation}\label{eq:enhanced cv 2}
    (ii)\,\qquad\lim_{K\rightarrow\infty}\mathbb{E}\left[ \sup_{t\leq T,\, y\in D}|\langle Z^K_t-\xi_t,\mathds{1}_{\{\cdot>y_t\}}\rangle|\right]=0.
\end{equation}
\end{proposition}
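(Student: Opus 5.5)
Part (i) will come from Theorem \ref{theoreme cv} after removing the discontinuity of $f(\cdot,y)$, which sits only at the single point $x=y$. Part (ii) will then follow from (i) by a Glivenko--Cantelli type argument: the family of half-line indicators is controlled by finitely many of them.

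For (i), fix $y$ and $\varepsilon>0$. I will choose continuous functions $g_\varepsilon\le \mathds{1}_{\{\cdot>y\}}\le G_\varepsilon$, both with values in $[0,1]$, that agree outside $(y-\varepsilon,y+\varepsilon)$. Multiplying by $\varphi$ (splitting $\varphi=\varphi^+-\varphi^-$) gives continuous bounded functions $f_\varepsilon^\pm$ bracketing $f(\cdot,y)$, with $|f-f^\pm_\varepsilon|\le \|\varphi\|_\infty h_\varepsilon$. Here $h_\varepsilon$ is a continuous bump equal to $1$ on $(y-\varepsilon,y+\varepsilon)$ and supported in $(y-2\varepsilon,y+2\varepsilon)$. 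We then split
\[
|\langle Z^K_t-\xi_t,f\rangle|\le |\langle Z^K_t-\xi_t,f_\varepsilon\rangle| + \|\varphi\|_\infty\big(\langle Z^K_t,h_\varepsilon\rangle+\langle \xi_t,h_\varepsilon\rangle\big).
\]
Then we bound each term as follows:
\begin{itemize}
\item The first term has $\mathbb{E}\sup_t$ tending to $0$ by the $L^2$ convergence in $\mathbb{D}([0,T],M_F)$ of Theorem \ref{theoreme cv}. The limit is continuous, so Skorohod convergence is uniform, and the moment bounds of Lemma \ref{lemma_moment_control} give uniform integrability.
\item In the second term, $\langle Z^K_t,h_\varepsilon\rangle\le |\langle Z^K_t-\xi_t,h_\varepsilon\rangle|+\langle \xi_t,h_\varepsilon\rangle$, and the difference part is handled as before.
\item What remains is $\sup_{t\le T}\langle\xi_t,h_\varepsilon\rangle\le \sup_{t\le T}\int_{y-2\varepsilon}^{y+2\varepsilon}\xi_t(x)dx\le 4\varepsilon\,\sup_{[0,T]\times\mathbb{R}_+}\xi$. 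This bound is finite by Proposition \ref{prop:densité limite} if $\xi_0$ is bounded. Otherwise I use uniform absolute continuity: $\xi$ is continuous in time with values in $L^1$, so $\{\xi_t\}_{t\le T}$ is compact in $L^1$ and hence uniformly integrable.
\end{itemize}
Letting $K\to\infty$ and then $\varepsilon\to0$ gives (i).

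For (ii), the supremum over $y\in D$ reduces to a supremum over levels $a=y_t\in\mathbb{R}_+$, uniformly in $t$. So it suffices to show $\mathbb{E}\sup_{t\le T}\sup_{a\ge0}|\langle Z^K_t-\xi_t,\mathds{1}_{(a,\infty)}\rangle|\to0$. Fix $\delta>0$ and pick a grid $0=a_0<a_1<\dots<a_N$ such that $\sup_{t\le T}\langle\xi_t,\mathds{1}_{(a_i,a_{i+1}]}\rangle\le\delta$ and $\sup_t\langle \xi_t,\mathds{1}_{(a_N,\infty)}\rangle\le\delta$. Such a grid exists by the uniform integrability above and the tightness given by $\sup_t\langle\xi_t,d^\eta\rangle<\infty$. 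For $a\in(a_i,a_{i+1}]$, monotonicity of $a\mapsto \mathds{1}_{(a,\infty)}$ gives the sandwich
\[
\langle Z^K_t-\xi_t,\mathds{1}_{(a,\infty)}\rangle\le \langle Z^K_t-\xi_t,\mathds{1}_{(a_i,\infty)}\rangle+\delta,
\]
together with the symmetric lower bound at $a_{i+1}$. For $a>a_N$, I bound $\langle Z^K_t,\mathds{1}_{(a_N,\infty)}\rangle$ via its difference with $\xi$ at $a_N$ plus $\delta$. The resulting maximum over the $N+1$ grid points converges to $0$ in $L^1$ by (i) with $\varphi\equiv1$. Then let $\delta\to0$.

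The main obstacle is the uniform-in-time control of the mass of $\xi_t$ near the discontinuity point, that is, equicontinuity of $t\mapsto$ the distribution function of $\xi_t$. I would first try the boundedness of $\xi$ from Proposition \ref{prop:densité limite}, and fall back on $L^1$-compactness of $\{\xi_t\}$ if needed.
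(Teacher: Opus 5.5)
Your proposal is correct and follows essentially the same route as the paper. Both use a continuous approximation of $\mathds{1}_{\{\cdot>y\}}$ with a bump correction near $y$, the same three-term split, and a finite grid with the monotone sandwich that reduces (ii) to finitely many instances of (i).

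The only real difference is how you make $\sup_{t\le T}\langle\xi_t,h_\varepsilon\rangle$ small. The paper uses Dini's theorem: the tent functions $\delta_n$ decrease in $n$, $t\mapsto\langle\xi_t,\delta_n\rangle$ is continuous, and the pointwise limit is $0$ because $\xi_t$ has no atoms. This needs neither a bounded density nor $L^1$-compactness. In (ii), however, the paper does use $\sup\xi<\infty$ to choose the mesh, so your uniform-integrability fallback is slightly more general there.
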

\begin{proof}[Proof of Proposition \ref{prop:enhanced convergence}]

Let $l_n$ be the continuous function defined such that $l_n(x) = 0$ for $x \le y$, $l_n(x) = 1$ for $x \ge y + n^{-1}$, and $l_n$ is linear on the interval $[y, y + n^{-1}]$. Similarly, let $u_n$ be the continuous function such that $u_n(x) = 0$ for $x \le y - n^{-1}$, $u_n(x) = 1$ for $x \ge y$, and $u_n$ is linear on $[y - n^{-1}, y]$.

We define $\delta_n(x) = u_n(x) - l_n(x)$. Note that $\delta_n$ is continuous, bounded by $1$, and supported on the interval $(y - n^{-1}, y + n^{-1})$. Finally, let $f_n = l_n \varphi$.

We have $|f-f_n|\leq \delta_n\|\varphi\|_\infty$ so we can write 
\begin{equation*}
    \begin{aligned}
        |\langle Z^K_t-\xi_t, f\rangle|&\leq|\langle Z^K_t,f-f_n\rangle|+|\langle Z^K_t-\xi_t,f_n\rangle|
        +|\langle\xi_t, f_n-f\rangle|\\
        &\leq \|\varphi\|_\infty\langle Z^K_t,\delta_n\rangle+|\langle Z^K_t-\xi_t,f_n\rangle|+\|\varphi\|_\infty \langle\xi_t,\delta_n\rangle\\
        &\leq 2\|\varphi\|_\infty \langle\xi_t,\delta_n\rangle+|\langle Z^K_t-\xi_t,f_n\rangle|+\|\varphi\|_\infty |\langle Z^K_t-\xi_t,\delta_n \rangle|
    \end{aligned}
\end{equation*}
By taking the supremum over $t$ and the expectation, the convergence of the second and third terms of the last inequality follows from the continuity of $f_n$ and $\delta_n$, and from the convergence of $Z^K$ to $\xi$ in distribution. For the first term, we know that $\xi_t$ has a density and that $t\mapsto\langle\xi_t,\delta_n\rangle$ is continuous. For any time $t$ in the compact interval $[0,T]$, the sequence of functions $(\langle\xi_t,\delta_n\rangle)_n$ converges to $0$. By Dini’s Theorem, $\lim_{n\rightarrow\infty}\sup_{t\leq T}\langle\xi_t,\delta_n\rangle=0$. We then conclude that we have \eqref{eq:enhanced cv 1}.

For the second limit, we set $\varepsilon>0$. We know that $\xi$ is continuous in time on $[0,T]$, and we know that there exists a $B>0$ such that for all $t\in[0,T]$, $\xi_t([B,\infty))<\varepsilon$. On $[0,B]$, the density is bounded by $A:=\sup_x\xi_t(x)<\infty$. We consider a subdivision $0=u_0<u_1<...<u_{N-1}=B$ with steps of length less than $\varepsilon/A$. We set $u_N=\infty$. For all $i$, we have $\xi_t([u_i,u_{i+1}])\leq \varepsilon$. 

For any path $y \in D$ and any $t \in [0, T]$, there exists an $i$ such that $y_t \in [u_i, u_{i+1})$, so 
$\mathds{1}_{\{x>u_{i+1}\}} \leq \mathds{1}_{\{x>y_t\}} \leq \mathds{1}_{\{x>u_i\}}$ and we can write
\begin{equation*}
    |\langle Z^K_t-\xi_t,\mathds{1}_{\{\cdot>y_t\}}\rangle|\leq \max_{j=i,\,i+1}|\langle Z^K_t-\xi_t,\mathds{1}_{\{\cdot>u_j\}}\rangle|+\xi_t([u_i,u_{i+1}]).
\end{equation*}
Since the mass of $\xi$ is bounded and $y_t$ must fall within one of the intervals of the subdivision, we can take the supremum:
\begin{equation*}
    \sup_{t\leq T,\,y\in D}|\langle Z^K_t-\xi_t,\mathds{1}_{\{\cdot>y_t\}}\rangle|\leq \varepsilon+\sum_{j=0}^N \sup_{t\leq T}|\langle Z^K_t-\xi_t,\mathds{1}_{\{\cdot>u_j\}}\rangle|.
\end{equation*}
By taking the expectation, letting $K$ tend to infinity and using \eqref{eq:enhanced cv 1} for the terms in the sum, we obtain \eqref{eq:enhanced cv 2}.
\end{proof}

\begin{proof}[Proof of Proposition \ref{cv historique}]
Recall that $H^K$ and $\widehat{H}^K$ are built on the same probability space with the same initial condition. As in the proof of Proposition 3.4 of \cite{meleardB1}, we introduce 
\begin{equation*}
    \begin{aligned}
        \tau^K_M&=\inf\{t\geq 0\,|\, \langle H^K_t,1\rangle=\langle Z^K_t,1\rangle\geq M\}\\
        \widehat{\tau}^K_M&=\inf\{t\geq 0\,|\, \langle \widehat{H}^K_t,1\rangle=\langle \widehat{Z}^K_t,1\rangle\geq M\}.
    \end{aligned}
\end{equation*}
We fix $\varepsilon>0$. We know that the processes $(\langle Z^K_t,1\rangle)_t$ and $(\langle \widehat{Z}^K_t,1\rangle)_t$ converge to $(\langle\xi_t,1\rangle)_t$. It implies that there exists a $M=M(\varepsilon)>0$, not depending on $K$, such that
\begin{equation*}
    \mathbb{P}(\tau^K_M\wedge \widehat{\tau}^K_M\leq T)\leq \varepsilon.
\end{equation*}
Let us use a test-function of the form $\Phi_\varphi$ with $\|\Phi_\varphi\|_\infty\leq 1$ and the transform $\psi^t$ given by Lemma \ref{lemma : transformée FK}. We have 
\begin{equation*}
\begin{aligned}
    |\langle & H^K_{t\wedge\tau^K_M\wedge \widehat{\tau}^K_M}, \Phi_\varphi\rangle -\langle\widehat{H}^K_{t\wedge\tau^K_M\wedge \widehat{\tau}^K_M},\Phi_\varphi\rangle|
    =|\langle H^K_{t\wedge\tau^K_M\wedge \widehat{\tau}^K_M}, \psi^t(t,\cdot)\rangle-\langle\widehat{H}^K_{t\wedge\tau^K_M\wedge \widehat{\tau}^K_M},\psi^t(t,\cdot)\rangle|\\
    &\leq |R^K_{t\wedge\tau^K_M\wedge \widehat{\tau}^K_M,t}|+|\widehat{R}^K_{t\wedge\tau^K_M\wedge \widehat{\tau}^K_M,t}|+\int_0^{t\wedge\tau^K_M\wedge \widehat{\tau}^K_M}\int_{\mathbb{D}(\mathbb{R},\mathbb{R})}\psi^t(s,y)|\langle\xi_s- Z^K_s,1\rangle|H^K_s(dy)ds\\
    &+\left|\int_0^{t\wedge\tau^K_M\wedge \widehat{\tau}^K_M}\int_{\mathbb{D}(\mathbb{R},\mathbb{R})}\int_0^\infty (\psi^t(s,(y|s|x))-\psi^t(s,y))
        [h(x,y_s,Z^K_s)Z^K_s(dx)-h(x,y_s,\xi_s)\xi_s(dx)]\widehat{H}^K_s(dy)ds\right|\\
    &\leq \sup_{s\leq t}|R^K_{s\wedge\tau^K_M\wedge \widehat{\tau}^K_M,t}|+\sup_{s\leq t}|\widehat{R}^K_{s\wedge\tau^K_M\wedge \widehat{\tau}^K_M,t}|+CM\|\Phi_\varphi\|_\infty e^{\overline{b}t}T \sup_{s\leq T}|\langle Z^K_s-\xi_s,1\rangle|\\
    &+2M\|\Phi_\varphi\|_\infty\mathds{1}_{\mu>0}\sup_{s\leq T}(\mu\langle\xi_s,1\rangle)^{-1}T\sup_{s\leq T}|\langle Z^K_s-\xi_s,1\rangle|\\
    &+\sup_{s\leq T}\frac{\tau\|\Phi_\varphi\|_\infty e^{\overline{b}T}}{\beta+\mu\langle\xi_s,1\rangle}MT\sup_{s\leq T,\,y\in D}|\langle Z^K_s-\xi_s,\mathds{1}_{\{\cdot>y_s\}}\rangle|\\
    &+\int_0^t\tfrac{\tau}{\beta+\mu\langle\xi_s,1\rangle}M \sup_{y\in D}|\langle Z^K_s-\xi_s, \Psi(s,\cdot)\mathds{1}_{\{\cdot>y_s\}}\rangle|ds
\end{aligned}
\end{equation*}
where $\Psi(s,x)=\psi^t(s,(y|s|x))$. Indeed, we observe that $\psi^t(s,(y|s|x))$ does not depend on $y$, since the function evaluates the trajectory at time $s$, which is therefore fixed in $x$ and independent of $y$. Furthermore, the mapping $x\mapsto(y|s|x)$ is continuous with respect to the $J_1$ topology and $y\mapsto\psi^t(s,y)$ is continuous by Proposition \ref{prop:continuity criterion in J1}, so the mapping $\Psi(s,\cdot)$ is also continuous.

We take the supremum over time and the expectation and the second moments for the terms $R^K$ and $\widehat{R}^K$, and obtain by Doob's inequality:
\begin{equation*}
\begin{aligned}
    \mathbb{E}[\sup_{s\leq t}\langle H^K_{s\wedge\tau^K_M\wedge \widehat{\tau}^K_M}-\widehat{H}^K_{s\wedge\tau^K_M\wedge \widehat{\tau}^K_M},\Phi_\varphi\rangle]&\leq \sqrt{\mathbb{E}[\langle R^K_{t,t}\rangle]}+\sqrt{\mathbb{E}[\langle \widehat{R}^K_{t,t}\rangle]}\\
    &+M\|\Phi_\varphi\|_\infty T (Ce^{\overline{b}T}+2\mathds{1}_{\mu>}\sup_{s\leq T}(\mu\langle\xi_s,1\rangle)^{-1})\mathbb{E}[\sup_{s\leq T}|\langle Z^K_s-\xi_s,1\rangle|]\\
    &+\sup_{s\leq T}\frac{\tau\|\Phi_\varphi\|_\infty e^{\overline{b}T}}{\beta+\mu\langle\xi_s,1\rangle}MT\mathbb{E}[\sup_{s\leq T,\,y\in D}|\langle Z^K_s-\xi_s,\mathds{1}_{\{\cdot>y_s\}}\rangle|]\\
    &+\int_0^t\tfrac{\tau}{\beta+\mu\langle\xi_s,1\rangle}M\mathbb{E}[\sup_{y\in D}|\langle Z^K_s-\xi_s, \Psi(s,\cdot)\mathds{1}_{\{\cdot>y_s\}}\rangle|]ds.
\end{aligned}
\end{equation*}
Let $Q_M(K)$ denote the right-hand side of the inequality. 

By partitioning over the events $\{\tau^K_M \wedge \widehat{\tau}^K_M \leq T\}$ and $\{\tau^K_M \wedge \widehat{\tau}^K_M > T\}$: 
\begin{equation*}
    \begin{aligned}
        \mathbb{E}[\sup_{t\leq T}|\langle H^K_t-\widehat{H}^K_t, \Phi_\varphi\rangle|]&\leq\mathbb{E}[\sup_{s\leq t}\langle H^K_{s\wedge\tau^K_M\wedge \widehat{\tau}^K_M}-\widehat{H}^K_{s\wedge\tau^K_M\wedge \widehat{\tau}^K_M},\Phi_\varphi\rangle]\\
        &+\sqrt{\mathbb{P}(\tau^K_M\wedge \widehat{\tau}^K_M\leq T)}\sqrt{2\sup_K\mathbb{E}[\sup_{t\leq T}\langle Z^K_t,1\rangle^2+\sup_{t\leq T}\langle \widehat{Z}^K_t,1\rangle^2]}\\
        &\leq Q_M(K)+\sqrt{\varepsilon}\sqrt{2\sup_K\mathbb{E}[\sup_{t\leq T}\langle Z^K_t,1\rangle^2+\sup_{t\leq T}\langle \widehat{Z}^K_t,1\rangle^2]}.
    \end{aligned}
\end{equation*}
The last inequality is obtained by taking $M>M(\varepsilon)$. By our assumptions, $Q_M(K)\underset{K\rightarrow\infty}{\rightarrow}0$ thus concluding the proof. To extend the convergence to $L^2$, we make use of the estimates given by Lemma \ref{lemma_moment_control}.
\end{proof} 
\section{Time reversal and Nagasawa's Theorem}\label{sec:appendix time reversal}
We recall some definitions.
\begin{definition}
Let \(P\) be the semigroup (extended to time-space) of a Markov process. The potential $U$ is defined by 
\begin{equation*}
    U = \int_0^\infty  P_t \, dt.
\end{equation*}
A measure $\eta$ is said to be excessive for $P$ if it is positive, $\sigma$-finite and for any positive, bounded, and measurable function $f$ we have 
\begin{equation*}
    \eta(P_t f)\leq \eta(f) \,\text{ and }\lim_{t\rightarrow 0^+} \eta(P_tf)=\eta(f).
\end{equation*}
\end{definition}
The following result can be found in Theorem 3.5 of \cite{nagasawa64} or 2.2.1 of \cite{nagasawa}:

\begin{theorem}[Nagasawa]\label{Dellacherie}
Let \(P\) be the semigroup (extended to time-space) of a Markov process of potential $U$. Let \(\mu\) be an initial law, and define \(\eta = \mu U\) which is excessive for $P$.
 The semigroup \(S\) of the time-reversed process satisfies the duality relation $\forall \varphi,\psi\in L^\infty([0,T]\times\mathbb{R}_+)$:
\begin{equation*}
    \int_{[0,T]\times\mathbb{R}_+}P_t\varphi(s,x)\psi(s,x)\eta(ds,dx)=\int_{[0,T]\times\mathbb{R}_+}\varphi(s,x) S_t\psi(s,x)\eta(ds,dx)
\end{equation*}

\end{theorem}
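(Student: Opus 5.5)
We would prove Theorem~\ref{Dellacherie} in the time-space setting in which it is used, where the time coordinate of the Markov process $Z_t=(s_0+t,Y_{s_0+t})$ moves deterministically. This turns the general co-optional reversal of \cite{nagasawa64} into a reversal at a deterministic time. The plan has three steps: build $S$ abstractly from the duality by disintegration, check that $S$ is a semigroup, and identify $S$ with the transition mechanism of the reversed process through its finite-dimensional distributions.

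\textbf{Step 1: construction of $S$.} Since $\eta=\mu U$, the Markov property gives, for nonnegative bounded $\varphi,\psi$,
\begin{equation*}
\int P_t\varphi\,\psi\,d\eta=\mathbb{E}_\mu\Big[\int_0^\infty \psi(Z_u)\,\varphi(Z_{u+t})\,du\Big].
\end{equation*}
The bilinear form $(\psi,\varphi)\mapsto\int P_t\varphi\,\psi\,d\eta$ is therefore a positive measure $\Pi_t$ on the product space. Its first marginal is $\eta$ restricted by $P_t1\le 1$. Its second marginal is $\eta P_t$, which is at most $\eta$ because $\eta$ is excessive. By the Radon--Nikodym theorem together with a regular disintegration (the space is Polish), there is a sub-Markov kernel $S_t$ with
\begin{equation*}
\Pi_t(d\psi\text{-variable},d\varphi\text{-variable})=\eta(dy)\,S_t(y,dx),
\end{equation*}
and this is exactly the duality relation of the theorem. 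Uniqueness holds $\eta$-a.e. Applying the duality twice and using $P_{t+r}=P_tP_r$ gives $S_{t+r}=S_rS_t$ $\eta$-a.e., which is the Chapman--Kolmogorov property.

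\textbf{Step 2: identification with the reversed process.} Because the time coordinate is deterministic, the process started at time $s_0$ exits $[0,T]$ exactly at time $T-s_0$. The reversed process is $\mathcal{R}(Z)_t=Z_{(T-s_0-t)-}$, and we would compute its finite-dimensional laws. For $0<t_1<\dots<t_n$ and test functions $f_i$, we integrate over the starting point, which is distributed according to $\mu$, and over the time shift $u$ that is absorbed by $\eta=\mu U$. Applying the Markov property of $P$ repeatedly yields a nested expression of the form
\begin{equation*}
\int f_n\,P_{t_n-t_{n-1}}\big(f_{n-1}P_{\cdots}(f_1)\big)\,d\eta .
\end{equation*}
We would then move each $P$ across the pairing by the duality of Step~1, one factor at a time from the outside in. This gives
\begin{equation*}
\int f_1\,S_{t_2-t_1}\big(f_2\cdots S_{t_n-t_{n-1}} f_n\big)\,d\eta ,
\end{equation*}
which is the finite-dimensional law of a Markov process with semigroup $S$ and initial measure given by the appropriate slice of $\eta$. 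Since finite-dimensional laws characterise the process, the semigroup of the reversed process satisfies the stated duality.

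\textbf{Main obstacle.} The delicate part is the left-limit and measurability issue in Step~2. We need $Z_{u-}=Z_u$ a.s. for each fixed $u$ (quasi-left-continuity, which holds for our pure-jump process with bounded rates), so that evaluating $\mathcal{R}$ at fixed times coincides with evaluating $Z$. We also need the $\eta$-null sets left undetermined by the disintegration to be irrelevant. We would handle this by working only with bounded measurable test functions and $\eta$-a.e. identities, which is all that the statement requires. A second point is $\sigma$-finiteness of $\eta$; in our application this follows from $\eta(ds,dx)=\xi m_T\mathds{1}_{[0,T]}\,ds\,dx$ being finite, by Proposition~\ref{prop : properties of mT} and the integrability of $\xi$.
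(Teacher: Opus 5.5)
You have taken a genuinely different route from the paper, and it is correct in the setting where the paper uses the result. The paper gives no proof of this statement: it imports Nagasawa's general theorem (\cite{nagasawa64}, Theorem 3.5), which reverses a Markov process from its lifetime, possibly random, with respect to $\eta=\mu U$.

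Your argument is self-contained. What makes it work is the point you identify: the clock coordinate moves at unit speed and nothing is killed inside $[0,T]$. So the lifetime is $\zeta=T-s_0$, and $\mathcal{R}(Z)$ sits at clock value $T-t$ at reversed time $t$, whatever $s_0$ is.

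Hence the Nagasawa identity, which is a priori only averaged over the shift $u$, can be un-averaged. Take $f_1(s,x)=\chi(s)g(x)$ with $\chi$ an arbitrary bounded function of time. This is what lies behind your ``appropriate slice of $\eta$'', and you should write it out. It gives the finite-dimensional laws of $\mathcal{R}(Z)$ for a.e.\ $t_1$ at fixed increments, which suffices for an $\eta$-a.e.\ statement.

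What your route buys:
\begin{itemize}
\item It uses only the forward Markov property and a disintegration.
\item Step 1 gives $\eta$-a.e.\ uniqueness of the dual. This is exactly what Proposition~\ref{time-reversed semigroup} needs, since there the theorem is used in the converse direction: an explicit $\widetilde P^{\mathcal R}$ in duality is declared to be the reversed semigroup.
\end{itemize}

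What it costs:
\begin{itemize}
\item It only covers time-space processes whose lifetime is the exit time of $[0,T]$. This holds for $\widetilde P$, because $\widetilde P_t1=\mathds{1}_{\{s+t\le T\}}$. It fails for a semigroup with killing inside $[0,T]$: there $\zeta$ is random and the clock of the reversed process no longer encodes $t$.
\item It gives the Markov property of $\mathcal{R}(Z)$ only in the $\eta$-a.e.\ sense. You do not get the path-level (moderate or strong Markov) conclusions that the general theorem delivers under its hypotheses, which the paper does not check either.
\end{itemize}

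In this deterministic-horizon setting you could even skip Step 1. Define $S_t$ as the regular conditional law of the forward process at clock $s-t$ given its state at clock $s$. The duality then follows from the change of variables $s=u+t$ in $\int_0^T\mathbb{E}_\mu[\varphi(Z\text{ at clock } s)\,\psi(Z\text{ at clock } s-t)]\,ds$, combined with the forward Markov property.

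Two small corrections:
\begin{itemize}
\item In Step 1 the disintegration must be taken along the $\varphi$-variable (the endpoint). Its marginal $\eta P_t\le\eta$ is where excessivity enters. As written, $\eta(dy)S_t(y,dx)$ with $y$ the $\psi$-variable just reproduces $P_t$.
\item In Step 2 what you need is the absence of fixed discontinuity times, or merely Lebesgue-a.e.\ $u$ in the averaged identity. Quasi-left-continuity is not the relevant property, although both hold here.
\end{itemize}
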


We make the measure \(\eta\) explicit:

\begin{lemma}\label{mesure excessive}
For the initial measure on $\mathbb{R}\times\mathbb{R}_+$
\begin{equation*}
   \mu(ds,dx) = \xi(s,x) m_T(s,x) (\delta_0(ds) \otimes dx) 
\end{equation*}
and for the potential \(U\) associated with \(\widetilde{P}\), we have, for any measurable set \(A \times B \subset \mathbb{R} \times \mathbb{R}_+\):
\begin{equation*}\label{expression eta}
    \eta(A\times B)=\mu U(A \times B) = \int_{A \times B} \mathds{1}_{\{0 \leq s \leq T\}} \, \xi(s,x) m_T(s,x) \, ds \, dx.
\end{equation*}
\end{lemma}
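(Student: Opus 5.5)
We compute $\mu U$ directly from the definition of the potential and reduce everything to the transition identity of Lemma \ref{lemme de transition}. By Tonelli (all quantities are nonnegative), for a test set $A\times B$ we write
\begin{equation*}
\mu U(\mathds{1}_{A\times B})=\int_0^\infty \int_0^\infty \xi(0,x)m_T(0,x)\,\widetilde{P}_t\mathds{1}_{A\times B}(0,x)\,dx\,dt .
\end{equation*}
The indicator in \eqref{expression semigroupe spine} forces $t\in[0,T]$ when $s=0$, so the outer integral reduces to $[0,T]$. For such $t$, the definition of $\widetilde P$ gives
\begin{equation*}
m_T(0,x)\widetilde{P}_t\mathds{1}_{A\times B}(0,x)=\widehat{P}_t(\mathds{1}_{A\times B}\,m_T)(0,x).
\end{equation*}
The factor $m_T(0,x)$ therefore cancels against the weight of $\mu$. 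This cancellation is the reason for choosing this initial measure.

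Next we apply Lemma \ref{lemme de transition} with $s=0$ and the test function $\varphi=\mathds{1}_{A}(u)\mathds{1}_B(x)m_T(u,x)$, extended by $0$ outside $[0,T]$. We must check that this $\varphi$ lies in $\mathcal{A}$, or at least in a class where the lemma holds.
\begin{itemize}
\item By Proposition \ref{prop : properties of mT}, $m_T$ is $C^1$ in time and bounded measurable in space on $[0,T]$, so $\mathds{1}_{[0,T]}m_T$ is of the form $\mathds{1}_{[0,T]}f$.
\item The extra factor $\mathds{1}_A(u)\mathds{1}_B(x)$ is not $C^1$ in $u$ for a general Borel $A$. We first take $A$ a finite interval, so that $A\cap[0,T]$ is again an interval. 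The spatial factor $\mathds{1}_B$ is harmless, since only bounded measurability in $x$ is required.
\end{itemize}
Lemma \ref{lemme de transition} then yields
\begin{equation*}
\int_0^\infty \xi(0,x)\widehat{P}_t(\mathds{1}_{A\times B}m_T)(0,x)\,dx=\int_0^\infty \xi(t,y)\,\mathds{1}_A(t)\mathds{1}_B(y)\,m_T(t,y)\,dy .
\end{equation*}
Integrating over $t\in[0,T]$ gives exactly $\int_{A\times B}\mathds{1}_{\{0\le s\le T\}}\xi(s,x)m_T(s,x)\,ds\,dx$, which is the claimed formula for intervals $A$.

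To pass to general measurable $A\times B$, we use that both sides are finite measures in $A$ for fixed $B$. Finiteness follows from $m_T\le e^{\overline b T}$ and the bound on $\langle\xi_s,1\rangle$ from Proposition \ref{bornitude de la taille de population}. Since intervals form a $\pi$-system generating the Borel sets, a monotone class argument extends the identity.

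We also record the properties needed to apply Theorem \ref{Dellacherie}: $\eta$ is $\sigma$-finite (indeed finite) and positive, by the strict positivity of $\xi$ and $m_T$. Excessivity is automatic for $\eta=\mu U$, and can also be checked directly from the identity $\widehat P_t(\mathds{1}_{[0,T]}f)=\mathds{1}_{[0,T]}(\cdot+t)\widehat P_t f$, which shrinks the time window.

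The main obstacle is the middle step: justifying the application of Lemma \ref{lemme de transition} to $\mathds{1}_{A\times B}m_T$. This requires the extension of $\widehat P$ to $\mathcal{A}$ (Proposition \ref{prop:def_semigroup_FK} together with Lemma \ref{lemma : extension}) and care with the time indicators at the endpoints $0$ and $T$. If needed, we would approximate $\mathds{1}_A$ by $C^1_b$ functions $c_n$ and pass to the limit by dominated convergence, using the uniform bound $e^{\overline bT}$.
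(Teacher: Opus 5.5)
Your proof is correct and follows the paper's argument essentially line by line: you reduce $\mu U$ to the slice $s=0$, use the definition of $\widetilde P$ to cancel the weight $m_T(0,x)$, apply Lemma \ref{lemme de transition} to $\mathds{1}_{A\times B}m_T$, and integrate over $u\in[0,T]$. Your restriction to intervals $A$ followed by a $\pi$-system extension makes rigorous a step the paper leaves implicit, since for general Borel $A$ the function $\mathds{1}_{A\times B}m_T$ is in general not in $\mathcal{A}$. This step can also be bypassed: the time coordinate is deterministic, so $\widehat P_t(\mathds{1}_{A\times B}m_T)(0,x)=\mathds{1}_A(t)\,\widehat P_t(\mathds{1}_{[0,T]\times B}m_T)(0,x)$.
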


\begin{proof}
We compute $\mu U$:
\begin{align*}
    \mu U(A \times B)
    &= \int_{\mathbb{R}} \int_0^\infty  \int_0^\infty  \widetilde{P}_u(\mathds{1}_{A \times B})(s,x) \, \xi(s,x) m_T(s,x) \, du \, dx \, \delta_0(ds)\\
    &= \int_0^\infty  \int_0^\infty  \widetilde{P}_u(\mathds{1}_{A \times B})(0,x) \, \xi(0,x) m_T(0,x) \, du \, dx.
\end{align*}
We apply the definition of $\widetilde{P}$ expressed in terms of $\widehat{P}$. Next, we use Lemma \ref{lemme de transition}:
\begin{align*}
    \mu U(A \times B)
    &= \int_0^\infty  \langle \delta_0 \otimes \xi_0, \widehat{P}_u(\mathds{1}_{A \times B} m_T) \rangle \mathds{1}_{\{0 \leq u \leq T\}} \, du\\
    &= \int_0^\infty  \langle \delta_u \otimes \xi_u, \mathds{1}_{A \times B} m_T \rangle \mathds{1}_{\{0 \leq u \leq T\}} \, du\\
    &= \int_0^\infty  \int_0^\infty  \mathds{1}_{A \times B}(u,x) \mathds{1}_{\{0 \leq u \leq T\}} \, \xi(u,x) m_T(u,x) \, du \, dx\\
    &= \int_{A \times B} \mathds{1}_{\{0 \leq u \leq T\}} \, \xi(u,x) m_T(u,x) \, du \, dx.
\end{align*}
\end{proof}
We prove that the measure $\eta$ that we computed is excessive.
\begin{lemma}
The measure \(\eta\) is excessive with respect to the semigroup \(\widetilde{P}\).
\end{lemma}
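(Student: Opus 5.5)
We must show that $\eta(ds,dx)=\mathds{1}_{\{0\le s\le T\}}\xi(s,x)m_T(s,x)\,ds\,dx$ is positive and $\sigma$-finite, that $\eta(\widetilde P_t f)\le \eta(f)$ for every positive bounded measurable $f$, and that $\eta(\widetilde P_t f)\to\eta(f)$ as $t\to0^+$.

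\textbf{Positivity and finiteness.} These are immediate. By Proposition~\ref{prop : properties of mT}, $m_T\le e^{\overline bT}$, and by Proposition~\ref{bornitude de la taille de population} the masses $\langle\xi_s,1\rangle$ are bounded. Hence $\eta$ is in fact a finite positive measure.

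\textbf{Computing $\eta(\widetilde P_t f)$.} I would first take $f=\mathds{1}_{[a,b]}g$ with $g\in C^1_b(\mathbb{R},L^\infty(\mathbb{R}_+))$, positive, so that $fm_T\mathds{1}_{[0,T]}\in\mathcal A$. Using \eqref{expression semigroupe spine}, the factor $m_T(s,x)$ in $\eta$ cancels the denominator, and Fubini gives
\[
\eta(\widetilde P_tf)=\int_0^{T}\mathds{1}_{\{s+t\le T\}}\langle \xi_s,\widehat P_t(fm_T)(s,\cdot)\rangle\,ds .
\]
Lemma~\ref{lemme de transition}, together with the remark after Proposition~\ref{prop:def_semigroup_FK} to handle the indicator in time, turns the inner bracket into $\langle\xi_{s+t},f(s+t,\cdot)m_T(s+t,\cdot)\rangle$. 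The change of variables $u=s+t$ then yields
\[
\eta(\widetilde P_tf)=\int_t^{T}\langle \xi_u, f(u,\cdot)m_T(u,\cdot)\rangle\,du .
\]
Since the integrand is nonnegative, this is at most $\int_0^T\langle\xi_u,fm_T(u,\cdot)\rangle du=\eta(f)$. The missing piece $\int_0^t$ is bounded by $t\,\|f\|_\infty e^{\overline bT}\sup_u\langle\xi_u,1\rangle$, so it tends to $0$ as $t\to0^+$, which gives the continuity condition.

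\textbf{Extension to all positive bounded measurable $f$.} This is the main obstacle, since Lemma~\ref{lemme de transition} was only proved on $\mathcal A$. I would run a monotone class argument. The identity $\eta(\widetilde P_tf)=\int_t^T\langle\xi_u,f(u,\cdot)m_T(u,\cdot)\rangle du$ holds for indicators of rectangles $A\times B$. For these, $\mathds{1}_A$ is approximated as in Lemma~\ref{lemma : approximation de l'indicatrice}, $\mathds{1}_B\in L^\infty$, and one concludes by dominated convergence using the bound $e^{\overline bT}$. Rectangles form a $\pi$-system generating the Borel $\sigma$-field. Both sides are finite measures in $f$ on $[0,T]\times\mathbb{R}_+$, so Dynkin's theorem extends the identity to all Borel sets, and monotone convergence extends it to positive bounded measurable $f$. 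The inequality and the limit at $0$ then follow exactly as above, so $\eta$ is excessive.
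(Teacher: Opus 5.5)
Your proof is correct and follows the same route as the paper: you cancel $m_T$ against the denominator of $\widetilde P_t$ and apply Lemma~\ref{lemme de transition}, which gives $\eta(\widetilde P_t f)=\int_t^T\langle\xi_u,f(u,\cdot)m_T(u,\cdot)\rangle\,du\le\eta(f)$, and then you let $t\to0^+$. Beyond the paper, your monotone class step extends this identity from $\mathcal{A}$ to all positive bounded measurable $f$, which the paper omits (it applies Lemma~\ref{lemme de transition} to $m_Tf$ directly), and your explicit $O(t)$ bound on the missing piece replaces the paper's appeal to dominated convergence.
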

\begin{proof}
The measure is positive. From Proposition \ref{bornitude de la taille de population}, we have \(\sup_{s \leq T} \langle \xi_s, 1 \rangle < \infty\), and by the assumptions on the birth rate, \(r^\lambda(s,x) \leq \overline{b}\). It follows that \(m_T(s,x) \leq e^{\overline{b}T}\). Therefore,
\[
\eta(\mathbb{R}\times\mathbb{R}_+) = \int_0^T \int_0^\infty  \xi(s,x) m_T(s,x) \, ds \, dx \leq e^{\overline{b}T} T \sup_{s \leq T} \langle \xi_s, 1 \rangle < \infty,
\]
so the measure is finite and hence \(\sigma\)-finite.

Let us prove the inequality $\eta(\widetilde P_t f)\leq \eta(f)$. By the definition of $\widetilde{P}$, we have:
\begin{equation*}
    \begin{aligned}
    \eta(\widetilde{P}_t f)&=\int_\mathbb{R}\int_0^\infty  \widetilde{P}_tf(s,x)\xi(s,x)m_T(s,x)\mathds{1}_{\{0\leq s \leq T\}}dxds\\
    &=\int_\mathbb{R}\int_0^\infty \widehat{P}_t(m_T f)(s,x)\xi(s,x)\mathds{1}_{\{0\leq s \leq T\}}\mathds{1}_{\{0\leq s +t\leq T\}}dxds\\
    &=\int_\mathbb{R}\langle \delta_s\otimes\xi_s, m_T f\rangle \mathds{1}_{\{0\leq s \leq T\}}\mathds{1}_{\{0\leq s +t\leq T\}}ds
    \end{aligned}
\end{equation*}
Using Lemma \ref{lemme de transition}, we get
\begin{equation*}
    \begin{aligned}
    \eta(\widetilde{P}_tf)&=\int_\mathbb{R}\langle \delta_{s+t}\otimes\xi_{s+t},m_T f\rangle \mathds{1}_{\{0\leq s \leq T\}}\mathds{1}_{\{0\leq s +t\leq T\}}ds\\
    &=\int_t^T\int_0^\infty \xi(s,x)m_T(s,x)f(s,x)dxds\\
    &\leq \int_0^T\int_0^\infty \xi(s,x)m_T(s,x)f(s,x)dxds = \eta(f)
    \end{aligned}
\end{equation*}
Moreover, applying the dominated convergence Theorem to the integral term as $t\rightarrow 0^+$ yields $\lim_{t\rightarrow 0^+}\eta(\widetilde{P}_tf)=\eta(f)$. We conclude that $\eta$ is indeed excessive.
\end{proof}
\section{Approximation and Extension Results}\label{sec:appendix approximation and extension results}
\begin{lemma}\label{lemma : approximation de l'indicatrice}
For any finite interval $A$ on $\mathbb{R}$, there exists a sequence $(c_n)$ of elements of $C^1_b(\mathbb{R},\mathbb{R})$ that converges pointwise to $\mathds{1}_A$.
\end{lemma}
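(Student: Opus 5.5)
For a finite interval $A$ with endpoints $a\le b$, I would build the sequence by hand, with explicit $C^1$ ramps that shrink onto the endpoints. The only care needed is to respect whether each endpoint belongs to $A$, because pointwise convergence must also hold at the endpoints themselves. The degenerate cases are handled separately. If $A=\emptyset$, take $c_n\equiv 0$. If $A=\{a\}$, take $c_n(s)=\rho(n(s-a))$, where $\rho\in C^1_c(\mathbb{R})$ satisfies $0\le\rho\le1$, $\rho(0)=1$, and $\rho$ is supported in $(-1,1)$.

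For $a<b$, fix a nondecreasing $C^1$ function $\theta:\mathbb{R}\to[0,1]$ with $\theta=0$ on $(-\infty,0]$ and $\theta=1$ on $[1,\infty)$. A concrete choice is $\theta(u)=3u^2-2u^3$ on $[0,1]$, which is $C^1$ with $\theta'(0)=\theta'(1)=0$. Each endpoint then gets a left factor and a right factor, depending on whether it is included:
\[
\ell_n(s)=\theta\bigl(n(s-a)\bigr)\ \text{if } a\notin A,\qquad \ell_n(s)=\theta\bigl(n(s-a)+1\bigr)\ \text{if } a\in A,
\]
\[
r_n(s)=\theta\bigl(n(b-s)\bigr)\ \text{if } b\notin A,\qquad r_n(s)=\theta\bigl(n(b-s)+1\bigr)\ \text{if } b\in A.
\]
Set $c_n=\ell_n r_n$. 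Each $c_n$ is $C^1$ and takes values in $[0,1]$. It has compact support, contained in $[a-1/n,\,b+1/n]$, so $c_n$ and $c_n'$ are bounded and $c_n\in C^1_b(\mathbb{R},\mathbb{R})$.

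Pointwise convergence is then checked region by region. If $s<a$, the factor $\ell_n(s)$ vanishes once $n>1/(a-s)$. Symmetrically, if $s>b$, then $r_n(s)=0$ for large $n$. If $a<s<b$, both factors equal $1$ for large $n$. At $s=a$, the value is $\ell_n(a)=\theta(1)=1$ when $a\in A$ and $\ell_n(a)=\theta(0)=0$ when $a\notin A$, while $r_n(a)=1$ for $n$ large since $b-a>0$. Hence $c_n(a)\to\mathds{1}_A(a)$, and the endpoint $b$ is handled in the same way. This gives $c_n\to\mathds{1}_A$ pointwise, with $|c_n|\le 1$ uniformly. That uniform bound is exactly what the dominated convergence step in the proof of Proposition \ref{prop:def_semigroup_FK} requires.

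The main obstacle is only the endpoint bookkeeping. Open, closed, and half-open intervals need the shifted or unshifted ramp to get the right limit at $a$ and $b$. If the paper's notion of finite interval also allowed unbounded intervals, the same construction would still apply by simply dropping the factor belonging to the missing endpoint.
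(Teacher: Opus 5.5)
Your proof is correct and follows essentially the paper's route: the paper also builds explicit steep ramps at each endpoint, taking $I_n(s)=S\bigl(n^2(s-a+\tfrac1n)\bigr)-S\bigl(n^2(s-b-\tfrac1n)\bigr)$ with the logistic function $S$ for $A=[a,b]$. There, the shift by $\tfrac1n$ plays the same role as your shift by $1$ in $\theta(n(s-a)+1)$. Your compactly supported product version also writes out the half-open, open and degenerate cases, which the paper dismisses as ``a similar construction,'' and it makes explicit the bound $0\le c_n\le 1$ needed for dominated convergence.
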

\begin{proof}
Let $S(x)=\frac{1}{1+e^{-x}}$ and $A=[a,b]$. We define $I_n(s)=S\left(n^2\left(s-a+\frac{1}{n}\right)\right)-S\left(n^2\left(s-b-\frac{1}{n}\right)\right)$. For all $n$, $I_n$ belongs to $C^1_b$ and when $n$ goes to infinity, it converges pointwise to $\mathds{1}_{[a,b]}$. A similar construction yields the approximation of $[a,b),\, (a,b]$ and $(a,b)$.
\end{proof}
Recall the following definitions:
\begin{itemize}
    \item $\mathcal{A}_b = \{ (s,x) \mapsto \mathds{1}_A(s) f(s,x) \mid A \subset \mathbb{R} \text{ finite interval}, f \in C^1_b(\mathbb{R},L^\infty(\mathbb{R}_+,\mathbb{R}))\}$.
    \item $\mathcal{A} = \{ (s,x) \mapsto \mathds{1}_A(s) f(s,x) \mid A \subset \mathbb{R} \text{ finite interval}, f \in C^1(\mathbb{R},L^\infty(\mathbb{R}_+,\mathbb{R}))\}$.
    \item $\mathcal{A}'_b = \{ (s,x) \mapsto \mathds{1}_A(s) f(s,x) \mid A \subset \mathbb{R} \text{ finite interval}, f \in C^1_b(\mathbb{R},L^1(\mathbb{R}_+,\mathbb{R}))\}$.
    \item $\mathcal{A}' = \{ (s,x) \mapsto \mathds{1}_A(s) f(s,x) \mid A \subset \mathbb{R} \text{ finite interval}, f \in C^1(\mathbb{R},L^1(\mathbb{R}_+,\mathbb{R}))\}$.
\end{itemize}
\begin{lemma}\label{lemma : extension}
We have
\begin{equation*}
    \mathcal{A}=\mathcal{A}_b\,\text{and } \mathcal{A}'=\mathcal{A}'_b
\end{equation*}
And in particular, we have
\begin{equation*}
    \{\mathds{1}_A(s)f(s,x)| A\subset\mathbb{R}\text{ compact interval }, f \in C^1(A,L^\infty(\mathbb{R}_+,\mathbb{R}))\}\subset\mathcal{A}
\end{equation*}
and 
\begin{equation*}
    \{\mathds{1}_A(s)g(s,x)| A\subset\mathbb{R}\text{ compact interval }, g \in C^1(A,L^1(\mathbb{R}_+,\mathbb{R}))\}\subset\mathcal{A}'
\end{equation*}
\end{lemma}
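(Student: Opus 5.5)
The statement to prove is Lemma \ref{lemma : extension}: $\mathcal{A}=\mathcal{A}_b$ and $\mathcal{A}'=\mathcal{A}'_b$, together with the two inclusions for functions that are only defined and $C^1$ on a compact interval. The inclusions $\mathcal{A}_b\subset\mathcal{A}$ and $\mathcal{A}'_b\subset\mathcal{A}'$ are trivial, since $C^1_b\subset C^1$. All the work is in the reverse inclusion. The idea is that only the values of $f$ on the finite interval $A$ matter, because the indicator $\mathds{1}_A(s)$ kills everything outside. So we may replace $f$ by a function that agrees with it on $A$ and is bounded with bounded derivative.

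\textbf{Reverse inclusion.} Let $\varphi=\mathds{1}_A f$ with $A$ a finite interval and $f\in C^1(\mathbb{R},L^\infty(\mathbb{R}_+,\mathbb{R}))$. Let $\bar A=[a,b]$ be the closure of $A$. Choose a cutoff $\chi\in C^\infty_c(\mathbb{R})$ with $\chi\equiv1$ on $[a-1,b+1]$ and support in $[a-2,b+2]$, and set $\tilde f=\chi f$.

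On the compact set $[a-2,b+2]$, the maps $s\mapsto f(s,\cdot)$ and $s\mapsto\partial_1 f(s,\cdot)$ are continuous with values in $L^\infty(\mathbb{R}_+)$, so their norms are bounded there. By the product rule in the Banach space $L^\infty$, $\tilde f$ is $C^1$ with
\[
\partial_1\tilde f=\chi' f+\chi\,\partial_1 f,
\]
and both $\tilde f$ and $\partial_1\tilde f$ are bounded and compactly supported. Hence $\tilde f\in C^1_b(\mathbb{R},L^\infty(\mathbb{R}_+,\mathbb{R}))$. Since $\chi=1$ on $A$, we have $\mathds{1}_A\tilde f=\mathds{1}_A f=\varphi$, so $\varphi\in\mathcal{A}_b$.

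The same argument, with $L^1(\mathbb{R}_+)$ in place of $L^\infty(\mathbb{R}_+)$, gives $\mathcal{A}'=\mathcal{A}'_b$.

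\textbf{Inclusions for compact $A$.} Now take $A=[a,b]$ compact and $f\in C^1(A,L^\infty(\mathbb{R}_+,\mathbb{R}))$, with one-sided derivatives at the endpoints. First extend $f$ to a function in $C^1(\mathbb{R},L^\infty)$. The simplest extension is the first-order Taylor one:
\[
F(s)=f(b)+(s-b)\,\partial_1 f(b)\ \text{ for } s>b,\qquad F(s)=f(a)+(s-a)\,\partial_1 f(a)\ \text{ for } s<a,
\]
and $F=f$ on $[a,b]$. Then $F$ is continuous, its derivative is continuous across $a$ and $b$, and $F$ takes values in $L^\infty$. Thus $F\in C^1(\mathbb{R},L^\infty)$, and $\mathds{1}_A f=\mathds{1}_A F\in\mathcal{A}$. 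The first part then gives membership in $\mathcal{A}_b$. The $L^1$ case is identical.

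\textbf{Main obstacle.} The only genuinely delicate point is interpreting $C^1$ for Banach-valued maps (Fréchet derivative in $s$) and checking the matching of one-sided derivatives at $a$ and $b$. With the Taylor extension above this matching holds by construction. Everything else is routine bookkeeping: the boundedness of a continuous Banach-valued map on a compact set, and the product rule for a scalar $C^1$ function times a Banach-valued $C^1$ function.
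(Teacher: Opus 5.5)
Your proof is correct and follows the paper's argument essentially verbatim. You use a smooth cutoff $\chi$ equal to $1$ on $\overline{A}$ to replace $f$ by $\chi f\in C^1_b$ without changing $\mathds{1}_A f$, and you handle the compact-interval inclusions with the same first-order Taylor extension of $f$ past the endpoints $a$ and $b$; your explicit reading of $C^1$ as a Banach-valued notion (so that continuity on compacts yields the boundedness used) also makes precise an assumption the paper leaves implicit.
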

\begin{proof}
If $A$ is a finite interval, then its closure $\overline{A}$ is compact. Any function $f\in C^1(\mathbb{R},L^\infty(\mathbb{R}_+,\mathbb{R}))$ restricted to $\overline{A}$ is bounded and its derivative is also bounded.

We can use a truncation function $\chi\in C^\infty_c(\mathbb{R},L^\infty(\mathbb{R}_+,\mathbb{R}))$ such that $\chi(s)$ is equal to $1$ if $s\in\overline{A}$ and $\chi(s)=0$ outside a compact neighbourhood of $\overline{A}$. 

Define $\widetilde{f}(s,x)=\chi(s)f(s,x)$ then $\widetilde{f}\in C^1_b(\mathbb{R},L^\infty(\mathbb{R}_+,\mathbb{R}))$. If $s\in A$, $\widetilde{f}(s,x)=f(s,x)$ and $\mathds{1}_A(s)f(s,x)=\mathds{1}_A(s)\widetilde{f}(s,x)$. We deduce that $\mathds{1}_Af$ belongs to $\mathcal{A}_b$. The same goes for $\mathcal{A}'$ and $\mathcal{A}'_b$.

An example of $\chi$ can be constructed in the following way:

Take for example $A=[a,b]$. Set $l(t)=e^{-\tfrac{1}{t}}\mathds{1}_{t>0}$ and $g(t)=\frac{l(t)}{l(t)+l(1-t)}$. We have $g(t)=0$ for all $t\leq0$ and $g(t)=1$ for all $t\geq1$. Let $\varepsilon>0$, we define
\begin{equation*}
    \chi(t)=g\left( \frac{t-a+\varepsilon}{\varepsilon} \right)g\left(\frac{b+\varepsilon-t}{\varepsilon}\right)
\end{equation*}
We have that $\chi\in C^\infty_c(\mathbb{R},[0,1])$, with compact support in $[a-\varepsilon,b+\varepsilon]$ and $\chi(s)=1$ on $A$.

Let $A=[a,b]$ be a compact interval of $\mathbb{R}$ and $f\in C^1(A,L^\infty(\mathbb{R}_+,\mathbb{R}))$. We define
\begin{equation*}
    f_{ext}(t,x)=[f(a,x)+\partial_1f(a,x)(t-a)]\mathds{1}_{t<a}+f(t,x)\mathds{1}_{a\leq t \leq b}+[f(b,x)+\partial_1f(b,x)(t-b)]\mathds{1}_{t>b}
\end{equation*}
Then $f_{ext}\in C^1(\mathbb{R},L^\infty(\mathbb{R}_+,\mathbb{R}))$.
We have $\mathds{1}_A(s) f(s,x)=\mathds{1}_A(s) f_{ext}(s,x)$ and we conclude that $\mathds{1}_A f\in\mathcal{A}$. The same goes for $\mathcal{A}'$.
\end{proof}

\end{appendix}
\end{document}